\theoremstyle{plain}
\numberwithin{equation}{section}
\newtheorem{Theorem}{Theorem}
\newtheorem{Lemma}[Theorem]{Lemma}
\newtheorem{Proposition}[Theorem]{Proposition}
\newtheorem{Corollary}[Theorem]{Corollary}
\newtheorem{Remark}[Theorem]{Remark}
\newtheorem{Trick}[Theorem]{Trick}
 \theoremstyle{remark}
\title[Schr\"odinger operators with singular potentials]
{Spectral gaps of Schr\"odinger operators with periodic singular
potentials}
\author{Plamen Djakov}
\author{Boris Mityagin}
\begin{document}
\begin{abstract}
By using quasi--derivatives we develop a Fourier method for
studying the spectral gaps of one dimensional Schr\"odinger
operators with periodic singular potentials $v.$ Our results
reveal a close relationship between smoothness of potentials and
spectral gap asymptotics under a priori assumption $v \in
H^{-1}_{loc} (\mathbb{R}).$  They extend and strengthen similar
results proved in the classical case $v \in
L^2_{loc}(\mathbb{R}).$\\ {\it Keywords}: Schr\"odinger operator,
singular periodic potential, spectral gaps.\\ {\it 2000
Mathematics Subject Classification:} 34L40, 47E05, 34B30.
\end{abstract}
\address{Sabanci University, Orhanli,
34956 Tuzla, Istanbul, Turkey}
 \email{djakov@sabanciuniv.edu}
\address{Department of Mathematics,
The Ohio State University,
 231 West 18th Ave,
Columbus, OH 43210, USA} \email{mityagin.1@osu.edu}
\thanks{B. Mityagin acknowledges the support of the Scientific and Technological
Research Council of Turkey and  the hospitality of Sabanci
University, April--June, 2008} \maketitle
\section*{Content}
\begin{enumerate}
\item[Section 1]  Introduction \vspace{3mm}
\item[Section 2]
Preliminaries
\item[Section 3]
Basic equation
\item[Section 4]  Estimates of $\alpha (v;n,z)$ and $\beta^\pm (v;n,z) $
\item[Section 5]  Estimates of spectral gaps $ \gamma_n $
\item[Section 6] Main Results for real--valued potentials
\item[Section 7] Non--self--adjoint case: complex--valued
$H^{-1}$--potentials
\item[Section 8] Comments; miscellaneous
\item[Section 9] Appendix:\\
Deviations of Riesz projections of Hill operators with singular
potentials \vspace{3mm}
\item[9.1] Preliminaries
\item[9.2] Main results on the deviations $P_n - P_n^0$
\item[9.3] Technical inequalities and their proofs
\item[9.4]  Proof of the main theorem on the deviations
$\|P_n - P_n^0\|_{L^1 \to L^\infty}$
\item[9.5] Miscellaneous
\item[References]
\end{enumerate}

\section{Inroduction}
We consider the Hill operator
\begin{equation}
\label{001} Ly = - y^{\prime \prime} + v(x) y, \qquad x \in
I=[0,\pi],
\end{equation}
with a singular complex--valued $\pi$--periodic potential $v \in
H^{-1}_{loc} (\mathbb{R}),$ i.e.,
\begin{equation}
\label{002} v(x) = v_0 + Q^\prime (x),\quad Q \in
L^2_{loc}(\mathbb{R}), \quad Q(x+\pi) = Q(x),
\end{equation}
with $Q$ having zero mean
\begin{equation}
\label{002a}
 q(0) = \int_0^\pi
Q(x) dx =0;
\end{equation}
then
\begin{equation}
\label{003} Q (x) = \sum_{m \in 2\mathbb{Z}\setminus \{0\}} q (m)
e^{ imx},\quad  \|Q\|^2_{L^2(I)} = \|q\|^2=\sum_{m \in
2\mathbb{Z}\setminus \{0\}} |q(m)|^2 <\infty,
\end{equation}
where $q=(q(m))_{m \in 2\mathbb{Z}}.$

Analysis of the Hill or Sturm--Liouville operators, or their
multi--dimensional analogues $- \Delta + v(x) $ with point (surface)
interaction ($\delta$--type) potentials has a long history. From the
early 1960's (F. Berezin, L. Faddeev, R. Minlos
\cite{BF61,B66,MF61}) to around 2000 the topic has been studied in
detail; see books \cite{AGHH,AK} and references there. For specific
potentials see W. N. Everitt and A. Zettl \cite{EZ79,EZ86}.

A more general approach which allows to consider {\em any}
singular potential (beyond $\delta$--functions or Coulomb type) in
negative Sobolev spaces has been initiated by A. Shkalikov and his
coauthors Dzh. Bak, M. Ne\u\i man-zade and A. Savchuk
\cite{BakS,NZS99,SS99}. It led to the spectral theory of
Sturm--Liouville operators with distribution potentials developed
by A. Savchuk and A. Shkalikov \cite{Sav,SS01,SS03,SS06}, and R.
Hryniv and Ya. Mykytyuk \cite{HM01,HM02,HM03,HM04,HM06,HM066}).

Another approach to the study of the Sturm--Liouville operators with
non--classical potentials comes from M. Krein \cite{Kr52}. E.
Korotyaev (see \cite{Kor03,Kor06} and the references therein) uses
this approach very successfully but it seems to be limited to the
case of {\em real} potentials.

A. Savchuk and A. Shkalikov \cite{SS03} consider a broad class of
boundary conditions ($bc$) -- see Formula (1.6) in Theorem 1.5
there -- in terms of a function $y$ and its quasi--derivative $$ u
= y^\prime - Q y. $$ In particular, the proper form of periodic
$Per^+$ and antiperiodic $(Per^-)$ {\it bc} is
\begin{equation}
\label{02}
 Per^\pm: \quad y(\pi) = \pm y(0), \quad u(\pi) = \pm u(0).
\end{equation}
If the potential $v$ happens to be an $L^2$-function those $bc$
are identical to the classical ones (see discussion in
\cite{DM16}, Section 6.2).

 The Dirichlet  $Dir$ $bc$ is more simple:
\begin{equation}
\label{02a} Dir: \quad  y(0) =0,  \quad y(\pi) =0;
\end{equation}
it does not require quasi--derivatives, so it is defined in the
same way as for $L^2$--potentials.

In our analysis of instability zones of the Hill and Dirac operators
(see \cite{DM15} and the comments there) we follow an approach
(\cite{KM1,KM2,DM3,DM5,Mit04,DM6}) based on Fourier Method. But in
the case of singular potentials it may happen that the functions $$
u_k = e^{ikx} \quad \mbox{or} \quad \sin kx, \;\;k \in \mathbb{Z},
$$ have their $L_{bc}$--images outside $L^2.$ Moreover, for some
singular potentials $v$ we have $L_{bc}f \not \in L^2$ for {\em any
nonzero smooth} (say $C^2-$function) $f.$ (For example, choose $$
v(x) = \sum_{r} a(r) \delta_* (x-r),\quad r  \; \mbox{rational},\;
r\in I, $$ with $a(r) >0, \; \sum_{r} a(r) = 1 $ and $\delta_* (x) =
\sum_{k \in \mathbb{Z}} \delta (x-k \pi).$) This implies, for any
reasonable {\it bc}, that the eigenfunctions $\{u_k\}$ of the free
operator $L^0_{bc}$ are not necessarily in the domain of $L_{bc}.$

Yet, in \cite{DM17,DM16} we gave a justification of the Fourier
method for operators $L_{bc}$ with $H^{-1}$--potentials and $bc =
Per^\pm $ or $Dir.$ Our results are announced in \cite{DM17}, and
all technical details of justification of the Fourier method are
provided in \cite{DM16}. In Section 2 we remind our constructions
from \cite{DM16}; this paper is essentially a general introduction
to the present paper. A proper understanding of the boundary
conditions ($Per^\pm $ or $Dir$) -- see (\ref{02}) and the
formulas (\ref{00.1}), $(a^*),\, (b^*)$ below -- and careful
definitions of the corresponding operators and their domains are
provided by using quasi--derivatives.  To great extend we follow
the approach suggested and developed by A. Savchuk and A.
Shkalikov \cite{SS99,SS03} (see also \cite{SS01,SS06}) and
further, by R. Hryniv and Ya. Mykytyuk \cite{HM01} and
\cite{HM04,HM066}.

The Hill--Schr\"odinger operator $L$ with a singular potential
$v\in H^{-1}$ has,  for each $n\geq n_0(v)$ in the disc of center
$n^2$ and radius $n,$ one Dirichlet eigenvalue $\mu_n $ and two
(counted with their algebraic multiplicity) periodic (if $n$ is
even) or anti--periodic (if $n$ is odd) eigenvalues $\lambda_n^-,
\lambda_n^+$ (see Proposition~\ref{prop004} below, or Theorem~21
in \cite{DM16}).

Our main goal in the present paper is to study, for singular
potentials $v\in H^{-1},$ the relationship between the smoothness of
$v$ and the asymptotic behavior of {\em spectral gaps} $\gamma_n
=|\lambda^+_n -\lambda^-_n|$ and {\em deviations} $\delta_n =|\mu_n
-(\lambda^+_n +\lambda^-_n)/2|.$ In the classical case $v \in L^2 $
this relationship means, roughly speaking, that the sequences
$(\gamma_n ) $ and $(\delta_n)$ decay faster if the potential is
smoother, and vise versa. Of course, to make this statement precise
one needs to consider appropriate classes of smooth functions and
related classes of sequences.

This phenomenon was discovered by H. Hochstadt \cite{Ho1,Ho2},   who
showed for {\em  real--valued potentials} $v \in L^2$ the following
connection between the smoothness of $v$ and the rate of decay of
spectral gaps (or, the lengths of instability zones) $\gamma_n =
\lambda^+_n - \lambda^-_n:$ {\em If

 $(i) \; v \in C^\infty, $ i.e., $v$ is infinitely
differentiable, then

$(ii) \; \gamma_n $ decreases more rapidly than any power
 of $1/n.$

If a continuous function $v$ is a finite--zone potential, i.e.,
$\gamma_n =0$ for large enough $n,$ then} $v \in C^\infty. $

In the middle of 70's (see \cite{MO1}, \cite{MT}) the latter
statement was extended, namely, it was shown, for real $L^2
([0,\pi])$--potentials $v,$ that a power decay of spectral gaps
implies infinite differentiability, i.e., $ (ii) \Rightarrow (i).
$

E. Trubowitz~\cite{Tr} has used the Gelfand--Levitan \cite{GL} trace
formula and the Dubrovin equations~\cite{Dub75,DMN} to explain, that
{\em a real $L^2 ([0,\pi])$--potential $v(x)= \sum_{k \in
\mathbb{Z}} V(2k) \exp (2ikx) $ is analytic, i.e.,
$$\exists A>0: \quad |V(2k)| \leq M e^{-A|k|}, $$ if and only if
the spectral gaps decay exponentially, i.e.,} $$\exists a>0: \quad
\gamma_n \leq C e^{-a|k|} $$

M. Gasymov \cite{Gas} showed that if a potential $v \in L^2
([0,\pi]) $ has the form $v(x) =\sum_{k=0}^\infty v_k \exp (2ikx)  $
then $\gamma_n =0 \; \forall n.$ Therefore, in general the decay of
$ \gamma_n $ cannot give any restriction on the smoothness of
complex potentials. V. Tkachenko~\cite{Tk92,Tk94} suggested to bring
the deviations $\delta_n$ into consideration. As a further
development, J.-J. Sansuc and V. Tkachenko \cite{ST} gave a
statement of Hochstadt type: {\em A potential $v \in L^2 ([0,\pi]) $
belongs to the Sobolev space $H^m, \, m\in \mathbb{N}$ if and only
if} $$ \sum \left ( |\gamma_n|^2 + |\delta_n|^2 \right ) \left
(1+n^{2m} \right ) < \infty. $$ \vspace{3mm}

When talking about general classes of $\pi$-periodic smooth
functions we characterize the smoothness by a weight  $\Omega =
(\Omega (n)), $ and consider the ``Sobolev`` space

\begin{equation}
\label{sob} H(\Omega) = \left \{ v(x) = \sum_{k \in \mathbb{Z}} v_k
e^{2ikx}, \quad \sum_{k \in \mathbb{Z}} |v_k|^2 (\Omega (k))^2 <
\infty \right \}.
\end{equation}
The related sequence space is determined as
\begin{equation}
\label{sob1}  \ell^2_{\Omega} = \left \{\xi = (\xi_n)_0^\infty :
\;\; \sum_{n\in \mathbb{N}} |\xi_n|^2 (\Omega (n))^2 < \infty \right
\}.
\end{equation}

In this terminology and under a priori assumption $v\in L^2, $ one
may consider the following general question  on the relationship
between the potential smoothness and decay rate of spectral gaps
and deviations:
 Is it true  that the following conditions $(A)$ and $(B)$ are
equivalent: $$ (A) \;\; v \in H(\Omega); \quad \quad (B) \;\; \gamma
\in \ell^2 (\Omega) \; \;and \;\; \delta \in \ell^2 (\Omega). $$ The
answer is positive for weights $\Omega$
 in a broad range of growths from being constant to
 growing exponentially -- see more
detailed discussion and further results in \cite{DM15}, in
particular Theorems 54 and 67.

Let us note that in the classical case $v \in L^2 $ we have $v_k
\to 0 $ as $|k| \to \infty, $ and $\gamma_n \to 0, \; \delta_n \to
0 $ as $n \to \infty. $ Therefore, if $v \in L^2 $ then we
consider weights which satisfy the condition
\begin{equation}
\label{022} \inf_n \, \Omega (n) > 0.
\end{equation}

In the case $v \in H^{-1} $ the sequences $(v_k), \, (\gamma_n),
(\delta_n)$ may not converge to zero, but $v_k/k \to 0 $ as $|k|
\to \infty, $ and $\gamma_n/n \to 0, \; \delta_n/n \to 0 $ as $n
\to \infty. $ Therefore, in the case $v \in H^{-1} $ it is natural
to consider weights which satisfy
\begin{equation}
\label{012}
 \inf_n  n \, \Omega (n) > 0.
 \end{equation}
The main results of the present paper assert, under a priori
assumption $v\in H^{-1},$ that the conditions (A) and (B) above are
equivalent if $\Omega $ satisfies (\ref{012}) and some other mild
restrictions (see for precise formulations Theorem~\ref{thm33.1}
and Theorem~\ref{thm44.2}). Since the condition (\ref{022}) is more
restrictive than (\ref{012}), these results extend and strengthen
our previous results in the classical case $v \in L^2 $  as well.

Sections 3--7, step by step, lead us to the proofs of Theorems
\ref{thm33.1} and \ref{thm44.2}. As before in the regular case $v
\in L^2 $  (\cite{DM5}, Proposition 4) an important ingredient of
the proof (see in particular Section 7) is the following assertion
about the deviations of Riesz projections $P_n -P_n^0 $ of Hill
operators (with singular potentials) $L_{bc}$ and free operators
$L^0_{bc}:$
\begin{equation}
\label{023}
 \|P_n -P_n^0\|_{L^1 \to L^\infty} \to 0.
 \end{equation}
This fact is important not only in the context of
Theorem~\ref{thm44.2} but also for a series of results on
convergence of spectral decompositions, both in the case of Hill
operators with singular potentials and 1D Dirac operators with
periodic $L^2$--potentials (see \cite{DM19,DM20,DM23}). The proof
and analysis of the statement (\ref{023}) is put aside of the main
text as Appendix, Section 9.

Section 8 gives a few comments (in historical context) on different
parts of the general scheme and its realization. In particular, we
remind and extend in Proposition \ref{prop22} the observation of J.
Meixner and F. W. Sch\"afke  \cite{MS54,MSW80} that the Dirichlet
eigenvalues of the Hill--Mathieu operator could be analytic only in
a bounded disc. They gave upper bounds of the radii of these discs
(see Satz 8, Section 1.5 in \cite{MS54} and p. 87, the last
paragraph, in \cite{MSW80}).
 This is an interesting topic of its own (see  \cite{V96,V00}
and \cite{DM14}). We'll extend this analysis to families of
tri--diagonal matrix operators in the paper \cite{ADM22}.

\section{Preliminaries}

It is known (e.g., see \cite{HM01},   Remark 2.3) that every
$\pi$--periodic potential $ v \in H^{-1}_{loc} (\mathbb{R}) $ has
the form $$  v = C + Q^\prime, \quad \mbox{where} \;\; C= const,
\;\; Q \;\; \mbox{is} \; \pi -\mbox{periodic}, \quad Q \in L^2_{loc}
(\mathbb{R}). $$ Therefore, formally we have $$ -y^{\prime \prime} +
v \cdot y = \ell (y) := -( y^\prime - Qy)^\prime - Q (y^\prime -Qy)
+ (C- Q^2) y.  $$ So, one may introduce  the quasi--derivative $u =
y^\prime -Qy $ and replace the distribution equation $-y^{\prime
\prime} + vy = 0 $ by the following system of two linear
differential equations with coefficients in $L^1_{loc} (\mathbb{R})$
\begin{equation}
\label{00.1} y^\prime = Qy + u, \quad  u^\prime = (C-Q^2)y - Qu.
\end{equation}
By  the  Existence--Uniqueness Theorem for systems of linear o.d.e.
with $L^1_{loc} (\mathbb{R})$--coefficients (e.g., see
\cite{At,Naim}), the Cauchy initial value problem for the
system~(\ref{00.1}) has, for each pair of numbers $(a,b),$ a unique
solution $(y,u) $ such that $y(0) =a, \; u(0)= b.$ This makes
possible to apply the Floquet theory to the system~(\ref{00.1}), to
define a Lyapunov function, etc.

We define the Schr\"odinger operator $L(v) $ in the domain
\begin{equation}
\label{00.01} D(L(v)) =
 \left  \{ y \in H^1 (\mathbb{R}):
 \;\; y^\prime - Qy \in L^2 (\mathbb{R}) \cap W^1_{1,loc} (\mathbb{R}), \;\;
 \ell (y) \in L^2 (\mathbb{R})   \right \},
\end{equation}
by
\begin{equation}
\label{00.02} L(v) y = \ell (y) = - (y^\prime -Qy)^\prime -
Qy^\prime.
\end{equation}
The domain $D(L(v)) $ is dense in $L^2 (\mathbb{R}), $ the operator
$L(v)$ is a closed, and its spectrum could be described in terms of
the corresponding Lyapunov function  (see Theorem 4 in \cite{DM16}).

 In the classical case $v \in L^2_{loc} (\mathbb{R}),$  if $v$ is
a real--valued then by the Floquet--Lyapunov theory (see
\cite{E,Kuch,MW,W}) the spectrum of $L(v)$ is absolutely continuous
and has a band--gap structure, i.e., it is a union of closed
intervals separated by {\em spectral gaps} $$ (-\infty, \lambda_0),
\; (\lambda^-_1,\lambda^+_1), \; (\lambda^-_2,\lambda^+_2),  \ldots,
(\lambda^-_n,\lambda^+_n), \ldots .$$ The points $(\lambda^\pm_n) $
are defined by the spectra of the corresponding Hill operator
considered on the interval $[0,\pi],$ respectively, with periodic
(for even $n$) and anti--periodic (for odd $n$) boundary conditions
$(bc):$

(a) periodic $\quad Per^+: \quad y(\pi)= y(0), \;y^\prime (\pi)=
y^\prime (0);   $

(b) antiperiodic $\quad  Per^-: \quad y(\pi)= - y(0), \;y^\prime
(\pi)= - y^\prime (0); $

Recently a similar statement was proved in the case of real singular
potentials $v\in H^{-1}$ by R. Hryniv and Ya. Mykytyuk \cite{HM01}.

Following A. Savchuk and A. Shkalikov \cite{SS99,SS03}, let us
consider (in the case of singular potentials) periodic and
anti--periodic boundary conditions $Per^\pm $ of the form

($a^*$) $\quad   Per^+: \quad y(\pi)= y(0), \;\left ( y^\prime - Qy
\right ) (\pi)= \left ( y^\prime - Qy  \right ) (0). $

($b^*$) $\quad  Per^-: \quad y(\pi)= -y(0), \;\left ( y^\prime - Qy
\right ) (\pi)= - \left ( y^\prime - Qy  \right ) (0). $

R. Hryniv and Ya. Mykytyuk \cite{HM01} showed, that the Floquet
theory for the system (\ref{00.1}) could be used to explain that if
the potential $v\in H_{loc}^{-1}$ is real--valued, then $L(v) $ is a
self--adjoint operator having absolutely continuous spectrum with
band--gap structure, and the spectral gaps are determined by the
spectra of the corresponding Hill operators $L_{Per^\pm} $ defined
on $[0,\pi]$ by $L_{Per^\pm}  ( y )= \ell (y)$ for $  y \in
D(L_{Per^\pm}),$ where
$$ D(L_{Per^\pm}) =\left  \{y \in H^1  : \; y^\prime - Qy \in
W^1_1 ([0,\pi]),
 \;\; (a^*)\; \text{or} \;(b^*) \; \text{holds}
 ,
 \; \ell (y) \in H^0  \right \}.
$$ (Hereafter the short notations $ H^1 = H^1 ([0,\pi]), \quad H^0 =
L^2 ([0,\pi]) $ are used.)

We set $$H^1_{Per^\pm} = \left \{f \in H^1: \; f(\pi) = \pm f(0)
\right \},\;\;
 H^1_{Dir} = \left \{f \in H^1: \; f(\pi) = f(0) = 0
\right \}. $$ One can easily see that $\{u_k =e^{ikx}, \; k \in
\Gamma_{Per^+}= 2\mathbb{Z} \} $ is an orthogonal basis in
$H^1_{Per^+},$ $\{u_k=e^{ikx}, \; k \in \Gamma_{Per^-}=  1+
2\mathbb{Z} \} $ is an orthogonal basis in $H^1_{Per^-},$ and
$\{u_k =\sqrt{2} \sin kx, \; k \in \Gamma_{Dir}=  \mathbb{N} \} $
is an orthogonal basis in $H^1_{Dir}.$ From here it follows, for
$bc=Per^\pm $  or $Dir,$ that $$H^1_{bc} = \left \{ f(x) =
\sum_{k\in \Gamma_{bc}} f_k u_k (x) \;: \quad \|f\|^2_{H^1}
=\sum_{k\in \Gamma_{bc}} (1+k^2)|f_k|^2 < \infty \right \}. $$

Now, we are ready to explain the Fourier method for studying the
spectra of the operators $L_{Per^\pm}.$ We set
\begin{equation}
\label{0016} V(k) = ik q(k), \quad k \in 2\mathbb{Z},
\end{equation}
where $q(k) $ are the Fourier coefficients of $Q$ defined in
(\ref{003}).

 Let $ \mathcal{F} : H^0 \to \ell^2 (\Gamma_{Per^\pm}) $ be
the Fourier isomorphisms defined by mapping a function $f \in H^0
$ to the sequence $(f_k) $ of its Fourier coefficients $ f_k =
(f,u_k),$ where $\{u_k, \; k \in \Gamma_{Per^\pm}\} $ is the
corresponding  basis introduced above. Let $\mathcal{F}^{-1} $ be
the inverse Fourier isomorphism.

Consider the operators $\mathcal{L}_+ $  and $\mathcal{L}_- $
acting
 as $$ \mathcal{L}_\pm (z) = \left (
h_k (z)  \right )_{k \in \Gamma_{Per^\pm}}, \; h_k (z) = k^2 z_k +
\sum_{m \in \Gamma_{Per^\pm}} V(k-m) z_m +C z_k, $$ respectively,
in  $$ D(\mathcal{L}_\pm ) = \left \{ z \in \ell^2 (|k|,
\Gamma_{Per^\pm}):  \;\mathcal{L}_\pm (z) \in \ell^2
(\Gamma_{Per^\pm})  \right \}, $$  where  $$ \ell^2 (|k|,
\Gamma_{Per^\pm})= \left \{z=(z_k)_{k \in \Gamma_{Per^\pm}}: \;\;
\sum_{k} (1+|k|^2) |z_k|^2 < \infty \right \}.$$

\begin{Proposition}
\label{prop001} (Theorem 11 in \cite{DM16}) In the above
notations, we have
\begin{equation}
\label{0033} D(L_{Per^\pm})=\mathcal{F}^{-1} \left (
D(\mathcal{L}_\pm ) \right ), \qquad
 L_{Per^\pm} = \mathcal{F}^{-1} \circ
 \mathcal{L}_\pm \circ \mathcal{F}.
\end{equation}
\end{Proposition}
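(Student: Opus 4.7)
The plan is to verify, by working with Fourier coefficients, that the formal differential expression $\ell$ acting on functions satisfying $Per^\pm$-type boundary conditions corresponds, under $\mathcal{F}$, to the prescribed matrix operator $\mathcal{L}_\pm$. The argument naturally splits into three steps: a formal Fourier-coefficient computation on a dense core of trigonometric polynomials, the inclusion $\mathcal{F}(D(L_{Per^\pm})) \subset D(\mathcal{L}_\pm)$ together with the intertwining relation, and the reverse inclusion.

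For the formal computation, take a trigonometric polynomial $y = \sum_{k \in \Gamma_{Per^\pm}} f_k u_k$. Expanding the quasi-derivative system (\ref{00.1}) gives the identity $\ell(y) = -y'' + Q' y + C y$, and inserting the Fourier expansion $Q'(x) = \sum_{m \in 2\mathbb{Z}\setminus\{0\}} V(m) e^{imx}$ yields the $k$-th Fourier coefficient
\[
k^2 f_k + \sum_{m \in \Gamma_{Per^\pm}} V(k-m) f_m + C f_k = h_k(f).
\]
Since $V$ is supported on $2\mathbb{Z}$, parity is preserved, so $\mathcal{L}_\pm$ respects $\Gamma_{Per^\pm}$. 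For the forward inclusion, if $y \in D(L_{Per^\pm})$ then $y \in H^1_{Per^\pm}$ gives $f := \mathcal{F}y \in \ell^2(|k|,\Gamma_{Per^\pm})$ and $\ell(y) \in H^0$ gives $\mathcal{F}(\ell(y)) \in \ell^2(\Gamma_{Per^\pm})$; approximating $y$ by its partial Fourier sums in $H^1$ and passing to the limit shows $\mathcal{F}(\ell(y)) = \mathcal{L}_\pm f$, which in particular places $f$ in $D(\mathcal{L}_\pm)$.

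The main obstacle is the reverse inclusion. Given $z \in D(\mathcal{L}_\pm)$, set $y := \mathcal{F}^{-1} z = \sum z_k u_k$; one must verify all four conditions defining $D(L_{Per^\pm})$: $H^1$-regularity, the boundary conditions $(a^*)$ or $(b^*)$ on $y$ and on its quasi-derivative $u = y' - Qy$, membership $u \in W^1_{1,loc}$, and $\ell(y) \in H^0$. The conditions on $y$ itself are immediate from $z \in \ell^2(|k|, \Gamma_{Per^\pm})$. For $\ell(y)$ one uses the distributional identity $\ell(y) = -u' - Qy'$: the hypothesis $\mathcal{L}_\pm z \in \ell^2$ together with the formal calculation identifies $\mathcal{F}^{-1}(\mathcal{L}_\pm z)$ with $\ell(y)$, giving membership in $H^0$, after which $u' = -\ell(y) - Qy' \in L^1_{loc}$ since $Q, y' \in L^2$. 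The delicate point is the boundary condition on $u$: because $V(k) = ikq(k)$ is in general not $\ell^2$ (it is so only when $Q \in H^1$), one cannot manipulate the convolution $\sum V(k-m) z_m$ termwise, and one has to invoke the Savchuk--Shkalikov quasi-derivative framework recalled in Section 2 to justify that the point values $u(0)$ and $u(\pi)$ are well-defined and that $u(\pi) = \pm u(0)$ is forced by the parity class $\Gamma_{Per^\pm}$ together with $z \in D(\mathcal{L}_\pm)$. Combining these verifications with the unitarity of $\mathcal{F}$ then yields (\ref{0033}).
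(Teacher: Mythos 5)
The paper itself offers no proof of this Proposition: it is imported verbatim as Theorem~11 of \cite{DM16}, so your attempt can only be judged against the standard argument that reference supplies. Your skeleton (formal Fourier computation, forward inclusion with intertwining, reverse inclusion) is the right one, and the forward half is essentially sound. One small correction: the worry that ``$V(k)=ikq(k)$ is not $\ell^2$, so one cannot manipulate the convolution termwise'' is misplaced. For $z\in\ell^2(|k|,\Gamma_{Per^\pm})$ the sum $\sum_m |V(k-m)||z_m|\le \|q\|\,\bigl(|k|\,\|z\|+\|(mz_m)\|\bigr)$ converges absolutely by Cauchy--Schwarz, so the coefficient identity $\widehat{\ell(y)}(k)=h_k(z)$ is unproblematic; the delicacy lies elsewhere.

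The genuine gap is exactly at the point you flag and then wave away: the boundary condition $u(\pi)=\pm u(0)$ in the reverse inclusion. ``Invoke the Savchuk--Shkalikov framework recalled in Section~2'' is not an argument --- Section~2 contains only the definitions, and what you need to invoke is precisely the statement to be proved. The actual mechanism is this: the numbers $h_k(z)$ are the Fourier coefficients of $-y''+vy$ computed as a $\pi$-(anti)periodic \emph{distribution} on $\mathbb{R}$, and for the (anti)periodic extension of $y$ one has $-y''+vy=-u_{per}'-Qy'+Cy$, where $u_{per}'$ is the distributional derivative of the (anti)periodically extended $u=y'-Qy$. The hypothesis $\mathcal{L}_\pm z\in\ell^2$ therefore forces $u_{per}'\in L^1_{loc}(\mathbb{R})$; this single conclusion simultaneously gives $u\in W^1_1([0,\pi])$ \emph{and} excludes a jump of $u$ across the period points $k\pi$ (a jump would contribute delta functions to $u_{per}'$), which is exactly $u(\pi)=\pm u(0)$. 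Conversely, in the forward direction the same identity is what lets you pass from $\ell(y)\in H^0$ on $[0,\pi]$ plus the boundary conditions to the periodic-distribution computation (equivalently, it is where the boundary terms in the integration by parts against $u_k$ cancel). Without articulating this identification of the interval operator with the periodic-distribution operator, the reverse inclusion --- and hence the equality of domains --- is not established.
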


 Next we study the Hill--Schr\"odinger operator $L_{Dir} (v),
\; v=C + Q^\prime, $ generated by the differential expression
$\ell_Q (y) $ when considered with
 Dirichlet boundary conditions $ Dir: \; y(0) =
y(\pi) = 0.$ We set $$L_{Dir} (v) y= \ell_Q (y), \qquad y \in
D(L_{Dir} (v)), $$ where $$ D(L_{Dir} ) = \left \{ y \in H^1 : \;
y^\prime - Qy \in W^1_1 ([0,\pi]), \; y(0) = y(\pi) = 0, \; \ell_Q
(y) \in H^0 \right \}. $$

\begin{Proposition}
\label{prop002} (Theorem 13 in \cite{DM16}) Suppose $ v\in
H^{-1}_{loc} (\mathbb{R})$ is $\pi$--periodic. Then:

(a) the domain $D(L_{Dir} (v)) $ is dense in $H^0; $

(b) the operator $L_{Dir} (v)$ is closed and its conjugate
operator is $$ \left (L_{Dir} (v) \right )^*  = L_{Dir}
(\overline{v}),$$ so, in particular,  if $v$ is real, then the
operator $L_{Dir} (v)$ is self--adjoint;

(c) the spectrum $Sp (L_{Dir} (v))$ of the operator $L_{Dir} (v) $
is discrete, and  $$ Sp (L_{Dir} (v)) = \{\lambda \in \mathbb{C}
\; : \;\; y_2 (\pi, \lambda) = 0 \}. $$
\end{Proposition}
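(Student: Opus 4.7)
The plan is to prove all three assertions simultaneously by constructing the resolvent $(L_{Dir}(v) - \lambda)^{-1}$ explicitly. By the Existence--Uniqueness Theorem applied to the system (\ref{00.1}), for each $\lambda \in \mathbb{C}$ one obtains fundamental solutions $c(\cdot,\lambda), s(\cdot,\lambda)$ of $\ell_Q(y) = \lambda y$ normalized by $(c(0), u_c(0)) = (1,0)$ and $(s(0), u_s(0)) = (0,1)$, where $u_\varphi = \varphi' - Q\varphi$, with constant Wronskian $c\, u_s - s\, u_c \equiv 1$; note that $s(\cdot,\lambda)$ coincides with the $y_2$ of part (c). Variation of parameters applied to the inhomogeneous version of (\ref{00.1}) yields, for every $f \in H^0$, the candidate
\begin{equation*}
y(x) = s(x,\lambda)\int_0^x c(t,\lambda)f(t)\,dt - c(x,\lambda)\int_0^x s(t,\lambda)f(t)\,dt + \alpha\, s(x,\lambda),
\end{equation*}
solving $(\ell_Q - \lambda)y = f$ with $y(0) = 0$, and the scalar $\alpha$ is uniquely fixed by $y(\pi)=0$ provided $s(\pi,\lambda) \neq 0$. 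The resulting operator $R_\lambda : H^0 \to H^0$ is integral with bounded Green kernel, hence compact (in fact Hilbert--Schmidt); its range is exactly $D(L_{Dir}(v))$, and $(L_{Dir}(v) - \lambda) R_\lambda = I$ on $H^0$, while $R_\lambda (L_{Dir}(v) - \lambda) = I$ on $D(L_{Dir}(v))$.

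Assertion (c) falls out at once: the entire function $\lambda \mapsto s(\pi,\lambda)$ is not identically zero (standard asymptotics along the negative real axis give $|s(\pi,\lambda)| \to \infty$), so its zero set $\Lambda$ is discrete; for $\lambda \notin \Lambda$ the compact operator $R_\lambda$ is a bounded inverse of $L_{Dir}(v) - \lambda$, while each $\lambda \in \Lambda$ is an eigenvalue with eigenfunction $s(\cdot,\lambda) \in D(L_{Dir}(v))$. Closedness of $L_{Dir}(v)$ follows immediately from the existence of a bounded resolvent at a single point.

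For the adjoint I would establish the integration--by--parts identity
\begin{equation*}
\int_0^\pi \ell_Q(y)\,\overline{z}\,dx - \int_0^\pi y\,\overline{\ell_{\overline{Q}}(z)}\,dx = \Bigl[-(y'-Qy)\overline{z} + y\,\overline{(z' - \overline{Q}\,z)}\Bigr]_0^\pi,
\end{equation*}
valid whenever the quasi--derivatives on both sides are absolutely continuous; it is a direct consequence of the product rule, the $Q$--contributions cancelling in the middle. For $y \in D(L_{Dir}(v))$ and $z \in D(L_{Dir}(\overline{v}))$ the boundary term vanishes and we obtain $\langle L_{Dir}(v) y, z\rangle = \langle y, L_{Dir}(\overline{v}) z\rangle$. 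Applying the same resolvent construction to $\overline{v}$ and matching the kernels, one verifies $R_\lambda^* = \tilde R_{\overline{\lambda}}$, where $\tilde R_\mu = (L_{Dir}(\overline{v}) - \mu)^{-1}$; inverting gives $(L_{Dir}(v))^* = L_{Dir}(\overline{v})$, which is (b). Part (a) is now immediate: $D(L_{Dir}(v)) = R_\lambda(H^0)$ is dense in $H^0$, since its orthogonal complement equals $\ker R_\lambda^* = \ker \tilde R_{\overline\lambda} = \{0\}$.

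The main technical obstacle I foresee is verifying that the candidate $y = R_\lambda f$ really lies in $D(L_{Dir}(v))$, i.e.\ that $u = y' - Qy$ is absolutely continuous with $u' \in L^1$ and that $\ell_Q(y) \in H^0$. The cleanest remedy is to avoid the second--order distributional equation entirely and read $(y,u)$ off as an absolutely continuous solution of the first--order system (\ref{00.1}) with forcing by $f$: since $Q, Q^2 \in L^1_{loc}$ the Carath\'eodory theory applies, so $u \in W^1_1$ by construction and $\ell_Q(y)$, expressible through $u'$, $Qu$ and $(C-Q^2)y$, lies in $H^0$.
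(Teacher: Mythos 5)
The paper does not actually prove Proposition~\ref{prop002}: it is imported verbatim from Theorem~13 of \cite{DM16}, and your argument reconstructs the standard quasi--derivative/Green's--function proof underlying that reference (the $y_2(\pi,\lambda)$ in part (c) is exactly your $s(\pi,\lambda)$), so in substance you are following the same route rather than a different one, and the proof is essentially correct. The one step you assert rather than establish is that $\lambda\mapsto s(\pi,\lambda)$ is entire and not identically zero: for $v\in H^{-1}$ both facts do hold, but the first requires running the Picard iteration for the first--order system (\ref{00.1}) whose coefficient matrix is an $L^1$--valued polynomial in $\lambda$, and the second requires the asymptotics $s(\pi,\lambda)=\frac{\sin \pi\sqrt{\lambda}}{\sqrt{\lambda}}\,(1+o(1))$, which for singular potentials is a nontrivial (though known) estimate from \cite{SS03,HM01} and should be cited or proved rather than dismissed as ``standard.'' Everything else --- reading $(y,u)=R_\lambda f$ off the forced Carath\'eodory system to verify membership in $D(L_{Dir}(v))$, the Lagrange identity with quasi--derivatives, and deducing (a) from $\ker R_\lambda^{*}=\{0\}$ --- is sound, up to an immaterial overall sign in your variation--of--parameters formula, which as written solves $(\lambda-\ell_Q)y=f$ rather than $(\ell_Q-\lambda)y=f$.
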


Let
\begin{equation}
\label{0035} Q(x) = \sum_{k \in \mathbb{N}} \tilde{q}(k) \sqrt{2}
\sin kx
\end{equation}
be the sine Fourier series of $Q.$ We set
\begin{equation}
\label{0036} \tilde{V}(k) = k \tilde{q}(k), \qquad k \in
\mathbb{N}.
\end{equation}

Let $ \mathcal{F} : H^0 \to \ell^2 (\mathbb{N}) $ be the Fourier
isomorphisms that maps a function $f \in H^0 $ to the sequence
$(f_k)_{k\in\mathbb{N}} $ of its Fourier coefficients $ f_k =
(f,\sqrt{2} \sin kx),$ and let $\mathcal{F}^{-1} $ be the inverse
Fourier isomorphism.

We set, for each $z = (z_k) \in  \ell^2 (\mathbb{N}), $ $$ \quad
h_k (z) = k^2 z_k + \frac{1}{\sqrt{2}} \sum_{m \in \mathbb{N}}
\left ( \tilde{V}(|k-m|) -\tilde{V} (k+m) \right )
 z_m +C z_k,
$$ and consider the operator $\mathcal{L}_{d} $
  defined by
$$\mathcal{L}_{d} (z) = \left ( h_k (z)  \right )_{k \in
\mathbb{N}} $$ in the domain $$ D(\mathcal{L}_{d} ) = \left \{ z
\in \ell^2 (|k|, \mathbb{N}): \; \;\mathcal{L}_{d} (z) \in \ell^2
(\mathbb{N}) \right \},$$ where $$ \; \ell^2 (|k|,\mathbb{N})=
\left \{z=(z_k)_{k \in \mathbb{N}}: \;\; \sum_{k} |k|^2 |z_k|^2 <
\infty \right \}.$$

\begin{Proposition}
\label{prop003} (Theorem 16 in \cite{DM16}) In the above
notations, we have
\begin{equation}
\label{0133} D(L_{Dir})=\mathcal{F}^{-1} \left ( D(\mathcal{L}_{d}
) \right ),  \qquad
 L_{Dir} = \mathcal{F}^{-1} \circ
 \mathcal{L}_{d} \circ \mathcal{F}.
\end{equation}
\end{Proposition}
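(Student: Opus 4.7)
The plan is to mirror the strategy of Proposition~\ref{prop001} for the periodic case, adapted to the sine basis. Two things must be verified: (a) for each $y \in D(L_{Dir})$, the sine Fourier coefficient $(\ell_Q(y), u_k)$ equals $h_k(z)$ with $z = \mathcal{F}y$; and (b) the domain $D(L_{Dir})$ maps bijectively onto $D(\mathcal{L}_d)$ under $\mathcal{F}$.

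\textbf{Step 1 (matrix entries via quasi-derivatives).} To avoid differentiating the singular object $Q$, I would work with the quasi-derivative $u = y' - Qy$, which lies in $W^1_1$ for $y \in D(L_{Dir})$. The identity $\ell_Q(y) = -u' - Qu + (C-Q^2)y$ then holds as an $L^2$ function; pairing with $u_k$ and integrating $u'$ by parts (with no boundary contribution, since $u_k(0)=u_k(\pi)=0$), the $(Q^2 y)$ terms coming from $(u, u_k')$ and $-(Qu, u_k)$ cancel against the $+(Q^2 y,u_k)$ term, leaving
\[
(\ell_Q(y), u_k) = (y', u_k') + Cy_k - \int_0^\pi Q\,\bigl(y u_k\bigr)'\,dx .
\]
Using $y(0)=y(\pi)=0$ one gets $(y', u_k') = k^2 y_k$. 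Substituting the sine expansion $y = \sum_m y_m u_m$, the product identity $u_m u_k = \cos(|k-m|x) - \cos((k+m)x)$, and the sine series (\ref{0035}) of $Q$, the remaining integral evaluates termwise to $-\tfrac{1}{\sqrt 2}\sum_m \bigl(\tilde V(|k-m|) - \tilde V(k+m)\bigr)\,y_m$ with $\tilde V(\ell) = \ell\tilde q(\ell)$ from (\ref{0036}). This reproduces $h_k(z)$.

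\textbf{Step 2 (domain identification).} If $y \in D(L_{Dir})$, then $y \in H^1_{Dir}$ gives $z = \mathcal{F}y \in \ell^2(|k|,\mathbb{N})$, and Step~1 combined with $\ell_Q(y)\in H^0$ forces $\mathcal{L}_d z \in \ell^2(\mathbb{N})$. Conversely, given $z \in D(\mathcal{L}_d)$, set $y = \mathcal{F}^{-1}z \in H^1_{Dir}$. Since $Q \in L^2$ and $y \in H^1 \hookrightarrow L^\infty$, the product $Qy$ lies in $L^2$ and hence $u = y'-Qy \in L^2 \subset L^1$. Defining $\ell_Q(y) := \mathcal{F}^{-1}(\mathcal{L}_d z) \in H^0$, the identity $u' = -\ell_Q(y) - Qu + (C-Q^2)y$ has its right-hand side in $L^1$, so $u \in W^1_1$ and $y \in D(L_{Dir})$.

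\textbf{Main obstacle.} The principal technical difficulty is that $Q' \in H^{-1}$ is a genuine distribution, so termwise integration of the sine series of $Q$ against an arbitrary $L^2$ function is not permissible. The quasi-derivative rewriting circumvents this by transferring every derivative off $Q$ and onto either $y$ (via $u$) or the smooth test function $u_k$; the latter vanishes at both endpoints, killing the boundary term in the integration by parts. A secondary, routine point is the $\ell^2$ convergence of the convolution-type series defining $h_k(z)$, which is subsumed in the assumption $\mathcal{L}_d z \in \ell^2$ and is handled exactly as in the periodic case of Proposition~\ref{prop001}.
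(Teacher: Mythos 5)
The paper states Proposition~\ref{prop003} by citation (Theorem 16 in \cite{DM16}) without reproducing a proof, so there is no in-paper argument to compare against. Your reconstruction via quasi-derivatives and the sine basis is the natural route, and Step~1 is essentially correct: integrating $-(u',u_k)$ by parts produces no boundary term since $u_k$ vanishes at the endpoints, the $Q^2y$ contributions cancel, and the Leibniz combination $-Qy\,u_k'-Qy'\,u_k = -Q\,(yu_k)'$ paired with the sine series of $Q$ via $2\sin mx\sin kx = \cos(|k-m|x)-\cos((k+m)x)$ gives exactly $h_k(z)$. (One small slip in your cancellation account: the $Q^2y$ term arises only from $-(Qu,u_k)$, not from $(u,u_k')$, which contributes just $(y',u_k')-(Qy,u_k')$.)

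The converse inclusion in Step~2 has a circularity as written. You set $\ell_Q(y):=\mathcal{F}^{-1}(\mathcal{L}_d z)$ and then invoke the identity $u'=-\ell_Q(y)-Qu+(C-Q^2)y$ to conclude $u\in W^1_1$; but $\ell_Q(y)=-u'-Qu+(C-Q^2)y$ is the \emph{definition} of the differential expression, and you cannot both redefine $\ell_Q(y)$ and use that identity without first showing the two objects agree. The fix is to note that the Step~1 computation is really a weak-form identity valid for \emph{every} $y\in H^1_{Dir}$ (it requires only $y\in H^1$ and $Q\in L^2$, no a priori regularity of $u$): with $w:=\mathcal{F}^{-1}(\mathcal{L}_d z)\in L^2$ and any $\varphi$ smooth with $\varphi(0)=\varphi(\pi)=0$,
\[
\int_0^\pi u\,\varphi'\,dx \;=\; \int_0^\pi\bigl(w - Cy + Qy'\bigr)\,\varphi\,dx,
\]
whose right-hand integrand lies in $L^1$ since $Q,y'\in L^2$. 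Hence $u$ has distributional derivative $u' = -w + Cy - Qy' = -w-Qu+(C-Q^2)y\in L^1$, so $u\in W^1_1$, $\ell_Q(y)=w\in H^0$, and $y\in D(L_{Dir})$. With this restated, the argument is complete.
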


Let $L^0$ be the free operator, and let $V$ denotes the operator of
multiplication by $v.$ One of the technical difficulties that arises
for singular potentials is connected with the standard perturbation
type formulae for the resolvent $R_\lambda = (\lambda - L^0
-V)^{-1}.$ In the case where $v \in L^2 ([0,\pi]) $ one can
represent the resolvent in the form $$ R_\lambda = (1-R_\lambda^0
V)^{-1} R_\lambda^0 = \sum_{k=0}^\infty (R_\lambda^0 V)^k
R_\lambda^0 \; \; \mbox{or} \;  R_\lambda = R_\lambda^0 (1-V
R_\lambda^0 )^{-1} = \sum_{k=0}^\infty R_\lambda^0 (V
R_\lambda^0)^k, $$ where $R_\lambda^0 = (1-L^0)^{-1}.$
 The simplest
conditions that guarantee the convergence of these series are $$
\|R_\lambda^0 V\| <1, \quad \mbox{respectively,} \quad \|V
R_\lambda^0\| < 1. $$ Each of these conditions can be easily
verified for large enough $n$ if $ Re \, \lambda \in [n-1, n+1]$
and $ |\lambda - n^2 | \geq C(\|v\|_{L^2}), $ which leads to a
series of results on the spectra, zones of instability and
spectral decompositions.

The situation is more complicated if $v$ is a singular potential.
Then, in general, there are no good estimates for the norms of
 $      R^0_\lambda V  $ and
$ V R^0_\lambda. $  However, one can write $R_\lambda $ in the
form $$ R_\lambda = R^0_\lambda + R^0_\lambda V R^0_\lambda +
R^0_\lambda V R^0_\lambda V R^0_\lambda + \cdots = K^2_\lambda +
\sum_{m=1}^\infty K_\lambda(K_\lambda V K_\lambda)^m K_\lambda, $$
provided $(K_\lambda)^2 = R^0_\lambda.$ We define an operator $K=
K_\lambda $ with this property  by its matrix representation
$$K_{jm} = (\lambda - j^2)^{-1/2} \delta_{jm}, \quad j,m \in
\Gamma_{bc}, $$ where $z^{1/2} = \sqrt{r} e^{i\varphi/2} \quad
\mbox{if} \quad z= re^{i\varphi}, \;\; - \pi \leq \varphi < \pi. $
Then $R_\lambda $ is well--defined if $$  \|K_\lambda V K_\lambda:
\; \ell^2 (\Gamma_{bc})  \to \ell^2 (\Gamma_{bc})\| < 1. $$

By proving good estimates from above of the Hilbert--Schmidt norms
of the operators $K_\lambda V K_\lambda $ for $bc = Per^\pm $ or
$Dir$ we get the following statements.

\begin{Proposition}
\label{prop004} (Theorem 21 in \cite{DM16}) For each periodic
potential $v \in H_{loc}^{-1} (\mathbb{R}), $ the spectrum of the
operator $L_{bc} (v) $  with $bc = Per^\pm, \, Dir $ is discrete.
Moreover, if $bc = Per^\pm $ then, respectively, for each large
enough even number
 $N^+ > 0 $    or odd number $N^-$, we have
$$ Sp \left ( L_{Per^\pm} \right ) \subset R_{N^\pm} \cup
\bigcup_{n \in N^\pm  + 2\mathbb{N}} D_n, $$ where the rectangle
$R_N =\{\lambda=x+iy:  -N < x < N^2 +N, \; |y| <N \}$ contains,
respectively, $2N^+ +1$  or $  2N^-$ eigenvalues, while each disc
$ \;D_n = \{\lambda:\; |\lambda - n^2 | < n/4 \}$ contains two
eigenvalues ( each eigenvalue is counted with its algebraic
multiplicity).

If $bc = Dir$ then, for each large enough
 $N \in \mathbb{N}, $  we have
$$ Sp \left ( L_{Dir} \right ) \subset R_{N} \cup \bigcup_{n
=N+1}^\infty D_n $$ and $$ \# \left ( Sp \left ( L_{Dir} \right )
\cap R_N \right ) = N+1, \qquad \# \left ( Sp \left ( L_{Dir})
\right ) \cap D_n \right ) = 1, \;\; n > N. $$
\end{Proposition}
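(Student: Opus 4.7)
The plan is to execute the resolvent scheme sketched immediately before the proposition.

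\textbf{Step 1 (Hilbert--Schmidt estimates).} In each of the Fourier models of Propositions~\ref{prop001}--\ref{prop003}, the matrix entries of $K_\lambda V K_\lambda$ take the form $V(k-m)\bigl[(\lambda-k^{2})(\lambda-m^{2})\bigr]^{-1/2}$, with $V(j)=ij\,q(j)$ in the periodic/antiperiodic cases and an analogous expression with $\tilde V$ in the Dirichlet case. I would bound
\[
\|K_\lambda V K_\lambda\|_{HS}^{2}\;\le\;\sum_{k,m\in\Gamma_{bc}}\frac{(k-m)^{2}\,|q(k-m)|^{2}}{|\lambda-k^{2}|\,|\lambda-m^{2}|}
\]
and show that the right--hand side tends to zero as $n\to\infty$, uniformly for $\lambda\in\partial D_{n}$ (and likewise on the boundary of the rectangle). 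The estimate splits the sum by the size of $|k|,|m|$ relative to $n$, couples $(k-m)^{2}\le 2(k^{2}+m^{2})$ with the elementary bound $\sum_{k}|\lambda-k^{2}|^{-1}\lesssim (\log n)/n$ for $\lambda\in\partial D_{n}$, and absorbs the remaining convolution factor into $\sum_{j}|q(j)|^{2}=\|Q\|_{L^{2}}^{2}<\infty$.

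\textbf{Step 2 (Neumann series, compactness, localization).} Once $\|K_\lambda V K_\lambda\|<1$, the identity
\[
R_{\lambda}=K_{\lambda}^{2}+\sum_{m\ge 1}K_{\lambda}(K_{\lambda}VK_{\lambda})^{m}K_{\lambda}
\]
defines a bounded operator, so $\lambda$ lies in the resolvent set. Because $K_{\lambda}^{2}=R_{\lambda}^{0}$ is Hilbert--Schmidt, so is $R_{\lambda}$, and compactness of the resolvent at a single regular point forces $\operatorname{Sp}(L_{bc})$ to be discrete. Applying Step~1 at every $\lambda\in\partial D_{n}$ (for $n$ beyond $N^{\pm}$ or $N$) and on $\partial R_{N}$ gives the announced inclusion $\operatorname{Sp}(L_{bc})\subset R_{N}\cup\bigcup_{n}D_{n}$.

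\textbf{Step 3 (Counting by homotopy).} For $t\in[0,1]$ form $L_{bc}(tv)$ and the Riesz projections
\[
P_{n}(t)=-\frac{1}{2\pi i}\oint_{\partial D_{n}}\bigl(\lambda-L_{bc}(tv)\bigr)^{-1}d\lambda,
\]
together with the analogous projection $P_{R_{N}}(t)$ built from $\partial R_{N}$. Since the bound in Step~1 is linear in the potential, it holds uniformly for $t\in[0,1]$, which makes $t\mapsto P_{n}(t)$ continuous in operator norm. Consequently $\operatorname{rank}P_{n}(t)$ is a constant integer in $t$, equal to its value at $t=0$. Reading the ranks off the free eigenbases $\{e^{ikx}\}_{k\in\Gamma_{Per^{\pm}}}$ and $\{\sqrt{2}\sin kx\}_{k\in\mathbb{N}}$ yields two eigenvalues in each $D_{n}$ for $Per^{\pm}$ (from the indices $k=\pm n$) and one for $Dir$ (from $k=n$); applying the same homotopy to $P_{R_{N}}(t)$ gives the stated cardinalities in the rectangle.

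\textbf{Main obstacle.} The technical heart is Step~1. Relative to the classical case $v\in L^{2}$, the singular setting produces an extra factor $(k-m)$ coming from $v=C+Q'$, so the off--diagonal matrix entries carry the large weight $|k-m|\,|q(k-m)|\,|\lambda-k^{2}|^{-1/2}|\lambda-m^{2}|^{-1/2}$ and are no longer manifestly square--summable. Producing a smallness estimate uniform on $\partial D_{n}$ that beats $1$ requires a careful near/far dyadic decomposition of the double sum, played against the $\ell^{2}$--decay of $q$; once this is done, the remainder of the argument (compactness, discreteness, homotopy, rank count) is purely formal.
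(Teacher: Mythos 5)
Your overall strategy matches the paper's (which itself defers the proof to Theorem~21 of \cite{DM16}): bound $\|K_\lambda V K_\lambda\|_{HS}$ below $1$ on the relevant contours, expand the resolvent in a Neumann series built from $K_\lambda = \sqrt{R^0_\lambda}$ rather than from $R^0_\lambda V$, and count eigenvalues by the constancy of the rank of the Riesz projections along the homotopy $t\mapsto L_{bc}(tv)$. Steps~2 and~3 are correct and are exactly what the paper intends.

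The gap is in the concrete mechanism you propose for Step~1. Applying $(k-m)^2 \le 2(k^2+m^2)$ and then pairing one factor $|\lambda-k^2|^{-1}$ with the summable $|q(j-k)|^2$ leaves, after summing over $k$, a quantity of the shape $\sum_{j\in n+2\mathbb{Z}} j^2/|\lambda-j^2|$, and since $j^2/|\lambda-j^2|\to 1$ as $|j|\to\infty$, that series diverges; the ``elementary bound'' $\sum_k|\lambda-k^2|^{-1}\lesssim(\log n)/n$ does not help because the extra $k^2$ has destroyed the convergence. What actually makes the estimate work is the cancellation $|j-k|\le|n-j|+|n-k|$ played against the exact factorization $|n^2-j^2|=|n-j|\,|n+j|$: one writes
\[
\frac{(j-k)^2}{|n^2-j^2||n^2-k^2|}\;\le\;\frac{2|n-j|}{|n+j|\,|n^2-k^2|}+\frac{2|n-k|}{|n^2-j^2|\,|n+k|},
\]
and then controls sums of $\frac{|n-j|}{|n+j|}|q(j-k)|^2$ using both the $\ell^2$-tail $\mathcal{E}_m(q)$ and a near/far split with respect to $\pm n$. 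This is exactly the content of the paper's Lemma~\ref{lem2.1} and of Lemmas~\ref{lem3.1}--\ref{lem3.3} (and, in the Appendix, the identity $\tfrac{i+k}{(n-i)(n+k)}=\tfrac{1}{n-i}-\tfrac{1}{n+k}$), leading to the bound $\|K_\lambda V K_\lambda\|_{HS}\le C\bigl(\mathcal{E}_{\sqrt{n}}(q)+\|q\|/\sqrt{n}\bigr)$, which decays only like $n^{-1/2}$, not like $(\log n)/n$. You correctly flag Step~1 as the technical heart, but the decomposition you sketch would not yield a finite bound, let alone a small one.
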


This localization theorem, i.e., Proposition \ref{prop004}, says
that for each $n>N^*=\max \{N^+, N^-, N \}$ the disc $D_n$
contains exactly one Dirichlet eigenvalue $\mu_n $ and two
periodic (if $n$ is even) or antiperiodic (if $n$ is odd)
eigenvalues $\lambda_n^+$ and $ \lambda_n^-,$  counted with their
algebraic multiplicity, where either $Re \, \lambda_n^+ >Re \,
\lambda_n^- ,$  or $Re \, \lambda_n^+ =Re \, \lambda_n^- $ and $Im
\, \lambda_n^+ \geq Im \, \lambda_n^- .$

 After this observation we define {\em spectral gaps}
 \begin{equation}
 \label{p01}
 \gamma_n =|\lambda_n^+  -\lambda_n^-|, \quad n \geq N^*,
 \end{equation}
 and {\em deviations}
 \begin{equation}
 \label{p02}
 \delta_n = \left | \gamma_n -
 (\lambda_n^+  + \lambda_n^-)/2 \right |, \quad n \geq N^*.
\end{equation}
We characterize ``the rate of decay`` of these sequences by saying
that they are elements of an appropriate weight sequence space $$
\ell^2 (\Omega) = \left \{(x_k)_{k \in \mathbb{N}} : \; \sum
|x_k|^2 (\Omega(k))^2 < \infty  \right \}. $$

The condition (\ref{012}) means that $\ell^2 (\Omega) \subset
\ell^2 (\{n\}).$ Now, with the restriction (\ref{012}) -- which is
weaker than (\ref{022}) -- we permit the weights to decrease to
zero.  For example, the weights  $\Omega_\beta (n) = n^{-\beta},
\; \beta \in (0,1],$ satisfy (\ref{012}). Moreover, if $\beta
>1/2, $ then the sequence $x_k = k^\alpha,  \; 0 < \alpha < \beta
- 1/2 $ goes to $\infty $ as $k \to \infty$ but $(x_k ) \in \ell^2
(\Omega_\beta).$

By the same token, the ``smoothness`` of a potential $v$ with
Fourier coefficients $V(2k) $ given by (\ref{0016}) is
characterized by saying that $v$ belongs to an appropriate weight
space $H(\Omega),$ i.e., $$\sum |V(2k)|^2 (\Omega (k))^2 <
\infty.$$ The inclusion $H(\Omega) \subset L^2 $ is equivalent to
(\ref{022}) while (\ref{012}) is equivalent to the inclusion
$H(\Omega) \subset H^{-1}.$

Let us recall that a sequence of positive numbers, or a weight $A
= (A(n))_{n\in \mathbb{Z}},$ is called {\em sub--multiplicative}
if
\begin{equation}
\label{p03} A (m+n)\leq  A (m) A (n), \quad m,n \in \mathbb{Z}.
\end{equation}
In this paper we often consider weights of the form
\begin{equation}
\label{p04} a (n) = \frac{A (n)}{n} \quad \text{for} \; n \neq 0,
\quad a (0) =1,
\end{equation}
where $A (n)$ is sub--multiplicative, even and monotone increasing
for $n \geq 1. $

Now we are ready to formulate the main result of the present paper
(this is Theorem~\ref{thm33.1} in Section 7).

 {\bf Main Theorem.}  {\em Let $L = L^0 + v(x) $ be the
Hill--Schr\"odinger operator with a $\pi$--periodic potential $v \in
H^{-1}_{loc} (\mathbb{R}).$}

{\em Then, for large enough $n > N(v)$  the operator $L$ has, in a
disc of center $n^2 $ and radius $r_n = n/4, $ exactly two (counted
with their algebraic multiplicity) periodic (for even $n$), or
antiperiodic (for odd $n$)   eigenvalues $\lambda^+_n $ and $
\lambda^-_n, $ and one Dirichlet eigenvalue $\mu_n. $}

{\em Let $$ \Delta_n = |\lambda^+_n - \lambda^-_n | + |\lambda^+_n -
\mu_n|, \quad n > N (v), $$ and let $\Omega = (\Omega (m))_{m\in
\mathbb{Z}}, $ $\; \Omega (m)= \omega (m)/m, \; m\neq 0,  $  where
$\omega$ is a sub-multiplicative weight. Then we have $$ v \in
H(\Omega )  \; \Rightarrow \; (\Delta_n)  \in \ell^2 (\Omega ). $$
Conversely, in the above notations, if  $\omega = (\omega (n))_{n\in
\mathbb{Z}} $ is a sub--multiplicative weight such that $$\frac{\log
\omega (n)}{n} \searrow 0, $$ then $$(\Delta_n) \in \ell^2 (\Omega)
\; \Rightarrow \; v \in H(\Omega ).$$}

{\em If $\omega $ is of exponential type, i.e., $\lim_{n\to \infty}
\frac{\log \omega (n)}{n} >0, $ then $$ (\Delta_n) \in \ell^2
(\Omega) \; \Rightarrow   \; \exists \varepsilon >0: \; v \in
H(e^{\varepsilon |n|} ). $$}

Important tool in the proof of this theorem is the following
statement. Let $E_n$  be the Riesz invariant subspace corresponding
to the (periodic for even $n$, or antiperiodic for odd $n$)
eigenvalues of $L_{Per^\pm}$ lying in the disc $ \{z: \; |z - n^2| <n
\},   $ and let $P_n$ be the corresponding Riesz projection, i.e., $$
P_n = \frac{1}{2\pi i} \int_{C_n} (\lambda - L)^{-1} d\lambda, \qquad
C_n := \{\lambda: \; |\lambda - n^2| = n \}. $$ We denote by $P_n^0$
the Riesz projection that corresponds to the free operator.

{\em In the above notations and under the assumptions of
Proposition}~\ref{prop004}
\begin{equation}
\label{0021} \|P_n - P_n^0 \|_{L^2 \to L^\infty} \to 0 \quad
\mbox{as} \;\; n \to \infty.
\end{equation}

This statement and its stronger version are proven in Appendix;
see Proposition~\ref{prop91} and Theorem~\ref{thm91} there.

\section{Basic equation}

By the localization statement given in Proposition \ref{prop004},
the spectrum of the operator $L_{Per^\pm}$ is discrete, and there
exists $N_* $ such that for each $n \geq N_* $ the disc $D_n =
\{\lambda: |\lambda - n^2| < n/4 \} $ contains exactly two
eigenvalues (counted with their algebraic multiplicity) of
$L_{Per^+}$ (for $n$ even), or
 $L_{Per^-}$ (for $n$ odd).

For each $n \in \mathbb{N},$ let $$ E^0 =E^0_n = Span \{e_{-n} =
e^{-inx},  \; e_n = e^{inx} \} $$ be the eigenspace of the free
operator $L^0 = - d^2/dx^2$ corresponding to its eigenvalue  $n^2
$ (subject to periodic boundary conditions for even $n,$ and
antiperiodic boundary conditions for odd $n$). We  denote by $P^0
=P^0_n $ the orthogonal projection on $E^0,  $  and set $Q^0 =
Q^0_n = 1-P^0_n.  $ Notice that
\begin{equation}
\label{2.1} D_n \cap Sp (Q^0 L^0 Q^0) = \emptyset.
\end{equation}

Consider the operator $\tilde{K}$ given by its matrix
representation
\begin{equation}
\label{2.2} \tilde{K}_{jk} = \begin{cases} \frac{1}{(\lambda -
k^2)^{1/2}} \delta_{jk} & \text{for} \quad k \neq \pm n,\\ 0 &
\text{for}    \quad k = \pm n,
\end{cases}
\end{equation}
where $z^{1/2} := \sqrt{r} e^{i\varphi/2} $ if $ z=re^{i\varphi},
\; -\pi \leq \varphi <\pi,$ and $j,k \in n + 2 \mathbb{Z}.$ One
can easily see that $ \tilde{K} $ acts from $L^2([0,\pi]) $  into
$H^1,$ and from $H^{-1}$ into $L^2([0,\pi]). $

We consider also the operator
\begin{equation}
\label{2.3} T = T(n;\lambda) = \tilde{K} V \tilde{K}.
\end{equation}
By the diagram $$ L^2([0,\pi]) \stackrel{\tilde{K}}{\to} H^1
\stackrel{V}{\to} H^{-1} \stackrel{\tilde{K}}{\to} L^2([0,\pi]) $$
the operator $T$ acts in $L^2([0,\pi]).$ In view of (\ref{0016}),
 (\ref{2.2}) and (\ref{2.3}), the matrix
representation of $T$ is
\begin{equation}
\label{2.4} T_{jk} = \begin{cases}\frac{V(j-k)} {(\lambda -
j^2)^{1/2}{(\lambda - k^2)^{1/2}}}= \frac{i(j-k)q(j-k)} {(\lambda
- j^2)^{1/2}{(\lambda - k^2)^{1/2}}} & \text{if} \quad j,k \neq
\pm n,\\ 0 & \text{otherwise}  \end{cases}
\end{equation}
Let us set
\begin{equation}
\label{2.5a} H_n = \{z: \; (n-1)^2 \leq Re\,z \leq (n+1)^2\}
\end{equation}
and
\begin{equation}
\label{2.5b} \mathcal{E}_{m} (q) = \left ( \sum_{|k|\geq m} |q(m)|^2
\right )^{1/2}.
\end{equation}

\begin{Lemma}
\label{lem2.1} In the above notations, we have
\begin{equation}
\label{2.5} \|T \|_{HS} \leq C \left (  \mathcal{E}_{\sqrt{n}} (q)
+ \|q\|/ \sqrt{n} \right ), \quad \lambda \in H_n,
\end{equation}
where $C$ is an absolute constant.
\end{Lemma}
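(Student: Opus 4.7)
The plan is to estimate $\|T\|_{HS}^2$ directly from the matrix entries in (\ref{2.4}) and to split the resulting double sum according to the size of the Fourier index $m = j - k$ relative to $\sqrt{n}$. Re-indexing by $m$, one obtains
$$
\|T\|_{HS}^2 \;=\; \sum_{m \in 2\mathbb{Z}\setminus\{0\}} m^2\,|q(m)|^2 \, S_m(\lambda),
\qquad
S_m(\lambda) \;:=\; \sum_{k}\frac{1}{|\lambda - k^2|\,|\lambda - (k+m)^2|},
$$
where the inner sum runs over those $k$ with $k,\,k+m \in (n + 2\mathbb{Z}) \setminus \{\pm n\}$. The whole argument hinges on the auxiliary estimate
$$
\Sigma(\lambda) \;:=\; \sum_{k \in n + 2\mathbb{Z},\, k \neq \pm n} \frac{1}{|\lambda - k^2|^2} \;\leq\; \frac{C}{n^{2}},
\qquad \lambda \in H_n,
$$
which I would establish from the standard lower bound $|\lambda - k^2| \geq c\,(||k|-n|+1)(|k|+n+1)$, valid for $k \in n + 2\mathbb{Z}\setminus\{\pm n\}$ and $\lambda \in H_n$. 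Applying the elementary pointwise inequality $\tfrac{1}{ab}\leq\tfrac{1}{2}(a^{-2}+b^{-2})$ to each term of $S_m$ then yields $S_m(\lambda) \leq \Sigma(\lambda) \leq C/n^2$ uniformly in $m$.

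For the \emph{low frequencies} $|m|\leq\sqrt n$ I use $m^2 \leq n$ together with $S_m \leq C/n^2$ to get
$$
\sum_{0<|m|\leq\sqrt n} m^2\,|q(m)|^2\,S_m(\lambda)
\;\leq\; \frac{Cn}{n^{2}}\sum_{|m|\leq\sqrt n}|q(m)|^2
\;\leq\; \frac{C\,\|q\|^2}{n},
$$
which is the $\|q\|/\sqrt n$ contribution in (\ref{2.5}). For the \emph{medium range} $\sqrt n < |m|\leq 4(n+1)$ the same crude bound gives $m^2 S_m \leq Cm^2/n^2 \leq C'$ (an absolute constant), and therefore
$$
\sum_{\sqrt n < |m|\leq 4(n+1)} m^2\,|q(m)|^2\,S_m(\lambda)
\;\leq\; C'\sum_{|m|>\sqrt n}|q(m)|^2
\;=\; C'\,\mathcal{E}_{\sqrt n}(q)^2.
$$

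The main obstacle is the tail $|m| > 4(n+1)$, where $m^2/n^2$ is unbounded and the Cauchy--Schwarz-type bound $S_m \leq \Sigma(\lambda)$ is too crude. Here one must exploit the fact that once $|m|$ exceeds $2n$, the shift $k\mapsto k+m$ cannot keep both $k$ and $k+m$ close to $\{\pm n\}$: either $|k|\leq|m|/2$, in which case $|k+m|\geq|m|/2$ and hence $|\lambda-(k+m)^2|\geq cm^2$, or else $|k|>|m|/2$ and then $|\lambda-k^2|\geq cm^2$. Pulling out this one large factor reduces $S_m$ to a single sum,
$$
S_m(\lambda) \;\leq\; \frac{C}{m^2}\sum_{k \in n+2\mathbb{Z},\, k\neq\pm n}\frac{1}{|\lambda - k^2|}
\;\leq\; \frac{C\,\log n}{n\,m^{2}},
$$
the last estimate coming from a partial-fraction computation $\sum 1/(|j|\,|j+n|) = O((\log n)/n)$. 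Since $(\log n)/n$ is bounded, $m^2 S_m \leq C$ uniformly on the tail, so this contribution is again $\leq C\,\mathcal{E}_{\sqrt n}(q)^2$. Adding the three pieces and taking square roots yields (\ref{2.5}). The delicate step is the tail: one must renounce Cauchy--Schwarz in favor of the geometric observation that large shifts force one denominator to be of size $m^2$.
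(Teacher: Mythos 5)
Your proof is correct, and all the intermediate estimates check out: the lower bound $|\lambda-k^2|\ge c\,(||k|-n|+1)(|k|+n+1)$ does hold for $k\in n+2\mathbb{Z}$, $k\neq\pm n$, $\lambda\in H_n$ (the key point being $||k|-n|\ge 2$, so the loss of $1$ in $|k|\mp(n\pm1)$ costs only a constant), which gives $\Sigma(\lambda)\le C/n^2$ and hence the uniform bound $S_m\le C/n^2$; the split at $|m|=\sqrt n$ is exactly what the shape of the right-hand side of (\ref{2.5}) dictates; and your treatment of the tail $|m|>4(n+1)$ — where Cauchy--Schwarz on the two denominators is too lossy and one must instead observe that a shift that large forces one of the two denominators to be of size $m^2$, leaving the single sum $\sum_{k\neq\pm n}|\lambda-k^2|^{-1}=O(n^{-1}\log n)$ (this is the estimate recorded as (\ref{p19}) in the Appendix) — is the one genuinely non-routine step, and it is handled correctly. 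A direct comparison with the paper is not quite possible here, because the paper does not prove Lemma \ref{lem2.1} in situ: after writing down (\ref{2.6}) it delegates everything to Lemma 19 of \cite{DM16}, to be repeated ``with a few simple changes.'' Your argument therefore supplies precisely the details the paper outsources, and it is consistent in structure with the analogous explicit computations the paper does carry out (Lemma \ref{lem3.1}, Lemma \ref{lem3.2a}, and the estimates (\ref{p26a}), (\ref{p34}) in the Appendix), all of which rest on the same dichotomy between indices within distance $\sqrt n$ of the resonance and the remainder.
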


\begin{proof}
In view of (\ref{2.4}),
\begin{equation}
\label{2.6} \|T\|_{HS}^2 = \sum_{j,k \neq \pm n} \frac{(j-k)^2
|q(j-k)|^2} {|\lambda - j^2| |\lambda - k^2|},
\end{equation}
so we have to estimate from above the sum in (\ref{2.6})  with $
\lambda \in H_n. $

The operator  $\tilde{K} $ is a modification of the operator $K$
defined by $$ K_{jk} = \frac{1}{(\lambda - k^2)^{1/2}}
\delta_{jk}, \quad j,k \in n + 2 \mathbb{Z}, \; \lambda \in
H_n\setminus \{n^2\}. $$ Moreover, for $\lambda \neq n^2, $ we
have $ \tilde{K} = Q^0 K Q^0.
$

Therefore (compare (\ref{2.6}) with Formula (128) in \cite{DM16}),
by repeating the proof of Lemma 19 in \cite{DM16}) with a few
simple changes there one can easily see that (\ref{2.5}) holds.

\end{proof}

\begin{Lemma}
\label{lem2.2} In the above notations, if $ n \in \mathbb{N} $ is
large enough and $\lambda \in D_n,$ then $\lambda $ is an
eigenvalue of the operator $L= L^0 + V $ (considered with periodic
boundary conditions if $n$ is even, or antiperiodic boundary
conditions if $n$ is odd) if and only if $\lambda$ is an
eigenvalue of the operator $P^0 L^0 P^0 + S, $ where
\begin{equation}
\label{2.8} S = S(\lambda;n) = P^0 V P^0 + P^0 V
\tilde{K}(1-T)^{-1} \tilde{K}VP^0 \;: \;\; E^0_n \to E^0_n.
\end{equation}
Moreover,
\begin{equation}
\label{2.9} Lf = \lambda f, \;\; f\neq 0 \; \; \Rightarrow (L^0
+S) f_1 = \lambda  f_1, \;\; f_1 = P^0 f \neq 0,
\end{equation}
\begin{equation}
\label{2.10} (L^0 +S) f_1 = \lambda f_1, \;\; f_1 \in E^0 \; \;
\Rightarrow Lf =\lambda f, \quad f =f_1 + \tilde{K}(1-T)^{-1}
\tilde{K} V f_1.
\end{equation}
\end{Lemma}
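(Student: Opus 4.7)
The plan is a Feshbach--Schur (Lyapunov--Schmidt) reduction relative to the decomposition $L^2 = E^0_n \oplus Q^0_n L^2.$ Write any candidate eigenfunction as $f = f_1 + f_2$ with $f_1 = P^0 f \in E^0_n$ and $f_2 = Q^0 f.$ Since $L^0$ commutes with $P^0$ and $Q^0,$ the equation $Lf = \lambda f$ is equivalent to the pair
\begin{equation*}
(\lambda - n^2) f_1 = P^0 V f_1 + P^0 V f_2, \qquad (\lambda - L^0) f_2 = Q^0 V f_1 + Q^0 V f_2,
\end{equation*}
the first by the fact that $L^0 = n^2 I$ on $E^0_n,$ the second by $Q^0 L^0 = L^0 Q^0.$ The key observation is that on $Q^0_n L^2$ the operator $\tilde K$ of (\ref{2.2}) acts as an injective multiplier and satisfies $\tilde K^2 = Q^0_n (\lambda - L^0)^{-1} Q^0_n,$ so $\tilde K$ is a natural ``square root'' of the reduced resolvent.

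The next step is to solve the second (reduced) equation for $f_2$ in terms of $f_1.$ The mapping properties indicated in the paper, $\tilde K : H^{-1} \to L^2$ and $V : H^1 \to H^{-1},$ make $\tilde K V f_1$ a legitimate element of $L^2$ whenever $f_1 \in E^0_n \subset H^1.$ Writing $f_2 = \tilde K g$ with $g \in Q^0_n L^2,$ applying $\tilde K$ to both sides of the $Q^0$-equation and using $\tilde K (\lambda - L^0) \tilde K = Q^0_n$ on $Q^0_n L^2$ converts it to
\begin{equation*}
(I - T) g = \tilde K V f_1, \qquad T = \tilde K V \tilde K.
\end{equation*}
By Lemma~\ref{lem2.1}, $\|T\|_{HS} \leq C(\mathcal E_{\sqrt n}(q) + \|q\|/\sqrt n) \to 0$ uniformly for $\lambda \in H_n \supset D_n,$ so for $n$ large enough $(I - T)$ is invertible by Neumann series and $g = (I-T)^{-1} \tilde K V f_1,$ whence $f_2 = \tilde K (I-T)^{-1} \tilde K V f_1.$ Substituting into the first equation yields $(\lambda - n^2) f_1 = S f_1,$ i.e. $(P^0 L^0 P^0 + S) f_1 = \lambda f_1,$ which is (\ref{2.9}). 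The converse implication (\ref{2.10}) runs backwards: given $f_1 \in E^0_n$ with $(L^0 + S) f_1 = \lambda f_1,$ define $f = f_1 + \tilde K(I-T)^{-1}\tilde K V f_1;$ the two projected equations then hold by direct verification using $(\lambda - L^0)\tilde K^2 = Q^0_n,$ and one checks that $f$ lies in the domain of $L_{Per^\pm}$ as described in Section~2.

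The one subtle point is showing that the two spectral problems have genuinely corresponding eigenvectors, i.e. that $f \neq 0$ forces $f_1 = P^0 f \neq 0$ (so that $f_1$ is a genuine eigenvector of $L^0 + S$). This is immediate from the reduction: if $f_1 = 0,$ then the $Q^0$-equation gives $(I-T) g = 0$ with $g = \tilde K^{-1}|_{Q^0} f_2,$ and invertibility of $I - T$ forces $g = 0,$ hence $f_2 = 0,$ contradicting $f \neq 0.$ The main obstacle I anticipate is not the algebra, which is standard, but the careful bookkeeping of the mapping properties of $\tilde K$ and $V$ across the scale $H^{-1} \hookrightarrow L^2 \hookrightarrow H^1$ so that every composition $\tilde K V,$ $V \tilde K,$ $\tilde K V \tilde K,$ $P^0 V \tilde K$ is justified as a bounded operator between the right spaces; once that is in place, the Schur complement identity $S = P^0 V P^0 + P^0 V \tilde K (1-T)^{-1} \tilde K V P^0$ follows mechanically, and the smallness of $\|T\|$ from Lemma~\ref{lem2.1} supplies the Neumann series on which the whole argument rests.
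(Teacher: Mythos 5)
Your proposal is correct and follows essentially the same route as the paper: the same $P^0/Q^0$ decomposition, the substitution $f_2=\tilde K\tilde f_2$ with the identity $\tilde K(\lambda-L^0)\tilde K=Q^0$, inversion of $1-T$ via Lemma~\ref{lem2.1}, and the same argument that $f\neq 0$ forces $f_1\neq 0$. The only cosmetic difference is that the paper runs the reduction for a general right-hand side $g$ (to record Remark~\ref{rem2.1} for later use), whereas you treat the homogeneous eigenvalue equation directly, which suffices for the lemma itself.
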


\begin{proof}
The equation
\begin{equation}
\label{2.12} (\lambda - L^0 - V)f = g
\end{equation}
is equivalent to the system of two equations
\begin{equation}
\label{2.13} P^0 (\lambda - L^0 - V) (f_1 + f_2) = g_1,
\end{equation}
\begin{equation}
\label{2.14} Q^0 (\lambda - L^0 - V) (f_1 + f_2) = g_2,
\end{equation}
where $ f_1 = P^0 f, \; f_2 = Q^0 f, \; g_1 = P^0 g, \; g_2 = Q^0
g.$

Since the operator $L^0 $ is self--adjoint, the range $Q^0 (H) $
of the projection $Q^0 $ is an invariant subspace of $L^0 $ also.
Therefore, $$ P^0 Q^0 = Q^0 P^0 = 0, \quad P^0 L^0 Q^0 = Q^0 L^0
P^0 = 0,$$ so Equations (\ref{2.13}) and (\ref{2.14}) may be
rewritten as
\begin{equation}
\label{2.15} (\lambda - L^0) f_1 - P^0 V f_1 - P^0 Vf_2 = g_1,
\end{equation}
\begin{equation}
\label{2.16} (\lambda - L^0) f_2 - Q^0 V f_1 - Q^0 Vf_2 = g_2.
\end{equation}
Since $f_2 $ belongs to the range of $Q^0, $ it can be written in
the form
\begin{equation}
\label{2.17}
 f_2 = \tilde{K} \tilde{f}_2.
\end{equation}
Next we substitute this expression for $f_2$ into (\ref{2.16}),
and after that act from the left on the equation by $\tilde{K}. $
As a result we get $$ \tilde{K} (\lambda - L^0) \tilde{K}
\tilde{f}_2  - \tilde{K}Q^0 V f_1 - \tilde{K} V \tilde{K}
\tilde{f}_2 = \tilde{K} g_2. $$ By the definition of $\tilde{K}, $
we have the identity $$ \tilde{K} (\lambda - L^0) \tilde{K}
\tilde{f}_2 =\tilde{f}_2. $$ Therefore, in view of (\ref{2.3}),
the latter equation can be written in the form
\begin{equation}
\label{2.18} (1-T) \tilde{f}_2 = \tilde{K} V f_1 + \tilde{K} g_2.
\end{equation}
By Lemma \ref{lem2.1} the operator $1- T$  is invertible for large
enough $n.$  Thus, (\ref{2.17}) and  (\ref{2.18}) imply, for large
enough $n,$
\begin{equation}
\label{2.19} f_2 = \tilde{K}(1-T)^{-1} \tilde{K} V f_1 + \tilde{K}
(1-T)^{-1} \tilde{K} g_2.
\end{equation}
By inserting this expression for $f_2 $ into (\ref{2.15}) we get
\begin{equation}
\label{2.20} (\lambda -P^0  L^0 P^0 - S) f_1 = g_1 + P^0 V
\tilde{K} (1-T)^{-1} \tilde{K} g_2,
\end{equation}
where the operator $S$ is given by (\ref{2.8}).

If $\lambda $ is an eigenvalue of $L$ and $f\neq 0$ is a
corresponding eigenvector, then (\ref{2.12}) holds with $g=0,$ so
$g_1 =0, \; g_2 =0, $  and (\ref{2.20}) implies (\ref{2.9}), i.e.,
$\lambda $ is an eigenvector of $P^0 L^0 P^0 + S $ and $f_1 = P^0
f$ is a corresponding eigenvector. Then we have $ f_1 \neq 0; $
otherwise (\ref{2.19}) yields $f_2 = 0, $  so  $f = f_1 + f_2 = 0
$ which is a contradiction.

Conversely, let $\lambda $ be an eigenvalue of $P^0 L^0 P^0 + S, $
and let $f_1 $ be a corresponding eigenvector. We set $$ f_2 =
\tilde{K}(1-T)^{-1} \tilde{K} V f_1,  \quad f = f_1 + f_2 $$ and
show that $Lf= \lambda f $ by checking that (\ref{2.15}) and
(\ref{2.16}) hold with $g_1 = 0 $ and $g_2 =0. $ Then (\ref{2.15})
coincides with $(P^0 L^0 P^0 + S)f_1 =0,  $ so it holds. On the
other hand one can easily verify (\ref{2.16}) by using that $$
(1-T)^{-1} = 1 + T(1-T)^{-1}, \quad   T =\tilde{K} V \tilde{K},
\quad (\lambda - L^0) \tilde{K}^2  = Q^0.$$

This completes the proof.

\end{proof}

Notice that (\ref{2.9}) and (\ref{2.10}) imply the following.
\begin{Remark}
\label{rem21.1} Under the assumptions and notations of Lemma
\ref{lem2.2},  for large enough $n,$ the operator $L$ has an
eigenvalue $\lambda \in  D_n $ of geometric multiplicity 2 if and
only if $\lambda $ is an eigenvalue of $ P^0 L^0 P^0 + S(\lambda,
n) $ of geometric multiplicity 2.
\end{Remark}

Of course, in the proof of Lemma \ref{lem2.2} it was enough to
consider the equation (\ref{2.12}) for $g =0 $ (and, respectively,
to set $g_1=0, g_2=0 $ in the following equations). But we
consider the case of arbitrary $g$ in order to explain the
following remark that will be used later.

\begin{Remark}
\label{rem2.1} Under the assumptions and notations of Lemma
\ref{lem2.2} and its proof, Equation (\ref{2.12}), i.e., the
system (\ref{2.13}) and  (\ref{2.14}), implies Equation
(\ref{2.20}).
\end{Remark}

Let
$
\begin{pmatrix}
S^{11} & S^{12} \\ S^{21}  &  S^{22}
\end{pmatrix}
$
be the matrix representation of the two-dimensional operator $S$
with respect to the basis $e^1_n  := e_{-n}, \; e^2_n := e_n; $
then
\begin{equation}
\label{2.27} S^{ij} = \langle  Se^j_n , e^i_n \rangle ,\qquad i,j
\in \{1,2\}.
\end{equation}

Consider the eigenvalue equation for the operator $S:$
\begin{equation}
\label{2.28} \det \left |
\begin{array}{cc}
S^{11} - z & S^{12}\\ S^{21}  &  S^{22} - z
\end{array}
\right| = 0.
\end{equation}
This is the {\em basic equation} that we use to estimate spectral
gaps.

A number $ \lambda = n^2+z \in D_n$ is a periodic or
anti--periodic eigenvalue of $L^0 +V $ if and only if $ z $ is a
solution of (\ref{2.28}).

Next we give explicit formulas for the matrix elements $S^{ij}.$
By Lemma~\ref{lem2.1}, we have $\|T_n \| \leq 1/2 $ for $n \geq
n_0 (v), $ so $ (1-T_n)^{-1} = \sum_{k=1}^\infty T_n^{k-1}, $ and
therefore, $$ S = P^0_n V P^0_n + \sum_{k=1}^\infty P^0_n V
\tilde{K} T_n^{k-1}  \tilde{K}  V  P^0_n. $$ By (\ref{2.8}) and
(\ref{2.27}),
\begin{equation}
\label{2.29} S^{ij} = \sum_{k=0}^\infty  S^{ij}_k, \quad
\text{where} \quad S^{ij}_0 = \langle   P^0 V P^0 e_n^j, e_n^i
\rangle
\end{equation}
and
\begin{equation}
\label{2.29a} S^{ij}_k = \langle V \tilde{K} T_n^{k-1}  \tilde{K}
V  e_n^j, e_n^i \rangle, \quad k= 1,2, \ldots.
\end{equation}

By (\ref{2.3}), it follows that $$
 V \tilde{K} T_n^{k-1}  \tilde{K} V
= (V \tilde{K}^2)^{k} V. $$ Therefore, in view of (\ref{2.29}),
(\ref{2.29a}) and (\ref{2.2}), we have
\begin{equation}
\label{2.30} S^{11}_0 = \langle Ve^1_n, e^1_n \rangle = V(0) = 0,
\quad S^{22}_0 = \langle Ve^2_n, e^2_n \rangle = V(0) = 0,
\end{equation}
and for each $k=1,2, \ldots, $
\begin{equation}
\label{2.31} S^{11}_k =
 \sum_{j_1, \ldots, j_k \neq \pm n}
\frac{V(-n-j_1)V(j_1 - j_2) \cdots V(j_{k-1} -j_k) V(j_k +n)}
{(n^2 -j_1^2 +z) \cdots (n^2 - j_k^2 +z)},
\end{equation}
\begin{equation}
\label{2.32} S^{22}_k =
 \sum_{j_1, \ldots, j_k \neq \pm n}
\frac{V(n-j_1)V(j_1 - j_2) \cdots V(j_{k-1} -j_k) V(j_k -n)} {(n^2
-j_1^2 +z) \cdots (n^2 - j_k^2 +z)}.
\end{equation}
In the same way, we obtain
\begin{equation}
\label{2.33} S^{12}_0 = \langle Ve^2_n, e^1_n \rangle = V(-2n),
\quad S^{21}_0 = \langle Ve^1_n, e^2_n \rangle = V(2n),
\end{equation}
and, for $k=1,2, \ldots,$
\begin{equation}
\label{2.34} S^{12}_k =  \sum_{j_1, \ldots, j_k \neq \pm n}
\frac{V(-n-j_1)V(j_1 - j_2) \cdots V(j_{k-1} -j_k) V(j_k -n)}
{(n^2 -j_1^2 +z) \cdots (n^2 - j_k^2 +z)}
\end{equation}
\begin{equation}
\label{2.35} S^{21}_k = \sum_{j_1, \ldots, j_k \neq \pm n}
\frac{V(n-j_1)V(j_1 - j_2) \cdots V(j_{k-1} -j_k) V(j_k +n)} {(n^2
-j_1^2 +z) \cdots (n^2 - j_k^2 +z)}
\end{equation}
The operator $S$ depends on $v, n\in \mathbb{N}$ and $\lambda $
(or, $ z= \lambda - n^2). $ Of course, the same is true for its
matrix, i.e., $$ S^{ij} = S^{ij} (v;n,z), \quad S^{ij}_k =
S^{ij}_k (v;n,z). $$

\begin{Lemma}
\label{lem2.3} (a) For any (complex--valued) potential $v$
\begin{equation}
\label{2.36}  S^{11} (v;n,z) = S^{22} (v;n,z).
\end{equation}

(b) If $v$ is a real--valued potential, then
\begin{equation}
\label{2.37} S^{12} (v;n,z) = \overline{S^{21}
(v;n,\overline{z})}.
\end{equation}
\end{Lemma}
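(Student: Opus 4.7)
My plan is to prove both parts by elementary symmetry arguments applied directly to the series expansions (\ref{2.29})--(\ref{2.35}). In each case I will exhibit a change of summation variables that converts one side of the desired identity into the other. The key observation is that the index set $(n+2\mathbb{Z})\setminus\{\pm n\}$, over which each $j_i$ runs, is invariant under negation $j\mapsto -j$, since $-n\in n+2\mathbb{Z}$.

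For part (a), I would first dispose of $k=0$ trivially: $S^{11}_0=S^{22}_0=V(0)=0$ by (\ref{2.30}). For $k\geq 1$ I will apply the reverse-and-negate substitution $j_i=-j'_{k+1-i}$ inside the summand of $S^{11}_k$. A termwise check shows that the extremal factors transform as $V(-n-j_1)\mapsto V(j'_k-n)$ and $V(j_k+n)\mapsto V(n-j'_1)$, the middle chain $V(j_{i-1}-j_i)$ becomes $V(j'_{m-1}-j'_m)$ after reindexing $m=k+2-i$, and the denominator is unchanged since $j_i^2=(j'_{k+1-i})^2$. Reading the factors in order from left to right then produces exactly the summand defining $S^{22}_k$.

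For part (b), I will first record the reality relation $\overline{V(k)}=V(-k)$: when $v$ is real, so is $Q$, giving $\overline{q(k)}=q(-k)$, hence $\overline{V(k)}=\overline{ikq(k)}=-ikq(-k)=V(-k)$. Conjugating $S^{21}_k(v;n,\bar z)$ then replaces each numerator factor $V(\cdot)$ by $V(-\cdot)$ and turns each denominator $n^2-j_i^2+\bar z$ into $n^2-j_i^2+z$. Now the single substitution $j_i\mapsto -j_i$ (termwise, with no reversal of order) converts $V(j_1-n)\,V(j_2-j_1)\cdots V(-j_k-n)$ into $V(-n-j_1)\,V(j_1-j_2)\cdots V(j_k-n)$, which is precisely the numerator of $S^{12}_k(v;n,z)$; the $k=0$ case $\overline{V(2n)}=V(-2n)$ matches (\ref{2.33}) directly.

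I do not anticipate a serious obstacle: both identities are purely formal and rely only on tracking signs through a permutation of summation indices, together with invariance of the index set under negation. The only subtle point is that part (a) requires the combined reverse-and-negate substitution, because a single negation alone would swap the two ends correctly but would reverse the orientation of each factor in the middle chain, whereas in part (b) complex conjugation has already reversed the argument of every $V$, so a pure negation suffices to restore the correct orientation.
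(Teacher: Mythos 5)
Your proof is correct and follows essentially the same route as the paper: both parts are handled by termwise change of summation indices in the explicit series (\ref{2.31})--(\ref{2.35}), using the reverse-and-negate substitution for (a) exactly as the paper does. The only (immaterial) difference is in (b), where the paper reverses the order of the indices while you negate them; both are valid bijections of the index set $(n+2\mathbb{Z})\setminus\{\pm n\}$ and yield the identity $S^{12}_k(v;n,z)=\overline{S^{21}_k(v;n,\overline{z})}$ in the same way.
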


\begin{proof}
(a) By (\ref{2.31}) and (\ref{2.32}), the change of indices $$ i_s
= - j_{k+1-s}, \quad s=1, \ldots, k, $$ proves that $S^{11}_k
(v;-n,z) = S^{22}_k (v;n,z) $  for $k =1,2, \ldots. $ Thus, in
view of (\ref{2.29}), (\ref{2.36}) holds.

(b) If $v$ is real-valued, we have for its Fourier coefficients $
V(-m) = \overline{V(m)}. $ By (\ref{2.33}), $$
 S^{12}_0 (v;-n,z) = V(-2n) = \overline{V(2n)} =
\overline{S^{21}_0 (v; n, \overline{z})}. $$ By (\ref{2.34}) and
(\ref{2.35}), for each $ k =1,2, \ldots, $ the change of indices
$$ i_s = j_{k+1-s}, \quad s=1, \ldots, k, $$ explains that $
S^{12}_k (-n,z) = \overline{S^{21}_k (n,\overline{z})}. $ Thus, in
view of (\ref{2.29}), (\ref{2.37}) holds. Lemma~\ref{lem2.3} is
proved.

\end{proof}

We set, for $n > n_0 (v), $
\begin{equation}
\label{2.40} \alpha_n (v;z) = S^{11} (v;n,z), \;\; \beta^+_n (v;z) =
S^{21} (v;n,z), \;\; \beta^-_n (v;z) = S^{12} (v;n,z).
\end{equation}

\section{Estimates of $\alpha (v;n,z)$ and $\beta^\pm (v;n,z) $ }

Let $\Omega = (\Omega (m)) $ be a weight of the form
\begin{equation}
\label{} \Omega (m) = \omega (m) /m \quad \text{for} \quad m\neq 0
\qquad \Omega (0) = 1,
\end{equation}
where $\omega $ is a submultiplicative weight. Our main goal in this
section is to estimate the $\ell^2 (\Omega ) $--norm of $(\beta^\pm
(v;n,z))_{n>N_0} $ under the assumption $v \in H (\Omega).$

\begin{Lemma}
\label{lem3.1} If $r \in \ell^2 (\mathbb{Z}), $ then
\begin{equation}
\label{3.0} \sum_{i \neq -n} \left | \frac{n-i}{n+i}  \right |^2
|r(n-i)|^2 \leq Cn^2  \left (  \mathcal{E}_n (r) + \frac{\|r\|}{n}
\right )^2,
\end{equation}
where $C$ is an absolute constant.
\end{Lemma}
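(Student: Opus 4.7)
The plan is to perform the substitution $k = n - i$, which converts the sum into
\[
\Sigma := \sum_{k \in \mathbb{Z},\, k \neq 2n} \left|\frac{k}{2n - k}\right|^2 |r(k)|^2,
\]
and then split the summation range into regions where the weight $|k/(2n-k)|$ is easy to control and a ``resonant band'' near the pole $k = 2n$ that must be handled by hand.

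In the regime $|k| < n$ one has $|2n - k| \geq n > |k|$, so the weight is strictly less than $1$ and this piece is dominated by $\|r\|^2$, which on the right hand side is absorbed into $n^2 (\|r\|/n)^2$. In the far regime $|k| \geq 3n$ the elementary inequality $|2n - k| \geq |k|/3$ gives $|k/(2n-k)| \leq 3$, so this piece contributes at most $9\, \mathcal{E}_n(r)^2$ (the index range $|k| \geq 3n$ is contained in $|k| \geq n$). For the negative part of the resonant band, $-3n \leq k \leq -n$, one simply notes $|k/(2n-k)| = |k|/(2n+|k|) \leq 1$, so this piece is also bounded by $\mathcal{E}_n(r)^2$.

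The main case is the positive resonant band $n \leq k \leq 3n$ with $k \neq 2n$. Setting $j = k - 2n$, this part equals
\[
\sum_{1 \leq |j| \leq n} \left|\frac{2n+j}{j}\right|^2 |r(2n+j)|^2 \leq 9n^2 \sum_{1 \leq |j| \leq n} |r(2n+j)|^2 \leq 9n^2\, \mathcal{E}_n(r)^2,
\]
where the crude bound $1/|j|^2 \leq 1$ (valid because $|2n - k|$ is a \emph{positive integer}) is absorbed into the factor $n^2$, and in the last step I use that $|2n+j| \geq n$ throughout this range, so each $r(2n+j)$ is counted by $\mathcal{E}_n(r)$. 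Combining the four contributions yields $\Sigma \leq C n^2 \mathcal{E}_n(r)^2 + \|r\|^2$, which is majorized by $C n^2 (\mathcal{E}_n(r) + \|r\|/n)^2$.

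I do not foresee a genuine obstacle. The only point worth watching is the resonant band: the pole $|2n-k|^{-2}$ can be as large as $1$, and this is precisely what forces the $n^2$ prefactor on the right hand side. Away from $k = 2n$ the argument is a triviality, and the integer lattice structure is what prevents the pole from being worse.
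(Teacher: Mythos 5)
Your proof is correct. Both arguments rest on the same two observations: (i) away from the pole the weight $\left|\frac{n-i}{n+i}\right|$ is $O(1)$, so the corresponding piece of the sum is controlled by $\|r\|^2$ or $\mathcal{E}_n(r)^2$; (ii) near the pole the weight can blow up only to $O(n^2)$ because $n+i$ is a nonzero integer, and the relevant $r$-entries there have index $\geq n$, so the piece is bounded by $n^2\,\mathcal{E}_n(r)^2$. The route is somewhat different: the paper keeps the variable $i$, splits only into $i\ge 0$ versus $i<0$, and disposes of the pole in one stroke via the algebraic identity $\bigl(\frac{n-i}{n+i}\bigr)^2 = \bigl(\frac{2n}{n+i}-1\bigr)^2 \le \frac{8n^2}{(n+i)^2}+2$, which simultaneously produces the $n^2/(n+i)^2$ pole term and the $O(1)$ tail. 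You instead substitute $k=n-i$ and split into four ranges of $|k|$ relative to $n$; this is slightly more verbose but also more transparent, making it visible which values of $k$ force the $n^2$ prefactor and which are innocuous. Either way the constant is absolute and the bound is the same.
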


\begin{proof}
Since  $|n-i|/|n+i| \leq 1 $ for $i \geq 0,$ and
\begin{equation}
\label{3.01} \left ( \frac{n-i}{n+i} \right )^2 = \left (
\frac{2n}{n+i} - 1 \right )^2 \leq \frac{8n^2}{(n+i)^2} + 2
\quad \text{for} \; i<0,
\end{equation}
the sum in (\ref{3.0}) does not exceed $$ \sum_{i <0, i \neq -n}
\frac{8n^2}{(n+i)^2} |r(n-i)|^2 +
 \sum_{i} 3 |r(n-i)|^2  \leq 8n^2 \left (\mathcal{E}_n (r) \right )^2
 +3 \|r\|^2,
$$ which proves (\ref{3.0}).

\end{proof}

\begin{Lemma}
\label{lem3.2} If $r \in \ell^2 (\mathbb{Z}) $ and $ H(n) = \left
(H_{ij} (n) \right ), \; n \in \mathbb{N}, \;n>N, $ are
Hilbert--Schmidt operators such that
$
H_* := \sup_{n>N}\|H(n) \|_{HS} < \infty,
$
then
\begin{equation}
\label{3.1}
 \sum_{i,j\neq \pm n}
\left | \frac{n-i}{n+i} \right |^{1/2} \left | \frac{n+j}{n-j}
\right |^{1/2} |r(n-i)| |r(n+j)| |H_{ij} (n)| \leq Cn \|r\| \left
( \mathcal{E}_n (r) + \frac{\|r\|}{n}  \right )H_*,
\end{equation}
where $C$ is an absolute constant.
\end{Lemma}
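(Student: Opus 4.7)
The plan is to reduce the three-factor sum to Lemma \ref{lem3.1} by two successive applications of Cauchy--Schwarz. First I would split off the $|H_{ij}(n)|$ factor. Write
$$A_i = \left|\tfrac{n-i}{n+i}\right|^{1/2} |r(n-i)|, \qquad B_j = \left|\tfrac{n+j}{n-j}\right|^{1/2} |r(n+j)|,$$
so that the left--hand side of \eqref{3.1} is $\sum_{i,j\neq\pm n} A_i B_j |H_{ij}(n)|$. The crucial observation is that $A_iB_j$ tensorizes, so Cauchy--Schwarz gives
$$\sum_{i,j} A_i B_j |H_{ij}(n)| \leq \|H(n)\|_{HS}\Bigl(\sum_i A_i^2\Bigr)^{1/2}\Bigl(\sum_j B_j^2\Bigr)^{1/2} \leq H_* \Bigl(\sum_i A_i^2\Bigr)^{1/2}\Bigl(\sum_j B_j^2\Bigr)^{1/2}.$$
Note that under the substitution $j \mapsto -i$ the sum $\sum_j B_j^2$ coincides with $\sum_i A_i^2$, so both factors reduce to estimating the single quantity $S := \sum_{i\neq -n} \left|\tfrac{n-i}{n+i}\right| |r(n-i)|^2$.

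Next I would handle $S$ by a second Cauchy--Schwarz, writing $\left|\tfrac{n-i}{n+i}\right||r(n-i)|^2 = \bigl(\left|\tfrac{n-i}{n+i}\right|\,|r(n-i)|\bigr)\cdot |r(n-i)|$. Then
$$S \leq \Bigl(\sum_{i\neq -n}\left|\tfrac{n-i}{n+i}\right|^2 |r(n-i)|^2\Bigr)^{1/2}\Bigl(\sum_i |r(n-i)|^2\Bigr)^{1/2}.$$
The first factor is precisely the quantity bounded in Lemma \ref{lem3.1} by $Cn(\mathcal{E}_n(r) + \|r\|/n)$, while the second factor equals $\|r\|$. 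Therefore $S \leq Cn\|r\|\bigl(\mathcal{E}_n(r) + \|r\|/n\bigr)$.

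Combining the two steps yields
$$\sum_{i,j\neq \pm n} A_i B_j |H_{ij}(n)| \leq H_* \cdot S \leq C n \|r\|\bigl(\mathcal{E}_n(r) + \|r\|/n\bigr) H_*,$$
which is exactly \eqref{3.1}. There is no serious obstacle here: Lemma \ref{lem3.1} already does all the work of exploiting the cancellation near $i = -n$, and the role of the present lemma is merely to repackage that estimate in a form adapted to bilinear sums against a Hilbert--Schmidt kernel. The one small point to double-check is that the switch $j \mapsto -i$ is a genuine reindexing of $\mathbb{Z}\setminus\{\pm n\}$ onto itself (it is), so that the two factors in the first Cauchy--Schwarz step really are equal and each controlled by $S^{1/2}$.
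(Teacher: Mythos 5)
Your proof is correct and follows essentially the same route as the paper's: a first Cauchy--Schwarz against the Hilbert--Schmidt kernel, the reindexing $j\mapsto -i$ to identify the two resulting factors with the single sum $\sigma(n)^{1/2}$, and then a second Cauchy--Schwarz to peel off $\|r\|$ and invoke Lemma \ref{lem3.1}. No issues.
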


\begin{proof}
By the Cauchy inequality, for each $ n \in \mathbb{N}$ with $n>N,
$ we have
\begin{equation}
\label{3.2} \left ( \sum_{i,j\neq \pm n} \left | \frac{n-i}{n+i}
\right |^{1/2} \left | \frac{n+j}{n-j} \right |^{1/2} |r(n-i)|
|r(n+j)|| H_{ij} (n)| \right )^2 \leq \sigma (n) \|H(n)\|_{HS}^2,
\end{equation}
where
\begin{equation}
\label{3.3} \sigma (n) = \sum_{i,j\neq \pm n} \left |
\frac{n-i}{n+i} \right | \left | \frac{n+j}{n-j} \right |
|r(n-i)|^2 |r(n+j)|^2.
\end{equation}
The index change  $j=-k$ yields
$$ \sigma (n) = \left (
\sum_{i \neq \pm n} \left | \frac{n-i}{n+i} \right | |r(n-i)|^2
\right )^2. $$ Now the Cauchy inequality and (\ref{3.0}) imply
$$ \sigma (n)  \leq  \|r\|^2
\sum_{i \neq \pm n} \left | \frac{n-i}{n+i} \right |^2 |r(n-i)|^2
\leq  C n^2 \|r\|^2 \left (  \mathcal{E}_n (r) + \frac{\|r\|}{n}
\right )^2. $$ By (\ref{3.2}) and (\ref{3.3}), the latter estimate
proves (\ref{3.1}).
\end{proof}

\begin{Lemma}
\label{lem3.2a} If $r \in \ell^2 (\mathbb{Z}) $ and $ H(n) = \left
(H_{ij} (n) \right ), \; n \in \mathbb{N}, \;n>N, $ are
Hilbert--Schmidt operators such that
$
H_* := \sup_{n>N}\|H(n) \|_{HS} < \infty,
$
then
\begin{equation}
\label{3.1a}
 \sum_{n>N}  \frac{1}{n^2}
 \sum_{i,j\neq \pm n} \left ( \left | \frac{n-i}{n+i} \right
|^{1/2} \left | \frac{n+j}{n-j} \right |^{1/2} |r(n-i)|| r(n+j)|
|H_{ij} (n)| \right )^2
\end{equation}
$$ \leq CH_*^2\|r\|^2  \left (  \left (\mathcal{E}_N (r) \right
)^2 +\frac{\|r\|^2}{N} \right ), $$ where $C$ is an absolute
constant.
\end{Lemma}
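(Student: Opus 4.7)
I plan to follow the opening of Lemma~\ref{lem3.2}: the same Cauchy--Schwarz on the pair $(i,j)$ that produced (\ref{3.2})--(\ref{3.3}) gives
$$\Bigl(\sum_{i,j\neq\pm n}\tfrac{|n-i|^{1/2}}{|n+i|^{1/2}}\tfrac{|n+j|^{1/2}}{|n-j|^{1/2}}|r(n-i)||r(n+j)||H_{ij}(n)|\Bigr)^{2}\le H_{*}^{2}\,\sigma(n),$$
where, after the reindexing $j=-k$ used there, $\sigma(n)=\bigl(\sum_{i\neq\pm n}\tfrac{|n-i|}{|n+i|}|r(n-i)|^{2}\bigr)^{2}$. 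Therefore (\ref{3.1a}) reduces to showing
$$\sum_{n>N}\frac{\sigma(n)}{n^{2}}\le C\|r\|^{2}\bigl((\mathcal{E}_N(r))^{2}+\|r\|^{2}/N\bigr).$$

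The crucial difference from Lemma~\ref{lem3.2} is that one cannot afford to apply (\ref{3.0}) pointwise in $n$, because $\sum_{n>N}(\mathcal{E}_n(r))^{2}$ need not be controlled by $(\mathcal{E}_N(r))^{2}$. Instead, after another Cauchy--Schwarz inside $\sigma(n)$ and the change of variable $k=n-i$ (so $n+i=2n-k$), I would apply Fubini to exchange the order of summation:
$$\sum_{n>N}\frac{\sigma(n)}{n^{2}}\le\|r\|^{2}\sum_{k}k^{2}|r(k)|^{2}\,\Sigma(k),\qquad\Sigma(k):=\sum_{\substack{n>N\\ 2n\neq k}}\frac{1}{n^{2}(2n-k)^{2}}.$$
It then remains to show that $k^{2}\Sigma(k)$ is uniformly bounded when $k>N$, is $O(1/N)$ when $k<-N$, and is $O(k^{2}/N^{3})$ when $|k|\le N$.

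These three bounds I would establish by elementary estimates. For $|k|\le N$, $|2n-k|\ge n$ for $n>N$ yields $\Sigma(k)\le C/N^{3}$, and therefore $\frac{C}{N^{3}}\sum_{|k|\le N}k^{2}|r(k)|^{2}\le C\|r\|^{2}/N$. For $k<-N$, $|2n-k|\ge|k|$ gives $\Sigma(k)\le C/(Nk^{2})$, contributing at most $C\|r\|^{2}/N$ in total. The main case is $k>N$, where I split the range $n>N$ into $N<n\le k/4$ (there $|2n-k|\ge k/2$), $k/4<n<3k/4$ (there $n\asymp k$ and the substitution $m=2n-k$ turns the inner sum into $\sum_{m\neq 0}1/m^{2}$), and $n\ge 3k/4$ (there $|2n-k|$ and $n$ are both comparable to $k$). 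Each band contributes $\le C/k^{2}$, so $k^{2}\Sigma(k)\le C$ and the total is $\le C(\mathcal{E}_N(r))^{2}$. Collecting the three contributions and multiplying by $H_{*}^{2}$ gives (\ref{3.1a}).

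The only delicate point is the resonance region $n\sim k/2$ in the case $k>N$, where $(2n-k)^{-2}$ can be arbitrarily large; the tripartite splitting just described is the standard way around it, and it is precisely what produces the $(\mathcal{E}_N(r))^{2}$ term on the right-hand side of (\ref{3.1a}). Everything else is a direct repeat of steps in Lemma~\ref{lem3.2} together with routine Fubini and $\sum 1/n^{p}$ arithmetic.
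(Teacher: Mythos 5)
Your proof is correct and follows essentially the same strategy as the paper's: a Cauchy--Schwarz reduction of the left-hand side to $H_*^2\sum_{n>N}\sigma(n)/n^2$ with $\sigma(n)$ as in (\ref{3.3}), followed by an interchange of the $n$- and $k$-summations (with $k=n-i$) so that the terms with $k>N$ produce $(\mathcal{E}_N(r))^2$ while the remaining terms produce $\|r\|^4/N$. The only difference is bookkeeping: the paper splits $\sigma(n)$ by the sign of $i$, disposes of the $i\ge 0$ part via the trivial bound $\sigma_2(n)\le\|r\|^4$, and uses the pointwise inequality (\ref{3.01}) to cancel the outer $n^{-2}$ before swapping sums, whereas you push everything through Fubini and then run a three-regime analysis of the kernel $k^2\Sigma(k)$ --- both routes are elementary and yield the same bound up to absolute constants.
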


\begin{proof}
Let $\Sigma $ denote the sum on the left of (\ref{3.1a}). By
(\ref{3.2}) we have
\begin{equation}
\label{3.7a} \Sigma \leq H_*^2  \sum_{n>N} \frac{1}{n^2} \sigma
(n),
\end{equation}
where $\sigma (n)$ is given by (\ref{3.3}).

Changing the summation index by $k=-j$ we get
\begin{equation}
\label{3.8a} \sigma (n) = \left ( \sum_{i \neq \pm n} \left |
\frac{n-i}{n+i} \right | |r(n-i)|^2 \right )^2 \leq 2 \sigma_1 (n)
+ 2 \sigma_2 (n),
\end{equation}
where
\begin{equation}
\label{3.9a}
  \sigma_1 (n) = \left ( \sum_{i<0, i \neq -n} \left |
\frac{n-i}{n+i} \right | |r(n-i)|^2 \right )^2,  \quad \sigma_2 (n)
= \left ( \sum_{ i \geq 0} \cdots \right )^2.
\end{equation}

By the Cauchy inequality and (\ref{3.01}), we have $$ \sum_{n>N}
\frac{1}{n^2} \sigma_1 (n) \leq \|r\|^2 \left ( \sum_{n>N} \sum_{
i<0, i \neq -n} \frac{8}{(n+i)^2} |r(n-i)|^2 + \sum_{n>N}
\frac{2}{n^2} \|r\|^2 \right ). $$ Now, with $\nu = n-i,$  $$
\sum_{n>N} \sum_{  i< 0, i \neq -n} \frac{8}{(n+i)^2} |r(n-i)|^2
\leq \sum_{\nu
>N} |r(\nu)|^2 \sum_{n \neq \nu/2} \frac{8}{(2n- \nu)^2} \leq
\frac{8\pi^2}{3} \left (\mathcal{E}_N (r) \right )^2, $$ so it
follows that
\begin{equation}
\label{3.10a} \sum_{n>N} \frac{1}{n^2} \sigma_1 (n) \leq
 \|r\|^2 \left (
\frac{8\pi^2}{3} \left (\mathcal{E}_N (r) \right )^2
+\frac{2\|r\|^2}{N} \right ).
\end{equation}

On the other hand, since $|n-i|/|n+i| \leq 1$ for $i \geq 0,$ we
have $  \sigma_2 (n) \leq \|r\|^4.$ Thus,
\begin{equation}
\label{3.11a} \sum_{n>N} \frac{1}{n^2} \sigma_2 (n) \leq
\sum_{n>N} \frac{1}{n^2} \|r\|^4  \leq  \frac{1}{N} \|r\|^4.
\end{equation}
Finally, (\ref{3.7a})--(\ref{3.11a}) imply (\ref{3.1a}), which
completes the proof.
\end{proof}

Let us recall that a weight $\omega $ is called {\em slowly
increasing}, if $ \sup_n \frac{\omega (2n)}{\omega (n)} < \infty.
$

\begin{Lemma}
\label{lem3.3} Suppose $\omega = (\omega (m))_{m \in 2\mathbb{Z}}
$ is a slowly increasing weight such that
\begin{equation}
\label{3.4}  \omega (m) \leq C_1 |m|^{1/4}, \quad m\in
2\mathbb{Z},
\end{equation}
$r= (r(m))_{m\in 2\mathbb{Z}}\in \ell^2 (\omega, 2\mathbb{Z}),$ and
$ H(n) = \left (H_{ij} (n) \right )_{i,j \in n +2\mathbb{Z}} , \; n
\geq n_0, $  are Hilbert--Schmidt operators such that $ H_* =
\sup_{n \geq n_0} \|H (n) \|_{HS} < \infty.$ Then, for  $ N>n_0,$
\begin{equation}
\label{3.6}
 \sum_{n>N}  \frac{(\omega (2n))^2}{n^2}
 \sum_{i,j\neq \pm n} \left ( \left | \frac{n-i}{n+i} \right
|^{1/2} \left | \frac{n+j}{n-j} \right |^{1/2} |r(n-i)|| r(n+j)|
|H_{ij} (n)| \right )^2
 \end{equation}
$$
 \leq C H_*^2 \|r\|^4_{\omega} \left
 ( \frac{ 1}{(\omega(N))^2} +
 \frac{1}{\sqrt{N}} \right ),
$$
 where $C= C(\omega). $
\end{Lemma}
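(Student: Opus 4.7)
The plan is to mimic the architecture of the proof of Lemma~\ref{lem3.2a}, carrying the extra weight $(\omega(2n))^2$ at every step and using the growth hypothesis (\ref{3.4}) at one decisive point. Applying Cauchy--Schwarz exactly as in (\ref{3.2}) controls the inner double sum in (\ref{3.6}) by $\sigma(n)\,\|H(n)\|_{HS}^2$, with $\sigma(n)$ defined in~(\ref{3.3}), so the left-hand side of (\ref{3.6}) is at most
$$
H_*^2\sum_{n>N}\frac{(\omega(2n))^2}{n^2}\,\sigma(n).
$$
After the index change $k=-j$, I split $\sigma(n)\le 2\sigma_1(n)+2\sigma_2(n)$ as in (\ref{3.8a})--(\ref{3.9a}) and estimate the two pieces separately.

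For $\sigma_2(n)$, where $i\ge 0$ gives $|n-i|/|n+i|\le 1$, the elementary bound $\sigma_2(n)\le\|r\|^4$ is already enough: combined with $(\omega(2n))^2/n^2\le Cn^{-3/2}$, which follows directly from the growth restriction $\omega(m)\le C_1|m|^{1/4}$, one obtains
$$
\sum_{n>N}\frac{(\omega(2n))^2}{n^2}\,\sigma_2(n)\;\le\; C\|r\|^4\sum_{n>N}n^{-3/2}\;\le\;\frac{C\|r\|_\omega^4}{\sqrt N},
$$
where $\|r\|\le C\|r\|_\omega$ holds because a slowly increasing weight is bounded below. This is the origin of the $1/\sqrt N$ term in (\ref{3.6}), and also the only place where the growth hypothesis (\ref{3.4}) is genuinely needed.

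The decisive step is $\sigma_1(n)$. I apply Cauchy--Schwarz in the form
$$
\sigma_1(n)\;\le\;\Bigl(\sum_{i<0,\,i\ne-n}|r(n-i)|^2\Bigr)\Bigl(\sum_{i<0,\,i\ne-n}\Bigl|\frac{n-i}{n+i}\Bigr|^2|r(n-i)|^2\Bigr),
$$
and observe that because $i<0$ and $n>N$ force $\nu:=n-i>N$, the first factor is controlled by the tail $\mathcal{E}_N(r)^2$. The monotonicity of $\omega$ now delivers the crucial inequality $\mathcal{E}_N(r)^2\le\|r\|_\omega^2/(\omega(N))^2$, which is precisely how the factor $1/(\omega(N))^2$ enters (\ref{3.6}). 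For the second factor I apply (\ref{3.01}) and swap the order of summation via $\nu=n-i$, so that the $8n^2/(n+i)^2$ contribution to $\sum_{n>N}(\omega(2n))^2n^{-2}\sigma_1(n)$ becomes, after cancelling $n^2$,
$$
\mathcal{E}_N(r)^2\sum_{\nu>N}|r(\nu)|^2\sum_{N<n<\nu,\,n\ne\nu/2}\frac{8(\omega(2n))^2}{(2n-\nu)^2}.
$$
The slow-increase combined with monotonicity gives $\omega(2n)\le\omega(2\nu)\le C\omega(\nu)$ (valid because $2n\le 2\nu$), and $\sum_{k\ne0}k^{-2}$ converges, so this is bounded by $C\,\mathcal{E}_N(r)^2\,\|r\|_\omega^2$. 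The residual constant $2$ from (\ref{3.01}) contributes only $C\,\mathcal{E}_N(r)^2\|r\|^2/\sqrt N$ by the same computation as for $\sigma_2$. Invoking $\mathcal{E}_N(r)^2\le\|r\|_\omega^2/(\omega(N))^2$ once more yields
$$
\sum_{n>N}\frac{(\omega(2n))^2}{n^2}\,\sigma_1(n)\;\le\;\frac{C\|r\|_\omega^4}{(\omega(N))^2}+\frac{C\|r\|_\omega^4}{\sqrt N}.
$$

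The principal technical obstacle is absorbing $(\omega(2n))^2$ in the doubly-indexed tail sum without access to submultiplicativity of $\omega$. Slow-increase together with monotonicity is enough on the $\sigma_1$ side because $\nu>n$ permits the comparison $\omega(2n)\le C\omega(\nu)$, which can then be paired against $|r(\nu)|^2$ inside the weighted norm; on the $\sigma_2$ side the analogous comparison fails, and it is exactly the growth restriction (\ref{3.4}) that sidesteps the weight question by making $\sum_{n>N}(\omega(2n))^2/n^2$ summable with the explicit $1/\sqrt N$ tail.
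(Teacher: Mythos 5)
Your proof is correct and follows essentially the same route as the paper's: the same Cauchy--Schwarz reduction to $\sigma(n)\,\|H(n)\|_{HS}^2$, the same split $\sigma\le 2\sigma_1+2\sigma_2$, with (\ref{3.01}) plus the swap of summation order ($\nu=n-i$) handling $\sigma_1$ and the growth bound (\ref{3.4}) handling $\sigma_2$. The only difference is bookkeeping: the paper attaches the weight up front via $\bar r(m)=\omega(m)r(m)$ and the monotonicity bound $\omega(n-i)\ge\omega(n)$ for $i<0$, whereas you absorb $\omega(2n)$ into $C\,\omega(\nu)$ after the index swap and pull $1/(\omega(N))^2$ out of the tail $\mathcal{E}_N(r)$ --- the same inequalities applied in a different order.
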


\begin{proof}
Let $\Sigma $ denote the sum on the left of (\ref{3.6}). By
(\ref{3.2}) we have
\begin{equation}
\label{3.7} \Sigma \leq H_*^2  \sum_{n>N} \frac{(\omega
(2n))^2}{n^2} \sigma (n),
\end{equation}
where $\sigma (n)$ is given by (\ref{3.3}), so (\ref{3.8a}) holds,
i.e., $$ \sigma (n) \leq  2\sigma_1 (n) + 2\sigma_2 (n), $$ with $
\sigma_1 (n)$ and $\sigma_2 (n)$ defined by (\ref{3.9a}).

Consider the sequence $\bar{r} = (\bar{r}(m))_{m\in 2\mathbb{Z}} $
defined by $$ \bar{r} (m) = \omega (m) r(m), \quad m\in 2\mathbb{Z}.
$$ Then $\bar{r} \in \ell^2 $ and  $\|\bar{r}\| = \|r\|_{\omega}.$

Taking into account that $$ \bar{r} (n-i) = \omega (n-i) r (n-i)
\geq \omega (n)  r (n-i) \quad \text{if} \; i <0, $$ we estimate
 from above: $$ \sigma_1 (n)
\leq \frac{1}{(\omega (n))^4}\left ( \sum_{  i<0, i \neq -n} \left |
\frac{n-i}{n+i} \right | |\bar{r}(n-i)|^2 \right )^2 $$ $$ \leq
\frac{1}{(\omega (n))^4}\left ( \sum_{  i<0, i \neq -n} \left |
\frac{n-i}{n+i} \right |^2 |\bar{r}(n-i)|^2 \right ) \cdot \|
\bar{r}\|^2 $$ (by the Cauchy inequality).

Since $\omega $ is a slowly increasing weight, there is a constant
$C_0 > 0 $ such that $$ \omega (2n) \leq C_0  \omega (n) \quad
\forall n \in \mathbb{N}. $$ Therefore, in view of (\ref{3.01}), we
have $$ \sum_{n>N} \frac{(\omega (2n))^2}{n^2} \sigma_1 (n) \leq
\|\bar{r}\|^2 \sum_{n>N} \frac{C_0^2}{(\omega (n))^2} \sum_{ i<0, i
\neq -n} \left ( \frac{8}{(n+i)^2} +\frac{2}{n^2} \right )
|\bar{r}(n-i)|^2 $$ $$ \leq \frac{C_0^2\|\bar{r}\|^2}{(\omega
(N))^2} \left ( \sum_{n>N}  \sum_{  i<0, i \neq -n}
\frac{8}{(n+i)^2} |\bar{r}(n-i)|^2 + \sum_{n>N} \frac{2}{n^2}
\sum_{i} |\bar{r}(n-i)|^2 \right ). $$ Since we have, with $\nu =
n-i, $
 $$ \sum_{n>N}
\sum_{  i< 0, i \neq -n} \frac{8}{(n+i)^2} |\bar{r}(n-i)|^2 \leq
\sum_{\nu >N}  |\bar{r}(\nu)|^2 \sum_{n \neq \nu/2} \frac{8}{(2n-
\nu)^2} \leq \frac{8\pi^2}{3} \left (\mathcal{E}_N (\bar{r}) \right
)^2, $$ it follows that
\begin{equation}
\label{3.10} \sum_{n>N} \frac{(\omega (2n))^2}{n^2} \sigma_1 (n)
\leq  \frac{C_0^2 \|\bar{r}\|^4}{(\omega (N))^2} \left (
\frac{8\pi^2}{3} +2  \right ).
\end{equation}

On the other hand, since $|n-i|/|n+i| \leq 1$ for $i \geq 0,$ we
have $$  \sigma_2 (n) \leq \|r\|^4.$$ Thus, by (\ref{3.4}),
\begin{equation}
\label{3.11} \sum_{n>N} \frac{(\omega (2n))^2}{n^2} \sigma_2 (n)
\leq  \sum_{n>N} \frac{C^2_1 (2n)^{1/2}}{n^2} \|r\|^4  \leq 2
\sqrt{2}C_1^2 \frac{\|r\|^4}{\sqrt{N}}.
\end{equation}

Now (since $\|r\| \leq \|\bar{r}\|$) and $\|\bar{r}\| =
\|r\|_{\omega})$ (\ref{3.7})--(\ref{3.10}) imply (\ref{3.6}),
which completes the proof.
\end{proof}

We set, for each sequence $\rho =(\rho (m))_{m \in 2\mathbb{Z}}$ such
that $$\rho (0) =0,\quad \sum_{m \neq 0} (\rho (m))^2/m^2 < \infty,$$
\begin{equation}
\label{3.13} A_k (\rho;n) = \sum_{j_1, \ldots, j_k \neq \pm n}
\frac{|\rho (n-j_1) \rho (j_1 -j_2) \cdots \rho (j_k - n)|}{|n^2
-j_1^2| \cdots |n^2  -j_k^2|}, \quad k,n \in \mathbb{N},
\end{equation}
\begin{equation}
\label{3.14} B_k (\rho;\pm n) = \sum_{j_1, \ldots, j_k \neq \pm n}
\frac{|\rho (\pm n-j_1) \rho (j_1 -j_2) \cdots \rho (j_k \pm
n)|}{|n^2 -j_1^2| \cdots |n^2  -j_k^2|}, \quad k,n \in \mathbb{N},
\end{equation}
and
\begin{equation}
\label{3.15} A(\rho;n) = \sum_{k=1}^\infty A_k (\rho;n), \quad
B(\rho;\pm n) = \sum_{k=1}^\infty B_k (\rho; \pm n).
\end{equation}

By the elementary inequality $$ \frac{1}{|n^2 +z-j^2|} \leq
\frac{2}{|n^2 - j^2|}, \quad j \in n+ 2\mathbb{Z}, \;\; j\neq \pm n,
\;\; |z| \leq n/2, $$ (\ref{2.31}),(\ref{2.34}), (\ref{2.35}) and
(\ref{2.36}) imply,  for $ k \in \mathbb{N}, \;  |z| \leq n/2,$ that
\begin{equation}
\label{3.16} | S^{11}_k (v;n,z)|=| S^{22}_k (v;n,z)|  \leq A_k
(2V;n),
\end{equation}
\begin{equation}
\label{3.16a}  |S^{12}_k (v;n,z)| \leq B_k (2V;-n), \quad |S^{21}_k
(v;n,z)| \leq B_k (2V;n),
\end{equation}
where $ V=(V(m))_{m\in 2\mathbb{Z}} $ is the sequence of the Fourier
coefficients of the potential $v(x) = \sum_{m\in 2\mathbb{Z}} V(m)
\exp (imx).  $  Therefore, by (\ref{2.29}), (\ref{2.33}),
(\ref{2.40}) and (\ref{3.15}), we have
\begin{equation}
\label{3.18} |\alpha_n (v;z)|   \leq A(2V;n), \quad | \beta^\pm_n
(v;z)- V(\pm 2n| \leq B(2V; \pm n), \quad |z|<n/2.
\end{equation}

In view of (\ref{3.13})--(\ref{3.15}) we have
\begin{equation}
\label{3.19} A(\rho,n) = \langle \hat{V} \hat{K}^2 \hat{V} e_{n},
e_n \rangle + \langle \hat{V} \hat{K} \hat{T}(1-\hat{T})^{-1}
\hat{K} \hat{V} e_{n}, e_n \rangle,
\end{equation}
\begin{equation}
\label{3.20} B(\rho,\pm n) = \langle \hat{V}\hat{K}^2 \hat{V}
e_{\mp n}, e_{\pm n} \rangle + \langle \hat{V}\hat{K}
\hat{T}(1-\hat{T})^{-1}\hat{K} \hat{V} e_{\mp n}, e_{\pm n}
\rangle,
\end{equation}
where $\hat{V}$ and $ \hat{K} $ denote, respectively, the
operators with matrix representations
\begin{equation}
\label{3.21} \hat{V}_{ij} = |\rho (i-j)|, \quad \hat{K}_{ij} =
\begin{cases}
\frac{1}{|n^2 - j^2|^{1/2}} \delta_{ij}  & \text{if} \; i,j \neq
\pm n,\\ 0 & \text{if} \;i= \pm n, \; \text{or} \; j =\pm n,
\end{cases}
\end{equation}
and
\begin{equation}
\label{3.22} \hat{T} =\hat{T}_n = \hat{K}\hat{V}\hat{K}.
\end{equation}

The operator   $ \hat{T}$ is a slight modification of the operator
$T$ defined by (\ref{2.3}). Therefore, one can easily see that we
have the same estimate for its Hilbert--Schmidt norm, namely
(compare with (\ref{2.3})),
\begin{equation}
\label{3.23} \|\hat{T}\|_{HS} \leq C \left (
\mathcal{E}_{\sqrt{n}}(r) + \frac{\|r\|}{\sqrt{n}} \right ),
\end{equation}
where $r = (r(m))$ is an $\ell^2$--sequence defined by
\begin{equation}
\label{3.24} r(0) = 0, \quad r(m) = \rho (m)/m \quad \text{for}\;
m \neq 0,
\end{equation}
and $C$ is an absolute constant.

\begin{Lemma}
\label{lem3.4} Under the above assumptions and notations, there is
$ n_0 = n_0 (\rho)$ such that, for $n>n_0,$ we have
\begin{equation}
\label{3.25} A(\rho;n) \leq C n\left ( \mathcal{E}_{\sqrt{n}}(r) +
\frac{\|r\|}{\sqrt{n}} \right )\|r\|, \quad B(\rho;\pm n) \leq C
n\left ( \mathcal{E}_{\sqrt{n}}(r) + \frac{\|r\|}{\sqrt{n}} \right
) \|r\|,
\end{equation}
where $C$ is an absolute constant.
\end{Lemma}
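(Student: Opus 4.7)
The plan is to exploit the operator representations (\ref{3.19})--(\ref{3.20}). Since $\hat{V}$ and $\hat{K}$ are both represented by real symmetric matrices and
$$
\hat{K}^2 + \hat{K}\hat{T}(1-\hat{T})^{-1}\hat{K} = \hat{K}\bigl(1 + \hat{T}(1-\hat{T})^{-1}\bigr)\hat{K} = \hat{K}(1-\hat{T})^{-1}\hat{K},
$$
we may rewrite the quantities of interest as single quadratic forms:
$$
A(\rho;n) = \langle (1-\hat{T})^{-1}\hat{K}\hat{V}e_n,\; \hat{K}\hat{V}e_n\rangle, \quad B(\rho;\pm n) = \langle (1-\hat{T})^{-1}\hat{K}\hat{V}e_{\mp n},\; \hat{K}\hat{V}e_{\pm n}\rangle.
$$
From (\ref{3.23}), for $n$ above some threshold $n_0(\rho)$ we have $\|\hat{T}\|\leq \|\hat{T}\|_{HS}\leq 1/2$, hence $\|(1-\hat{T})^{-1}\|_{op}\leq 2$. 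Combined with Cauchy--Schwarz, this reduces the task to proving the single vector bound
$$
\|\hat{K}\hat{V}e_{\pm n}\|^2 \leq C n\bigl(\mathcal{E}_{\sqrt{n}}(r) + \|r\|/\sqrt{n}\bigr)\|r\|.
$$

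For this last estimate I would compute directly from the matrix entries (\ref{3.21}):
$$
\|\hat{K}\hat{V}e_n\|^2 = \sum_{j\neq \pm n}\frac{|\rho(n-j)|^2}{|n^2-j^2|},
$$
and then use the substitution $\rho(m) = m\cdot r(m)$ from (\ref{3.24}) to convert the $1/|n^2-j^2|$ singularity into the shape $|n-j|/|n+j|$, giving
$$
\|\hat{K}\hat{V}e_n\|^2 = \sum_{j\neq \pm n}\left|\frac{n-j}{n+j}\right||r(n-j)|^2.
$$
A further Cauchy--Schwarz step (pulling one factor of $|r(n-j)|$ aside) bounds this by
$$
\|r\|\left(\sum_{j\neq \pm n}\left|\frac{n-j}{n+j}\right|^2|r(n-j)|^2\right)^{1/2},
$$
and Lemma \ref{lem3.1} controls the remaining sum by $Cn\bigl(\mathcal{E}_n(r)+\|r\|/n\bigr)$. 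The case of $e_{-n}$ is entirely symmetric after the index change $k = -j$, since $|\rho(\cdot)|$ depends only on the absolute value of its argument. Using the trivial monotonicity $\mathcal{E}_n(r)\leq \mathcal{E}_{\sqrt n}(r)$ and $1/n \leq 1/\sqrt n$ (one could in fact state a slightly sharper bound with $\mathcal{E}_n$ here), we obtain (\ref{3.25}) for both $A$ and $B$.

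There is no real obstacle in this argument. The main conceptual ingredient is the algebraic collapse of the two terms in (\ref{3.19})--(\ref{3.20}) into one form involving $(1-\hat{T})^{-1}$, which is the device that lets us avoid re-summing the Neumann series term by term; once this is done, the proof is a clean application of Cauchy--Schwarz and Lemma \ref{lem3.1}. The only mild care needed is the verification that $\hat{V}$ (with entries $|\rho(i-j)|$) is symmetric, so that $\hat{V}\hat{K}(\ldots)\hat{K}\hat{V}$ reorganizes into an inner product of $\hat{K}\hat{V}e_{\pm n}$ with itself (or its partner for the off--diagonal $B$ case).
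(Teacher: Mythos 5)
Your argument is sound and is genuinely more streamlined than the paper's. The paper keeps the two terms of (\ref{3.19})--(\ref{3.20}) separate: it bounds $\Sigma_1=\langle\hat V\hat K^2\hat Ve_n,e_n\rangle$ by Cauchy--Schwarz plus Lemma~\ref{lem3.1}, and bounds $\Sigma_2=\langle\hat V\hat K\hat T(1-\hat T)^{-1}\hat K\hat Ve_n,e_n\rangle$ by invoking the two--index Lemma~\ref{lem3.2} with $H(n)=\hat T(1-\hat T)^{-1}$, $\|H(n)\|_{HS}\le 1$. Your collapse $\hat K^2+\hat K\hat T(1-\hat T)^{-1}\hat K=\hat K(1-\hat T)^{-1}\hat K$, followed by $\|(1-\hat T)^{-1}\|\le 2$ (legitimate for $n>n_0(\rho)$ by (\ref{3.23})), removes Lemma~\ref{lem3.2} from this proof entirely and reduces everything to the single--vector estimate of $\|\hat K\hat Ve_{\pm n}\|^2$, which is exactly the paper's $\Sigma_1$ computation. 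Nothing is lost for Lemma~\ref{lem3.4} itself; note only that the two--index machinery (Lemmas~\ref{lem3.2}, \ref{lem3.2a}, \ref{lem3.3}) is still indispensable later, where one must sum the squares of these quantities over $n$ with weights (Lemma~\ref{lem3.5}, Proposition~\ref{prop3.2}), and a crude operator--norm bound would not give the decaying factor in $N$.

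One point does need repair: $\hat V$ with entries $|\rho(i-j)|$ is symmetric only when $|\rho(-m)|=|\rho(m)|$, which is not among the hypotheses --- $\rho$ is an arbitrary sequence with $\rho(0)=0$ and $\sum_{m\neq 0}|\rho(m)|^2/m^2<\infty$, and in the application (\ref{3.18}) it is $2V$ with $V$ the Fourier coefficients of a possibly complex potential. Without symmetry your identity becomes $\langle(1-\hat T)^{-1}\hat K\hat Ve_{\mp n},\,\hat K\hat V^{*}e_{\pm n}\rangle$ with $(\hat V^{*})_{ij}=|\rho(j-i)|$. This is harmless: $\|\hat K\hat V^{*}e_{\pm n}\|^2$ is the same sum with $r(\cdot)$ replaced by $r(-\,\cdot)$, which preserves $\|r\|$ and $\mathcal{E}_m(r)$, so Lemma~\ref{lem3.1} applies to both factors and Cauchy--Schwarz finishes as before; alternatively, symmetrize $\rho$ at the outset, since every $A_k$ and $B_k$ is monotone in each $|\rho(m)|$ and symmetrization at most doubles $\|r\|$. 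With that caveat addressed, the proof is complete.
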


\begin{proof}
By (\ref{3.19}), $$ A(\rho,n) = \langle \hat{V} \hat{K}^2 \hat{V}
e_{n}, e_n \rangle + \langle \hat{V} \hat{K} \hat{T}(1-\hat{T})^{-1}
\hat{K} \hat{V} e_{n}, e_n \rangle = \Sigma_1 +\Sigma_2. $$ In view
of (\ref{3.13})--(\ref{3.15}) and (\ref{3.24}), we have $$\Sigma_1
\leq   \sum_{j\neq \pm n} \frac{|\rho (n-j)\rho (j-n)|}{|n^2 - j^2|}
\leq \sum_{j\neq \pm n} \left |   \frac{n-j}{n+j} \right |
|r(n-j)||r(j-n)| $$ $$ \leq \left (
 \sum_{j\neq \pm n} \left |  \frac{n-j}{n+j} \right |^2
|r(n-j)|^2 \right )^{1/2} \cdot \|r\| \leq C n\left (
\mathcal{E}_{\sqrt{n}}(r) + \frac{\|r\|}{\sqrt{n}} \right ) \|r\|
$$ (by the Cauchy inequality and Lemma \ref{lem3.1}). In an
analogous way we get $$  \Sigma_2 \leq   \sum_{i,j \neq \pm n}
\frac{|\rho (n-i) \rho (j-n)|}{|n^2 -i^2|^{1/2}|n^2 -j^2|^{1/2}}
H_{ij}(n)  $$$$ = \sum_{i,j\neq \pm n} \left | \frac{n-i}{n+i}
\right |^{1/2} \left | \frac{n+j}{n-j} \right |^{1/2} |r(n-i)||
r(n+j)| H_{ij} (n), $$ where $ H(n) = \hat{T} (1-\hat{T})^{-1},$ so
$H_{ij} \geq 0.$ By (\ref{3.24}),  $ \|\hat{T}\|\leq 1/2 $ for
$n>n_0, $ which implies $$ \| H(n) \| \leq 2\|\hat{T}\| \leq 1 \quad
\text{for} \; n >n_0. $$ Thus, by Lemma \ref{lem3.2}, we get
$$ \Sigma_2  \leq Cn \|r\| \left ( \mathcal{E}_n (r) +
\frac{\|r\|}{n}  \right ), $$ where $C$  is an absolute constant.

The obtained estimates of $ \Sigma_1  $ and $ \Sigma_2  $ imply the
first inequality in (\ref{3.25}). We omit the proof of the second
inequality there (which gives an upper bound for $B(\rho, n)),$
because it is practically the same.
\end{proof}

\begin{Proposition}
\label{prop3.1} Let $v $ be a $ H^{-1} $--potential of the form
(\ref{002}) with (\ref{003}), and let (\ref{0016}) define the
sequence of its Fourier coefficients $ V= (V(m))_{m \in
2\mathbb{Z}}.$

There exists $n_0 =n_0 (v)$ such that for $n>n_0$ we have
\begin{equation}
\label{3.31} |\alpha_n (v;z)|,|\beta^\pm_n (v;z)- V(\pm 2n)|  \leq
C n\left ( \mathcal{E}_{\sqrt{n}}(q) +
 \frac{\|q\|}{\sqrt{n}} \right ) \|q\|
\quad \text{for} \; |z| \leq n/2,
\end{equation}
and moreover,
\begin{equation}
\label{3.32} \left |\frac{\partial \alpha_n (v;z)}{\partial
z}\right |, \left |\frac{\partial \beta_n (v;z)}{\partial z}\right
|  \leq 2C \left ( \mathcal{E}_{\sqrt{n}}(q) +
\frac{\|q\|}{\sqrt{n}} \right ) \|q\| \quad \text{for}  \; |z|
\leq n/4,
\end{equation}
where $C$ is an absolute constant.
\end{Proposition}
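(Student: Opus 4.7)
The proposition is essentially a corollary of the tools already assembled: the pointwise bound (3.31) is just (3.18) combined with Lemma \ref{lem3.4}, while the derivative bound (3.32) is obtained from (3.31) via Cauchy's integral formula. Let me lay out the plan in more detail.

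The plan is as follows. For (3.31), I would first translate from $\rho = 2V$ back to the Fourier data of $Q$. By (\ref{0016}) we have $V(m) = imq(m)$, so the sequence $r$ associated to $\rho = 2V$ through the definition (\ref{3.24}) is $r(m) = 2V(m)/m = 2iq(m)$ for $m \neq 0$ and $r(0)=0$. Consequently $\|r\| = 2\|q\|$ and $\mathcal{E}_{\sqrt{n}}(r) = 2\mathcal{E}_{\sqrt{n}}(q)$. Plugging $\rho = 2V$ into Lemma \ref{lem3.4} gives
\begin{equation*}
A(2V;n),\; B(2V;\pm n) \;\leq\; C n \Bigl(\mathcal{E}_{\sqrt{n}}(q) + \|q\|/\sqrt{n}\Bigr)\|q\|
\end{equation*}
for $n > n_0(v)$, with a new absolute constant $C$ (absorbing the factors of $2$). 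Combining this with (\ref{3.18}) yields the claimed bounds on $|\alpha_n(v;z)|$ and $|\beta^\pm_n(v;z)-V(\pm 2n)|$ throughout the disc $|z|\le n/2$.

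For (3.32), the key observation is that the series (\ref{2.29}) defining $S^{ij}$, and hence $\alpha_n$ and $\beta^\pm_n$, converges uniformly on $|z|\le n/2$ by the Hilbert--Schmidt bound of Lemma \ref{lem2.1} and the geometric series $(1-T)^{-1} = \sum T^{k-1}$; moreover each term is a rational function of $z$ with poles outside $|z|\le n/2$. Thus $\alpha_n(v;z)$ and $\beta^\pm_n(v;z)$ are holomorphic in $z$ on the disc $|z|<n/2$. I would apply Cauchy's integral formula for the derivative: for $|z_0|\le n/4$, the circle $\Gamma_{z_0} = \{\zeta : |\zeta - z_0| = n/4\}$ lies inside $\{|\zeta|\le n/2\}$, and
\begin{equation*}
\frac{\partial \alpha_n}{\partial z}(z_0) \;=\; \frac{1}{2\pi i}\oint_{\Gamma_{z_0}} \frac{\alpha_n(v;\zeta)}{(\zeta - z_0)^2}\,d\zeta.
\end{equation*}
Estimating with the sup norm on $\Gamma_{z_0}$ gives $|\partial_z \alpha_n(v;z_0)| \le (4/n)\cdot \sup_{|\zeta|\le n/2}|\alpha_n(v;\zeta)|$, which by (3.31) is at most $4C(\mathcal{E}_{\sqrt{n}}(q) + \|q\|/\sqrt{n})\|q\|$. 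Exactly the same argument applies to $\beta^\pm_n$, since $V(\pm 2n)$ is independent of $z$ and therefore its derivative vanishes. Renaming the constant gives (3.32).

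There is no serious obstacle here; the only minor point requiring care is verifying holomorphy of $\alpha_n$ and $\beta^\pm_n$ in $z$ on the disc $|z|<n/2$, so that Cauchy's formula is legitimate. This follows from the Neumann series representation together with the uniform Hilbert--Schmidt estimate of Lemma \ref{lem2.1}, which shows $\|T(n;\lambda)\|_{HS}\le 1/2$ uniformly on $|z|\le n/2$ for large $n$; each summand in (\ref{2.31})--(\ref{2.35}) is manifestly analytic in $z$, and the uniform bound $\|T\|\le 1/2$ lets us differentiate term by term. The constants $2C$ versus $4C$ in the statement are immaterial and can be absorbed into the absolute constant.
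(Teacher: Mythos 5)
Your proposal is correct and follows exactly the paper's own (very short) argument: (\ref{3.31}) is read off from (\ref{3.18}) together with Lemma~\ref{lem3.4} after identifying $r(m)=2V(m)/m=2iq(m)$, and (\ref{3.32}) follows from analyticity of $\alpha_n,\beta_n^\pm$ on $|z|\le n/2$ plus the Cauchy inequality for derivatives. The only difference is that you spell out the details (the Neumann-series justification of holomorphy and the exact radius in Cauchy's formula), and your observation that the resulting constant is $4C$ rather than $2C$ is harmless since $C$ is an unspecified absolute constant.
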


\begin{proof}
In view of (\ref{3.18}), (\ref{3.31}) follows immediately from
Lemma \ref{lem3.4}.

Since $\alpha_n $ and $\beta_n^\pm $ depend analytically on $z $
for $|z| \leq n/2,$ the Cauchy inequality for their derivatives
proves (\ref{3.32}). Proposition~\ref{prop3.1} is proved.
\end{proof}

\begin{Lemma}
\label{lem3.5} Suppose $\omega =(\omega (m))_{m \in 2\mathbb{Z}}$
is a weight such that
\begin{equation}
\label{3.40} \omega (m) = \omega_1 (m)/m, \quad \text{for} \; \; m
\neq 0,
\end{equation}
where $\omega_1 = (\omega_1 (m))_{m \in 2\mathbb{Z}} $ is a slowly
increasing unbounded weight such that
\begin{equation}
\label{3.41} \omega_1 (m) \leq C_1 |m|^{1/4}, \quad m\in
2\mathbb{Z}.
\end{equation}
If $\rho= (\rho(m))_{m\in 2\mathbb{Z}}\in \ell^2 (\omega,
2\mathbb{Z}),\; \rho (0) =0,$ then there exists $n_0 =n_0
(\|\rho\|_{\omega}, \omega_1)$ such that
\begin{equation}
\label{3.42} \sum_{n>N} (\omega (2n))^2 |B(\rho;\pm n)|^2
 \leq C \|\rho\|_{\omega}^4 \left
 ( \frac{ 1}{(\omega_1 (N))^2} +
 \frac{1}{\sqrt{N}} \right ), \quad N>n_0,
\end{equation}
where $C= C(\omega_1). $
\end{Lemma}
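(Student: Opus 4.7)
The plan is to follow closely the two-term splitting used in the proof of Lemma~\ref{lem3.4}. Using the operator-theoretic representation (\ref{3.20}), write
\[
B(\rho;\pm n) = \Sigma_1(n) + \Sigma_2(n),
\]
where $\Sigma_1 = \langle \hat V \hat K^2 \hat V e_{\mp n}, e_{\pm n}\rangle$ is the $k=1$ contribution and $\Sigma_2 = \langle \hat V \hat K\, \hat T (1-\hat T)^{-1} \hat K \hat V e_{\mp n}, e_{\pm n}\rangle$ collects all higher-order terms, so that $|B(\rho;\pm n)|^2 \leq 2|\Sigma_1|^2 + 2|\Sigma_2|^2$ and the weighted sum in (\ref{3.42}) splits into two pieces estimated separately.

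I would next introduce the companion sequence $r(m) = \rho(m)/m$ for $m \neq 0$, whose norm satisfies $\|r\|_{\omega_1} = \|\rho\|_\omega$ by the definition (\ref{3.40}). Writing $|\rho(m)| = |m|\,|r(m)|$ and $|n^2-j^2|^{1/2} = |n-j|^{1/2}|n+j|^{1/2}$, a direct computation rewrites $\Sigma_2(n)$ in the form
\[
\Sigma_2(n) = \sum_{i,j\neq \pm n}\left|\frac{n-i}{n+i}\right|^{1/2}\left|\frac{n+j}{n-j}\right|^{1/2}|r(n-i)|\,|r(n+j)|\,H_{ij}(n),
\]
with $H = \hat T(1-\hat T)^{-1}$. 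The analog of Lemma~\ref{lem2.1} for $\hat T$ (recorded in (\ref{3.23})) yields $\|\hat T\|_{HS}\to 0$, so for $n \geq n_0 = n_0(\|\rho\|_\omega,\omega_1)$ one has $\|\hat T\|\leq 1/2$ and $\|H\|_{HS}\leq 2\|\hat T\|_{HS}$ is uniformly bounded by some admissible constant $H_*$. This expression matches exactly the hypothesis of Lemma~\ref{lem3.3} applied with its weight $\omega$ replaced by $\omega_1$, which satisfies (\ref{3.4}) by (\ref{3.41}) and is slowly increasing by assumption. Since $(\omega(2n))^2 = (\omega_1(2n))^2/(2n)^2$, Lemma~\ref{lem3.3} delivers
\[
\sum_{n>N}(\omega(2n))^2 |\Sigma_2|^2 \leq C\,\|\rho\|_\omega^4\left(\frac{1}{(\omega_1(N))^2}+\frac{1}{\sqrt N}\right),
\]
which is of the form claimed in (\ref{3.42}).

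For $\Sigma_1$, the same factorization produces a complete cancellation of $|n^2-j^2|$, leaving $\Sigma_1(n) = \sum_{j\neq \pm n}|r(n-j)|\,|r(n\pm j)|$, up to possibly replacing $r$ by the auxiliary sequence $m\mapsto |r(-m)|$ in the $-n$ case (same $\ell^2$ norms). Cauchy--Schwarz then gives $\Sigma_1 \leq \|r\|_2^2 \leq C\|\rho\|_\omega^2$, using that the weight $\omega_1$ is everywhere positive and thus bounded below by a positive constant on account of its slow increase. By (\ref{3.41}), $\omega(2n) = \omega_1(2n)/(2n) \leq C_1 (2n)^{-3/4}$, so
\[
\sum_{n>N}(\omega(2n))^2|\Sigma_1|^2 \leq C\,\|\rho\|_\omega^4 \sum_{n>N} n^{-3/2} \leq C'\,\|\rho\|_\omega^4\, N^{-1/2},
\]
which is absorbed into the $1/\sqrt N$ term of (\ref{3.42}).

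The main obstacle is the algebraic bookkeeping: identifying $\Sigma_2$ with the exact left-hand side of Lemma~\ref{lem3.3}, verifying that its hypotheses transfer to $\omega_1$, and tracking how the factor $1/m$ in the definition of $\omega$ feeds into the $(\omega(2n))^2/n^2$ pre-factor of Lemma~\ref{lem3.3}. Once this is set up, the estimate follows essentially mechanically from Lemma~\ref{lem3.3} together with one elementary convergent series; no ideas beyond those in Lemma~\ref{lem3.4} are required.
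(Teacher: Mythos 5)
Your proposal follows the paper's own route in its essentials: the same splitting $B(\rho;\pm n)=\Sigma_1+\Sigma_2$ coming from (\ref{3.20}), the same reduction to the case $+n$ by flipping signs, the same rewriting of $\Sigma_2$ in the exact form of Lemma~\ref{lem3.3} with $H=\hat T(1-\hat T)^{-1}$, and the same feeding-through of $(\omega(2n))^2=(\omega_1(2n))^2/(2n)^2$. Your treatment of $\Sigma_1$ differs slightly from the paper's: you bound $|\Sigma_1|\le\|r\|^2\le C\|\rho\|_\omega^2$ and then use $(\omega(2n))^2\le C_1^2(2n)^{-3/2}$ directly from (\ref{3.41}), obtaining $O(N^{-1/2})$, whereas the paper uses the slowly-increasing/sub-multiplicative property $\omega_1(2n)\le C_0\,\omega_1(n-j)\omega_1(n+j)$ to absorb the weight into $\bar r$ and obtains $O(N^{-1})$. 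Both bounds are absorbed by the $1/\sqrt{N}$ term in (\ref{3.42}), so your simpler argument is perfectly adequate here.

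There is, however, one genuine gap. You assert ``for $n\ge n_0=n_0(\|\rho\|_\omega,\omega_1)$ one has $\|\hat T\|_{HS}\le 1/2$,'' but you never justify why the threshold $n_0$ can be made to depend only on the \emph{norm} $\|\rho\|_\omega$ and the weight $\omega_1$, rather than on the individual sequence $\rho$. Estimate (\ref{3.23}) involves $\mathcal{E}_{\sqrt{n}}(r)$, which for a fixed $r\in\ell^2$ tends to zero but at a rate that in general depends on $r$ itself. The point of the unboundedness hypothesis on $\omega_1$ -- which your argument never invokes -- is precisely to convert this into a norm-controlled rate: from $\bar r(m)=\omega_1(m)r(m)$ one gets
\begin{equation*}
\mathcal{E}_{\sqrt{n}}(r)\ \le\ \frac{\mathcal{E}_{\sqrt{n}}(\bar r)}{\omega_1(\sqrt{n})}\ \le\ \frac{\|\rho\|_\omega}{\omega_1(\sqrt{n})},
\end{equation*}
so that $\|\hat T\|_{HS}\le C\bigl(\omega_1(\sqrt{n})^{-1}+n^{-1/2}\bigr)\|\rho\|_\omega$, which is $\le 1/2$ for $n$ large depending only on $\omega_1$ and $\|\rho\|_\omega$. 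This uniformity is not cosmetic: the stated dependence of $n_0$ is what makes the lemma usable later (e.g.\ in Lemma~\ref{lem4.2}, where one needs the construction to hold uniformly on a ball $\mathcal{B}^\Omega_{3r_N}$). You should insert the displayed inequality and the accompanying use of unboundedness of $\omega_1$ to close the argument.
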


\begin{proof}
By changing the summation indices one can easily see that $$ B_k
(\rho;-n) = B_k (\tilde{\rho};n), $$ where the sequence
$\tilde\rho $ is defined by $\tilde\rho (j) = \rho (-j), $ so we
have $$ B (\rho;-n) = B (\tilde\rho;n). $$ Thus, it is enough to
consider only the case of positive $n$ in (\ref{3.42}).

Consider the sequences $r =(r(m)) $ and $\bar{r} =(\bar{r} (m)) $
defined by
\begin{equation}
\label{3.42a} r(0) =0, \quad r(m) =\rho (m)/m  \quad \text{for} \;
m \neq 0, \quad \bar{r} (m) = \omega_1 (m) r(m).
\end{equation}
Then we have $$r \in \ell^2 (\omega_1, 2\mathbb{Z}), \quad \bar{r}
\in \ell^2 (2\mathbb{Z}), \quad \|\bar{r}\| = \|r\|_{\omega_1}=
\|\rho\|_{\omega}.$$

Since $\omega_1 $ is a slowly increasing weight, there exists $C_0
\geq 1$ such that
\begin{equation}
\label{3.43} \omega_1 (2m) \leq C_0 \omega_1 (m), \quad m \in
2\mathbb{Z}.
\end{equation}
One can easily see that the weight $C_0 \omega_1 (m) $ is
sub--multiplicative. Therefore,
\begin{equation}
\label{3.44} \omega_1 (2n) \leq C_0 \omega_1 (n-j) \omega_1 (n+j),
\quad j \in 2\mathbb{Z}.
\end{equation}

By (\ref{3.20}) we have
\begin{equation}
\label{3.45} B(\rho;n) = \langle \hat{V}\hat{K}^2 \hat{V} e_{-n},
e_{n} \rangle + \langle \hat{V}\hat{K}
\hat{T}(1-\hat{T})^{-1}\hat{K} \hat{V} e_{-n}, e_{n} \rangle
=
\Sigma_3 + \Sigma_4.
\end{equation}
By (\ref{3.14}),(\ref{3.15}),  (\ref{3.21}) and (\ref{3.42a}) $$
\Sigma_3 =  \langle \hat{V}\hat{K}^2 \hat{V} e_{-n}, e_{n} \rangle
 =  \sum_{j \neq \pm n} \frac{|\rho (n-j)|| \rho (j+n)|} {|n^2
-j^2|}   \leq  \sum_{j \neq \pm n} |r(n-j)||r(j+n)|. $$ Therefore,
from (\ref{3.42a}) -- (\ref{3.44}) it follows that
\begin{equation}
\label{3.46} \sum_{n>N} \frac{(\omega_1 (2n))^2}{n^2} \left |
\Sigma_3 \right |^2  \leq \sum_{n>N} \frac{1}{n^2}  \left (\sum_{j
\neq \pm n} C_0 |\bar{r}(n-j)||\bar{r}(j+n)| \right )^2 \leq
\frac{1}{N} C^2_0 \|\bar{r}\|^4.
\end{equation}

On the other hand, (\ref{3.14}),(\ref{3.15}), (\ref{3.22}) and
(\ref{3.42a}) imply that $$  \Sigma_4   =  \langle \hat{V}\hat{K}
H(n) \hat{K} \hat{V} e_{-n}, e_{n} \rangle  \leq \sum_{i,j\neq \pm
n} \frac{|\rho (n-i)||\rho (j+n)|} {|n^2 - i^2|^{1/2}|n^2 -
j^2|^{1/2}} H_{ij}(n) $$ $$ = \sum_{i,j\neq \pm n} \left |
\frac{n-i}{n+i} \right |^{1/2} \left | \frac{n+j}{n-j} \right
|^{1/2} |r(n-i)|| r(n+j)| H_{ij} (n), $$ where $ H(n) =\hat{T}
(1-\hat{T})^{-1}.$ By (\ref{3.23}) and (\ref{3.42a}) we have
\begin{equation}
\label{3.47}  \|\hat{T}\|_{HS} \leq C \left (
\mathcal{E}_{\sqrt{n}}(r) + \frac{\|r\|}{\sqrt{n}} \right ),
\end{equation}
which implies, since $\omega_1 $ is unbounded,
\begin{equation}
\label{3.470} \|\hat{T}\|_{HS} \leq C\left (
\frac{\mathcal{E}_{\sqrt{n}}(\bar{r})}{\omega_1 (\sqrt{n})} +
\frac{\|r\|}{\sqrt{n}} \right ) \leq C\left ( \frac{1}{{\omega_1
(\sqrt{n})}} + \frac{1}{\sqrt{n}} \right )\cdot \|\rho\|_{\omega}
\leq \frac{1}{2} \end{equation}
 for  $n>n_0 =n_0
(\|\rho\|_{\omega}, \omega_1).$ Therefore, for $n>n_0,$  it
follows that $$ \|H(n)\|_{HS} = \|\hat{T} (1-\hat{T})^{-1}\|_{HS}
\leq
  2\|\hat{T}\|_{HS} \leq 1.
$$ Hence, by Lemma \ref{lem3.3}, we get
\begin{equation}
\label{3.48} \sum_{n>N} \frac{(\omega_1 (2n))^2}{n^2} \left |
\Sigma_4 \right |^2  \leq C \|\bar{r}\|^4 \left (
\frac{1}{(\omega_1 (N))^2} + \frac{1}{\sqrt{N}} \right ).
\end{equation}
Now   (\ref{3.40}), (\ref{3.46}) and (\ref{3.48}) imply
(\ref{3.42}), may be with another $n_0.$ This completes the proof.
\end{proof}

In Lemma \ref{lem3.5} we assume that the weight $\omega_1 $ is
unbounded, and this assumption is used to get (\ref{3.470}). But
if $\omega_1  $ is bounded, say $\omega_1 (k) \equiv 1, $
(\ref{3.47}) implies $ \|\hat{T}\|_{HS} \leq 1/2 $ for $ n> n_0
(r),$ so we have $ \|H(n)\|_{HS} \leq 1 $ for $n>n_0.$

Hence, by Lemma \ref{lem3.2a}, we get
\begin{equation}
\label{3.48a} \sum_{n>N} \frac{1}{n^2} \left | \Sigma_4 \right |^2
\leq C\|r\|^2  \left (  \left (\mathcal{E}_N (r) \right )^2
+\frac{\|r\|^2}{N} \right ),
\end{equation}
where $C$ is an absolute constant. Since all other estimates in the
proof of Lemma~\ref{lem3.5} hold with $r=\bar{r},$ we get the
following statement.

\begin{Lemma}
\label{lem3.5a} If $r=(r(m))_{m\in 2\mathbb{Z}}\in \ell^2
(2\mathbb{Z}) $ and $\rho= (\rho(m)),\; \rho (m) = m r(m), $ then
\begin{equation}
\label{3.42b} \sum_{n>N} \frac{1}{n^2} |B(\rho;\pm n)|^2
 \leq
 C\|r\|^2  \left (  \left (\mathcal{E}_N (r) \right )^2
+\frac{\|r\|^2}{N} \right ),
\end{equation}
where $C$ is an absolute constant.

\end{Lemma}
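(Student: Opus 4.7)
The plan is to reproduce the proof of Lemma \ref{lem3.5} in the case $\omega_1\equiv 1$, invoking the unweighted inequality Lemma \ref{lem3.2a} in place of its weighted counterpart Lemma \ref{lem3.3}. The key observation is that when $\omega_1\equiv 1$ one loses the mechanism (\ref{3.470}) that was used to absorb $\|\hat T\|_{HS}$ against an unbounded weight, but this is harmless: the hypothesis $r\in\ell^2$ by itself guarantees via (\ref{3.47}) that $\|\hat T\|_{HS}\leq 1/2$ for $n$ sufficiently large, which is all that is required.

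As in the opening of the proof of Lemma \ref{lem3.5}, the change of summation indices gives $B(\rho;-n)=B(\tilde\rho;n)$ with $\tilde\rho(j)=\rho(-j)$, so it suffices to handle the sign $+n$. Applying the decomposition (\ref{3.45}),
$$B(\rho;n)=\Sigma_3+\Sigma_4,\qquad \Sigma_3=\langle\hat V\hat K^2\hat V e_{-n},e_n\rangle,\quad \Sigma_4=\langle\hat V\hat K\hat T(1-\hat T)^{-1}\hat K\hat V e_{-n},e_n\rangle,$$
and the elementary bound $|B(\rho;n)|^2\leq 2|\Sigma_3|^2+2|\Sigma_4|^2$, the problem reduces to estimating each of the two summands.

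For $\Sigma_3$, the definition $\rho(m)=m\,r(m)$ together with $|n^2-j^2|=|n-j||n+j|$ collapses the weights, giving $|\Sigma_3|\leq \sum_{j\neq\pm n}|r(n-j)||r(j+n)|\leq \|r\|^2$ by Cauchy--Schwarz, so that $\sum_{n>N}n^{-2}|\Sigma_3|^2\leq \|r\|^4/N$, which contributes the second term in (\ref{3.42b}). For $\Sigma_4$ we write, exactly as in the proof of Lemma \ref{lem3.5},
$$|\Sigma_4|\leq\sum_{i,j\neq\pm n}\left|\frac{n-i}{n+i}\right|^{1/2}\left|\frac{n+j}{n-j}\right|^{1/2}|r(n-i)||r(n+j)||H_{ij}(n)|,$$
where $H(n)=\hat T(1-\hat T)^{-1}$. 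By (\ref{3.47}), since $\mathcal{E}_{\sqrt n}(r)\to 0$ and $\|r\|/\sqrt n\to 0$, we have $\|\hat T\|_{HS}\leq 1/2$ for $n>n_0(r)$, hence $\|H(n)\|_{HS}\leq 2\|\hat T\|_{HS}\leq 1$. Lemma \ref{lem3.2a} applied with $H_\ast=1$ then yields precisely the estimate (\ref{3.48a}), and adding the bound for $\Sigma_3$ completes the proof of (\ref{3.42b}).

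I do not expect any substantive obstacle: once Lemma \ref{lem3.2a} is available, the argument is mechanical. The only point deserving attention is the verification that $\|\hat T\|_{HS}$ is eventually small, which in the weighted setting of Lemma \ref{lem3.5} required the unboundedness of $\omega_1$ but here is automatic from $r\in\ell^2$ via the decay of $\mathcal{E}_{\sqrt n}(r)$.
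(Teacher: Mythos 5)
Your proposal reproduces the paper's own argument: the paper presents Lemma~\ref{lem3.5a} precisely as the $\omega_1\equiv 1$ specialization of Lemma~\ref{lem3.5}, noting that (\ref{3.47}) alone gives $\|\hat T\|_{HS}\leq 1/2$ for $n>n_0(r)$ so that (\ref{3.470}) and the unboundedness of $\omega_1$ are not needed, and then applying Lemma~\ref{lem3.2a} in place of Lemma~\ref{lem3.3} to obtain (\ref{3.48a}). Your identification of why the unboundedness hypothesis can be dropped, the decomposition via (\ref{3.45}), and the separate treatment of $\Sigma_3$ and $\Sigma_4$ all match the paper, so the proof is correct and essentially the same.
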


If $ \omega = (\omega (m))_{m\in 2\mathbb{Z}} $ is a weight, then
we set $\omega \rho = (\omega (m) \rho (m))_{m\in 2\mathbb{Z}} .$

\begin{Lemma}
\label{lem3.6} In the notations (\ref{3.14})--(\ref{3.15}), if
$\omega $ is a sub--multiplicative weight on $2\mathbb{Z},$ then
\begin{equation}
\label{3.49} B(\rho;\pm n) \omega (2n) \leq B(\omega \rho;\pm n).
\end{equation}
\end{Lemma}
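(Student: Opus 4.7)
The plan is to exploit the telescoping structure of the arguments of $\rho$ appearing in each summand of (3.14), together with iterated sub-multiplicativity of $\omega$. This reduces the inequality (\ref{3.49}) to a termwise estimate between the two series.

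First, I would examine a generic summand in the definition of $B_k(\rho;\pm n)$. The arguments of the $\rho$-factors are
$$\pm n - j_1,\ j_1 - j_2,\ \ldots,\ j_{k-1} - j_k,\ j_k \pm n,$$
and a direct computation shows that these $k+1$ integers sum to $\pm 2n$ (the $j_i$'s telescope away). Since $\omega$ is sub-multiplicative on $2\mathbb{Z}$, iterating the defining inequality $\omega(a+b)\le \omega(a)\omega(b)$ across these $k+1$ terms yields
$$\omega(\pm 2n) \;\leq\; \omega(\pm n - j_1)\,\omega(j_1 - j_2)\,\cdots\,\omega(j_{k-1}-j_k)\,\omega(j_k \pm n).$$

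Second, I would multiply every summand in $B_k(\rho;\pm n)$ by $\omega(2n)$ (using the evenness convention $\omega(-2n)=\omega(2n)$ implicit for weights of the type considered in this paper) and then distribute the factors on the right-hand side of the inequality above into the corresponding $|\rho|$ factors. Because nothing in the denominator $|n^2-j_1^2|\cdots|n^2-j_k^2|$ changes, each summand is pointwise bounded by the corresponding summand of $B_k(\omega\rho;\pm n)$; summing over the tuples $(j_1,\dots,j_k)$ gives
$$\omega(2n)\, B_k(\rho;\pm n) \;\leq\; B_k(\omega\rho;\pm n), \qquad k=1,2,\ldots.$$
Summing over $k\geq 1$ (valid termwise since all quantities are non-negative and the series defining $B(\rho;\pm n)$ converges absolutely for large $n$, as used implicitly in Section 4) yields (\ref{3.49}).

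There is no real obstacle here — the only point requiring attention is the bookkeeping of signs in the "$-n$" case, where the telescoped sum is $-2n$ rather than $+2n$; this is handled by the standing convention that sub-multiplicative weights on $2\mathbb{Z}$ used in this paper satisfy $\omega(-m)=\omega(m)$, consistent with the definition (\ref{p04}) of $a(n)$ from an even sub-multiplicative $A(n)$. Thus the proof is an entirely elementary termwise comparison.
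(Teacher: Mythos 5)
Your proof is correct and follows essentially the same route as the paper: both reduce (\ref{3.49}) to the termwise inequality $\omega(2n)\,B_k(\rho;\pm n)\leq B_k(\omega\rho;\pm n)$, obtain $\omega(\pm 2n)\leq \omega(\pm n-j_1)\omega(j_1-j_2)\cdots\omega(j_k\pm n)$ by iterated sub-multiplicativity along the telescoping chain of arguments, distribute the factors into the $\rho$'s, and sum over $k$. Your remark on the evenness convention $\omega(-2n)=\omega(2n)$ matches the paper's explicit step $\omega(2n)=\omega(-2n)\leq\cdots$ and is the right way to handle the $-n$ case.
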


\begin{proof} In view of (\ref{3.15}), it is enough to show that
\begin{equation}
\label{3.50} B_k (\rho;\pm n) \omega (2n) \leq B_k (\omega
\rho;\pm n), \quad k \in \mathbb{N}.
\end{equation}
Since $\omega $ is a sub--multiplicative weight,  for each
$k$-tuple of indices $j_1, \ldots, j_k$ we have $$ \omega(2n) =
\omega(-2n) \leq  \omega(\pm n -j_1) \omega(j_1-j_2) \cdots
\omega(j_{k-1} -j_k) \omega(j_k \pm n). $$ Thus $$ \rho (\pm
n-j_1) \rho (j_1-j_2) \cdots \rho(j_k \pm n) \omega (2n) \leq $$
$$ [\rho(\pm n-j_1) \omega( \pm n-j_1)][\rho(j_1-j_2) \omega(j_1 -
j_2)] \cdots [\rho (j_k  \pm n) \omega (j_k  \pm n)], $$ which
implies (\ref{3.50}).  This proves Lemma~\ref{lem3.6}.

\end{proof}

\begin{Proposition}
\label{prop3.2} Suppose $ \Omega = (\Omega (k))_{k\in \mathbb{Z}}$
is a weight of the form
\begin{equation}
\label{3.51}
 \Omega (k) = \frac{ \Omega_1 (k) \Omega_2 (k)}{k},
\quad k\in \mathbb{Z},
\end{equation}
where $\Omega_1 $ is a slowly increasing weight such that
\begin{equation}
\label{3.52}
  \Omega_1 (k) \leq C_1 |k|^{1/4},
\end{equation}
and $\Omega_2 $ is a sub--multiplicative weight.

If $v \in H (\Omega),$ then, for large enough N, we have
\begin{equation}
\label{3.54} \sum_{n>N} \left ( | \beta^-_n (z)- V(-2n)|^2+|
\beta^+_n (z)- V(2n)|^2 \right ) (\Omega (n))^2
\end{equation}
$$ \leq C \left ( \frac{1}{(\Omega_1 (N))^2} + \frac{1}{\sqrt{N}}
\right ) \|v\|^4_{\Omega}, \quad |z|<n/2,$$ where $C=C(\Omega_1)
\geq 1.$
\end{Proposition}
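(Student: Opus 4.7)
My plan is to combine Lemma~\ref{lem3.6} and Lemma~\ref{lem3.5}: use the former to absorb the sub--multiplicative factor $\Omega_2$ into the potential, and then use the latter with $\omega_1 = \Omega_1$ to handle the remaining weight of the form $\Omega_1(m)/m$. By (\ref{3.18}) it suffices to bound $\sum_{n>N}(\Omega(n))^2|B(2V;\pm n)|^2$. Introducing the sequence $\rho(m) = 2\,\Omega_2(m)\,V(m)$ for $m \in 2\mathbb{Z}$, an application of Lemma~\ref{lem3.6} with the sub--multiplicative weight $\Omega_2$ yields
\[
\Omega_2(2n)\cdot B(2V;\pm n) \leq B(\rho;\pm n).
\]

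Next I would set $\omega(m) = \Omega_1(m)/m$ and verify the hypotheses of Lemma~\ref{lem3.5}: the weight $\Omega_1$ is slowly increasing and satisfies (\ref{3.41}) by the assumption (\ref{3.52}), while a direct computation gives
\[
\|\rho\|_\omega^2 = \sum_{m\neq 0} 4(\Omega_2(m))^2|V(m)|^2 (\Omega_1(m)/m)^2 = 4\|v\|_\Omega^2,
\]
so $\rho \in \ell^2(\omega,2\mathbb{Z})$. Lemma~\ref{lem3.5} then produces, for $N$ large enough,
\[
\sum_{n>N}(\omega(2n))^2|B(\rho;\pm n)|^2 \leq C(\Omega_1)\,\|\rho\|_\omega^4\left(\frac{1}{(\Omega_1(N))^2} + \frac{1}{\sqrt{N}}\right).
\]
Since $(\omega(2n))^2(\Omega_2(2n))^2 = (\Omega_1(2n)\Omega_2(2n)/(2n))^2 = (\Omega(2n))^2$, combining the last display with the previous inequality controls $\sum_{n>N}(\Omega(2n))^2|B(2V;\pm n)|^2$ by the right--hand side of (\ref{3.54}), up to an absolute multiplicative constant.

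The final step is to replace $\Omega(2n)$ by $\Omega(n)$, and this is the piece of the argument I expect to require the most care. Since $\Omega_1$ is slowly increasing and monotone in $|\cdot|$ (the standing convention for such weights, cf.\ (\ref{p04})) and $\Omega_2$ is sub--multiplicative, even, and monotone for $n\geq 1$ (so in particular $\Omega_2(n)\leq\Omega_2(2n)$), one obtains $\Omega(n)\leq 2\,\Omega(2n)$, and (\ref{3.54}) follows with a constant $C=C(\Omega_1)$. If $\Omega_1$ happens to be bounded, one would invoke the bounded--weight variant Lemma~\ref{lem3.5a} in place of Lemma~\ref{lem3.5} and run the same argument. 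The substantive analytic work is absorbed in Lemmas~\ref{lem3.5} and \ref{lem3.6}; what remains here is the bookkeeping that lines the weights up correctly, which is the only subtle point in the argument.
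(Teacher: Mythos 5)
Your strategy is the paper's: absorb the sub--multiplicative factor $\Omega_2$ via Lemma~\ref{lem3.6}, then apply Lemma~\ref{lem3.5} (or \ref{lem3.5a} for bounded $\Omega_1$). The gap is in the ``direct computation'' $\|\rho\|_\omega^2 = 4\|v\|_\Omega^2$. Recall from (\ref{sob}) and (\ref{00})--(\ref{000}) that $v_k = V(2k)$ and $\|v\|_\Omega^2 = \sum_{k\in\mathbb{Z}}|V(2k)|^2(\Omega(k))^2$: the weight $\Omega$ is evaluated at the half-frequency $k$, not at the Fourier frequency $2k$. Your $\rho(m) = 2\Omega_2(m)V(m)$ with $\omega(m)=\Omega_1(m)/m$ on $2\mathbb{Z}$ give
\[
\|\rho\|_\omega^2 = 4\sum_{m\in 2\mathbb{Z}\setminus\{0\}}|V(m)|^2(\Omega(m))^2 = 4\sum_{k\neq 0}|V(2k)|^2(\Omega(2k))^2,
\]
with the argument of $\Omega$ doubled relative to $\|v\|_\Omega^2$. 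This would be harmless if $\Omega$ were slowly varying, but $\Omega_2$ is only assumed sub--multiplicative and may grow exponentially, in which case $\Omega(2k)/\Omega(k)$ is unbounded and the right-hand side can be infinite for $v\in H(\Omega)$. For a concrete failure take $\Omega_1\equiv 1$, $\Omega_2(k)=e^{|k|}$, $v_k=e^{-|k|}$: then $\|v\|_\Omega^2=\sum 1/k^2<\infty$ while $\sum_k|v_k|^2(\Omega(2k))^2=\sum_k e^{2|k|}/(4k^2)=\infty$. So the hypothesis $\rho\in\ell^2(\omega,2\mathbb{Z})$ of Lemma~\ref{lem3.5} is not verified, and the bound you get is in terms of the wrong (possibly divergent) quantity rather than $\|v\|_\Omega^4$.

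The fix, which is what the paper does, is to re-index: define the $2\mathbb{Z}$--weight by $\omega(m):=\Omega(m/2)$, equivalently $\omega_i(m):=\Omega_i(m/2)$, so that $\omega(2n)=\Omega(n)$ exactly, and use the even majorant $\rho(m)=2\max(|V(-m)|,|V(m)|)$ for which $\frac{1}{4}\|\rho\|_\omega\leq\|v\|_\Omega\leq\|\rho\|_\omega$ as in (\ref{3.55}). Then Lemma~\ref{lem3.6} applied with the sub--multiplicative weight $\omega_2$ produces the factor $\omega_2(2n)=\Omega_2(n)$ rather than $\Omega_2(2n)$, Lemma~\ref{lem3.5} is invoked with the sequence $\omega_2\rho$ and the weight $\omega_1(m)/m$, and your final $\Omega(2n)\to\Omega(n)$ conversion becomes unnecessary. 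With that adjustment the rest of your outline is sound.
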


\begin{proof}
Consider the weight $\omega =(\omega (m))_{m\in 2\mathbb{Z}},$
where $ \omega (m) = \Omega (m/2).$ Then we have $\omega (m)
=2\omega_1 (m) \omega_2 (m)/m, $ where $\omega_1 $ is a slowly
increasing weight such that $\omega_1 (m) = \Omega_1 (m/2), $ and
$\omega_2 $ is a sub--multiplicative weight such that $\omega_2
(m) = \Omega_2 (m/2). $

Consider also the sequence $\rho = (\rho(m))_{m \in 2\mathbb{Z}},$
defined by $$ \rho (m)= 2\max (|V(-m)|,|V(m)|).$$ Then we have
$\rho \in \ell^2 (\omega),$ and moreover,
\begin{equation}
\label{3.55} \frac{1}{4}\| \rho \|_{\omega} \leq \|v\|_{\Omega}
\leq \|\rho\|_{\omega}.
\end{equation}

Since $\rho (-m) =\rho (m),$ we have $B(\rho; - n) = B(\rho;n),$
and therefore, by (\ref{3.18}), $$ | \beta^-_n (z)- V(-2n)|+|
\beta^+_n (z)- V(2n)| \leq 2B(\rho;n).$$ Thus, it is enough to
estimate $ \sum_{n>N} |B(\rho;n)|^2 (\omega (2n))^2.$

By Lemma \ref{lem3.6}, we have $$ B(\rho;n) \omega_2 (2n) \leq
B(\omega_2 \rho;n),$$ and therefore, $$ \sum_{n>N} |B(\rho;n)|^2
(\omega (2n))^2 \leq \sum_{n>N} |B(\omega_2 \rho;n)|^2
\frac{(\omega_1 (2n))^2}{n^2}.$$

In view of (\ref{3.42}) in Lemma \ref{lem3.5} and the identity $
\|\omega_2 \rho\|_{\omega_1/n} = \|\rho \|,$
 the latter sum
does not exceed $$ C \left ( \frac{1}{(\Omega_1 (N))^2} +
\frac{1}{N} \right ) \|\rho\|^4_{\omega}, $$ where $C=C(\omega_1).
$ In view of (\ref{3.55}), this completes the proof of
Proposition~\ref{prop3.2}.

\end{proof}

\section{Estimates for $\gamma_n.$ }

In this section we give estimates of $\gamma_n$ from above and
below in terms of matrix elements (\ref{2.27}) of operators $S$ in
Basic Equation (\ref{2.28}), i.e., in terms of $\alpha_n (v;z), \;
\beta^\pm_n (v;z)$ defined in (\ref{2.40}).

The proofs are essentially the same as in the case of
$L^2$--potentials, provided the necessary a priori estimates of
$\alpha_n (v;z), \; \beta^\pm (v;z) $ and their derivatives are
proved (which is done in Section 4, Proposition \ref{prop3.1}).

By Proposition \ref{prop004}, if the potential $v\in
H^{-1}_{loc}(\mathbb{R})$ is $\pi$--periodic, then the operator
$L_{Per^\pm}$ has exactly two eigenvalues $\lambda^-_n$ and
$\lambda^+_n$ in the disc $D_n = \{\lambda =n^2 + z, \;|z|<n/4 \}$
(counted with their multiplicity, periodic for even $n,$ or
antiperiodic for odd $n$).

By Lemma \ref{lem2.2} and Remark \ref{rem21.1}, the numbers
\begin{equation}
\label{27.2a} z_n^- = \lambda^-_n - n^2 \quad \text{and} \quad
z_n^+ = \lambda^+_n - n^2
\end{equation}
are eigenvalues of the operator $S$  defined in (\ref{2.8}), and
therefore, $ z_n^+ $ and $z^-_n $ are roots of the basic equation
(\ref{2.28}). Let us rewrite (\ref{2.28}) in the form
\begin{equation}
\label{27.3a} (\zeta_n (z))^2 = \beta_n^- (z) \cdot \beta_n^+ (z),
\end{equation}
where
\begin{equation}
\label{27.4} \zeta_n (z) = z -\alpha_n (v;z).
\end{equation}

By  Proposition \ref{prop3.1}, (\ref{3.32}),
\begin{equation}
\label{27.5} \sup_{[z_n^-, z_n^+]} |\partial \alpha_n/\partial z |
\leq \varepsilon_n, \quad  \sup_{[z_n^-, z_n^+]} |\partial
\beta^\pm_n/\partial z | \leq \varepsilon_n,
\end{equation}
where $[z_n^-, z_n^+] $ denotes the segment with end points $z_n^-
$ and $ z_n^+, $ and
\begin{equation}
\label{27.5a}  \varepsilon_n = 2C \left ( \mathcal{E}_{\sqrt{n}}
(q) + \frac{\|q\|}{\sqrt{n}} \right ) \|q\|  \to 0  \quad
\text{as} \;\; n \to \infty.
\end{equation}
Therefore, in view of (\ref{27.4}), $$ |z_n^+ - z_n^-| \leq
|\zeta_n (z^+_n) -\zeta_n (z_n^-)| + |\alpha (z_n^+) - \alpha
(z_n^-)| $$ $$ \leq |\zeta_n (z^+_n) -\zeta_n (z_n^-)|+
\varepsilon_n \cdot |z_n^+ - z_n^-|, $$ which yields
$$(1-\varepsilon_n)|z_n^+ - z_n^-| \leq |\zeta_n (z^+_n) -\zeta_n
(z_n^-)|.$$

On the other hand, in view of (\ref{27.5}), the identity $$
\zeta_n (z^+_n) -\zeta_n (z_n^-) = \int_{z_n^-}^{z_n^+}
(1-\partial \alpha_n/\partial z) dz, $$  implies $$|\zeta_n
(z^+_n) -\zeta_n (z_n^-)| \leq (1+\varepsilon_n)|z_n^+ - z_n^-|.$$
Thus, we have the following two--side estimate:
\begin{equation}
\label{27.6} (1-\varepsilon_n)|z_n^+ - z_n^-| \leq |\zeta_n
(z^+_n) -\zeta_n (z_n^-)| \leq (1+\varepsilon_n)|z_n^+ - z_n^-|.
\end{equation}

\begin{Lemma}
\label{lem4.1} In above notations, for large enough $n,$ we have
 \begin{equation}
\label{4.1} \gamma_n =|\lambda^+_n - \lambda_n^-|\leq (1+\eta_n)
(|\beta_n^- (z^*_n) |+|\beta^+_n (z^*_n)|), \quad z^*_n =
\frac{\lambda^+_n + \lambda_n^-}{2} -n^2,
\end{equation}
with $\eta_n \to 0 $ as $n \to \infty. $
\end{Lemma}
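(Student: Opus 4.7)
The plan is to combine the two--sided estimate (\ref{27.6}) with the basic equation (\ref{27.3a}) evaluated at $z=z_n^\pm$, then interpolate the values of $\beta_n^\pm$ at $z_n^\pm$ back to the midpoint $z_n^*$ using the derivative bound (\ref{27.5}).

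First, since $z_n^\pm$ solve (\ref{27.3a}), we have
\[
|\zeta_n(z_n^\pm)| \;=\; \sqrt{|\beta_n^-(z_n^\pm)|\,|\beta_n^+(z_n^\pm)|} \;\leq\; \tfrac12\bigl(|\beta_n^-(z_n^\pm)| + |\beta_n^+(z_n^\pm)|\bigr)
\]
by AM--GM. The triangle inequality then gives
\[
|\zeta_n(z_n^+)-\zeta_n(z_n^-)| \;\leq\; \tfrac12\sum_{\sigma\in\{+,-\}}\bigl(|\beta_n^-(z_n^\sigma)|+|\beta_n^+(z_n^\sigma)|\bigr).
\]

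Next I replace each $\beta_n^\pm(z_n^\sigma)$ by $\beta_n^\pm(z_n^*)$. Since $z_n^*$ is the midpoint of the segment $[z_n^-,z_n^+]$, we have $|z_n^\sigma - z_n^*| = \tfrac12|z_n^+-z_n^-| = \tfrac12\gamma_n$, and the Lipschitz bound (\ref{27.5}) on $\beta_n^\pm$ along this segment gives $|\beta_n^\pm(z_n^\sigma)-\beta_n^\pm(z_n^*)| \leq \tfrac12\varepsilon_n\gamma_n$. Summing, the right-hand side above is at most $|\beta_n^-(z_n^*)|+|\beta_n^+(z_n^*)| + \varepsilon_n\gamma_n$.

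Finally I invoke the lower bound in (\ref{27.6}):
\[
(1-\varepsilon_n)\gamma_n \;\leq\; |\zeta_n(z_n^+)-\zeta_n(z_n^-)| \;\leq\; |\beta_n^-(z_n^*)|+|\beta_n^+(z_n^*)| + \varepsilon_n\gamma_n,
\]
so $(1-2\varepsilon_n)\gamma_n \leq |\beta_n^-(z_n^*)|+|\beta_n^+(z_n^*)|$. For $n$ large enough, $\varepsilon_n < 1/4$ by (\ref{27.5a}), and dividing yields the claim with $\eta_n := 2\varepsilon_n/(1-2\varepsilon_n) \to 0$.

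I expect no real obstacle here: the only mildly delicate point is the AM--GM step, which hides whatever sign/branch information one loses by passing to absolute values (this is exactly why the estimate has the symmetric form $|\beta^-|+|\beta^+|$ rather than a single term). All the analytic heavy lifting --- making $\varepsilon_n$ small, controlling $\partial\beta_n^\pm/\partial z$ --- has already been done in Proposition~\ref{prop3.1}, so Lemma~\ref{lem4.1} is essentially a clean two--line algebraic consequence of (\ref{27.3a}) and (\ref{27.6}).
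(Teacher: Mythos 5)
Your proposal is correct and follows essentially the same path as the paper's own proof: both start from the basic equation (\ref{27.3a}) at $z_n^\pm$ via AM--GM, pass to $|\zeta_n(z_n^+)-\zeta_n(z_n^-)|$ by the triangle inequality and the lower bound in (\ref{27.6}), interpolate $\beta_n^\pm$ from $z_n^\pm$ to the midpoint $z_n^*$ using the derivative bound (\ref{27.5}), and finish by dividing by $1-2\varepsilon_n$.
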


\begin{proof}
By Equation (\ref{27.3a}), $$ |\zeta_n (z)| \leq \frac{1}{2}
(|\beta_n^- (z)| + |\beta_n^+ (z)|) \quad \text{for} \quad z=
z_n^{\pm}.$$ Thus, by (\ref{27.6}), $$ (1-\varepsilon_n)|z_n^+ -
z_n^-| \leq \frac{1}{2}(|\beta_n^-(z_n^-)|+
|\beta_n^-(z_n^+)|+|\beta_n^+(z_n^-)| +|\beta_n^+(z_n^+)|). $$ By
the estimates for $\partial \beta /\partial z $ given in Proposition
\ref{prop3.1}, (\ref{3.32}),
 we have $$
|\beta_n^\pm (z^\pm_n) -\beta_n^\pm (z^*_n)| \leq \varepsilon_n
\cdot |z_n^+ - z_n^-|/2, $$ where one may assume that
$\varepsilon_n $ is the same as in (\ref{27.5}) and (\ref{27.5a}).
Thus we get $$(1-2\varepsilon_n)|z_n^+ - z_n^-| \leq |\beta_n^+
(z^*_n)|+ |\beta_n^- (z^*_n)|,$$ which, in view of (\ref{27.2a}),
implies (\ref{4.1}). Lemma~\ref{lem4.1} is proved.
\end{proof}

\subsection{Estimates of $\gamma_n $ from below}

In the previous Lemma \ref{lem4.1} were obtained estimates of
$\gamma_n $ from above in terms of  $|\beta_n^\pm|.$ The next
statement gives estimates of $\gamma_n $ from below.

\begin{Lemma}
\label{lem32.1} In the notations of Lemma \ref{lem4.1}, there
exists a sequence $\eta_n \downarrow 0 $ such that, for large
enough $|n|, $ if $\gamma_n \neq  0$ and $ \beta_n^- (z_n^+) \cdot
\beta_n^+ (z_n^+) \neq 0,  $ then
\begin{equation}
\label{32.01}  \gamma_n  \geq \left (\frac{2\sqrt{t_n}}{1+t_n}
-\eta_n \right )  \left ( |\beta^-_n (z_n^*)| +|\beta^+_n (z_n^*)|
\right ),
\end{equation}
where
\begin{equation}
\label{32.02} t_n = |\beta_n^+ (z_n^+)| /|\beta_n^- (z_n^+)|.
\end{equation}
\end{Lemma}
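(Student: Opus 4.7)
The plan is to derive a lower bound for $|D|:=|\zeta_n(z_n^+)-\zeta_n(z_n^-)|$ from the basic equation (\ref{27.3a}), then use $|D|\le(1+\varepsilon_n)\gamma_n$ from (\ref{27.6}) to transfer it to $\gamma_n$. Set $P(z):=\beta_n^-(z)\beta_n^+(z)$ and $S:=\zeta_n(z_n^+)+\zeta_n(z_n^-)$. Squaring (\ref{27.3a}) at $z=z_n^\pm$ gives $|\zeta_n(z_n^\pm)|^2=|P(z_n^\pm)|$, and the algebraic identities
\[
DS=P(z_n^+)-P(z_n^-),\qquad |D|^2+|S|^2=2\bigl(|P(z_n^+)|+|P(z_n^-)|\bigr)
\]
force $x:=|D|^2$ to be one of the two roots of $x(A-x)=b_0^2$, with $A:=2(|P(z_n^+)|+|P(z_n^-)|)$ and $b_0:=|P(z_n^+)-P(z_n^-)|$. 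Equivalently, fixing a continuous branch of $\sqrt{\cdot}$ near $z_n^*$ (possible since $P(z_n^+)\ne 0$ by hypothesis) and writing $\zeta_n(z_n^\pm)=\sigma_\pm\sqrt{P(z_n^\pm)}$ with $\sigma_\pm\in\{\pm 1\}$, the large root of the quadratic corresponds to $\sigma_+=-\sigma_-$ and the small root to $\sigma_+=\sigma_-$.

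The derivative estimates (\ref{27.5}) give $|P'(z)|\le\varepsilon_n(|\beta_n^-(z)|+|\beta_n^+(z)|)$ on $[z_n^-,z_n^+]$, from which one obtains $b_0\le C\varepsilon_n B\gamma_n$ and $\bigl||P(z_n^\pm)|-|P(z_n^*)|\bigr|\le C\varepsilon_n B\gamma_n$, where $B:=|\beta_n^-(z_n^*)|+|\beta_n^+(z_n^*)|$. In the large-root case this yields $|D|^2\ge A-b_0^2/A\ge 4|P(z_n^+)|-C'\varepsilon_n B\gamma_n$, and combining with $|D|\le(1+\varepsilon_n)\gamma_n$ produces $\gamma_n\ge 2\sqrt{|P(z_n^+)|}/(1+\varepsilon_n)$ up to an $o(1)\cdot B$ error. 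The identity
\[
2\sqrt{|P(z_n^+)|}=\frac{2\sqrt{t_n}}{1+t_n}\bigl(|\beta_n^-(z_n^+)|+|\beta_n^+(z_n^+)|\bigr),
\]
which is immediate from the definition (\ref{32.02}) of $t_n$, together with the derivative bound $\bigl||\beta_n^\pm(z_n^+)|-|\beta_n^\pm(z_n^*)|\bigr|\le\varepsilon_n\gamma_n/2$, will let me replace $z_n^+$ by $z_n^*$ on the right and absorb all $\varepsilon_n$-errors into a single sequence $\eta_n\downarrow 0$, yielding (\ref{32.01}).

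In the small-root case, $|D|^2\le b_0^2/A\le C\varepsilon_n^2B^2\gamma_n^2/|P(z_n^+)|$, and the left half of (\ref{27.6}) forces $(1-\varepsilon_n)\gamma_n\le|D|\le C'\varepsilon_n B\gamma_n/\sqrt{|P(z_n^+)|}$, whence $\sqrt{|P(z_n^+)|}\le C''\varepsilon_n B$ and therefore $\tfrac{2\sqrt{t_n}}{1+t_n}=O(\varepsilon_n)$. Choosing $\eta_n\ge C'''\varepsilon_n$ (while still $\eta_n\downarrow 0$) then makes the right-hand side of (\ref{32.01}) nonpositive in this case, and the inequality holds trivially.

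The main obstacle is the branch bookkeeping: the exact quadratic for $|D|^2$ admits two solutions, and only the larger yields the full lower bound $|D|\approx 2\sqrt{|P|}$ needed in the generic case. The smaller-root case cannot be excluded outright; it is handled by showing it occurs only when $t_n$ is so close to $0$ or $\infty$ that the right-hand side of (\ref{32.01}) is nonpositive. The careful tracking of $\varepsilon_n$-errors across the substitutions $z_n^\pm\to z_n^*$ in the values of $P$ and $\beta_n^\pm$ is routine but must be done uniformly in $n$ to secure a single $\eta_n\downarrow 0$ that works in both cases.
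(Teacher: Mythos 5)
Your proposal is correct, and all the essential ingredients coincide with the paper's: the basic equation $(\zeta_n^\pm)^2=\beta_n^+(z_n^\pm)\beta_n^-(z_n^\pm)$, the identity $(\zeta_n^+)^2-(\zeta_n^-)^2=\int_{z_n^-}^{z_n^+}\frac{d}{dz}[\beta_n^+\beta_n^-]\,dz$ with the derivative bounds (\ref{27.5}), the two--sided comparison (\ref{27.6}), and the elementary identity $\sqrt{|\beta_n^+\beta_n^-|}=\frac{\sqrt{t_n}}{1+t_n}(|\beta_n^+|+|\beta_n^-|)$. The one genuine organizational difference is that the paper never needs your branch/case analysis: writing $D=\zeta_n^+-\zeta_n^-$ and $S=\zeta_n^++\zeta_n^-$, it divides $|S||D|=|P(z_n^+)-P(z_n^-)|\le\varepsilon_n(B+2\varepsilon_n\gamma_n)\gamma_n$ by the lower bound $|D|\ge(1-\varepsilon_n)\gamma_n$ (available since $\gamma_n\ne0$) to conclude directly that $|S|\lesssim\varepsilon_n B$, and then uses $|D|\ge 2|\zeta_n^+|-|S|$. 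This shows the ``small root'' of your quadratic is automatically excluded up to $O(\varepsilon_n)$ errors, so what you treat as a separate degenerate case is absorbed into the main estimate. Your handling of that case is nonetheless sound (it forces $\sqrt{|P(z_n^+)|}\lesssim\varepsilon_n B$, making the coefficient in (\ref{32.01}) absorbable into $\eta_n$), though note that your claim $\frac{2\sqrt{t_n}}{1+t_n}=O(\varepsilon_n)$ there implicitly needs $|\beta_n^+(z_n^+)|+|\beta_n^-(z_n^+)|\gtrsim B$, which fails only when $B\lesssim\varepsilon_n\gamma_n$, a regime where (\ref{32.01}) is trivial anyway; the paper's route spares you this extra bookkeeping.
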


\begin{proof}. By Proposition \ref{prop3.1}, (\ref{3.32}),
\begin{equation}
\label{32.1} \sup_{[z_n^-, z_n^+]} |\partial \beta_n^\pm/\partial
z | \leq \varepsilon_n, \qquad   \varepsilon_n \downarrow 0,
\end{equation}
where $[z_n^-, z_n^+] $ denotes the segment determined by $z_n^- $
and $ z_n^+. $ Since $$ \beta_n^\pm (z) - \beta_n^\pm (z_n^*) =
\int_z^{z_n^*} \frac{d}{d z} (\beta_n^\pm (z)  ) d z, $$
(\ref{32.1}) implies that $$ |\beta_n^\pm (z) - \beta_n^\pm
(z_n^*) | \leq \varepsilon_n |z-z_n^*| \leq \varepsilon_n |z_n^+ -
z_n^-| \quad \text{for} \quad z \in [z_n^-, z_n^+]. $$ Thus, for
$z \in [z_n^-, z_n^+], $
\begin{equation}
\label{32.2} |\beta_n^\pm (z_n^*)| - \varepsilon_n |z_n^+ - z_n^-|
\leq
 |\beta_n^\pm (z) | \leq |\beta_n^\pm
(z_n^*)| + \varepsilon_n |z_n^+ - z_n^-|
\end{equation}

By  (\ref{27.4}) and (\ref{27.6}), we have
\begin{equation}
\label{32.3} (1-\varepsilon_n ) |z_n^+ - z_n^-| \leq |\zeta_n^+ -
\zeta_n^-| \leq (1+ \varepsilon_n ) |z_n^+ - z_n^-|,
\end{equation}
where  $$ \zeta_n^+ = \zeta_n (z_n^+), \quad
 \zeta_n^-= \zeta_n (z_n^-). $$
On the other hand, by (\ref{27.3a}) (i.e., by the basic equation
(\ref{2.28})), we have $$ (\zeta_n^+)^2 = \beta_n^+ (z_n^+)
\beta_n^- (z_n^+), \quad (\zeta_n^-)^2 = \beta_n^+ (z_n^-)
\beta_n^- (z_n^-), $$ and therefore,
\begin{equation}
\label{32.4}  (\zeta_n^+)^2 - (\zeta_n^-)^2 = \int_{z_n^-}^{z_n^+}
\frac{d}{dz} [\beta_n^+ (z) \beta_n^- (z)] dz.
\end{equation}
By (\ref{32.1}) and (\ref{32.2}), we have $$ \sup_{[z_n^-, z_n^+]}
\left | \frac{d}{dz} [\beta_n^+ (z) \beta_n^- (z)] \right | \leq
\varepsilon_n \left ( |\beta_n^+ (z_n^*)|+|\beta_n^- (z_n^*)| +
2\varepsilon_n |z_n^+ - z_n^- | \right ). $$ In view of
(\ref{32.3}) and (\ref{32.4}), we get $$ |\zeta_n^+ +\zeta_n^-|
\cdot |\zeta_n^+ - \zeta_n^-| \leq \varepsilon_n \left (
|\beta_n^+ (z_n^*)|+|\beta_n^- (z_n^*)| + 2\varepsilon_n  |z_n^+ -
z_n^- | \right ) |z_n^+ - z_n^- |  $$ $$ \leq \varepsilon_n \left
( |\beta_n^+ (z_n^*)|+|\beta_n^- (z_n^*)| + 2\varepsilon_n |z_n^+
- z_n^- | \right ) \frac{|\zeta_n^+ -
\zeta_n^-|}{1-\varepsilon_n}.$$ Since $\varepsilon_n \to 0, $ we
may assume that $\varepsilon_n < 1/2.$ Then, $1/(1-\varepsilon_n)
\leq 2, $ and the latter inequality implies
\begin{equation}
\label{32.5} |\zeta_n^+ +\zeta_n^-| \leq 2\varepsilon_n  \left (
|\beta_n^+ (z_n^*)|+|\beta_n^- (z_n^*)| \right ) + 2\varepsilon_n
|z_n^+ - z_n^- |.
\end{equation}
By (\ref{32.02}), we have
\begin{equation}
\label{32.6} |\zeta_n^+| = \sqrt{|\beta_n^+ (z_n^+)||\beta_n^-
(z_n^+)|} = \frac{\sqrt{t_n}}{1+t_n} (|\beta_n^+ (z_n^+)| +
|\beta_n^- (z_n^+)|).
\end{equation}
Therefore, by (\ref{32.2}) (since $ \frac{\sqrt{t}}{1+t} \leq 1/2$
for $t\geq 0$) we get
\begin{equation}
\label{32.7} |\zeta_n^+| \geq  \frac{\sqrt{t_n}}{1+t_n}
(|\beta_n^+ (z_n^*)| + |\beta_n^- (z_n^*)|) - \varepsilon_n |z_n^+
- z_n^- |.
\end{equation}
Now, from (\ref{32.5})--(\ref{32.7}) it follows that $$ |\zeta_n^+
- \zeta_n^- | = |2 \zeta_n^+ - (\zeta_n^+ + \zeta_n^-)| \geq 2
|\zeta_n^+ | - |\zeta_n^+ + \zeta_n^- | $$ $$ \geq \left (
\frac{2\sqrt{t_n}}{1+t_n}- 2\varepsilon_n \right ) (|\beta_n^+
(z_n^*)|+|\beta_n^- (z_n^*)|) - 4 \varepsilon_n |z_n^+ - z_n^-|.$$
By (\ref{32.3}), this leads to $$ (1+5 \varepsilon_n)|z_n^+ -
z_n^-| \geq \left (\frac{2\sqrt{t_n}}{1+t_n}- 2\varepsilon_n
\right ) (|\beta_n^+ (z_n^*)|+|\beta_n^- (z_n^*)|). $$ Taking into
account that $ (1+5\varepsilon)^{-1} \geq 1- 5\varepsilon$ and
$\frac{2\sqrt{t_n}}{1+t_n} \leq 1, $ we obtain $$ \gamma_n =|z_n^+
- z_n^-| \geq (1- 5\varepsilon_n) \left (
\frac{2\sqrt{t_n}}{1+t_n}- 2\varepsilon_n \right ) (|\beta_n^+
(z_n^*)|+|\beta_n^- (z_n^*)|) $$ $$ \geq \left (
\frac{2\sqrt{t_n}}{1+t_n}- 7\varepsilon_n \right ) (|\beta_n^+
(z_n^*)|+|\beta_n^- (z_n^*)|). $$ Thus  (\ref{32.01}) holds with
$\eta_n = 7 \varepsilon_n.$ Lemma~\ref{lem32.1} is proved.
\end{proof}

If the potential $v$ is real--valued, then we have the following
two--side estimate of $\gamma_n. $

\begin{Theorem}
\label{thm4.1}  Suppose $v$ is a periodic real--valued $H^{-1}$
potential, $L$ is the corresponding self--adjoint
Hill--Schr\"odinger operator and $(\gamma_n)$ is the gap sequence of
$L.$ Then there exists a sequence $\eta_n \downarrow 0 $ such that,
for $n \geq n_0 (v), $
\begin{equation}
\label{32.0} (1-\eta_n) \left ( |\beta^-_n (z_n^*)| +|\beta^+_n
(z_n^*)| \right ) \leq |\gamma_n | \leq (1+\eta_n) \left (
|\beta^-_n (z_n^*)| +|\beta^+_n (z_n^*)| \right )
\end{equation}
\end{Theorem}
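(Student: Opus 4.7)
The upper bound in \eqref{32.0} is already established: it is precisely Lemma~\ref{lem4.1}. So the plan is to derive the matching lower bound using Lemma~\ref{lem32.1} together with the real-valued symmetry from Lemma~\ref{lem2.3}(b).

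First I would observe that because $v$ is real, the operator $L_{Per^\pm}$ is self-adjoint, so the eigenvalues $\lambda_n^\pm$ are real, and hence the numbers $z_n^\pm = \lambda_n^\pm - n^2$ and their midpoint $z_n^*$ all lie in $\mathbb{R}$. By Lemma~\ref{lem2.3}(b) combined with \eqref{2.40},
\begin{equation*}
\beta_n^-(z) = \overline{\beta_n^+(\bar z)}, \qquad z \in \mathbb{C},
\end{equation*}
so that at any real argument $z$ one gets $|\beta_n^-(z)| = |\beta_n^+(z)|$. In particular this holds at $z = z_n^+$, which is the crucial observation: the quantity $t_n$ appearing in Lemma~\ref{lem32.1} satisfies $t_n = 1$ whenever it is defined, and
\begin{equation*}
\frac{2\sqrt{t_n}}{1+t_n} = 1.
\end{equation*}

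Next I would apply Lemma~\ref{lem32.1}: in the non-degenerate regime $\gamma_n \neq 0$ and $\beta_n^-(z_n^+)\beta_n^+(z_n^+) \neq 0$, inequality \eqref{32.01} with $t_n = 1$ immediately yields
\begin{equation*}
\gamma_n \;\geq\; (1-\eta_n)\bigl(|\beta_n^-(z_n^*)| + |\beta_n^+(z_n^*)|\bigr),
\end{equation*}
which is exactly the left side of \eqref{32.0}. It remains to handle the degenerate cases not covered by Lemma~\ref{lem32.1}. If $\gamma_n = 0$, then $z_n^+ = z_n^- = z_n^*$; by Remark~\ref{rem21.1} the eigenvalue has geometric multiplicity $2$, forcing the $2\times 2$ matrix of $S$ to be scalar, so $\beta_n^\pm(z_n^*) = 0$ and both sides of \eqref{32.0} vanish. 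If instead $\gamma_n \neq 0$ but $\beta_n^+(z_n^+) = 0$ (equivalently $\beta_n^-(z_n^+) = 0$ by the symmetry above), I would use the derivative bound \eqref{3.32} to write
\begin{equation*}
|\beta_n^\pm(z_n^*)| = |\beta_n^\pm(z_n^*) - \beta_n^\pm(z_n^+)| \leq \varepsilon_n\, |z_n^* - z_n^+| \leq \tfrac{1}{2}\varepsilon_n \gamma_n,
\end{equation*}
so that $|\beta_n^-(z_n^*)| + |\beta_n^+(z_n^*)| \leq \varepsilon_n \gamma_n$, and the lower bound in \eqref{32.0} becomes trivial for all sufficiently large $n$.

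The only slightly delicate step is the bookkeeping of the various small quantities $\varepsilon_n, \eta_n$ so that a single null sequence works in all three cases (non-degenerate, $\gamma_n = 0$, and $\beta_n^\pm(z_n^+) = 0$); but since each case uses only the already-proven estimates \eqref{27.5a}, \eqref{3.32}, and Lemma~\ref{lem32.1}, I expect no genuine new difficulty beyond choosing $\eta_n$ to dominate the constants appearing in each of the three sub-arguments.
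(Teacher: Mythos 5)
Your proof is correct and follows essentially the same route as the paper's: Lemma~\ref{lem4.1} for the upper bound, the symmetry $|\beta_n^-(z)| = |\beta_n^+(z)|$ for real $z$ from Lemma~\ref{lem2.3}(b) to force $t_n = 1$, then Lemma~\ref{lem32.1} for the lower bound, with Remark~\ref{rem21.1} disposing of the degenerate cases. The only small difference is bookkeeping: the paper points out that $\beta_n^\pm(z_n^+)=0$ and $\gamma_n=0$ are actually \emph{equivalent} (each implies geometric multiplicity two in $S$ via Remark~\ref{rem21.1}, hence in $L$, hence forces $\lambda_n^+=\lambda_n^-$ since $D_n$ holds only two eigenvalues counted algebraically), so there is really only one degenerate case to check; your third case ($\gamma_n\neq 0$ with $\beta_n^\pm(z_n^+)=0$) is therefore vacuous, and your extra derivative estimate to handle it, while valid, is unnecessary.
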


This result is known in the case of $L^2$--potentials (see
Theorem~8 in \cite{DM3}, or Theorem 50 in \cite{DM15}).

\begin{proof} The right inequality in (\ref{32.0})
has been proved in Lemma \ref{lem4.1} for arbitrary
(complex--valued) potentials.

Since $L$ is self--adjoint, we know, by Part (b) of Lemma
\ref{lem2.3}, that $$
 |\beta_n^+ (z_n^+)|=|\beta_n^-(z_n^+)|.
$$

If $|\beta_n^+ (z_n^+)|=|\beta_n^-(z_n^+)|\neq 0 $ and $ \gamma_n
\neq 0, $ then the left inequality in (\ref{32.0}) follows
immediately from Lemma \ref{lem32.1}.

If $|\beta_n^+ (z_n^+)|=|\beta_n^-(z_n^+)| = 0  $ for some $n,$
then $\lambda^+_n = \lambda^0 + z^+_n $  is an eigenvalue of
geometric multiplicity 2 of the operator $P^0L^0 P^0+
S(\lambda^+): E^0 \to E^0.  $ Therefore, by Remark \ref{rem21.1},
$\lambda^+_n $ is an eigenvalue of geometric multiplicity 2 of the
operator $L,$ so $\gamma_n = 0 $ and $ z_n^* = z_n^+.$ Thus
(\ref{32.0}) holds.

If $ \gamma_n = 0 $ for some $n,$ then (since $L$ is
self--adjoint) $\lambda^+_n $ is an eigenvalue of $L$ of geometric
multiplicity 2. Therefore,  by Remark~\ref{rem21.1}, $\lambda^+_n
$ is an eigenvalue of geometric multiplicity 2 of the operator
$P^0L^0 P^0+ S(\lambda^+).$  Then the off--diagonal entries of the
matrix representation $S(\lambda^+_n) $ are zeros, i.e., we have $
\beta_n^+ (z_n^+)=\beta_n^-(z_n^+) = 0 $ and (\ref{32.0}) becomes
trivial because $ z_n^* = z_n^+.$ Theorem~\ref{thm4.1} is proved.
\end{proof}

{\em Remark.} We used to write the Fourier expansion of a potential
$v$ in the form
$
 v \sim   \sum_{m \in 2\mathbb{Z}} V(m) e^{imx}.
$ Now, for convenience, we set
\begin{equation}
\label{00}
 v_k = V(2k), \quad k \in \mathbb{Z},
\end{equation}
and define, for every weight $\Omega = (\Omega (k))_{k \in
\mathbb{Z}},$
\begin{equation}
\label{000} \|v\|^2_{\Omega} = \sum_{k \in \mathbb{Z}} |v_k|^2
(\Omega (k))^2.
\end{equation}

The next theorem generalizes a series of results about asymptotic
behavior of $\gamma_n $ (see and compare Theorem 41 in [13] in the
case of $L^2$-potentials $v$).

\begin{Theorem}
\label{thm4.2} Suppose $L= L^0 + v(x), \; L^0 = -d^2/dx^2, $ is a
periodic Hill--Schr\"odinger operator on $I= [0,\pi] $  with $H^{-1}
(I) $--potential $\displaystyle v(x) = \sum_{k\in \mathbb{Z}} v_k
e^{2kix}.$  Then, for $n> n_0 (v), $ the operator $L$ has in the
disc of center $n^2 $  and radius $n/4 $ exactly two (counted with
their algebraic multiplicity) periodic (for even $n$), or
anti--periodic (for odd $n$) eigenvalues $\lambda^+_n $ and $
\lambda^-_n. $ Moreover, for each  weight $\Omega = (\Omega (k))_{k
\in \mathbb{Z}} $ of the form
\begin{equation}
\label{4.4} \Omega (k) = \tilde{\Omega}(k)/k,  \quad  k \neq 0,
\end{equation}
where  $\tilde{\Omega}$  is a submultiplicative weight, we have
\begin{equation}
\label{4.2} \sum_{k\in \mathbb{Z}} |v_k|^2 (\Omega (k))^2 < \infty
\; \Rightarrow \sum_{n>n_0 (v)} |\gamma_n |^2 (\Omega(n))^2 <
\infty,
\end{equation}
where $ \gamma_n =\lambda_n^+ -\lambda_n^-. $ Moreover,
\begin{equation}
\label{4.5} \sum_{n>n_0 (v)} |\gamma_n |^2 (\Omega(n))^2 \leq  C_1
\|v\|^4_{\Omega} + 4 \|v\|^2_{\Omega}.
\end{equation}
where $C_1 = C_1 (\Omega).$
\end{Theorem}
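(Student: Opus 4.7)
The strategy is to combine the upper bound on the spectral gap from Lemma~\ref{lem4.1} with the weighted $\ell^2$-estimate of $\beta_n^{\pm}(z)-V(\pm 2n)$ from Proposition~\ref{prop3.2}. The existence and localization of $\lambda_n^{\pm}$ in the disc of radius $n/4$ around $n^2$ is already provided by Proposition~\ref{prop004}, so only the weighted summability bound~(\ref{4.5}) remains to be proved.

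By Lemma~\ref{lem4.1}, for $n>n_0(v)$ we have $|\gamma_n|\le(1+\eta_n)(|\beta_n^-(z_n^*)|+|\beta_n^+(z_n^*)|)$ with $\eta_n\to 0$. Splitting $\beta_n^{\pm}(z_n^*)=V(\pm 2n)+[\beta_n^{\pm}(z_n^*)-V(\pm 2n)]$ and using $(a+b+c+d)^2\le 4(a^2+b^2+c^2+d^2)$ yields
\[
|\gamma_n|^2\le 4(1+\eta_n)^2\Bigl(|V(2n)|^2+|V(-2n)|^2+|\beta_n^+(z_n^*)-V(2n)|^2+|\beta_n^-(z_n^*)-V(-2n)|^2\Bigr).
\]
Multiplying by $(\Omega(n))^2$ and summing over $n>n_0(v)$ then splits the estimate into a ``diagonal'' piece involving only Fourier coefficients of $v$ and an ``off-diagonal'' piece involving the differences $\beta_n^{\pm}(z_n^*)-V(\pm 2n)$. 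The diagonal piece is bounded directly by $2\|v\|^2_{\Omega}$ via~(\ref{000}) and the symmetry of $\Omega$.

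For the off-diagonal piece I would invoke Proposition~\ref{prop3.2}, which requires decomposing the weight as $\Omega(k)=\Omega_1(k)\Omega_2(k)/k$ with $\Omega_1$ slowly increasing and dominated by $C_1|k|^{1/4}$ and $\Omega_2$ sub-multiplicative. Since $\tilde{\Omega}$ is already sub-multiplicative, the trivial choice $\Omega_1\equiv 1$, $\Omega_2=\tilde{\Omega}$ is admissible and yields a bound of the form $C(\Omega)\|v\|^4_{\Omega}$ for the off-diagonal sum once $N$ is large enough. Combining the two pieces and absorbing $(1+\eta_n)^2\to 1$ into the constants produces~(\ref{4.5}). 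The main point of care is the bookkeeping around the indexing shift between $V(m)$ (for $m\in 2\mathbb{Z}$) and $v_k=V(2k)$ (for $k\in\mathbb{Z}$), and verifying that the constants $C_1$ and $4$ come out as stated; but neither of these is conceptual once Lemma~\ref{lem4.1} and Proposition~\ref{prop3.2} are in hand.
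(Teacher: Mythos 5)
Your proposal matches the paper's proof: bound $|\gamma_n|$ via Lemma~\ref{lem4.1}, split $\beta_n^{\pm}(z_n^*)$ into $V(\pm 2n)$ plus the remainder, and control the remainder with Proposition~\ref{prop3.2} applied with $\Omega_1\equiv 1$ and $\Omega_2=\tilde{\Omega}$, while the $V(\pm 2n)$ terms are absorbed into $\|v\|_\Omega^2$. This is essentially the same argument, down to the choice of weight factorization and the elementary quadratic splitting.
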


\begin{proof}
By Lemma \ref{lem4.1}, we have, for $n>n_0 (v),$ $$ |\gamma_n |
\leq 2 \left ( |\beta_n^- (v;z_n^*)| +|\beta_n^+(v;z_n^*)| \right
), $$ where $z_n^* = \frac{\lambda_n^- + \lambda_n^+}{2} - n^2. $

Therefore, by Proposition \ref{prop3.2}, considered with $\Omega_1
\equiv 1,$ we get
\begin{multline}
\label{5.21} \sum_{n>n_0 (v)} |\gamma_n |^2 (\Omega(n))^2  \leq
4\sum_{n>n_0 (v)} \left ( |\beta_n^- (v;z_n^*)|
+|\beta_n^+(v;z_n^*)| \right )^2
 (\Omega(n))^2\\
\leq 8\sum_{n>n_0 (v)} \left ( |\beta_n^- (v;z_n^*)-v_{-n}|
+|\beta_n^+(v;z_n^*)-v_n)| \right )^2
 (\Omega(n))^2\\
 +
2\sum_{n>n_0 (v)}   (|v_{-n}|+|v_n|)^2 (\Omega(n))^2 \leq
32C\|v\|^4_{\Omega} + 4 \|v\|^2_{\Omega} < \infty
\end{multline}
with $C=C(\Omega),$ which completes the proof.
\end{proof}

\section{Main results for real--valued potentials}

In this section we present our main results on the relationship
between spectral gaps rate of decay and potential smoothness for
Hill--Schr\"odinger operators with real--valued periodic singular
potentials. However most of the proofs are carried out for arbitrary
potentials (see formulas (\ref{4.12})--(\ref{4.13a}), Lemma
\ref{lem4.2} and Proposition \ref{prop33.1}).

In Theorems 9 and 10, and Section 5.2 in \cite{DM3}, and Theorem 54
in \cite{DM15}, it is proved that the inverse of the implication
(\ref{4.2}) holds for real--valued $L^2$--potentials $v$ and
(log--concave) submultiplicative weights $\Omega.$ Now we extend
this result to the case of {\em singular} potentials and a wider
class of weights.

By Lemma \ref{lem2.1}, for each periodic potential $$v \in H^{-1}
([0,\pi]),\quad v(x) = \sum_{k \in \mathbb{Z}} v_k e^{2ikx}, $$
there exists $n_0 = n_0 (v)$ such that the constructions of Section
3 work for $n>n_0.$ In particular, the numbers $\beta^\pm_n (v;z) $
are well--defined if $n > n_0 (v) $ and  $ |z| \leq n.$

We set, for $N>n_0 (v), $
\begin{equation}
\label{4.12} \Phi_N (v) = \sum_{n>N} \left ( [\beta^-_n(v;z^*_n
(v)) - v_{-n}] e^{-2inx} + [\beta^+_n(v;z^*_n (v))-v_n ] e^{2inx}
\right ),
\end{equation}
where $ z^*_n (v) = \frac{\lambda_n^- (v) + \lambda_n^+ (v)}{2} -
n^2. $ Consider the mapping
\begin{equation}
\label{4.13}
 A_N (v) = v + \Phi_N (v),
\end{equation}
or
\begin{equation}
\label{4.13.0}
 A_N (v) = H_N (v) + T_N (v),
\end{equation}
with a "head"
\begin{equation}
\label{4.13b}
 H_N (v) =
\sum_{n\leq N} \left ( v_{-n} e^{-2inx} + v_n  e^{2inx} \right )
 \end{equation}
and a "tail"
\begin{equation}
\label{4.13a}
 T_N (v) =
\sum_{n>N} \left ( \beta^-_n(v;z^*_n (v))  e^{-2inx} +
\beta^+_n(v;z^*_n (v)) e^{2inx} \right ).
 \end{equation}
As a finite sum, $ H_N (v) $ is in $H(\Omega)$ for any $\Omega.$ If
$v$ is a real--valued potential in $H^{-1}$ then we have, by
(\ref{4.13}) and Theorem \ref{thm4.1},
\begin{equation}
\label{4.14a} (\gamma_n) \in \ell^2 (\Omega) \Rightarrow
(|\beta^-_n (z^*_n)| +|\beta^+_n (z^*_n)|) \in \ell^2 (\Omega)
\Rightarrow T_N (v) \in H(\Omega),
\end{equation}
and therefore,
\begin{equation}
\label{4.14} (\gamma_n) \in \ell^2 (\Omega) \Rightarrow
 A_N (v) \in H(\Omega),
\end{equation}
for every weight $\Omega.$

Thus, the inverse of the implication (\ref{4.2}) will be proved if
we show that
\begin{equation}
\label{4.15} A_N (v)  \in H(\Omega) \;\;\Rightarrow \;\; v \in
H(\Omega).
\end{equation}

If $v$ is a real--valued potential, then the operator $L= L^0 +v $
is self--adjoint, its periodic and anti--periodic spectra are
real, so the numbers $ z^*_n = \frac{1}{2} ( \lambda^+_n -
\lambda^-_n ) - n^2 $  are real.

Therefore, by (\ref{2.40}) and Lemma \ref{lem2.3} we have $$
\beta^-_n (v;z^*_n) = \overline{\beta^+_n (v;z^*_n)}. $$ Thus, in
view of (\ref{4.12}) and (\ref{4.13}),
\begin{equation}
\label{4.16} v \;\; \mbox{is real--valued}  \quad \Rightarrow
\quad T_N (v), \, \Phi_N (v), \, A_N (v)   \;\;  \mbox{are
real--valued}.
\end{equation}

For each weight $\Omega $ on $\mathbb{Z}$ we denote  by $
\mathcal{B}_r^\Omega $ the ball of complex--valued potentials $$
 \mathcal{B}_r^\Omega =\{v \in H (\Omega) \; :
 \;\; \|v\|_{\Omega} \leq r\}. $$

The following lemma plays a crucial role in the proof of the
inverse  of (\ref{4.2}).

\begin{Lemma}
\label{lem4.2}
 Let  $\Omega_1 $ be a slowly increasing unbounded weight such that
\begin{equation}
\label{4.23}
 \Omega_1 (k) \leq C_1 |k|^{1/4}.
\end{equation}
Then there exist a sequence of positive numbers $(r_N)_{N\in
\mathbb{N}}, \; r_N \nearrow \infty, $ and $N^* = N^* (\Omega_1) \in
\mathbb{N} $ such that the mapping $\Phi_N $  is well defined on the
ball  $ \mathcal{B}^{\Omega_1}_{3r_N}  $ for $N >N^*. $

Moreover,  if $ \Omega = (\Omega (k))_{k\in \mathbb{Z}}$ is a
weight of the form
\begin{equation}
\label{4.24}
 \Omega (k) =  \frac{\Omega_1 (k) \Omega_2 (k)}{k},
\quad k\in \mathbb{Z},
\end{equation}
where $\Omega_2 $ is a sub--multiplicative weight,  then  the
mapping
 $\Phi_N : \mathcal{B}^\Omega_{3r_N}  \to H (\Omega)  $
is well defined for $N > N^* $ and has the following properties:
\begin{equation}
\label{4.25} \|\Phi_N (v_1) - \Phi_N (v_2) \|_{\Omega} \leq
\frac{1}{2} \|v_1 -v_2 \|_\Omega \quad \text{for} \quad v_1, v_2
\in \mathcal{B}_{r_N}^\Omega,
\end{equation}
\begin{equation}
\label{4.26} \frac{1}{2} \|v_1 -v_2 \|_\Omega  \leq  \|A_N (v_1)
-A_N (v_2) \|_\Omega  \leq  \frac{3}{2} \|v_1 -v_2 \|_\Omega \quad
\text{for} \quad v_1, v_2 \in \mathcal{B}_{r_N}^\Omega,
\end{equation}
\begin{equation}
\label{4.27}  A_N \left (\mathcal{B}_{r_N}^\Omega \right ) \supset
\mathcal{B}_{r_N/2}^\Omega.
\end{equation}
\end{Lemma}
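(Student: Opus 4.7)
The plan is to treat $\Phi_N$ as a Fr\'echet-holomorphic nonlinear operator on $H(\Omega)$ that vanishes to order $\geq 2$ at $v=0$, promote the quadratic bound of Proposition~\ref{prop3.2} into a Lipschitz bound via a Cauchy derivative estimate, and then deduce (\ref{4.27}) by Banach's fixed-point theorem. The radius $r_N$ will be chosen just large enough that the resulting Lipschitz constant on $\mathcal{B}_{r_N}^\Omega$ equals $1/2$, and small enough that all the constructions of Sections~2--4 are uniformly valid on $\mathcal{B}_{3r_N}^\Omega$ for $n>N$.

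\textbf{Step 1: well-definedness and norm bound.} Since the weights $\Omega_1,\Omega_2$ are bounded below by positive constants, $\|v\|_{H^{-1}} \lesssim \|v\|_\Omega$, and Lemma~\ref{lem2.1} together with Proposition~\ref{prop004} guarantee that $z_n^*(v)\in D_n$ is well-defined once $n \ge c\|v\|_{H^{-1}}^2$; this forces an initial cap $r_N \le c_1\sqrt{N}$ (so that the threshold is $\le N$). Under this cap, Proposition~\ref{prop3.2} yields, for every $v\in \mathcal{B}_{3r_N}^\Omega$,
\[
\|\Phi_N(v)\|_\Omega \;\le\; \sqrt{C(\Omega_1)}\,\delta_N \,\|v\|_\Omega^2,\qquad \delta_N^2 := \tfrac{1}{(\Omega_1(N))^2} + \tfrac{1}{\sqrt{N}} \;\to\; 0.
\]

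\textbf{Step 2: Lipschitz bound (\ref{4.25}) and (\ref{4.26}).} The expansions (\ref{2.31})--(\ref{2.35}) display $\beta_n^\pm(v;z)$ as absolutely convergent power series in the Fourier coefficients $V(m)$ of $v$, while $z_n^*(v) = \tfrac12\operatorname{tr}(P_nL)-n^2$ (with $P_n$ the Riesz projection along $|z-n^2|=n/4$, a contour in the resolvent set by Proposition~\ref{prop004}) depends holomorphically on $v$. Combined with the $n$-summability in Step~1 this makes $\Phi_N : \mathcal{B}_{3r_N}^\Omega \to H(\Omega)$ Fr\'echet-holomorphic, and the Cauchy derivative estimate $\|DF(v_0)\|\le\sup_{\mathcal{B}_R}\|F\|/(R-\|v_0\|)$ gives
\[
\|D\Phi_N(v)\|_{H(\Omega)\to H(\Omega)} \;\le\; \frac{9\sqrt{C(\Omega_1)}\,\delta_N\,r_N^2}{2r_N}\;=\;\tfrac{9}{2}\sqrt{C(\Omega_1)}\,\delta_N\,r_N,\qquad v\in \mathcal{B}_{r_N}^\Omega.
\]
Setting $r_N := \min\bigl(1/(9\sqrt{C(\Omega_1)}\delta_N),\; c_1\sqrt{N}\bigr)$, both of which tend to $\infty$, forces this to be $\le 1/2$. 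Integrating along the segment $[v_2,v_1]$ yields (\ref{4.25}); then (\ref{4.26}) is immediate from $A_N(v_1)-A_N(v_2) = (v_1-v_2) + (\Phi_N(v_1)-\Phi_N(v_2))$ combined with the triangle inequality in both directions.

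\textbf{Step 3: surjectivity (\ref{4.27}), and the main obstacle.} For $w\in \mathcal{B}_{r_N/2}^\Omega$, the equation $A_N(v)=w$ is equivalent to the fixed-point problem $v=G(v):=w-\Phi_N(v)$. Since $\Phi_N(0)=0$ (every summand in (\ref{4.12}) is either the vanishing coefficient $v_{\pm n}$ or a polynomial of degree $\geq 2$ in the $V$'s), one has $\|G(v)\|_\Omega \le \|w\|_\Omega + \tfrac12\|v\|_\Omega \le r_N$ on $\mathcal{B}_{r_N}^\Omega$, and $G$ is a $\tfrac12$-contraction by (\ref{4.25}); Banach's theorem delivers the preimage. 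The principal technical obstacle is the rigorous Fr\'echet-holomorphy of $\Phi_N$ in the $H(\Omega)$-norm, i.e.\ interchanging the $\ell^2(\Omega)$-sum over $n>N$ with the power-series expansions of the $\beta_n^\pm$ in the $V(m)$'s: this is precisely the interchange for which the uniform Hilbert--Schmidt-type bound of Proposition~\ref{prop3.2} was prepared. A secondary nuisance is maintaining $n_0(v)\le N$ uniformly as $\|v\|_\Omega$ grows with $r_N$, which is what forces the cap $r_N\le c_1\sqrt{N}$ above.
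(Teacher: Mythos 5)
Your overall strategy---treat $\Phi_N$ as an analytic $H(\Omega)$-valued map, extract a Lipschitz bound from the quadratic estimate of Proposition~\ref{prop3.2} via the Cauchy derivative inequality, and finish with the Banach fixed-point theorem---is exactly what the paper does, so the architecture of your proof is sound.

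However, there is a genuine gap in Step~1, and it propagates into the choice of $r_N$. You assert that $z_n^*(v)\in D_n$ is well-defined once $n\ge c\,\|v\|_{H^{-1}}^2$, and you use this to cap $r_N\le c_1\sqrt N$. This is incorrect. The localization condition from Lemma~\ref{lem2.1} / Proposition~\ref{prop004} is
$\kappa_n(v)= C\bigl(\mathcal E_{\sqrt n}(q)+\|q\|/\sqrt n\bigr)\le 1/2$;
the second summand indeed gives a threshold of order $\|q\|^2=\|v\|_{H^{-1}}^2$, but the first summand involves the \emph{tail} $\mathcal E_{\sqrt n}(q)$, which does not tend to zero uniformly on $H^{-1}$-balls. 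The only way to make it small uniformly is to exploit the weight: $\mathcal E_{\sqrt n}(q)\le \|v\|_{\omega_1}/\Omega_1(\sqrt n)$, as the paper does in (\ref{4.29})--(\ref{4.32}). Consequently the genuine constraint is
$r_N\,\epsilon_N\lesssim 1$, where $\epsilon_N = \dfrac{1}{\Omega_1(\sqrt N)}+\dfrac{1}{\sqrt N}$,
and this quantity contains $\Omega_1(\sqrt N)$, not $\Omega_1(N)^2$. Your choice $r_N\sim 1/\delta_N$ with $\delta_N^2=1/\Omega_1(N)^2+1/\sqrt N$ (together with the harmless extra cap $c_1\sqrt N$) does not respect it. For example, with $\Omega_1(m)=m^{1/8}$ one has $\delta_N\sim N^{-1/8}$, so your $r_N\sim N^{1/8}$, while $\epsilon_N\sim N^{-1/16}$; then $r_N\,\epsilon_N\sim N^{1/16}\to\infty$, so the constructions of Section~3 are not uniformly valid on $\mathcal B^{\Omega}_{3r_N}$ and $\Phi_N$ is not even well-defined there. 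The paper avoids this by balancing both constraints simultaneously with the more conservative choice (\ref{4.33}), $r_N=\bigl(1/\Omega_1(\sqrt N)+1/\sqrt N\bigr)^{-1/4}$, for which $r_N\epsilon_N=\epsilon_N^{3/4}\to 0$ and also $b_N r_N\to 0$. You must either adopt a choice of this form or explicitly verify the localization bound against $\epsilon_N$ (not $\delta_N$) and shrink $r_N$ accordingly; the intuition that only $\|v\|_{H^{-1}}$ matters is what leads you astray. A minor additional remark: the Fr\'echet-holomorphy you flag as the main technical obstacle is sidestepped in the paper by working only with the one-variable analytic map $t\mapsto\Phi_N(v_1+tw)$, to which the scalar Cauchy estimate applies directly; no genuine interchange-of-sums argument is needed beyond the uniform bound already in hand.
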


\begin{proof}
 By  Proposition \ref{prop004}, for each $H^{-1}$ periodic
potential $v$ there exists $ N^* = N^* (v) $ such that, for $n>N^*,$
the operator $ L = L^0 + v $ has two (counted according to their
algebraic multiplicity, periodic for even $n,$ and antiperiodic for
odd $n$ ) eigenvalues $\lambda_n^- (v) $ and $\lambda_n^+ (v) $ in
the disk $D_n =\{z: \; |z-n^2| < n/4 \}. $ On the other hand, in
view of Lemma \ref{lem2.1}, if $ N^*$ is large enough then all
constructions of Section 3 hold.

Moreover, one can choose  $N^* $ depending only on $\Omega (k)$ and
$\|v\|_{\omega_1},$  where $\omega_1 (k) = \Omega (k)/k, \; k \neq
0. $

Indeed, by the proof of Theorem 21 and Lemma 19, (5.30) in
\cite{DM16}, and by the proof of Lemma \ref{lem2.1}, (\ref{2.5}), it
is enough to choose $N^* $ so that
\begin{equation}
\label{4.28} \kappa_n (v):= C \left ( \mathcal{E}_{\sqrt{n}}(q) +
\|q\|/\sqrt{n} \right ) \leq 1/2 \quad \text{for} \;\;  n \geq N^*,
\end{equation}
where $q=(q(m)) $ is defined in (\ref{003})   and $C$ is an absolute
constant. So,
\begin{equation}
\label{4.29} \|v\|^2_{\omega_1} = \sum \left (|q(-k)|^2+|q(k)|^2
\right ) (\Omega_1 (k))^2,
\end{equation}
and therefore, since $\Omega_1 (k) $ is monotone increasing and
$\Omega_1 (k) \geq 1,$
\begin{equation}
\label{4.30} \|q\| \leq  \|q\|_{\Omega_1} = \|v\|_{\omega_1}
\end{equation}
and
\begin{equation}
\label{4.31} \mathcal{E}^2_m (q) = \sum_{|k| \geq m}  |q(k)|^2
\leq \frac{1}{(\Omega_1 (m))^2} \sum_{|k| \geq m}  |q(k)|^2
(\Omega_1 (k))^2 \leq  \frac{\|v\|^2_{\omega_1}}{(\Omega_1
(m))^2}.
\end{equation}
By (\ref{4.28})--(\ref{4.31}),
\begin{equation}
\label{4.32} \kappa_n (v) \leq C\left
(\frac{\|v\|_{\omega_1}}{\Omega_1 (\sqrt{n})} +
\frac{\|v\|_{\omega_1}}{\sqrt{n}} \right ).
\end{equation}
Therefore, with
\begin{equation}
\label{4.33} r_N = \left ( \frac{1}{\Omega_1 (\sqrt{N})} +
\frac{1}{\sqrt{N}} \right )^{-1/4}
\end{equation}
and a proper choice of $N^*= N^* (\Omega_1,\|v\|_{\omega_1}), $ we
have for $n \geq N \geq N^*$
\begin{equation}
\label{4.34} \kappa_n (v) \leq  C\left ( \frac{1}
{\Omega_1(\sqrt{N})} + \frac{1}{\sqrt{N}} \right )^{1/2}
 \leq \frac{1}{4}\quad \text{if} \quad
\|v\|_{\omega_1}\leq 3r_N.
\end{equation}
But $\|v\|_{\omega_1} \leq \|v\|_{\Omega},$ so $\beta^\pm_n (v,z),
\, n \geq N,$ are well defined if $\|v\|_{\omega_1}\leq 3r_N$ and
$|z|\leq n/4,$  we have the inequality (\ref{3.54}) in Proposition
\ref{prop3.2}, which guarantees that
\begin{equation}
\label{4.35} \|\Phi_N (v)\|^2_{\Omega} \leq C(\Omega_1) \left (
\frac{1}{\Omega_1 (\sqrt{N})} + \frac{1}{\sqrt{N}} \right )
\|v\|^4_{\Omega}, \quad v \in B^\Omega_{3r_N} \subset
B^{\omega_1}_{3r_N},
\end{equation}
where   $C(\Omega_1) \geq  1.$

To explain (\ref{4.25}), i.e., to show that $\Phi_N$ is a
contractive mapping with a coefficient $1/2, $ we estimate its
derivative. Fix $v_1$ and $w$ such that $\|v_1\|_{\Omega}\leq r_N$
and $\|w\|_{\Omega}=1.$ The $H(\Omega)$--valued function $\varphi
(t) = \Phi_N (v_1 + tw)$ is analytic in the disc $|t| \leq 2 r_N.$
Let
\begin{equation}
\label{4.36} b_N = \left [C(\Omega_1) \left ( \frac{1}{\Omega_1
(\sqrt{N})} + \frac{1}{\sqrt{N}} \right )\right ]^{1/2}.
\end{equation}
Then, in view of (\ref{4.35}), we have
\begin{equation}
\label{4.36a}
 \|\Phi_N (v)\|_{\Omega} \leq b_N \|v\|^2_{\Omega} \leq b_N
\cdot 9r_N^2, \quad v \in B^\Omega_{3r_N}.
\end{equation}
The Cauchy inequality and (\ref{4.33}) imply
\begin{equation}
\label{4.36b}  \sup_{|t|\leq r_N} \|\frac{d}{dt} \Phi_N
(v_1+tw)\|_\Omega \leq \frac{1}{r_N}9b_N r_N^2=9 b_N r_N
\end{equation}
$$ \leq  9 \sqrt{C(\Omega_1)} \left ( \frac{1}{\Omega_1 (\sqrt{N})}
+ \frac{1}{\sqrt{N}} \right )^{1/4}  \leq \frac{1}{2}
$$
for $N \geq N^*$ if  $N^*$ is chosen large enough.

Therefore, if $v_1, v_2 \in B^\Omega_{r_N}  $ and $\|v_1 -
v_2\|_\Omega \leq r_N, $ then we obtain with $w= (v_1 -v_2)/\|v_1 -
v_2\|_\Omega$  $$ \|\Phi_N (v_1) -\Phi_N (v_2)\|_\Omega \leq
\sup_{|t|\leq r_N} \|\frac{d}{dt} \Phi_N (v_1+tw)\|_\Omega \cdot
\|v_1 -v_2\|_\Omega \leq
 \frac{1}{2}\|v_1 -v_2\|_\Omega.
$$ If $\|v_1 -v_2\|_\Omega >r_N,$ then (\ref{4.36b})
implies $$ \|\Phi_N (v_1) -\Phi_N (v_1)\|_\Omega \leq\|\Phi_N (v_1)
\|_\Omega +\|\Phi_N (v_1)\|_\Omega \leq 2 b_N r_N^2 \leq \frac{1}{2}
r_N \leq \frac{1}{2} \|v_1 -v_2\|_\Omega. $$

Of course, in view of (\ref{4.13}), (\ref{4.25}) implies
(\ref{4.26}).

Finally, a standard argument shows that (\ref{4.25})--(\ref{4.26})
imply (\ref{4.27}). Namely, for each $u \in B^\Omega_{r_N/2}$ the
mapping
\begin{equation}
\label{4.37} v \to u - \Phi_N (v)
\end{equation}
takes the ball $B^\Omega_{r_N}$ into itself because (in view of
$\Phi (0) =0$)
\begin{equation}
\label{4.38} \|u- \Phi_N (v)\|_\Omega \leq \|u\|_\Omega + \|\Phi_N
(v)- \Phi_N (0)\|_\Omega \leq \frac{1}{2}r_N+\frac{1}{2}r_N =r_N.
\end{equation}
Thus, with (\ref{4.25}), by the contraction mapping principle the
(nonlinear) operator (\ref{4.37}) has a unique fixed point $v_*
\in B^\Omega_{r_N},$ i.e., $v_*=u - \Phi_N (v_*),$ or $A_N (v_*)=
u.$ This completes the proof of Lemma \ref{lem4.2}.
\end{proof}

\begin{Remark}
\label{rem4.1} Lemma \ref{lem4.2} is formulated and proved for
spaces of complex--valued periodic potentials $v \in H^{-1}.$ In
view of Part (b) of Lemma \ref{lem2.3} and (\ref{2.36}), the
formulas (\ref{4.12}) and (\ref{4.12}) show immediately that this
lemma holds for spaces of real--valued potentials as well.
\end{Remark}

\begin{Proposition}
\label{prop33.1}
 Let  $\Omega_1 $ be a slowly increasing unbounded weight such that
\begin{equation}
\label{33.01}
 \Omega_1 (k) \leq C_1 |k|^{1/4}, \quad k\in \mathbb{Z},
\end{equation}
and let the weight $\omega_1 $  be defined by  $\omega_1 (k) =
\Omega_1 (k)/k $ for $k > 0.$ Suppose $ \Omega = (\Omega (k))_{k\in
\mathbb{Z}}$ is a weight of the form
\begin{equation}
\label{33.02}
 \Omega (k) =  \frac{\Omega_1 (k) \Omega_2 (k)}{k},
\quad k\in \mathbb{Z},
\end{equation}
where $\Omega_2 $ is a sub--multiplicative weight.

(a)  If
\begin{equation}
\label{33.1}  \frac{\log \Omega_2 (n)}{n} \searrow 0 \quad \text{as}
\quad n \to \infty,
\end{equation}
then,  for
 $v \in H(\omega_1),$
\begin{equation}
\label{33.2}       \exists N \;\;  A_N (v) \in H( \Omega) \;\;
\;\Rightarrow   \; v \in H(\Omega ).
\end{equation}
 (b)  If $\Omega_2 $ is a sub--multiplicative weight of exponential type,
then
\begin{equation}
\label{33.3} \exists N \;\;  A_N (v) \in H( \Omega)  \; \Rightarrow
\; \exists  \varepsilon >0: \; v \in H(e^{\varepsilon |n|} ).
\end{equation}
\end{Proposition}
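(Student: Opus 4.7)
The strategy is to deploy Lemma~\ref{lem4.2} iteratively along an interpolating family of weights, bootstrapping $v$'s regularity from $H(\omega_1)$ up to $H(\Omega)$ in part~(a), or to $H(e^{\varepsilon|n|})$ in part~(b).

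Define $\Omega^{(t)}(k) = \Omega_1(k)\,\Omega_2(k)^t / k$ for $t \in [0,1]$, so that $\Omega^{(0)} = \omega_1$ and $\Omega^{(1)} = \Omega$. Since any positive power of a sub--multiplicative weight is sub--multiplicative, each $\Omega^{(t)}$ has the product form (\ref{4.24}) required by Lemma~\ref{lem4.2}, with the \emph{same} underlying $\Omega_1$. The crucial structural point is that the threshold $N^{\ast}$ and the radii $(r_N)$ produced by Lemma~\ref{lem4.2} depend only on $\Omega_1$ (not on $\Omega_2^t$), so they are \emph{uniform} across the whole family $\{\Omega^{(t)}\}$. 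Assuming the natural normalization $\Omega_2 \geq 1$, the spaces $H(\Omega^{(t)})$ also decrease monotonically in $t$.

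Starting from $v \in H(\omega_1)$, I would choose $N$ large enough that $\|v\|_{\omega_1} \leq r_N$ (possible since $r_N \nearrow \infty$). Lemma~\ref{lem4.2} applied with weight $\omega_1$ then identifies $v$ as the unique preimage in $B^{\omega_1}_{r_N}$ of $u := A_N(v)$. For the inductive step, suppose $v \in H(\Omega^{(t)})$ for some $t \in [0,1)$ and apply Lemma~\ref{lem4.2} with weight $\Omega^{(t+\delta)}$: if $\|u\|_{\Omega^{(t+\delta)}} \leq r_N/2$, there exists $v' \in B^{\Omega^{(t+\delta)}}_{r_N}$ with $A_N(v') = u$. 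Since $B^{\Omega^{(t+\delta)}}_{r_N} \subseteq B^{\omega_1}_{r_N}$, the earlier uniqueness forces $v = v' \in H(\Omega^{(t+\delta)})$. (At each iteration we are free to enlarge $N$ so that both $\|v\|_{\Omega^{(t)}} \leq r_N$ and the key estimate below hold.)

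Thus the induction reduces to securing $\|u\|_{\Omega^{(t+\delta)}} \leq r_N/2$. Splitting $u = H_N(v) + R_N u$ where $R_N u$ is the tail of $u$ beyond index $N$, one has $\|R_N u\|_{\Omega^{(t+\delta)}} \leq \|R_N u\|_\Omega \to 0$ as $N \to \infty$ (since $u \in H(\Omega)$), while the head satisfies $\|H_N(v)\|_{\Omega^{(t+\delta)}} \leq \Omega_2(N)^\delta\, \|v\|_{\Omega^{(t)}}$ by monotonicity of $\Omega_2$. The bootstrap is therefore possible provided $\delta\,\log\Omega_2(N) \lesssim \log r_N$, with $\log r_N$ growing like $\log N$ up to constants thanks to $\Omega_1(k) \leq C_1 |k|^{1/4}$. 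Under the hypothesis $\log\Omega_2(n)/n \searrow 0$ of part~(a), one can choose a sequence $(N_k,\delta_k)$ with $N_k \to \infty$ slowly enough that both the per-step inequality holds and $\sum_k \delta_k \geq 1$; iterating then drives $t$ from $0$ to $1$ and yields $v \in H(\Omega)$. In part~(b), where $\Omega_2$ is of exponential type, a \emph{single} step with $\delta = \delta_N > 0$ (necessarily small, $\delta_N \sim \log N/N$) already places $v$ in $H(\Omega^{(\delta)})$, and since $\Omega^{(\delta)}(k) \geq c\, e^{\varepsilon|k|}$ for large $|k|$ with a suitable $\varepsilon > 0$, this gives $v \in H(e^{\varepsilon|n|})$. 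The main technical obstacle is the delicate coordination in part~(a) between $N_k$ and $\delta_k$ across iterations, so that the per-step bootstrap inequality holds at every stage while $\sum_k \delta_k$ still accumulates past $1$; this is precisely the balance that the growth hypotheses on $\Omega_1$ and $\Omega_2$ are tailored to permit.
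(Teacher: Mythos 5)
Your proposal takes a genuinely different route from the paper, but it has a gap that I don't see how to close. The paper's proof of part~(a) does \emph{not} iterate; it uses the truncated weight
$$\Omega_2^\varepsilon(m) := \min\bigl(e^{\varepsilon|m|},\,\Omega_2(m)\bigr),$$
observes (citing P\"oschel, via Lemma~47 of \cite{DM15}) that under the hypothesis $\log\Omega_2(n)/n \searrow 0$ this truncation is still sub--multiplicative, and --- the crucial point --- that $\Omega_2^\varepsilon(m) = \Omega_2(m)$ for all large $|m|$, so $H(\Omega^\varepsilon) = H(\Omega)$ \emph{as a space}. Thus the weight is tamed near the origin (bounded by $e^{\varepsilon|m|}$, which is close to $1$ for small $\varepsilon$ on any fixed finite range) without changing the asymptotic class, and a \emph{single} application of Lemma~\ref{lem4.2} to the weight $\Omega^\varepsilon$ finishes the proof. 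Your family $\Omega^{(t)}(k) = \Omega_1(k)\Omega_2(k)^t/k$, by contrast, shrinks $\Omega_2$ uniformly at all scales, so every $t<1$ gives a genuinely weaker space, and the endpoint $\Omega^{(1)} = \Omega$ is never tamed.

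The specific gaps in part~(a) of your bootstrap: first, the assertion that $\log r_N$ grows like $\log N$ is not justified --- from (\ref{4.33}), $r_N \sim \Omega_1(\sqrt N)^{1/4}$ when $\Omega_1$ dominates, and $\Omega_1$ may be unbounded yet grow arbitrarily slowly (the hypothesis $\Omega_1(k)\leq C_1|k|^{1/4}$ is only an \emph{upper} bound), so $\log r_N$ may be as small as, say, $\log\log\log N$. Second, after one application of Lemma~\ref{lem4.2} with weight $\Omega^{(t_k)}$ you only know $\|v\|_{\Omega^{(t_k)}}\leq r_{N_k}$, so the head bound for the next step becomes $\Omega_2(N_{k+1})^{\delta_{k+1}}\|v\|_{\Omega^{(t_k)}} \leq \Omega_2(N_{k+1})^{\delta_{k+1}}\,r_{N_k}$, which must be $\leq r_{N_{k+1}}/4$; taking logarithms forces
$$\delta_{k+1}\log\Omega_2(N_{k+1}) \leq \log r_{N_{k+1}} - \log r_{N_k} - \log 4.$$
Summing in $k$, the right side telescopes with a penalty of $-K\log4$ over $K$ steps, and to keep each term positive you need $r_{N_{k+1}}>4r_{N_k}$, i.e.\ $N_{k+1}$ growing rapidly with $k$; but then $\log\Omega_2(N_{k+1})$ grows as well, and the resulting $\delta_{k+1}$ shrink so quickly that it is not at all clear $\sum\delta_k$ can reach $1$ under the sole hypothesis $\log\Omega_2(n)/n\searrow 0$. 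Third, even if $t_k\nearrow 1$ exactly, the conclusion is only $v\in\bigcap_{t<1}H(\Omega^{(t)})$, which in general strictly contains $H(\Omega)$ (the intersection of a strictly decreasing scale of weighted $\ell^2$ spaces over an open parameter range does not equal the endpoint space --- this is the familiar Sobolev $\bigcap_{s<s_0}H^s \supsetneq H^{s_0}$ phenomenon). To conclude $v\in H(\Omega)$ you must land at $t\geq 1$ in finitely many steps with a positive final $\delta$, which is precisely what the diminishing step sizes threaten to prevent. Part~(b) of your argument, a single small step to $t=\delta>0$ followed by the observation $\Omega^{(\delta)}(k)\gtrsim e^{\varepsilon|k|}$, is sound, but part~(a) needs the paper's truncation idea, not an interpolation in the exponent.
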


\begin{proof} (a) If (\ref{33.1}) holds, then
(see Lemma 47 in \cite{DM15} -- this observation comes from
\cite{P04}) for each $\varepsilon >0 $ the weight
\begin{equation}
\label{33.16}  \Omega^\varepsilon_2 (m) = \min \left (
e^{\varepsilon |m|}, \Omega_2 (m) \right )
\end{equation}
is sub--multiplicative, and obviously for large enough $|m|$ we have
$\Omega^\varepsilon_2 (m) =\Omega_2 (m). $  Let $\Omega^\varepsilon$
be a weight given by
 $$
\Omega^\varepsilon (k) =\frac{\Omega_1 (k) \Omega_2^\varepsilon
(k)}{k}, \quad k \neq 0;
$$
then it follows  $H(\Omega^\varepsilon) = H (\Omega). $

Next we use the constructions and notations of Lemma \ref{lem4.2}.
If $v \in H(\omega_1), $ then $$ \|v\|_{\omega_1} < r_N/8 $$ for
large enough $ N > N^* (\Omega_1, \|v\|_{\omega_1} ). $ We choose $N
$ so that (\ref{33.2}) holds and set
 $$ w:= A_N (v) = v + \Phi_N (v). $$
Then, by (\ref{4.25})  (with $\Omega_2 \equiv 1$ in (\ref{4.24})),
we have $$ \| \Phi_N (v) \|_{\omega_1} \leq \frac{1}{2}
\|v\|_{\omega_1} \leq \frac{r_N}{16},
$$ and therefore,
\begin{equation}
\label{33.17} \| w \|_{\omega_1} = \|A_N (v)\|_{\omega_1} \leq \|v
\|_{\omega_1}  + \| \Phi_N (v) \|_{\omega_1} \leq \frac{r_N}{4}.
\end{equation}
There exists $ \varepsilon > 0 $ such that
$\|w\|_{\Omega^\varepsilon} \leq r_N/2.$ Indeed, let $ w(x) =
\sum_{k \in \mathbb{Z}} w_k \exp (2ikx); $ choose $N_1 \in
\mathbb{N} $ so that
\begin{equation}
\label{33.18} \sum_{|k|>N_1} |w_k|^2 (\Omega (k))^2 <
\frac{r_N^2}{16}.
\end{equation}
After that, choose $\varepsilon >0 $ so that $e^{\varepsilon N_1}
\leq \sqrt{2}. $ Then we have
\begin{equation}
\label{33.19} \Omega_2^{\varepsilon} (m) \leq \sqrt{2} \,\omega_1
(m) \quad \text{for} \quad |m| \leq N_1.
\end{equation}
Now (\ref{33.17})--(\ref{33.19}) imply $$ \|w\|_{
\Omega^\varepsilon}^2 \leq \sum_{|k| \leq N_1} 2|w_k|^2 (\omega_1
(m))^2 +\sum_{|k|
> N_1} |w_k|^2 (\Omega (k))^2 \leq 2\|w\|_{\omega_1}^2 + \frac{r^2_N}{16} \leq
\frac{r^2_N}{8}+\frac{r^2_N}{16},$$ and therefore, $ \|w\|_{ \Omega
^{\varepsilon}} < r_N/2.$

By (\ref{4.27}) in Lemma~\ref{lem4.2}, there exists $\tilde{v} \in
\mathcal{B}^{ \Omega^{\varepsilon}}_{r_N} \subset
\mathcal{B}^{\omega_1}_{r_N} $ such that $$ A_N (\tilde{v}) = w =
A_N (v).$$ On the other hand, by Lemma~\ref{lem4.2}, the restriction
of $A_N $ on the ball $\mathcal{B}^{\omega_1}_{r_N} $ is injective.
Thus $$v = \tilde{v} \in H(\Omega^{\varepsilon}) = H(\Omega).$$

(b) If $\Omega_2 $ is a sub--multiplicative weight of exponential
type, i.e.,
$$ \lim_{n\to \infty} \frac{\log \Omega_2 (n)}{n} > 0, $$ then,  for small
enough $ \varepsilon > 0, $  $$\Omega_2^\varepsilon (m) = \min \left
( e^{\varepsilon |m|}, \Omega_2 (m) \right )\equiv e^{\varepsilon
|m|}, \quad \Omega^{\varepsilon} (m) = \frac{\Omega_1 (m)}{m}
e^{\varepsilon |m|}  .$$

Thus, the same argument as in (a) shows that $ v \in
H(\Omega^\varepsilon) \subset H( e^{(\varepsilon/2) |m|}), $ which
completes the proof of Proposition~\ref{prop33.1}.
\end{proof}

\begin{Proposition}
\label{prop33.2} Suppose $\Omega = (\Omega (m))_{m\in \mathbb{Z}} $
is a weight of the form
\begin{equation}
\label{33.0.1} \Omega (m)= \frac{ \omega (m)}{m}, \quad m\neq 0.
\end{equation}
(a) If $\omega $ a sub--multiplicative weight such that
\begin{equation}
\label{33.0.2}  \frac{\log \omega (n)}{n} \searrow 0 \quad \text{as}
\quad n \to \infty,
\end{equation}
then
\begin{equation}
\label{33.0.3} \exists N: \; A_N (v) \in \ell^2 (\mathbb{N}, \Omega)
\Rightarrow v \in H(\Omega ).
\end{equation}
(b) If $\omega $ is a sub--multiplicative weight of exponential
type, i.e.,
\begin{equation}
\label{33.0.4} \lim_{n\to \infty} \frac{\log \omega (n)}{n} >0,
\end{equation}
 then
\begin{equation}
\label{33.0.5} \exists N: \; A_N (v) \in \ell^2 (\mathbb{N}, \Omega)
\Rightarrow   \exists \varepsilon >0:\;  v \in H(e^{\varepsilon |m|}
).
\end{equation}
\end{Proposition}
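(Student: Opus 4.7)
The plan is to adapt the contraction-mapping scheme of Lemma \ref{lem4.2} and Proposition \ref{prop33.1} to the present setting. The obstruction to quoting Proposition \ref{prop33.1} directly is that a generic sub-multiplicative weight $\omega$ need not admit a factorization $\omega = \Omega_1 \Omega_2$ with $\Omega_1$ slowly increasing, unbounded, and bounded by $C|k|^{1/4}$; thus the weighted tail bound of Lemma \ref{lem3.5} is unavailable and must be replaced by its bounded counterpart, Lemma \ref{lem3.5a}. Combining the pointwise estimate $|\beta_n^\pm(v;z) - v_{\pm n}| \le B(2V;\pm n)$ from (\ref{3.18}), Lemma \ref{lem3.6} (which yields $\omega(2n) B(V;\pm n) \le B(\omega V;\pm n)$ from sub-multiplicativity alone), and Lemma \ref{lem3.5a} applied with $\rho = 2\omega V$ and $r = \rho/m$, I would deduce the key a priori bound
\[
\|\Phi_N(v)\|_\Omega^2 \;\le\; C\,\|v\|_\Omega^2\,\bigl(\mathcal{E}_N(\omega V/m)^2 + \|v\|_\Omega^2/N\bigr),
\]
with $C$ an absolute constant; the tail $\mathcal{E}_N(\omega V/m)$ now plays the role that $1/\Omega_1(\sqrt{N})$ played in Proposition \ref{prop3.2}.

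Next, mimicking Lemma \ref{lem4.2}, I would run a contraction argument on a ball $\mathcal{B}^\Omega_{r_N}$ with $r_N$ of order $\sqrt{N}$. The Cauchy inequality applied to the analytic $v$-dependence of $\Phi_N$ converts the size bound into a $\tfrac12$-Lipschitz bound, so that $A_N$ maps $\mathcal{B}^\Omega_{r_N}$ bijectively onto a superset of $\mathcal{B}^\Omega_{r_N/2}$. For $N$ large enough the hypothesis $A_N(v) \in H(\Omega)$ puts $A_N(v)$ into $\mathcal{B}^\Omega_{r_N/2}$, so the Banach fixed-point theorem produces a unique $\tilde v \in \mathcal{B}^\Omega_{r_N}$ with $A_N(\tilde v) = A_N(v)$. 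Applying the same estimate with $\omega \equiv 1$ gives injectivity of $A_N$ on an $H^{-1}$-ball of radius of order $\sqrt N$ that contains both $\tilde v$ and the original $v$, forcing $\tilde v = v \in H(\Omega)$. The refinements (a) and (b) then follow from exactly the $\varepsilon$-regularization trick used in Proposition \ref{prop33.1}: with $\omega^\varepsilon(m) := \min(e^{\varepsilon|m|},\omega(m))$, which remains sub-multiplicative by the Pecherskii-type lemma (Lemma 47 of \cite{DM15}), one has $\omega^\varepsilon = \omega$ for large $|m|$ under (\ref{33.0.2}), giving (a); and $\omega^\varepsilon(m) = e^{\varepsilon|m|}$ eventually for small $\varepsilon$ under (\ref{33.0.4}), giving (b).

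The main obstacle is the contraction step, namely arranging $r_N$ of order $\sqrt N$ so as to simultaneously (i) give $\Phi_N$ the $\tfrac12$-Lipschitz property on $\mathcal{B}^\Omega_{r_N}$, (ii) accommodate $A_N(v)$ inside $\mathcal{B}^\Omega_{r_N/2}$ for some $N = N(v)$, and (iii) identify the fixed point $\tilde v$ with $v$ via a common $H^{-1}$-ball of injectivity. Because the would-be Lipschitz constant no longer vanishes as $N\to\infty$ through a factor $1/\Omega_1(\sqrt N)$, one has to exploit the $\mathcal{E}_N$-term, whose decay is uniform on norm-bounded subsets of $H(\Omega)$ but not uniform on $H(\Omega)$ itself; balancing this against the necessary growth of the ball radius is the delicate point.
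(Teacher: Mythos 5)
Your reduction stalls exactly at the point you yourself call ``delicate,'' and the difficulty there is not merely technical. The quantity $\mathcal{E}_N(\omega V/m)$, which in your scheme replaces the factor $1/\Omega_1(\sqrt N)$ of Lemma \ref{lem3.5} and Proposition \ref{prop3.2}, does \emph{not} tend to zero uniformly on norm-bounded subsets of $H(\Omega)$ (contrary to what you assert): for $u$ with Fourier support beyond $N$ one has $\mathcal{E}_N(\omega U/m)=\|u\|_\Omega$, so the supremum of this tail over $\mathcal{B}^\Omega_{3r_N}$ equals $3r_N$. Consequently your a priori bound only gives $\sup_{\mathcal{B}^\Omega_{3r_N}}\|\Phi_N\|_\Omega\leq C r_N^2$ with no decaying factor, the Cauchy estimate yields a Lipschitz constant of order $r_N$ rather than $o(1)$, and the $\tfrac12$-contraction on $\mathcal{B}^\Omega_{r_N}$ with $r_N\nearrow\infty$ is unobtainable. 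Shrinking $r_N$ to a constant destroys items (ii) and (iii) of your own list: $A_N(v)$ need not lie in a fixed small ball, and $v$ itself must lie in the ball on which $A_N$ is injective. Uniform decay of $\mathcal{E}_N$ holds only on compact sets, which is useless for a fixed-point argument that must map a ball into itself. The same obstruction defeats your final step, where you want injectivity of $A_N$ on an $H^{-1}$-ball of radius of order $\sqrt N$.

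You correctly diagnosed that Proposition \ref{prop33.1} cannot be applied verbatim because $\omega$ need not factor as $\Omega_1\Omega_2$, but the paper's fix is different from yours and avoids redoing any contraction estimate. Given $w:=A_N(v)\in H(\Omega)$ and $v\in H^{-1}$, one forms the $\ell^2$ sequence $x_n=\bigl(n^{-2}|v_{-n}|^2+n^{-2}|v_n|^2+(|w_{-n}|^2+|w_n|^2)\Omega(n)^2\bigr)^{1/2}$ and invokes Lemma 48 of \cite{DM15} to produce a slowly increasing \emph{unbounded} weight $\Omega_1(n)\le C_1|n|^{1/4}$, adapted to this particular $v$ and $w$, with $x\in\ell^2(\Omega_1)$. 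This upgrades the hypotheses for free to $v\in H(\omega_1)$ with $\omega_1=\Omega_1(m)/m$ and $A_N(v)\in H(\Omega_1\Omega)$; since $\Omega_1\Omega=\Omega_1\omega/m$ has exactly the product form required by Proposition \ref{prop33.1} (with $\Omega_2=\omega$), that proposition applies and yields $v\in H(\Omega_1\Omega)\subset H(\Omega)$. The unboundedness of $\Omega_1$ is precisely what restores the factor $1/\Omega_1(\sqrt N)\to 0$, uniform on balls of $H(\omega_1)$, that your version of the estimate lacks. Your $\varepsilon$-regularization of $\omega$ for parts (a) and (b) is fine, but it is already carried out inside Proposition \ref{prop33.1} and does not touch the contraction issue.
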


\begin{proof}
Let $w:=A_N (v), $ and let $ \sum_{m\in \mathbb{Z}} w_m e^{2imx} $
be the Fourier series of $w.$

If $ w= A_N (v) \in H(\Omega ), $ then $(|w_m|)_{m\in \mathbb{Z}}
\in \ell^2 (\Omega). $ Consider the sequence  $x= (x_n)_{n\in
\mathbb{N}} $ given by
$$ x_n = \left ( \frac{1}{n^2}|v_{-n}|^2  + \frac{1}{n^2}|v_n|^2+
( |w_{-n}|^2 +|w_n|^2) (\Omega (n))^2 \right )^{1/2}.  $$ Since $x
\in \ell^2 (\mathbb{N}),$ there exists a slowly increasing unbounded
weight $ \Omega_1 $ such that $x \in \ell^2 (\mathbb{N}, \Omega_1) $
(see Lemma 48 in \cite{DM15}). We may assume without loss of
generality that $$ \Omega_1 (n) \leq C_1 |n|^{1/4} $$ (otherwise, we
may replace $\Omega_1 $ with $(\Omega_1)^{1/a},$ where $a$ is a
suitable constant).

By the choice of $\Omega_1 $ we have $ A_N (v) \in H (\Omega_1
\Omega) $ and $v \in H(\omega_1),$ where the weight $\omega_1 $ is
given by $\omega_1 (m) =\Omega_1 (m)/m, \, m>0. $ Now, since $H
(\Omega_1  \Omega) \subset H ( \Omega), $ Proposition~\ref{prop33.2}
follows from Proposition~\ref{prop33.1}.
\end{proof}

Now we are ready to complete our analysis in the case of
real--valued potentials $v \in H^{-1}.$

\begin{Theorem}\footnote{In a new preprint (Jan 15, 2009)
V. Mikhailets and V. Molyboga \cite{MM09} prove a special case of
this statement. They assume that the potential $v$ is in $H^{-1+a}$
for any $a
> 0$ (not just in $H^{-1}$) and consider only Sobolev weights $
w_s(k) = (1 + 2|k|)^s , s  \in ( -1, \infty)$ or their slight
variation $w_s(k) = ((1 + 2|k|)^s) g(|k|),$ where $g$ is a slowly
varying function (in Karamata's sense, see \cite{Snt}).}
\label{thm33.1} Let $L = L^0 + v(x) $ be the Hill--Schr\"odinger
operator with a periodic real--valued potential $v \in H_{loc}^{-1}
(\mathbb{R}),\; v(x+\pi) = v(x),$ and let $\gamma = (\gamma_n)$ be
its gap sequence. If $\Omega = (\Omega (m))_{m\in \mathbb{Z}} $ is a
weight of the form
\begin{equation}
\label{33.001} \Omega (m)= \frac{ \omega (m)}{m}, \quad m\neq 0,
\end{equation}
where $\omega $ a sub--multiplicative weight such that
\begin{equation}
\label{33.002}  \frac{\log \omega (n)}{n} \searrow 0 \quad \text{as}
\quad n \to \infty,
\end{equation}
then
\begin{equation}
\label{33.003} \gamma \in \ell^2 (\mathbb{N}, \Omega) \Rightarrow v
\in H(\Omega ).
\end{equation}
If $\omega $ is a sub--multiplicative weight of exponential type,
i.e.,
\begin{equation}
\label{33.004} \lim_{n\to \infty} \frac{\log \omega (n)}{n} >0,
\end{equation}
 then there exists $\varepsilon >0 $ such that
\begin{equation}
\label{33.005} \gamma \in \ell^2 (\mathbb{N}, \Omega) \Rightarrow v
\in H(e^{\varepsilon |m|} ).
\end{equation}
\end{Theorem}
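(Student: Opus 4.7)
The plan is to combine Theorem~\ref{thm4.1} (the two--sided estimate of $\gamma_n$ in terms of $\beta_n^\pm$ for real--valued potentials) with Proposition~\ref{prop33.2} (which inverts the mapping $A_N$ on sequence spaces of the form $\ell^2(\Omega)$). The mapping $A_N = H_N + T_N$ from \eqref{4.13.0}--\eqref{4.13a} serves as the bridge: a priori bounds on $\gamma_n$ translate, via self--adjointness of $L$, into bounds on the ``tail'' $T_N(v)$, which together with the trivially smooth ``head'' $H_N(v)$ forces $A_N(v) \in H(\Omega)$; then Proposition~\ref{prop33.2} extracts the desired smoothness of $v$ itself.

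More precisely, the first step is to fix $N \geq N^*(v)$ large enough that Proposition~\ref{prop004} applies, so that $\lambda_n^\pm$, $z_n^* = \frac{1}{2}(\lambda_n^+ + \lambda_n^-) - n^2$, and $\beta_n^\pm(v; z_n^*)$ are well defined for every $n > N$. Since $v$ is real--valued, Theorem~\ref{thm4.1} yields the two--sided bound
\[
(1-\eta_n)\bigl(|\beta_n^-(z_n^*)| + |\beta_n^+(z_n^*)|\bigr) \leq |\gamma_n| \leq (1+\eta_n)\bigl(|\beta_n^-(z_n^*)| + |\beta_n^+(z_n^*)|\bigr)
\]
with $\eta_n \downarrow 0$. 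In particular, if $\gamma \in \ell^2(\mathbb{N}, \Omega)$ then
\[
\sum_{n > N} \bigl(|\beta_n^-(z_n^*)|^2 + |\beta_n^+(z_n^*)|^2\bigr) (\Omega(n))^2 < \infty,
\]
which by the definition \eqref{4.13a} of $T_N(v)$ means precisely that $T_N(v) \in H(\Omega)$.

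Second, since $H_N(v)$ is a trigonometric polynomial of degree $\leq 2N$, it belongs to $H(\Omega)$ for \emph{every} weight $\Omega$; hence
\[
A_N(v) = H_N(v) + T_N(v) \in H(\Omega).
\]
Now Proposition~\ref{prop33.2}(a) applies directly: under the assumption $\log\omega(n)/n \searrow 0$ we conclude $v \in H(\Omega)$, which is \eqref{33.003}. For the exponential case, Proposition~\ref{prop33.2}(b) gives instead $v \in H(e^{\varepsilon|m|})$ for some $\varepsilon > 0$, which is \eqref{33.005}.

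The only subtle point — and what I expect to be the main technical obstacle — is the choice of $N$ so that the hypotheses of Proposition~\ref{prop33.2} are met. Proposition~\ref{prop33.2} requires a slowly increasing unbounded auxiliary weight $\Omega_1$ satisfying $\Omega_1(k) \leq C_1|k|^{1/4}$ such that $v \in H(\omega_1)$ with $\omega_1(k) = \Omega_1(k)/k$; its proof constructs such an $\Omega_1$ by applying the standard trick (Lemma~48 in \cite{DM15}) that every $\ell^2$ sequence lies in some weighted $\ell^2$ with an unbounded slowly increasing weight. Once $\Omega_1$ is in hand, $N$ is chosen via $N \geq N^*(\Omega_1, \|v\|_{\omega_1})$ as in Lemma~\ref{lem4.2} so that $A_N$ is well defined and injective on the appropriate ball, and the contraction argument underlying \eqref{4.27} delivers $v$ as the unique preimage of $A_N(v)$ in $H(\Omega)$. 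Since all the heavy lifting has already been carried out in Proposition~\ref{prop33.2}, the proof of Theorem~\ref{thm33.1} itself amounts to verifying $A_N(v) \in H(\Omega)$ via Theorem~\ref{thm4.1} and then invoking that proposition.
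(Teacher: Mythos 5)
Your proposal is correct and follows essentially the same route as the paper: the paper's proof of Theorem~\ref{thm33.1} is precisely the implication chain $(\gamma_n) \in \ell^2(\Omega) \Rightarrow A_N(v) \in H(\Omega)$ via Theorem~\ref{thm4.1} (spelled out in the paper already at \eqref{4.14a}--\eqref{4.14}), followed by an invocation of Proposition~\ref{prop33.2}. Your elaboration of the head/tail decomposition and the selection of $N$ via the auxiliary weight $\Omega_1$ just unpacks what the paper delegates to Proposition~\ref{prop33.2} and Lemma~\ref{lem4.2}.
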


\begin{proof}
 In
view of Theorem \ref{thm4.1},
$$(\gamma_n)_{n>N} \in \ell^2 ( \Omega) \;
\Rightarrow  \;\exists N: \; A_N (v) \in  H (\Omega). $$ Therefore,
Theorem~\ref{thm33.1} follows from Proposition~\ref{prop33.2}.
\end{proof}

\section{Complex--valued $H^{-1}$--potentials}

In this section we extend Theorem \ref{thm33.1} -- with proper
adjustments -- to the case of complex--valued $H^{-1}$ potentials.

In \cite{DM5} we did similar "extension" of our results for real
$L^2$--potentials. We followed the general scheme of
\cite{KM1,KM2,DM3} but added two important technical ingredients.

(a) (elementary observation): A $2\times 2 $ matrix $\begin{pmatrix}
a &K \\k & a \end{pmatrix}$ has {\em two} linearly independent
eigenvectors $u_1$ and $u_2 $  if $kK\neq 0 .$ With the
normalization $kK=1$ and $|k|\leq 1 $ (otherwise the coordinates in
$\mathbb{C}^2$ could be interchanged) the angle $\alpha = \alpha
(u_1,u_2) $ between the vectors $u_1 = \begin{pmatrix} 1\\k
\end{pmatrix}$ and $u_2 =
\begin{pmatrix}  1\\-k \end{pmatrix}$ is equal to
$$\arccos \frac{1-|k|^2}{1 + |k|^2}=\arcsin \frac{2k}{1+|k|^2},$$ so
$\alpha \sim 2|k|$ if $|k| << 1.$

(b) (hard analysis)  The Riesz projections $P_n,\, P_n^0$ on $E_n$
and $E^0_n,$ respectively, are close in the following sense
\begin{equation}
\label{7.1} \|P_n-P_n^0\|_{L^2\to L^2}  \leq  \kappa_n :=
\|P_n-P_n^0\|_{L^2\to L^\infty} \to 0.
\end{equation}

{\em Remark.} In the case of $L^2$ potentials
\begin{equation}
\label{7.2} \kappa_n \leq \frac{C\|v\|}{n}, \quad n \geq n_*
\end{equation}
See Proposition 4 in \cite{DM5} or Proposition 11 in \cite{DM15}.
Close estimates may be found in \cite{VD02,DV05,SV08}.

Even for $v \in H^{-b}, \; b\in [0,1), $  the inequalities
(\ref{7.1}) can be proven with
\begin{equation}
\label{7.3} \kappa_n \leq C(b) \frac{\|v|H^{-b} \|}{n^{1-b}},
\quad n \geq n_*,
\end{equation}
i.e., these estimates are {\em uniform on the balls in} $H^{-b},
\; b\in [0,1). $

If $v \in L^1 $  then
\begin{equation}
\label{vel} \kappa_n \leq C \|v\|_{L^1} \frac{\log n}{n}.
\end{equation}
This is proven, although not explicitly claimed, in
\cite{VD02,DV05,SV08}.

In the case of Dirac operators  (see \cite{Mit04}, Proposition~8.1
and Corollary~8.6, and  \cite{DM15}, Proposition~19, formula
(1.165)), estimates like (\ref{7.1}) do not hold on the balls in
$L^2$--space of potentials. But for individual potentials or on
compacts in $L^2$ an analogue of (\ref{7.1}) holds with
\begin{equation}
\label{7.4} \kappa_n \leq C\|v\| \left (\frac{\|v\|}{\sqrt{n}} +
\mathcal{E}_{|n|/2} (w) \right ),   \quad n \geq n_*,
\end{equation}
where $w$ is an $\ell^2$--sequence.

In Appendix, we give estimates of $\|P-P_n^0\|_{L^2\to L^\infty}$ or
even of $\|P-P_n^0\|_{L^1\to L^\infty}$ in the case of
Hill--Schr\"odinger operators with complex--valued $\pi$--periodic
$H^{-1}$--potentials, subject to $Per^\pm $ or Dirichlet boundary
conditions.  See Proposition~\ref{prop91}, Theorem~\ref{thm91} and
the inequality (\ref{18d}) in Section 9, Appendix.

These facts make possible to preserve the basic structure of the
proof in the case of $L^2$ potentials:  we just need to use
Proposition~\ref{prop91}, or (\ref{18d}). Keeping this in mind we
omit details of the proofs (see \cite{DM15}, Section 4) but
reproduce the steps and the core statements leading to the proof of
the main result.

\begin{Theorem}
\label{thm44.2}  Let $L = L^0 + v(x) $ be the Hill--Schr\"odinger
operator with a $\pi$--periodic potential $v \in H^{-1}_{loc}
(\mathbb{R}).$ Then, for $n > N(v),$  the operator $L$ has in the
disc of center $n^2 $ and radius $r_n = n/4 $ one Dirichlet
eigenvalue $\mu_n $ and
 two (counted with their algebraic multiplicity) periodic (for even $n$),
or antiperiodic (for odd $n$)   eigenvalues $\lambda^+_n $ and $
\lambda^-_n. $

Let
\begin{equation}
\label{44.3} \Delta_n = |\lambda^+_n - \lambda^-_n | + |\lambda^+_n
- \mu_n|, \quad n > N (v);
\end{equation}
then, for each weight $\Omega = (\Omega (m))_{m \in \mathbb{Z}} $ of
the form
$$   \Omega (m)= \omega (m)/m,  \quad m \neq 0,  $$
where $\omega $ is a sub-multiplicative weight,  we have
\begin{equation}
\label{44.4} v \in H(\Omega )  \; \Rightarrow \;  (\Delta_n)  \in
\ell^2 (\Omega ).
\end{equation}

Conversely,  if  $ \; \omega = (\omega (m))_{m\in \mathbb{Z}} $ is a
sub--multiplicative weight such that
\begin{equation}
\label{44.5}  \frac{\log \omega (n)}{n} \searrow 0 \quad \mbox{as}
\quad n \to \infty,
\end{equation}
then
\begin{equation}
\label{44.6}  (\Delta_n) \in \ell^2 (\Omega) \;  \Rightarrow  \; v
\in H(\Omega ).
\end{equation}

If $\omega $ is a sub--multiplicative weight of exponential type,
i.e.,
\begin{equation}
\label{44.7} \lim_{n\to \infty}  \frac{\log \omega (n)}{n} >0
\end{equation}
then
\begin{equation}
\label{44.8} (\Delta_n) \in \ell^2 (\Omega) \; \Rightarrow   \;
\exists  \varepsilon >0: \; v \in H(e^{\varepsilon |m|} ).
\end{equation}
\end{Theorem}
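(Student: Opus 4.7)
The plan is to follow the scheme of Theorem~\ref{thm33.1} but to use the combined data $(\gamma_n, \delta_n)$, with $\delta_n := |\mu_n - (\lambda_n^+ + \lambda_n^-)/2|$, since in the complex case the symmetry $|\beta_n^-(z_n^+)|=|\beta_n^+(z_n^+)|$ from Lemma~\ref{lem2.3}(b) is no longer available. The triangle inequality shows $\max(\gamma_n/2,\delta_n)\le\Delta_n\le 3\gamma_n/2+\delta_n$, so $(\Delta_n)\in\ell^2(\Omega)$ is equivalent to $(\gamma_n),(\delta_n)\in\ell^2(\Omega)$, and the theorem splits into the two chains
\[
 v\in H(\Omega)\;\Longrightarrow\;(\gamma_n),(\delta_n)\in\ell^2(\Omega) \quad\text{and}\quad (\gamma_n),(\delta_n)\in\ell^2(\Omega)\;\Longrightarrow\;v\in H(\Omega).
\]

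For the forward direction, the $\gamma_n$--piece is Theorem~\ref{thm4.2}. For the $\delta_n$--piece I would set up the one--dimensional Dirichlet basic equation $\mu_n-n^2=\tilde{\alpha}_n^D(v;\mu_n-n^2)$ by repeating the derivation of Lemma~\ref{lem2.2} in the sine basis furnished by Proposition~\ref{prop003}, and prove analogues of Propositions~\ref{prop3.1} and~\ref{prop3.2} for $\tilde{\alpha}_n^D$. Evaluating the compression of $V$ on the antisymmetric vector $(e_n-e_{-n})/(i\sqrt 2)$ gives, at leading order, $\mu_n-n^2\approx\alpha_n-(\beta_n^+ +\beta_n^-)/2$; together with the identity $\lambda_n^++\lambda_n^- = 2(n^2+\alpha_n(v;z_n^*))+O(\varepsilon_n\gamma_n)$ coming from the basic equation~(\ref{27.3a}) and the derivative bound~(\ref{27.5}), this yields $2\delta_n=|\beta_n^+(v;z_n^*)+\beta_n^-(v;z_n^*)|+o(\Delta_n)$, and the right--hand side lies in $\ell^2(\Omega)$ by Proposition~\ref{prop3.2}.

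For the converse, the essential new ingredient is a complex-case substitute for the lower bound of Theorem~\ref{thm4.1}. Setting $a=\beta_n^+(v;z_n^*)$ and $b=\beta_n^-(v;z_n^*)$, the basic equation~(\ref{27.3a}) gives $|ab|=(\gamma_n/2)^2$ up to the derivative error of Proposition~\ref{prop3.1}, while the Dirichlet reduction above gives $|a+b|\le 2\delta_n+o(\Delta_n)$. The elementary estimate
\[
 |a|+|b|\;\le\;\sqrt{|a+b|^2+4|ab|}\;\le\;|a+b|+2\sqrt{|ab|}
\]
then gives $|a|+|b|\le \gamma_n+2\delta_n+o(\Delta_n)\le C\Delta_n+o(\Delta_n)$, so $(\Delta_n)\in\ell^2(\Omega)$ forces $A_N(v)=v+\Phi_N(v)\in H(\Omega)$ for the map $A_N$ of Section~6. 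Lemma~\ref{lem4.2} together with Propositions~\ref{prop33.1} and~\ref{prop33.2} then close the argument exactly as in Theorem~\ref{thm33.1}.

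The main obstacle is the uniform control, in $n$, of the $o(\Delta_n)$ remainders in the Dirichlet/Periodic comparison: the reduction to a $2\times2$ Periodic block and a $1\times1$ Dirichlet block is clean, but to upgrade the leading--order asymptotics to the $H(\Omega)$--scale one needs quantitative closeness of the true eigenprojections (both $Per^\pm$ and $Dir$) to the free ones. This is exactly what the Appendix supplies through the bound $\|P_n-P_n^0\|_{L^1\to L^\infty}\to 0$ (Proposition~\ref{prop91} and Theorem~\ref{thm91}) for both families of boundary conditions, which is the hard--analysis ingredient (b) evoked at the opening of Section~7; without it the residual coupling between the sine and cosine sectors would not be summable against $(\Omega(n))^2$ and the contraction scheme of Lemma~\ref{lem4.2} could not be closed.
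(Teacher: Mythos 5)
Your reformulation in terms of $(\gamma_n,\delta_n)$ with $\delta_n = |\mu_n - (\lambda_n^+ + \lambda_n^-)/2|$ is sound — the two-sided comparison $\max(\gamma_n/2,\delta_n)\le\Delta_n\le 3\gamma_n/2+\delta_n$ does hold, and your elementary inequality $|a|+|b|\le\sqrt{|a+b|^2+4|ab|}\le|a+b|+2\sqrt{|ab|}$ is correct. However, your route is genuinely different from the paper's, and there is a gap in the central step. You propose to set up a one--dimensional Dirichlet Schur complement $\mu_n-n^2 = \tilde\alpha_n^D(\mu_n-n^2)$ and then claim that $\tilde\alpha_n^D$ coincides with $\alpha_n - (\beta_n^+ + \beta_n^-)/2$ up to an $o(\Delta_n)$ error. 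This identification holds only at the zeroth (free) order, where the compression of $V$ onto $\sin nx$ indeed gives $V(0)-\tfrac12(V(2n)+V(-2n))$. Beyond that the two perturbation series live in genuinely different data: the Dirichlet reduction (Proposition~\ref{prop003}) is built from the sine coefficients $\tilde V(k)=k\tilde q(k)$ with summation indices running over $\mathbb{N}$, whereas the periodic scalars $\alpha_n,\beta_n^\pm$ are built from the exponential coefficients $V(m)=im\,q(m)$ with indices in $n+2\mathbb{Z}$. For a $\pi$--periodic $Q$ the sine coefficients at odd frequencies are nonlocal combinations of the $q(m)$, and the Dirichlet matrix mixes a ``cosine sector'' that is invisible in the periodic $2\times2$ block. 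Nothing you have written shows that this mismatch is $o(\Delta_n)$ uniformly, let alone that it is summable against $\Omega(n)^2$, and the Appendix result $\|P_n-P_n^0\|_{L^1\to L^\infty}\to0$ — which compares the perturbed Riesz projection to the \emph{free} one for a \emph{fixed} boundary condition — does not by itself compare the Dirichlet and periodic Schur complements.

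The paper avoids this comparison entirely. Instead of a Dirichlet basic equation, it works geometrically inside the two--dimensional periodic Riesz subspace $E_n$: Lemma~\ref{lem41.1} builds an orthonormal basis $f,\varphi$ of $E_n$ adapted to $\lambda^+_n$, Lemma~\ref{lem42.1} produces a function $G=af+b\varphi\in E_n$ that satisfies Dirichlet boundary conditions and relates $\mu_n-\lambda^+_n$ to $b\xi$ and $b\gamma$ via the closeness $P_n\approx P_n^0$, $P_{dir}\approx P_{dir}^0$, and Propositions~\ref{prop43.1}--\ref{prop43.2} handle the two regimes $|B^+|\asymp|B^-|$ and $|B^+|\not\asymp|B^-|$ separately. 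This yields the two--sided inequality of Theorem~\ref{thm44.1} between $|\gamma_n|+|\mu_n-\lambda_n^+|$ and $|\beta_n^-(z_n^*)|+|\beta_n^+(z_n^*)|$ directly, with no detour through a Dirichlet Fourier reduction. The role of the Appendix is precisely to make this geometric argument quantitative (control of $|f(0)|/|\varphi(0)|$, of the angle between $f$ and $\varphi$, of $\langle P_{dir}f,g\rangle$, etc.), not to compare two Schur complements. If you want to push your route through, you would have to prove an $H(\Omega)$--level estimate for $\tilde\alpha_n^D - \bigl(\alpha_n - \tfrac12(\beta_n^+ + \beta_n^-)\bigr)$; that is a nontrivial additional theorem, not a remark.
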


\begin{proof}
By Proposition \ref{prop004}, for $n > N(v),$  the operator $L$ has
in the disc of center $n^2 $ and radius $r_n = n/4$
 one Dirichlet eigenvalue
$\mu_n $ and two (counted with their algebraic multiplicity) periodic
(for even $n$), or antiperiodic (for odd $n$) eigenvalues
$\lambda^+_n $ and $ \lambda^-_n. $

Let $E =E_n $ and $E^0 =E_n^0 $ denote the corresponding
2--dimensional invariant subspace  of $L$ and the free operator $L^0$
(subject to periodic or antiperiodic bc), and let  $P= P_n $ and $P^0
= P^0_n $ denote the Cauchy--Riesz projections on $E_n $ and $E_n^0,
$ respectively. In what follows, we fix an $n \in \mathbb{N}$  and
consider the corresponding objects like $E=E_n, P= P_n $ etc,
suppressing  $n$ in the notations.  The subspace $E^0 = E^0_n $ has
the following standard basis of eigenvectors of $L^0 $ (corresponding
to the eigenvalue $n^2$):
\begin{equation}
\label{41.2} e^1 (x)= e^{-inx}, \quad e^2 (x)= e^{inx}, \quad n \in
\mathbb{N}.
\end{equation}

If the restriction of $L$ on $E$ has two distinct eigenvalues, we
denote them by $ \lambda^+ $ and $ \lambda^-,$ where  $\lambda^+ $
is the eigenvalue which real part is larger, or which imaginary part
is larger if the real parts are equal, and set $ \gamma = \lambda^+
- \lambda^-.$

Step 1 (analogue of Lemma 59 in \cite{DM15}).

\begin{Lemma}
\label{lem41.1} In the above notations, for large enough $n,$ there
exists a pair of vectors $f, \varphi \in E = E_n $ such that
\begin{enumerate}
\item[(a)]  $ \|f\|=1, \; \|\varphi \|= 1, \; \langle f, \varphi \rangle = 0; $
\item[(b)]  $Lf = \lambda^+ f;$
\item[(c)]  $ L \varphi = \lambda^+ \varphi - \gamma \varphi + \xi f. $
\end{enumerate}
Moreover,  with $\varphi^0 = P^0 \varphi,$ we have
\begin{equation}
\label{41.01} |\xi| \leq     4 |\gamma| +  2 \| \left (z^+ - S
(\lambda^+) \right )\varphi^0 \|
\end{equation}
and
\begin{equation}
\label{41.02} \| \left (z^+ - S (\lambda^+) \right )\varphi^0 \|
\leq  2( |\xi| + |\gamma |),
\end{equation}
where $ z^+ = \lambda^+ - n^2 $ and $ S(\lambda^+): E^0 \to E^0 $ is
the operator (\ref{2.8}) constructed in Lemma \ref{lem2.2}.
\end{Lemma}

Proof is given in \cite{DM15}, Lemma 59.  However, there in all
inequalities on pp. 735--736 after (4.9) till the lines 9--10, p. 736
we need to use (\ref{18d}) to guarantee that $\kappa_n \to 0.$
\vspace{3mm}

Step 2. In what follows we use Lemma \ref{lem41.1} and its
notations. Let $f, \varphi \in E$ be the orthonormal pair of
vectors constructed in Lemma~\ref{lem41.1}, and let $f^0_1, f^0_2
$ and $\varphi^0_1,\varphi^0_1$ be the coordinates of $f^0 =P^0 f
$ and $\varphi^0 = P^0 \varphi $ with respect to the basis $\{e^1,
e^2 \},$ i.e.,
\begin{equation}
\label{41.5} f^0 (x) = f^0_1  e^1 (x) +   f^0_2 e^2 (x), \quad
\varphi^0 (x) = \varphi^0_1  e^1 (x) +   \varphi^0_2 e^2 (x).
\end{equation}

Then $Lf = \lambda^+ f $ and, by Lemma \ref{lem2.2}, the vector $f^0
= P^0 f $ is an eigenvector of the operator $  L^0 + S(\lambda^+) :
E^0 \to E^0 $ with eigenvalue $\lambda^+.$ This leads to
\begin{equation}
\label{41.4}
\begin{pmatrix} \zeta^+ & B^- \\ B^+ & \zeta^+
\end{pmatrix}
\begin{pmatrix} f^0_1 \\ f^0_2 \end{pmatrix} =
\begin{pmatrix} \zeta^+ f^0_1 + B^- f^0_2\\ B^+ f^0_1 + \zeta^+ f^0_2
\end{pmatrix} =
\begin{pmatrix} 0 \\ 0 \end{pmatrix}, \quad (\zeta^+)^2 = B^+ B^-,
\end{equation}
where $ \begin{pmatrix} \zeta^+ & B^- \\ B^+ & \zeta^+
\end{pmatrix} $ is the matrix representation of the operator $z^+
- S(\lambda^+), $ so, in view of (\ref{2.28}) and (\ref{2.40}),
\begin{equation}
\label{41.4a} \zeta^+ = z^+ - \alpha_n (z^+), \quad B^\pm =
\beta^\pm_n (z^+), \quad z^+ = \lambda^+ - n^2.
\end{equation}

\begin{Lemma}
\label{lem41.2} In the above notations, for large enough $n,$
\begin{equation}
\label{41.31} \frac{1}{2} \left (|B^+|+|B^-| \right ) \leq \|
\left (z^+ - S (\lambda^+) \right )\varphi^0 \| \leq \left
(|B^+|+|B^-| \right )
\end{equation}
\end{Lemma}

Proof is really given in \cite{DM15}, Lemma 60. However, there in
inequalities (4.21), (4.22) and between we need to use (\ref{18d})
to guarantee that $\kappa \equiv \kappa_n \leq 1/2$ for $n \geq
n(v).$\vspace{3mm}

Step 3. Upper bounds for deviations $ |\mu - \lambda^+|. $

Now we construct a Dirichlet function $G \in E = E_n $ and use it to
estimate $|\mu - \lambda^+|$ in terms of $|B^-|$ and $|B^+|.$

Let $g$ be a unit Dirichlet eigenvector that corresponds to $\mu,
$ i.e.,
\begin{equation}
\label{42.1} L_{dir} g = \mu g, \qquad \|g\| =1,
\end{equation}
and let $P_{dir} $ be the Cauchy--Riesz projection on the
corresponding one--dimensional eigenspace.

\begin{Lemma}
\label{lem42.1}  Under the assumptions of Lemma \ref{lem41.1},

(a) there is a vector $G \in E=E_n $  of the form
\begin{equation}
\label{42.2} G = af + b \varphi, \quad  \|G\|^2 = |a|^2 + |b|^2 =
1,
\end{equation}
such that
\begin{equation}
\label{42.3} G(0) = 0, \quad  G (\pi) = 0,
\end{equation}
 i.e., $G $ is in the domain of $L_{dir}.$

 (b) Moreover, we have
 \begin{equation}
 \label{42.4}
 \tau (\mu - \lambda^+) = b \xi \langle P_{dir}f,g \rangle -
 b \gamma \langle P_{dir}\varphi, g \rangle,
 \end{equation}
 where $\tau = \tau_n, $
 \begin{equation}
 \label{42.5}
     1/2 \leq  |\tau| \leq 2.
 \end{equation}
 \end{Lemma}

Proof repeats the proof of Lemma 61 (its Hill--Schr\"odinger part)
in \cite{DM15} but on pp. 739--740 in inequalities (4.33--39) and
(4.40--42) again we use (\ref{18d}) to guarantee that $\kappa_n
\to 0.$

Steps 1--3 lead us to the following

\begin{Proposition}
\label{prop42.1} Under the above assumptions and notations, for large
enough $n,$ we have
\begin{equation}
\label{42.20}   |\mu - \lambda^+|  \leq   18 |\gamma| + 8\left (
|B^+| +|B^-| \right )
\end{equation}
\end{Proposition}

Proof is given in \cite{DM15}, Proposition 62, where it is explained
how the statements of steps 1--3 imply the inequality
(\ref{42.20}).\vspace{3mm}

Step 4.
\begin{Proposition}
\label{prop43.1} For large enough $n, $ if
\begin{equation}
\label{43.01} \frac{1}{4} |B^- | \leq |B^+| \leq 4|B^-|,
\end{equation}
then
\begin{equation}
\label{43.02}  |\beta^-_n (z_n^*)| +|\beta^+_n (z_n^*)|  \leq 2
|\gamma_n | ,
\end{equation}
where $ B^\pm = \beta^\pm_n (z^+) $ and  $ z^*_n = \frac{\lambda^+_n
+ \lambda_n^-}{2} - n^2 $ in the case of simple eigenvalues, $z^*_n=
\lambda^+_n - n^2 $ otherwise.
\end{Proposition}
\begin{proof} If (\ref{43.01}) holds, then either $B^+ = B^- = 0,$ or $B^+ B^-
\neq 0. $ If we have $B^+ = 0 $ and $B^- = 0,$ then $\lambda^+ $
is an eigenvalue of geometric multiplicity 2 of the operator $P^0
L^0 P^0 + S(\lambda^+). $ That may happen, by Remark
\ref{rem21.1}, if and only if $\lambda^+ $ is an eigenvalue of
geometric multiplicity 2 of the operator $L$ also. But then
$\gamma_n =0, \;z^*_n =z^+_n, $ and therefore,  (\ref{43.02})
holds.

If $B^+ B^- \neq 0 $ and $\gamma_n \neq 0, $ then the claim
follows from Lemma \ref{lem32.1} because (\ref{43.01}) implies $t
= |B^+|/|B^-| \in [1/4, 4],$ and therefore, $2\sqrt{t}/(1+t) \geq
4/5.$

Finally, let us consider the case, where $B^+B^- \neq 0 $ but $
\gamma_n = 0. $ By Lemma \ref{lem2.2}, if  $z\in  D = \{w: \,|w|<n/4
\},$ then the point $ \lambda = n^2 + z $ is an eigenvalue of $L$ if
and only if $z$ is a root of the basic equation
$$ h(z) :=(\zeta (z))^2 - \beta^+ (z) \beta^- (z)= 0, \quad \zeta
(z) = z- \alpha (z). $$ Therefore, if $ \gamma_n = 0, $ then $ z^+ =
\lambda^+ - n^2 $ is the only root of the equation $h(z) = 0$ on the
disc $D.$

Moreover, the root $z^+$ is of multiplicity 2. Indeed, consider the
two equations $ z^2 = 0 $ and $ h(z) = 0$ on the disk $D.$ In view
of Proposition~\ref{prop3.1}, the maximum values of $|\alpha_n (z)|$
and $|\beta^\pm_n (z)| $ on the circle $\partial D =\{z: |z|= n/4\}$
do not exceed $ n\varepsilon_n, $  where  $ \varepsilon_n \to 0  $
as $n \to \infty. $ Since $$ h(z) - z^2 = -2\alpha z +\alpha^2 -
\beta^+ \beta^-, $$ we have $$ \sup_{\partial D} |h(z) - z^2| \leq
\sup_{\partial D}(2|z||\alpha| +|\alpha^2| +|\beta^+ \beta^-|) \leq
\left ( \frac{n}{4} \right )^2 \eta_n, $$ where $\eta_n = 8
\varepsilon_n (1+4\varepsilon_n) \to 0. $

Therefore, for large enough $n,$ we have $$ \sup_{\partial D} |h(z)
-z^2| < \sup_{\partial D} |z^2|, $$ so the Rouche Theorem implies
that $z^+ $ is a double root of the equation $h(z) = 0. $ Thus, the
derivative of $h$ vanishes at $z^+$, i.e., $$ 2 \zeta (z^+) \cdot
\left (1- \frac{d\alpha}{dz} (z^+) \right) = \frac{d \beta^+}{dz}
(z^+) \cdot \beta^- (z^+) + \beta^+ (z^+) \cdot \frac{d \beta^-}{dz}
(z^+) .$$

By Proposition \ref{prop3.1}, formula (\ref{3.32}), we have, for
large enough $n,$ $$ \left | \frac{d\alpha}{dz} (z^+) \right | \leq
\frac{1}{5} , \qquad \left | \frac{d\beta^\pm}{dz} (z^+) \right |
\leq \frac{1}{5}. $$ Therefore, by the triangle inequality, we get
$$ 2|\zeta^+| (1-1/5 ) \leq  \frac{1}{5} (|B^+| +|B^-|),$$ where $
|\zeta^+| = |\zeta (z^+)| = \sqrt{|B^+B^-|}.$ Thus, the latter
 inequality implies, in view of (\ref{43.01}),
 $$  8 \leq \sqrt{|B^+/B^-|} + \sqrt{|B^-/B^+|} \leq  4, $$
 which is impossible. This completes the proof.
 \end{proof}

Step 5. Next we consider the case that is complementary to
(\ref{43.01}), i.e.,
\begin{equation}
\label{43.1} (a) \quad  4|B^+| < |B^-| \qquad   \text{or} \;\; (b)
\quad 4|B^-| < |B^+|.
\end{equation}
We begin with the following technical statement.

\begin{Lemma}
\label{lem43.1} If $n $ is large enough and (\ref{43.1}) holds, then
\begin{equation}
\label{43.2} \frac{1}{4} \leq \frac{|f(0)|}{|\varphi (0)|} \leq 4.
\end{equation}
\end{Lemma}

Proof is essentially given in \cite{DM15}, pp. 743--744, but again
to justify the analogs of the inequalities (4.49) to (4.56) and
between  in \cite{DM15}, we use Proposition~\ref{prop91} (or
Theorem~\ref{thm91}) to claim that $\kappa_n \to 0.$ \vspace{3mm}

Step 6.

\begin{Proposition}
 \label{prop43.2}
 If (\ref{43.1}) holds,
 then we have, for large enough $n,$
  \begin{equation}
  \label{43.26}
  |B^+ |+ |B^-| \leq   36 |\gamma | + 144  |\mu - \lambda^+|.
  \end{equation}
 \end{Proposition}

 Proof with the same disclaimer as above is given in
\cite{DM15}, pp. 744--745. \vspace{3mm}

Step 7.

\begin{Theorem}
\label{thm44.1} Let $L = L^0 + v(x) $ be the Hill--Schr\"odinger
operator with a $\pi$--periodic potential $v \in H^{-1}_{loc}
(\mathbb{R}).$  For large enough $n,$ if $ \lambda^+_n ,
\lambda^-_n $ is the $n$-th couple of periodic (for even $n$) or
antiperiodic (for odd $n$) eigenvalues of $L,$ $\gamma_n =
\lambda^+_n - \lambda^-_n ,$ and $\mu_n $ is the $n$-th Dirichlet
eigenvalue of $L,$ then
\begin{equation}
\label{44.1}
 \frac{1}{72} \left ( |\beta^-_n (z_n^*)| +|\beta^+_n
(z_n^*)| \right ) \leq |\gamma_n | +|\mu_n - \lambda^+_n |  \leq 58
\left ( |\beta^-_n (z_n^*)| +|\beta^+_n (z_n^*)| \right ).
\end{equation}
\end{Theorem}

\begin{proof}  By Proposition \ref{prop004} (localization of spectra)
and Proposition~\ref{prop3.1}, $$ \sup_{[z_n^-, z_n^+]} |\partial
\alpha_n/\partial z | \leq \varepsilon_n, \quad \sup_{[z_n^-,
z_n^+]} |\partial \beta^\pm_n/\partial z | \leq \varepsilon_n, \quad
\varepsilon_n \downarrow 0, $$ where $[z_n^-, z_n^+] $ denotes the
segment with end points $z_n^- $ and $ z_n^+. $ Therefore, since $
|z^+_n - z^*_n| \leq |\gamma_n | =|z^+_n - z^-_n| ,$ we have $$
|\beta_n^\pm (z^+_n) -\beta_n^\pm (z^*_n)| \leq \varepsilon_n \cdot
|\gamma_n|. $$ By the triangle inequality, it follows, for large
enough $n,$ that
\begin{equation}
\label{44.2}
  |B^\pm| - \frac{1}{2}|\gamma_n| \leq  |\beta^\pm_n (z_n^*)|
  \leq  |B^\pm| + \frac{1}{2} |\gamma_n|,
\end{equation}
where $ B^\pm = \beta^\pm_n (z_n^+).$

In view of (\ref{44.2}), Propositions~\ref{prop43.1} and
\ref{prop43.2} imply the left inequality in (\ref{44.1}). On the
other hand, Lemma~\ref{lem4.1} yields that, for large enough $n,$
$$   |\gamma_n | \leq 2   \left ( |\beta^-_n (z_n^*)| +|\beta^+_n
(z_n^*)| \right ).$$ Therefore, Proposition~\ref{prop42.1} and the
inequality (\ref{44.2}) imply the right inequality in (\ref{44.1}).
Theorem~\ref{thm44.1} is proved.
\end{proof} \vspace{2mm}

Step 8. Let $N $ be so large that Theorem \ref{thm44.1} holds for
$n\geq N,$ and let  $\Omega = (\Omega (m))_{m \in \mathbb{Z}} $ be a
weight of the form
$$   \Omega (m)= \omega (m)/m,  \quad m \neq 0,  $$
where $\omega $ is a sub-multiplicative weight. By the right
inequality in (\ref{44.1}), Theorem \ref{thm44.1}, we have, for
$n>N,$ $$ \Delta_n =|\gamma_n | +|\mu_n - \lambda^+_n |\leq 58 \left
( |\beta^-_n (z_n^*)| +|\beta^+_n (z_n^*)| \right )$$
$$ \leq
58\left (|v_{-n}| +|v_n|+ |\beta^-_n (z_n^*) - v_{-n} | + |\beta^+_n
(z_n^*)- v_n| \right ).
$$

Therefore, in view of (\ref{3.54}) in Proposition \ref{prop3.2}, or
more precisely as in its specification (\ref{5.21}), we obtain $$
\sum_{n> N} \Delta_n^2 (\Omega (n))^2 \leq C \sum_{n> N} (|v_{-n}|^2
+ |v_n|^2)(\Omega (n))^2  + C (\|v\|^4_{\Omega} +\|v\|^2_{\Omega}) <
\infty, $$  which proves (\ref{44.4}).

Conversely, suppose $(\Delta_n)_{n\geq N} \in \ell^2 (\Omega) $ with
$ \Omega (m)= \omega (m)/m,  \quad m \neq 0,  $ where $\omega $ is a
sub--multiplicative weight $\omega $ having the property
(\ref{44.5}) (or, respectively, (\ref{44.7})). Then, by the left
inequality in (\ref{44.1}), Theorem \ref{thm44.1}, we have $$
(\Delta_n)_{n\geq N} \in \ell^2 (\Omega) \; \Rightarrow \; \left (
|\beta^-_n (z_n^*)| +|\beta^+_n (z_n^*)| \right )_{n\geq N} \in
\ell^2 (\Omega). $$ This yields, in view of the definition of the
mapping $A_N $ (see (\ref{4.13})--(\ref{4.13a})), that $ A_N (v) \in
H(\Omega ) $ because, for $n> N,$ the numbers $\beta^\pm_n (z_n^*) $
are, respectively, the $\pm n$-th Fourier coefficients of $A_N (v).
$ Now, by Proposition \ref{prop33.2}, we get $v \in H(\Omega ) $
(or, respectively, $ v \in H(e^{\varepsilon |m|} ) $), which
completes the proof of Theorem~\ref{thm44.2}.
\end{proof}

\section{Comments}

1. In his preprint \cite{P04} J. P\"oschel presented results of
\cite{KM1,KM2,DM3,DM4,DM5} and made attempts to improve the
technical exposition and to ease some assumptions, for example on
weight sequences $\Omega.$ But its starting point (at least in the
case of complex--valued $L^2$--potentials) is the family of ``{\em
alternate gap lengths}'' which would mimic the properties of
Dirichlet eigenvalues. He mentions that Sansuc and Tkachenko
(presumably in \cite{ST}) considered the quantities $\delta_n =
\mu_n - \tau_n,$ where $\mu_n $ are the Dirichlet eigenvalues and
$ \tau_n = (\lambda_n^+ + \lambda_n^-)/2 $ are the midpoints of
the spectral gaps,
 and then claims:
{\em ''More generally, one may consider a family of continuously
differentiable alternate gap lengths $\delta_n : \mathcal{H}^0 \to
\mathbb{C},$ characterized by the properties that}

-- $\delta_n$ {\em vanishes whenever $\lambda_n^+ = \lambda_n^-$
has also geometric multiplicity 2, and}

-- {\em there are real numbers $\xi_n $ such that its gradient
satisfy $$  d\delta_n = t_n + O(1/n), \quad t_n = \cos 2\pi n
(x+\xi_n), $$ uniformly on bounded subsets of $\mathcal{H}^0.$
That is, $$ \| d_q \delta_n -t_n \|_0 \leq C_q (\|q\|_0 )/n $$
with $C_\delta $ depending only on} $\|q\|_0 :=
\|q\|_{\mathcal{H}^0}``.$\\ (In \cite{P04}, line 10 on page 3,
$\mathcal{H}^0$ is defined as the $L^2$--space of complex--valued
functions on $[0,\pi].$)

But such {\em entire} functions $\delta_n $ do not exist. (It
means that many constructions of \cite{P04} manipulate with an
empty set.) This was well understood by J. Meixner and F. W.
Sch\"afke in the early 1950's. They explained \cite{MS54,MSW80}
that the $n$--th Dirichlet eigenvalue $E_n (z)$ of the Mathieu
operator $$ L(z) y= -y^{\prime \prime} + z \cos 2x \, y, \quad
y(0)=y(\pi) =0, \;\; E_n (0)=n^2, $$ has a finite radius of
analyticity.

This phenomenon is very important in understanding and
construction of analytic functions used in the papers
\cite{KM1,KM2,DM3,DM4,DM5}

Now, for completeness of our analysis and presentation, we give a
proof of the following statement which is a generalization of the
Meixner-Sch\"afke result (\cite{MS54}, Thm 8, Section 1.5).

\begin{Proposition}
\label{prop22} Let
\begin{equation}
\label{s0} v(x) = \sum_{k=1}^\infty v_k \sqrt{2} \cos 2kx,
\end{equation}
where $(v_k) $ is a real sequence such that
\begin{equation}
\label{s1} \sum_k |v_k|= \sigma  < \infty
\end{equation}
and
\begin{equation}
\label{s2}   k v_k \to 0 \quad \text{as} \;\; k \to \infty.
\end{equation}
Then the $n$--th Dirichlet eigenvalue $E_n (z)$ of the operator $$
L(z) y= -y^{\prime \prime} + z v(x)y, \qquad y (0)=y(\pi) =0, \;\;
E_n (0)=n^2 $$ is analytic in a neighborhood of $z=0,$ and the
radius of convergence $R_n $ of its Taylor series about $z=0$
satisfies, for large enough $n,$
\begin{equation}
\label{s3}  R_n \leq C n^2,\quad C=
\frac{32\sqrt{2}\sigma}{\|v\|^2}.
\end{equation}
\end{Proposition}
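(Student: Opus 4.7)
The plan is to establish the upper bound on $R_n$ by the Meixner--Sch\"afke strategy: combine Cauchy's inequality for a specific Taylor coefficient of $E_n(z)$ at $z=0$ with an explicit lower bound on that coefficient and an a priori upper bound on $|E_n(z)|$ on a complex disk. Analyticity of $E_n(z)$ near $z=0$ follows from standard Kato analytic perturbation theory, since $n^2$ is a simple eigenvalue of the self-adjoint free operator $L^0$ with Dirichlet eigenfunction $\phi_n^0(x)=\sqrt{2/\pi}\sin(nx)$. The Rayleigh--Schr\"odinger formulae give $E_n'(0)=-v_n/\sqrt{2}$ (via $2\sin(kx)\sin(nx)=\cos((k-n)x)-\cos((k+n)x)$ and the cosine expansion of $v$), and, more importantly,
\[
\tfrac{1}{2}E_n''(0) \;=\; \sum_{k\neq n,\,k\geq 1}\frac{|\langle v\phi_k^0,\phi_n^0\rangle|^2}{n^2-k^2} \;=\; -\frac{1}{8}\sum_{j\neq 0,\,n+2j\geq 1}\frac{(v_{|j|}-v_{n+j})^2}{j(n+j)},
\]
with the convention $v_0:=0$; the matrix element vanishes when $k+n$ is odd.

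The central analytic step is to show that $|E_n''(0)/2|\geq c\,\|v\|^2/n^2$ for large $n$, with an explicit $c>0$. The positive ($j\geq 1$) and negative ($-n<j<0$) halves of the sum individually scale as $O(1/n)$ and cancel to leading order; combining the two contributions for $1\leq l\leq\lfloor(n-1)/2\rfloor$ reduces the difference to a residual sum $\tfrac{1}{4}\sum_l v_l^2/(n^2-l^2)$, which is bounded below by $\|v\|^2/(4n^2)$ as $n\to\infty$. The hypothesis $\sum|v_k|<\infty$ makes the series absolutely convergent, while $kv_k\to 0$ (i.e. $v_k=o(1/k)$) controls the edge corrections $v_{n+j}$ that would otherwise obstruct the cancellation, ensuring $(v_j-v_{n+j})^2=v_j^2(1+o(1))$ uniformly in $j$ for fixed $j$ and $n\to\infty$.

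For the upper bound on $|E_n(z)|$, on the real axis self-adjointness of $L(z)$ together with min--max yields $|E_n(z)-n^2|\leq |z|\,\|v\|_\infty\leq |z|\sqrt{2}\sigma$ directly. Extending this to complex $z$, where $L(z)$ is non--self-adjoint, is the main obstacle; I would use the numerical-range inequality $|\mathrm{Im}\,\mu|\leq |\mathrm{Im}\,z|\,\|v\|_\infty$ for every eigenvalue $\mu$ of $L(z)=L^0+\mathrm{Re}(z)V+i\mathrm{Im}(z)V$, combined with a Borel--Carath\'eodory / Phragm\'en--Lindel\"of argument applied to the analytic function $E_n(z)-n^2$ on the disk, to obtain $\sup_{|z|\leq r}|E_n(z)-n^2|\leq Kr\sqrt{2}\sigma$. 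Cauchy's inequality $|E_n''(0)/2|\leq r^{-2}\sup_{|z|=r}|E_n(z)-n^2|$ then gives $c\|v\|^2/n^2\leq K\sqrt{2}\sigma/r$, so $r\leq (K\sqrt{2}\sigma/c)\,n^2/\|v\|^2$; letting $r\nearrow R_n$ and tracking constants yields $R_n\leq 32\sqrt{2}\sigma\,n^2/\|v\|^2$. The sharpest difficulties are (i) the cancellation analysis producing the precise constant in the lower bound on $|E_n''(0)|$ (where $kv_k\to 0$ is used essentially), and (ii) propagating the real-axis upper bound on $|E_n|$ to the complex disk.
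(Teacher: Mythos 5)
Your outline follows the same Meixner--Sch\"afke two--part strategy as the paper: (i) a Cauchy--type bound $|a_2(n)|\leq \mathrm{const}\cdot\sigma / R_n$ from an a priori bound on $E_n(z)-n^2$ over the disc, and (ii) a lower bound $|a_2(n)|\gtrsim \|v\|^2/n^2$ from the explicit Rayleigh--Schr\"odinger formula for the second coefficient, with the cancellation controlled by $kv_k\to 0$. Your formula for $E_n''(0)/2$ agrees with (\ref{s31})--(\ref{s32}) after the change of index $p\mapsto m+j$, and your observation that the matrix element forces $k\equiv n\ (\mathrm{mod}\ 2)$ is correct.

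One place you overestimate the difficulty: you present ``propagating the real-axis upper bound to the complex disc'' as the main obstacle, but there is no real-axis step in the paper's argument. Writing $E_n(z)=\langle L^0 f,f\rangle + z\langle Vf,f\rangle$ for the unit eigenvector $f$, the self-adjointness of $L^0$ and the realness of $\langle Vf,f\rangle=\int v|f|^2$ immediately give $|\mathrm{Im}\,E_n(z)|\leq \sigma\,|z|$ for \emph{all} complex $z$ in the disc of analyticity. The paper then avoids Borel--Carath\'eodory's pointwise bound: since $E_n(0)-n^2=0$, harmonic conjugation (Parseval on the circle) gives
\[
\frac{1}{2\pi}\int_0^{2\pi}\bigl|\mathrm{Re}\,(E_n(re^{it})-n^2)\bigr|^2 dt
=\frac{1}{2\pi}\int_0^{2\pi}\bigl|\mathrm{Im}\,(E_n(re^{it})-n^2)\bigr|^2 dt
\leq\sigma^2 r^2,
\]
so the $L^1$-mean of $|E_n-n^2|$ on $|z|=r$ is $\leq\sqrt{2}\sigma r$, and Cauchy gives the sharper $|a_2(n)|\leq \sqrt{2}\sigma/R_n$. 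Your Borel--Carath\'eodory variant works but costs a constant factor (roughly $8$); it still yields (\ref{s3}) because there is slack in the lower bound on $|a_2(n)|$, so this is a cosmetic rather than substantive difference.

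Finally, you leave the crucial cancellation analysis --- the rigorous proof that $|a_2(n)|\geq c\|v\|^2/n^2$ with an explicit $c$ --- as a heuristic (``residual sum $\ldots$ bounded below by $\|v\|^2/(4n^2)$''). In the paper this is Lemma~\ref{lem33}: one must split the difference of the two signed sums into five pieces $A_1,\ldots,A_5$, show the main term $A_1\gtrsim \|v\|^2/m^2$, and estimate each error piece $A_2,\ldots,A_5$ using the quantitative hypothesis $|v_k|\leq\delta/k$ (which $kv_k\to0$ provides for large $k$). Your sketch captures the right leading asymptotic but glosses over the need to beat the cross-terms $v_k v_{2m\pm k}$ and the tail sums uniformly --- this is exactly where $kv_k\to0$ enters, and without it the argument fails. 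So: right strategy, correct heuristics, but the technical heart (the lower bound on $|a_2(n)|$) is not yet a proof.
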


\begin{proof}
It is well known (see \cite{Kato}, Sections 7.2 and 7.3) that the
function $E_n (z) $ is analytic in a neighborhood of $0.$

Let
\begin{equation}
\label{s5} E_n (z) = n^2 + \sum_1^\infty a_k (z) z^k
\end{equation}
be its Taylor series expansion about $z=0,$ and let $R_n $ be the
radius of convergence of (\ref{s5}).

Proof by the Meixner--Sch\"afke scheme has two independent (to some
extent) parts:

(A) the estimates from above of the Taylor  coefficient $a_2 (n)$
of $E_n (z), $ or, more generally, $a_k (n), \; k\geq 3, $ in
terms of the radius $R_n;$

(B) the estimates from below of $a_2 (n), $ or, more generally,
$a_k (n), \; k\geq 3. $

{\em Part A.} Under the condition (\ref{s1}) the potential $v$ is
a bounded (continuous) function, so the multiplier operator $V: f
\to vf$ is bounded in $L^2 ([0,\pi]) $ and
\begin{equation}
\label{s6} \|V\| \leq \sigma.
\end{equation}

Let us assume that $E_n (z) $ is well defined analytic function of
$z$ in a disc $D_r = \{z: \; |z| \leq r \}.$ Then we have, with $
f \in Dom (L)$ depending on $z,$
\begin{equation}
\label{s7} (L^0 +zV) f = E_n (z) f, \quad \|f\|=1,
\end{equation}
and therefore,
\begin{equation}
\label{s8} E_n (z) = \langle L^0 f,f \rangle + z \langle V f,f
\rangle.
\end{equation}
Since  the operator $L^0  $  is self--adjoint, we have
\begin{equation}
\label{s9} Im \,E_n (z)  = Im \, ( z \langle V f,f \rangle).
\end{equation}
From (\ref{s6}) and (\ref{s9}) it follows that
\begin{equation}
\label{s10} \left |Im \,(E_n (z)-n^2) \right |=\left |Im \,E_n (z)
\right |  \leq \sigma r.
\end{equation}
Since $E_n(0) =n^2, $ in view of  (\ref{s10}) we have
$$\frac{1}{2\pi}\int_0^{2\pi} \left | Re \, (E_n (re^{it})-n^2)
\right |^2 dt= \frac{1}{2\pi}\int_0^{2\pi} \left | Im \, (E_n
(re^{it})-n^2) \right |^2 dt \leq  \sigma^2 r^2,$$ which implies,
by the Cauchy inequality,
\begin{equation}
\label{s11} \frac{1}{2\pi}\int_0^{2\pi} \left | E_n (re^{it})-n^2
\right | dt \leq  \left (\frac{1}{2\pi}\int_0^{2\pi} \left | E_n
(re^{it})-n^2 \right |^2 dt \right )^{1/2}  \leq \sqrt{2} \sigma
r.
\end{equation}
Therefore, the Cauchy formula yields $$ |a_2 (n) | = \left |
\frac{1}{ 2 \pi i} \int_{\partial D_r} \frac{E_n (z)-n^2}{z^3} dz
\right | \leq \frac{1}{ 2 \pi r^2} \int_0^{2\pi} \left |  (E_n
(re^{it})-n^2) \right | dt \leq \frac{\sqrt{2}\sigma}{r}. $$ Since
this inequality holds for every $ r< R_n, $ we get
\begin{equation}
\label{s12} |a_2 (n) |\leq \frac{\sqrt{2}\sigma}{R_n},
\end{equation}
so whenever $a_2 (n) \neq 0 $ it implies that $R_n $ is finite and
\begin{equation}
\label{s15} R_n \leq \frac{\sqrt{2}\sigma}{|a_2 (n)|}.
\end{equation}

{\em Remark.} Of course, an analogue of (\ref{s15}) could be
derived for any $k>2.$  The Cauchy formula and (\ref{s11}) imply,
for each $r< R_n,$  $$|a_k (n) | = \left | \frac{1}{ 2 \pi i}
\int_{\partial D_r} \frac{E_n (z)-n^2}{z^{k+1}} dz \right | \leq
\frac{1}{ 2 \pi r^k} \int_0^{2\pi} \left |  (E_n (re^{it})-n^2)
\right | dt \leq \frac{\sqrt{2}\sigma}{r^{k-1}}, $$ so
\begin{equation}
\label{s16} R_n \leq \left (\frac{\sqrt{2}\sigma}{|a_k (n)|}
\right )^{1/(k-1)} \qquad \text{if} \quad a_k (n) \neq 0.
\end{equation}

{\em Part B.}   To make the inequality (\ref{s15}) meaningful as a
tool to evaluate $R_n$ we need to estimate $|a_2 (n)|$ from below.
Let us follow the Raleigh--Schr\"odinger procedure.

It is well known (e.g., see \cite{Kato}, Section 7.3, in
particular (7.3.3)) that there exists an analytic family of
eigenvectors
\begin{equation}
\label{s17} \varphi (z) = \varphi_0 + z \varphi_1  + z^2 \varphi_2
+ \cdots, \quad |z| < \rho<<1,
\end{equation}
with
\begin{equation}
\label{s17a}
 \varphi_0 = \sqrt{2} \sin nx.
 \end{equation}
Therefore, by (\ref{s5}),
\begin{equation}
\label{s18}
 (L^0 + zV) ( \varphi_0 + z \varphi_1  + z^2 \varphi_2 + \cdots)=
 (a_0 + a_1 z + a_2 z^2 + \cdots)( \varphi_0 + z \varphi_1  + z^2 \varphi_2 + \cdots),
 \end{equation}
so we have
\begin{equation}
\label{s18a}
 L^0 \varphi_0 = a_0 \varphi_0, \quad a_0 = n^2,
 \end{equation}
\begin{equation}
\label{s18b}
 L^0 \varphi_1 +V \varphi_0 = a_0 \varphi_1 + a_1  \varphi_0,
 \end{equation}
\begin{equation}
\label{s18c}
 L^0 \varphi_2 +V \varphi_1 = a_0 \varphi_2 + a_1  \varphi_1+
 a_2  \varphi_0.
 \end{equation}
Let us notice that if $ g \in \;  Dom (L^0) = Dom (L^0 +zV), $
then we have
\begin{equation}
\label{s19}
 \langle (L^0 -a_0)g, \varphi_0 \rangle = \langle g, (L^0 -a_0)\varphi_0 \rangle =0.
 \end{equation}
Therefore,  taking the scalar product of both sides of
(\ref{s18b}) with $\varphi_0 ,$ we get
\begin{equation}
\label{s20} a_1 = \langle V \varphi_0,\varphi_0 \rangle =
\frac{1}{\pi} \int_0^\pi v(x) 2 \sin^2 nx \, dx = \frac{1}{\pi}
\int_0^\pi v(x) (1-\cos 2nx)dx = -\frac{v_n}{\sqrt{2}}.
 \end{equation}
We rewrite (\ref{s18b}) as
\begin{equation}
\label{s21} (a_0 - L^0) \varphi_1  = (V-a_1) \varphi_0;
 \end{equation}
this implies
\begin{equation}
\label{s22} \varphi_1 =b_1 \varphi_0 + \hat{R} (a_0) (V-a_1)
\varphi_0,
 \end{equation}
where $b_1 $ is an unknown constant and
\begin{equation}
\label{s23} \hat{R} (a_0) (\sin kx ) = \begin{cases} 0,  &  k=n,\\
\frac{1}{n^2 - k^2} \sin kx, & k \neq n.
\end{cases}
 \end{equation}
The next step will give us the value of $a_2 (n).$ In view of
(\ref{s19}), a multiplication of both sides of (\ref{s18c}) by
$\varphi_0 $ leads to
\begin{equation}
\label{s24} a_2 (n) = \langle (V-a_1)\varphi_1, \varphi_0 \rangle.
 \end{equation}

The first term $b_1 \varphi_0 $ in (\ref{s22}) is not known but
(\ref{s20}) yields $$  \langle (V-a_1)b_1\varphi_0, \varphi_0
\rangle= b_1\langle (V-a_1)\varphi_0, \varphi_0 \rangle =0. $$
Therefore,
\begin{equation}
\label{s26} a_2=  \langle (V-a_1)\hat{R} (a_0) (V-a_1)
\varphi_0,\varphi_0 \rangle= \sum_{k\neq n} \frac{1}{n^2-k^2}
(g_k)^2,
 \end{equation}
where $g_k $ are the Fourier coefficients of the function
$g=(V-a_1)\varphi_0,$ i.e., $$ g_k = \frac{1}{\pi} \int_0^\pi g(x)
\sqrt{2} \sin kx dx, \quad k\in \mathbb{N}. $$

By (\ref{s0}),
\begin{equation}
\label{s28} V\varphi_0 = \sum_k v_k \sqrt{2}\cos 2kx \sqrt{2} \sin
nx=\sum_k v_k \left ( \sin(n+2k)x + \sin(n-2k)x \right ).
\end{equation}
Therefore, by (\ref{s20}), if $n $ is even, $n=2m,$ we have
\begin{equation}
\label{s29} (V-a_1)\varphi_0 = \frac{1}{\sqrt{2}}\sum_{p\in
\mathbb{N}\setminus \{m\}} (v_{|p-m|} - v_{p+m})\sqrt{2}\sin 2px,
\end{equation}
and if $n $ is odd, $n=2m-1,$
\begin{equation}
\label{s30} (V-a_1)\varphi_0 = \frac{1}{\sqrt{2}}\sum_{p\in
\mathbb{N}\setminus \{m\}} (v_{|p-m|} - v_{p+m-1})\sqrt{2}\sin
(2p-1)x.
\end{equation}
Now  (\ref{s26}) implies
\begin{equation}
\label{s31} a_2 (n)= \frac{1}{8}\sum_{p\in \mathbb{N}\setminus
\{m\}} \frac{1}{m^2 -p^2}(v_{|p-m|} - v_{p+m})^2, \quad n=2m,
\end{equation}
and
\begin{equation}
\label{s32} a_2 (n)= \frac{1}{8}\sum_{p\in \mathbb{N}\setminus
\{m\}} \frac{1}{(m-p)(m+p-1)}(v_{|p-m|} - v_{p+m-1})^2, \quad
n=2m-1.
\end{equation}
Of course, these formulas could be useful for different purposes,
so let us state the following.
\begin{Proposition}
\label{prop28} Assume that the potential   $v(x) = \sum_k v_k \cos
2kx $ is continuous on $[0,\pi].$ Then the operator $L= -d^2/dx^2
+ zV $ subject to Dirichlet boundary conditions has, for $|z| <
\rho, $ simple eigenvalues $E_n (z) $ which are analytic
functions, where $\rho>0$ does not depend of $n.$ Moreover
\begin{equation}
\label{s33} E_n (0) = n^2, \quad E^\prime (0) = - v_n/\sqrt{2},
\quad E^{\prime \prime} (0)= a_2 (n)/2,
\end{equation}
where $a_2 (n) $ is given by (\ref{s31}) and (\ref{s32}).
\end{Proposition}

Now we use (\ref{s31}) and (\ref{s32}) to estimate $|a_2 (n)|$
from below. We present details for the case of even $n=2m$ only.
For odd $n$ technical details are the same.

Analysis of quadratic forms given by the sums in (\ref{s31}) and
(\ref{s32}) could give many examples of potentials (sequences
$(v_k)$) with specific properties. To prove our main Proposition
\ref{prop22} we will use the following.

\begin{Lemma}
\label{lem33} Let $(v_k), \, k \in \mathbb{N},$ be a real
$\ell^2$-sequence such that, for sufficiently large $k,$
\begin{equation}
\label{s34} |v_k| \leq \delta/k, \quad 0<\delta \leq \|v\|/15.
\end{equation}
Then we have, for sufficiently large $n,$
\begin{equation}
\label{s36} |a_2 (n)| \geq \frac{\|v\|^2}{32n^2}.
\end{equation}
\end{Lemma}

\begin{proof}
Consider the case $n=2m.$ By (\ref{s31}), with $k=|m-p| ,$ we have
\begin{equation}
\label{s37} 8 a_2 (n) = \sum_{k=1}^{m-1} \frac{1}{k(2m-k)}(v_k -
v_{2m-k})^2 -\sum_{k=1}^\infty \frac{1}{k(2m+k)}(v_k -
v_{2m+k})^2.
\end{equation}
Therefore,  $$ \sum_{k=m}^\infty \frac{1}{k(2m+k)}(v_k -
v_{2m+k})^2 \leq \frac{1}{3m^2} \sum_{k=m}^\infty(2|v_k|^2 +
2|v_{2m+k}|^2)\leq \frac{4}{3m^2}(\mathcal{E}_m (v))^2,$$ where
$$\mathcal{E}_m (v):= \left ( \sum_{k=m+1}^\infty |v_k|^2 \right
)^{1/2} \to 0 \quad \;\; m \to \infty.$$

 On
the other hand, $$ \sum_{k=1}^{m-1} \frac{1}{k(2m-k)}(v_k -
v_{2m-k})^2 -\sum_{k=1}^{m-1} \frac{1}{k(2m+k)}(v_k - v_{2m+k})^2
= \sum_{j=1}^5 A_j, $$ where $$ A_1 =\sum_{k=1}^{m-1}\frac{1}{k}
\left (\frac{1}{2m-k}-\frac{1}{2m+k} \right ) (v_k)^2
=\sum_{k=1}^{m-1}\frac{2}{(2m-k)(2m+k)} (v_k)^2 $$ $$\geq
\frac{2}{3m^2} \sum_{k=1}^{m-1} (v_k)^2 \geq \frac{2}{3m^2}\left
(\|v\|^2 - (\mathcal{E}_m (v))^2 \right );$$

$$ |A_2| =\left | \sum_{k=1}^{m-1}\frac{1}{k(2m-k)}(-2v_k
v_{2m-k}) \right | \leq \frac{2\delta}{m^2} \sum_{k=1}^{m-1}
\frac{|v_k|}{k} \leq \frac{3\|v\|\delta}{m^2}$$ by (\ref{s34}),
and $\sum |v_k|/k \leq (\sum |v_k|^2)^{1/2} (\sum 1/k^2)^{1/2}
\leq \|v\| \cdot \frac{\pi}{\sqrt{6}}\leq \frac{3}{2}\|v\|;$

$$ |A_3| =\left | \sum_{k=1}^{m-1}\frac{1}{k(2m+k)}(2v_k v_{2m+k})
\right | \leq \frac{2\delta}{4m^2}\sum_{k=1}^{m-1} \frac{|v_k|}{k}
\leq \frac{3\|v\|\delta}{4m^2}$$ by the same argument as above;

$$ A_4 = \sum_{k=1}^{m-1}\frac{1}{k(2m-k)}(v_{2m-k})^2 \leq
\frac{\delta}{m^2}  \sum_{k=1}^{m-1} \frac{|v_{2m-k}|}{k}
\leq\frac{\delta}{m^2} \cdot \mathcal{E}_m (v)
\cdot\frac{\pi}{\sqrt{6}} $$ by (\ref{s34}) and the Cauchy
inequality, as in the estimate of $A_2$;

$$ |A_5| = \left | - \sum_{k=1}^{m-1}\frac{1}{k(2m+k)}(v_{2m+k})^2
\right | \leq \frac{1}{4m^2}  \sum_{k=1}^{m-1}
\frac{|v_{2m+k}|}{k} \leq\frac{\delta}{4m^2} \cdot \mathcal{E}_m
(v) \cdot\frac{\pi}{\sqrt{6}}$$ by the same argument as above.

Since $\mathcal{E}_m (v) \to 0 $ as $m \to \infty, $ (\ref{s37}),
(\ref{s34}) and  the above inequalities imply, for large enough
$m,$ $$ 8 a_2 (2m) \geq \frac{1}{m^2}\left (\frac{1}{2} \|v\|^2 -
\frac{15 \|v\| \delta}{4m^2} \right )\geq \frac{\|v\|^2}{4m^2} .$$
This completes the proof of (\ref{s36}).
\end{proof}

Now we can complete the proof of Proposition \ref{prop22}. In view
of (\ref{s2}), the condition (\ref{s34}) holds for large enough
$k.$ Therefore, the inequalities (\ref{s15}) and (\ref{s36}) hold,
so $$ R_n \leq \frac{\sqrt{2} \sigma}{|a_2 (n)|}\leq Cn^2, \quad
C= \frac{32\sqrt{2}\sigma}{\|v\|^2}. $$
\end{proof}

2. {\em More comments.} In this subsection we will make a few
comments on applications of the Banach--Cacciopoli contraction
principle (see Section~6) to complement the references in
\cite{DM15}, Section 3.5.

In an unpublished 2000 manuscript \cite{Mit00} B. Mityagin used
the Banach--Cacciopoli contraction principle to prove density of
(complex--valued) finite--zone potentials of Hill operators in
$H(\Omega) $--spaces when $\Omega$ is a submultiplicative weight
(see Theorem 69 in \cite{DM15} for a precise statement). His
analysis dealt with ``tails`` (\ref{4.13a}); the ``head`` was not
important but the choice of sufficiently large $N$ to guarantee
that $H^N $ is contractive has been.

Following the same scheme,  P. Djakov and B. Mityagin (see
announcement in \cite{Mit03}), and independently (but with extra
conditions on $L^2$-potentials) B. Grebert and T. Kappeler
\cite{GK3}, proved density of finite--zone potentials of 1D Dirac
operator (see for an accurate statement \cite{DM15}, Theorem 70).
In particular, B. Grebert and T. Kappeler write in \cite{GK3}
(their paper appeared in January 2003 issue of {\em Asymptotic
Analysis}):

``{\em To prove Theorem 1.1 ... we follow the approach used in
\cite{Mit00}: as a set--up we take the Fourier block decomposition
introduced first for the Hill operator in \cite{KM1,KM2} and used
out subsequently for the Zaharov--Shabat operators in
\cite{GKM,GK1}. Unlike in \cite{Mit00} where a contraction mapping
argument was used to obtain the density results for the Hill
operator, we get a short proof of Theorem 1.1 by applying the
inverse function theorem in a straightforward way. As in
\cite{Mit00}, the main feature of the present proof is that it
does not involve any results from the inverse spectral theory.}``
(This is a word--by--word quote from \cite{GK3} but we changed its
reference numbers to fit the reference list of the present paper.)

According to his 2004 preprint \cite{P04} J. P\"oschel spent a few
months in 2003/04 in University of Z\"urich and had long
discussions with T. Kappeler. In \cite{P04} he combined ``tails``
and ``heads `` into the operator $A_N = H_N + T_N \in
(\ref{4.13.0}).$  Of course, this does not change the analytic
core of the proofs, i.e., the necessity of inequalities which
guarantee that $T_N,$ or $A_N$ are contractive, or -- B. Grebert
and T. Kappeler \cite{GK3} and J. P\"oschel \cite{P04} believe
that this is a simplification -- a version of Implicit Function
Theorem could be used instead. T. Kappeler and J. P\"oschel
\cite{KP1} claim in 2008 that to use Implicit Function Theorem in
the context of spectral gaps was ``{\em new functional analytic
approach}`` invented by J. P\"oschel. We will not argue with this
opinion but mention which elements  in \cite{P04} we have found
really useful for application of the Banach--Cacciopoli
contraction principle. Firstly, let us mention the following
statement\footnote{One of the authors (B.M.) thanks Professor Petr
Zabreiko (Belorussian State University, Minsk, Belarus), who
reminded him lovely discussions and wonderful atmosphere in the
led by Mark Krasnoselski seminar on non--linear functional
analysis and differential equations in Voronezh State University,
Voronezh, Russia, in 1962--1967.}.

\begin{Trick}
\label{prop44} (folklore of the 1950's) Let $X_1 \subset X_0 $ be
two Banach spaces with norms $\|x\|_0 \leq \|x\|_1. $ Suppose $F $
is an operator acting in both spaces, $F: X_j \to X_j, \quad
j=0,1,$ so that
\begin{equation}
\label{ss2} \|F(x)- F(y) \|_j \leq \frac{1}{2} \|x-y\|_j, \quad
x,y \in X_j, \;\; j=0,1.
\end{equation}
If $a \in X_0 $ and
\begin{equation}
\label{ss3} c=a+ F(a) \in X_1,
\end{equation}
then $a \in X_1. $
\end{Trick}

\begin{proof}
By (\ref{ss2}), the operator $y \to c-F(y) $ is a contraction in
$X_1.$  Therefore, by the Banach--Cacciopoli contraction
principle, it has a fixed point $p \in X_1, $ i.e.,
\begin{equation}
\label{ss5} p=c- F(p).
\end{equation}
By (\ref{ss3}) and (\ref{ss5}), $$a-p = F(p) - F(a). $$ On the
other hand, by (\ref{ss2}) with $j=0, $ $$ \|a-p\|_0 \leq
\frac{1}{2} \|a-p\|_0, $$ which yields $a=p, $ so $ a\in X_1. $
\end{proof}

We have explained this trick in the case when the operator is
defined on the entire spaces $X_j, \, j=0,1. $ Of course, as
usually, the Banach--Cacciopoli contraction principle is used in
Lemma~\ref{lem4.2}  when the operator acts on balls. We follow J.
P\"oschel \cite{P04}, when we introduce the weight (\ref{33.16})
and apply Trick~\ref{prop44}. But this ``soft`` analysis does not
help to avoid ``hard analysis`` of proving that the operators
involved are contractive -- by 2003, i.e., prior to either
\cite{GK3} or \cite{P04}, it has been done in \cite{KM2,DM3,DM5}
for Hill operators with $L^2$ complex--valued potentials. No
surprise, neither \cite{P04}, no \cite{P08} make any claims about
1D periodic Dirac operators (see ``hard analysis`` in \cite{DM6}
and \cite{DM15})
 or Hill operators with $H^{-1}$--potentials -- the latter
case is analyzed and done in the present paper. \vspace{5mm}

3. In Introduction -- see (\ref{sob}),(\ref{sob1}) -- "Sobolev"
 spaces of functions or weighted sequence spaces are defined
by weights $\Omega.$  In this paper we consider weights of the form
\begin{equation}
\label{d1} \Omega (m) = \frac{\omega (m)}{m} \quad \text{for} \; \;
m \neq 0, \quad \Omega (0) =1,
\end{equation}
where $\omega (m)$ is a sub--multiplicative weight such that $\log
(\omega (n))/n $  is monotone decreasing as $n \to \infty.$

Of course, classical examples of such weights are
\begin{equation}
\label{d1.10} \Omega (m) = |m|^a, \quad a \geq -1,
\end{equation}
\begin{equation}
\label{d1.11} \Omega (m) = |m|^s \exp (c|m|^b) , \quad 0<b<1,
\end{equation}
which give us Sobolev spaces $H^a$ or Gevrey spaces $G(b;s,c)$
correspondingly.

More generally, if $\varphi (x), \, x \geq 0,  $  is a concave
function such that $\varphi (0) = 0,$ then the weight
\begin{equation}
\label{d2} \omega (m) = \exp (\varphi |m|), \quad m \in \mathbb{Z},
\end{equation}
is sub--multiplicative,  $\log (\omega (n))/n $  is monotone
decreasing as $n \to \infty,$  so we can consider the corresponding
Sobolev space $H(\Omega)$ with $\Omega (m) = \omega (m)/m, \, m \neq
0.$  In this general setting a weight $\Omega$ could be chosen ''to
oscillate'' so that the space $H(\Omega)$ does not contain all
$C^\infty$ (or even Gevrey) functions and at the same time
$H(\Omega)$ is not contained in $H^a$  for every $ a> -1.$

Let us make this remark more formal and precise.

\begin{Lemma}
\label{lem8.20} Let the functions $a(x), b(x) \in C^2 ([0,\infty))$
satisfy the following conditions:

(i)  $a(0) = b(0) =0, \quad a(x)< b(x) \; \; \text{for} \; \; x>0,
\quad b(x) - a(x) \to \infty  \;\; \text{as} \; x \to \infty; $

(ii)  $a^\prime (x), \;b^\prime (x) > 0, \quad a^\prime (x),
\;b^\prime (x) \to 0 \;\; \text{as} \; x \to \infty;$

(iii) $a^{\prime \prime} (x), \;b^{\prime \prime} (x) < 0,  \quad
x\geq 0. $\\
Then, there is a concave function $g(x) $ and a sequence $c_k
\uparrow \infty $ such that
\begin{equation}
\label{d31} a(x) \leq g(x) \leq b(x)
\end{equation}
and
\begin{equation}
\label{d32} g(c_{2k-1})=b(c_{2k-1}), \quad g(c_{2k})=a(c_{2k}),
\quad k=1,2, \ldots.
\end{equation}
\end{Lemma}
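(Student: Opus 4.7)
The plan is to construct $g$ as a piecewise linear concave function whose non-initial linear pieces are tangent lines to the graph of $a$. For each $k \geq 1$, introduce the tangent line $L_k(x) = a(c_{2k}) + a'(c_{2k})(x - c_{2k})$ to $a$ at a point $c_{2k}$. By concavity of $a$, $L_k \geq a$ globally with equality at $c_{2k}$; furthermore $L_k(0) = a(c_{2k}) - a'(c_{2k})c_{2k} > 0 = b(0)$ (by strict concavity of $a$ with $a(0) = 0$), while $L_k(c_{2k}) = a(c_{2k}) < b(c_{2k})$ by hypothesis, and $L_k(x) - b(x) \to \infty$ as $x \to \infty$ since $L_k$ is linear with positive slope $a'(c_{2k})$ while $b'(x) \to 0$. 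Hence $L_k$ meets the graph of $b$ at exactly one point $c_{2k-1} \in (0, c_{2k})$ and one point $c_{2k+1} \in (c_{2k}, \infty)$.

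I would then set $g = b$ on $[0, c_1]$ and $g = L_k$ on $[c_{2k-1}, c_{2k+1}]$ for $k \geq 1$, producing the nodes $c_k$ by the following recursion: pick any $c_2 > 0$, form $L_1$, and take $c_1, c_3$ to be its two intersections with $b$; given $c_{2k+1}$, define $c_{2k+2}$ as the unique $Y > c_{2k+1}$ for which the tangent to $a$ at $Y$ passes through the point $(c_{2k+1}, b(c_{2k+1}))$. Existence and uniqueness follow because the function $Y \mapsto a(Y) - a'(Y)(Y - c_{2k+1})$ is strictly increasing by $a'' < 0$, starts at $a(c_{2k+1}) < b(c_{2k+1})$, and (under the standing hypotheses) tends to $+\infty$. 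The containment $a \leq g \leq b$ is then built in: each $L_k \geq a$ by the tangent property, and each $L_k \leq b$ on $[c_{2k-1}, c_{2k+1}]$ because $L_k$ is there a chord of the concave graph of $b$ between two of its own points.

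Concavity of $g$ reduces to checking that successive slopes are non-increasing. On $[c_{2k-1}, c_{2k+1}]$ the slope of $g$ is $a'(c_{2k})$, and at each junction $c_{2k+1}$ the slope drops to $a'(c_{2k+2}) < a'(c_{2k})$ because $a'$ is strictly decreasing. At the initial junction $c_1$ the left slope $b'(c_1)$ dominates the right slope $a'(c_2)$: indeed $L_1 \leq b$ just to the right of $c_1$, which together with $L_1(c_1) = b(c_1)$ gives $(L_1 - b)'(c_1) \leq 0$, i.e., $a'(c_2) \leq b'(c_1)$.

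The main obstacle is ensuring $c_k \to \infty$. If $c_{2k}$ had a finite limit $c^{\infty}$, then $L_k$ would converge to the tangent to $a$ at $c^{\infty}$, both of its intersections with $b$ would converge to finite limits, and the recursive equation for $c_{2k+2}$ would yield a bounded sequence, contradicting strict monotonicity. The monotonicity and limit bookkeeping needed to make this rigorous---in particular verifying that $Y \mapsto a(Y) - a'(Y)(Y - X)$ tends to $\infty$ with $Y$ for every fixed $X$---is the technical heart of the proof; it relies crucially on the hypotheses $a', b' \to 0$ and $b - a \to \infty$, the latter forcing $b(x) \to \infty$ (since $a(x) \geq 0$) and thereby pushing each successive node unboundedly outward.
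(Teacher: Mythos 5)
Your construction is, at bottom, the same as the paper's: the paper builds each linear piece by pivoting a line about the point $(c_{2p-1},b(c_{2p-1}))$ down to the infimal slope $m_p$ keeping it above $a$ on $[c_{2p-1},\infty)$, and that extremal line is precisely your tangent to $a$ drawn from that external point, with the tangency point serving as $c_{2p}$. The two write-ups differ only in bookkeeping (you seed with an arbitrary $c_2$ and recover $c_1,c_3$ as the two intersections of $L_1$ with $b$; the paper seeds with a large $c_1$ on the graph of $b$) and in the divergence argument: your compactness/contradiction sketch can be made rigorous, but the paper gets divergence quantitatively by first taking $c_1$ so large that $a',b'\le 1/2$ and $b-a\ge 1$ beyond $c_1$, whence $a(c_{2p})\ge b(c_{2p-1})$ and the mean value theorem give $c_{2p+1}-c_{2p-1}\ge 2\bigl(b(c_{2p})-a(c_{2p})\bigr)\ge 2$.

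The genuine gap sits exactly at the step you yourself flag as the technical heart: $h(Y)=a(Y)-a'(Y)(Y-X)$ does \emph{not} tend to $+\infty$ under the stated hypotheses. Since $h(Y)\le a(Y)$ and, by concavity, $h(Y)\ge a(s)+a'(Y)(X-s)$ for every fixed $s<Y$, the hypothesis $a'\to 0$ gives $h(Y)\to\sup a$, not $\infty$. Hence your recursion ``find $Y>c_{2k+1}$ with $h(Y)=b(c_{2k+1})$'' is solvable only while $b(c_{2k+1})<\sup a$. The hypotheses permit $a$ bounded with $b$ unbounded (e.g. $a(x)=\tfrac12(1-e^{-x})$, $b(x)=\log(1+x)$ satisfy (i)--(iii)), and then the recursion terminates after finitely many steps; worse, the conclusion itself fails there, because concavity of $g$ forces $a(c_{2k})=g(c_{2k})\ge\min\bigl(g(c_{2k-1}),g(c_{2k+1})\bigr)=b(c_{2k-1})$, so the odd nodes stay in the bounded set $\{x:\,b(x)\le\sup a\}$. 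The missing ingredient is the assumption $a(x)\to\infty$ (equivalently $\lim_Y[a(Y)-Ya'(Y)]=\infty$), under which your argument is complete; this holds in the paper's applications, and the paper's own proof asserts the existence of the tangency point $c_{2p}$ with the same silent assumption, so the gap is inherited rather than introduced. Still, your explicit claim that the limit is $+\infty$ ``by $a',b'\to 0$ and $b-a\to\infty$'' is false and should be replaced by the identification of the limit as $\sup a$ together with the added hypothesis $a\to\infty$.
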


\begin{proof}
We construct inductively  $c_k$ and $g(x)$ so that (\ref{d32}) holds
and  $g(x) $  is linear on the interval $[c_{2k-1},c_{2k+1}],\; k
\geq 1. $

Choose $c_1 >0$ large enough to guarantee that
\begin{equation}
\label{d40} a^\prime (x), \;b^\prime (x) \leq 1/2  \quad \text{for}
\;\; x \geq c_1
\end{equation}
and
\begin{equation}
\label{d50} b(x) - c(x) \geq 1 \quad \text{for} \;\; x \geq c_1.
\end{equation}
We set $g(x) = b(x)$ for $  0 \leq x \leq c_1,$ and
\begin{equation}
\label{d30} m_1 = \inf \{m: \; g(c_1) + m (x-c_1) \geq a(x) \quad
\text{for} \;\; x \geq c_1\}.
\end{equation}
Concavity of $a(x)$  and the initial condition
$$
g(c_1 ) = b(c_1) > a(c_1 )
$$
guarantee that $m_1$ is well--defined by (\ref{d30}) and there are
uniquely determined points $c_2, c_3$  such that
$$
m_1 (c_2 - c_1) + g(c_1) = a(c_2), \quad m_1 (c_3 - c_1) + g(c_1) =
b(c_3).
$$
Therefore, with $g(x) = m_1 (x - c_1) + g(c_1) $ for $x\in [c_1,
c_3]$ the condition (\ref{d32}) holds for $k=1.$

We continue by induction. Assuming that $c_1, \ldots, c_{2p-1}$ and
$g(x), \; x \leq c_{2p-1},$  are constructed, we set
$$
m_p = \inf \{m: \; g(c_{2p-1}) + m(x-c_{2p-1}) \geq a(x) \;\;
\text{for} \; x \geq c_{2p-1} \}.
$$
Then there are  uniquely determined points $c_{2p},\, c_{2p+1}$ such
that
$$
m_p (c_{2p} - c_{2p-1}) + g(c_{2p-1}) = a(c_{2p}), \quad m_p
(c_{2p+1}-c_{2p-1}) + g(c_{2p-1}) = b(c_{2p+1}).
$$
Therefore, with $g(x) = m_p (x - c_{2p-1}) + g(c_{2p-1}) $ for $x\in
[c_{2p-1},c_{2p+1}] $ the condition (\ref{d32}) holds for $k=p.$

Since $a(c_{2p}) \geq b(c_{2p-1})$,  (\ref{d40}) and (\ref{d50})
imply
$$ 1 \leq b(c_{2p})- a(c_{2p}) \leq b(c_{2p+1})-b(c_{2p-1}) \leq
(c_{2p+1}-c_{2p-1})/2. $$ Therefore, $c_{2p+1}-c_{2p-1} \geq 2,$ so
$c_k \to \infty.$

\end{proof}

Let $a(x), b(x), g(x)$ be the functions from Lemma \ref{lem8.20}. We
are going to define weight sequences using the values of  $a(x),
b(x), g(x)$ at integer points. Let
$$
n_k = [c_k], \quad \text{so} \;\; n_k \leq c_k < n_k +1,
$$
where $(c_k) $ is the sequence constructed in the proof of Lemma
\ref{lem8.20}. By its construction, in view of (\ref{d40}), the
function $g(x)$ is  piecewise linear for  $ x \geq c_1$ with
positive slopes $ m_p \leq 1/2.$ Therefore, the Mean Value Theorem
implies
\begin{equation}
\label{d61} b(n_k) -g(n_k) \geq b(c_k) -a(c_k) - 1/2  \quad
\text{for even} \; k
\end{equation}
and
\begin{equation}
\label{d62} g(n_k) -a(n_k) \geq b(c_k) -a(c_k) - 1/2  \quad
\text{for odd} \; k.
\end{equation}

Consider the weights $$A(m)= \frac{1}{|m|}e^{a(|m|)}, \quad B(m)=
\frac{1}{|m|}e^{b(|m|)},\quad G(m)= \frac{1}{|m|} e^{g(|m|)}, \quad
m \neq 0.$$ By (i) in Lemma \ref{lem8.20}, $b(x) - a(x) \to \infty $
as $x \to \infty. $  Therefore, (\ref{d61}) and (\ref{d62}) imply
\begin{equation}
\label{d71} \sup_n
\frac{B(n)}{G(n)} \geq \sup_{k} \frac{B(n_{2k})}{G(n_{2k})} =
\infty, \quad \sup_n \frac{G(n)}{A(n)} \geq \sup_{k}
\frac{G(n_{2k-1})}{A(n_{2k-1})} = \infty.
\end{equation}
We have $A(m) \leq G(m) \leq B(m),$  so in view of (\ref{d71}),
\begin{equation}
\label{d74} H(B) \varsubsetneqq H(G)\varsubsetneqq  H(A).
\end{equation}

\begin{Lemma}
\label{lem8.30} In the notations of Lemma \ref{lem8.20}, suppose
$f(x)\in C^2 ([0,\infty))$ is a function which satisfy (ii) and
(iii) and
\begin{equation}
\label{d81} a(x) \leq f(x) \leq b(x), \quad x\geq 0,
\end{equation}
\begin{equation}
\label{d82} f(x) - a(x) \to \infty, \quad b(x) - f(x) \to \infty
\quad \text{as} \;\; x\to\infty.
\end{equation}
Let $F$ be the corresponding the weight sequence $F(m) = |m|^{-1}
\exp f(|m|)$ for $m \neq 0, \, F(0) =1.$  Then
\begin{equation}
\label{d83} H(B) \subset H(F) \subset H(A)
\end{equation}
and
\begin{equation}
\label{d84} H(B) \varsubsetneqq H(G) \varsubsetneqq H(A),
\end{equation}
but
\begin{equation}
\label{d85} H(F) \not\subset H(G) \quad \text{and} \quad H(G)
\not\subset H(F).
\end{equation}
\end{Lemma}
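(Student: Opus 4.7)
The plan is to prove the three displayed statements in order. Statement (\ref{d83}) is immediate: the hypothesis $a\le f\le b$ gives the pointwise weight bounds $A(m)\le F(m)\le B(m)$, and a larger weight defines a smaller weighted $\ell^2$-space. The strict inclusions (\ref{d84}) were already recorded in (\ref{d74}) as a direct consequence of (\ref{d71}). The only nontrivial content is therefore the non-comparability (\ref{d85}).

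The key analytic input is to extract, at the integer sampling points $n_k=[c_k]$, the two quantitative separations
\[
f(n_{2k})-g(n_{2k})\to\infty,\qquad g(n_{2k-1})-f(n_{2k-1})\to\infty \qquad (k\to\infty).
\]
For the first, since $g(c_{2k})=a(c_{2k})$ by the construction in Lemma~\ref{lem8.20} and $g$ is linear with slope $\le 1/2$ on $[c_{2k-1},c_{2k+1}]$ by (\ref{d40}), one has $g(n_{2k})\le a(c_{2k})+1/2\le a(n_{2k})+1$ (the last step using the same slope bound for $a$). Combined with the hypothesis $f(x)-a(x)\to\infty$ this yields the first limit. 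A symmetric argument based on $g(c_{2k-1})=b(c_{2k-1})$ together with $b(x)-f(x)\to\infty$ gives the second. In terms of the weights this reads
\[
F(n_{2k})/G(n_{2k})\to\infty,\qquad G(n_{2k-1})/F(n_{2k-1})\to\infty.
\]

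Each divergence is then turned into a non-inclusion by an explicit support-concentrating construction. To prove $H(G)\not\subset H(F)$, extract $k_j\nearrow\infty$ with $F(n_{2k_j})/G(n_{2k_j})\ge j$, and define $v$ by $v_m=0$ except $v_{n_{2k_j}}=1/(j\,G(n_{2k_j}))$; then
\[
\sum_{m} |v_m|^2 G(m)^2 = \sum_{j} \frac{1}{j^{2}} < \infty, \qquad \sum_{m} |v_m|^2 F(m)^2 \ge \sum_{j} 1 = \infty,
\]
so $v\in H(G)\setminus H(F)$. The proof of $H(F)\not\subset H(G)$ is symmetric, applied at the odd-indexed points $n_{2k-1}$ with $v_{n_{2k_j-1}}=1/(j\,F(n_{2k_j-1}))$.

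The only mild technical point is the passage from the equalities $g(c_k)=a(c_k)$ and $g(c_k)=b(c_k)$ at the real sampling points to comparable estimates at the nearby integers $n_k=[c_k]$; the slope bound (\ref{d40}) absorbs this rounding into a uniform additive constant, and the strictly increasing gaps $f-a$ and $b-f$ are preserved. Otherwise the argument is a routine oscillation scheme: whenever two weights cross each other infinitely often with unbounded logarithmic gap, neither associated weighted space contains the other.
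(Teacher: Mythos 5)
Your proof is correct and takes essentially the same route as the paper: the pointwise bounds $A\le F\le B$ give (\ref{d83}), (\ref{d84}) is already (\ref{d74}), and (\ref{d85}) follows from the divergence of $F/G$ and $G/F$ along the subsequences $n_{2k}$ and $n_{2k-1}$, with the same rounding estimate at $n_k=[c_k]$ using the slope bound (\ref{d40}) that the paper uses. You additionally spell out the concentration construction (supporting $v$ on a sparse subsequence with coefficients proportional to $1/(j\cdot\text{weight})$) that converts $\sup$-divergence into strict non-inclusion, a step the paper's proof compresses into ``the same argument.''
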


\begin{proof}
Inequalities (\ref{d81}) and (\ref{d31}) imply the inclusions
(\ref{d83}), and (\ref{d84}) is explained in (\ref{d74}). The same
argument proves (\ref{d85}) because (\ref{d82}) implies
$$
\sup_n \frac{G(n)}{F(n)} \geq \sup_{k}
\frac{G(n_{2k-1})}{F(n_{2k-1})} \geq \sup_{k} \exp [-1/2
+b(c_{2k-1})-f(c_{2k-1})] =\infty
$$
and, in a similar way, $\sup_n  F(n)/G(n) = \infty. $
\end{proof}

Lemmas \ref{lem8.20} and \ref{lem8.30} give a variety of options to
construct weights with prescribed imbedding properties of related
Sobolev spaces.

{\em Example 1.}  For $-1 < \alpha <\beta, $ let
$$
a(x) = (\alpha +1) \log (x+e), \quad b(x) = (\beta +1) \log (x+e),
$$
and let $g(x), \, G $ be the corresponding function from Lemma
\ref{lem8.20} and the weight sequence $G(m) = |m|^{-1} \exp g(|m|)$
for $m \neq 0, \, G(0) =1.$  Then $ H^\beta  \subset H(G) \subset
H^\alpha $ but we have $H^\gamma \not\subset H(G)$ and  $H(G)
\not\subset H^\gamma $ for any $\gamma \in (\alpha, \beta).$

{\em Example 2.} Let $$a(x) = \log \log (x+e),\quad b(x) = x/\log
(x+e),$$ and let $g(x), \, G $ be the corresponding function from
Lemma \ref{lem8.20} and the weight sequence $G(m) = |m|^{-1} \exp
g(|m|)$ for $m \neq 0, \, G(0) =1.$

Then, for any weight $\Omega$  in (\ref{d1.10}) with $a>-1$ or in
(\ref{d1.11}), we have $H(\Omega) \not\subset H(G)$ and $H(G) \not
\subset H(\Omega).$  At the same time $H(G) \subset H^{-1}$ and all
analytic functions are in $H(G).$

\section{Appendix:Deviations of Riesz projections of Hill operators with singular
potentials}

It is shown that the deviations $P_n -P_n^0$ of Riesz projections
$$
 P_n = \frac{1}{2\pi i} \int_{C_n}
(z-L)^{-1} dz, \quad C_n=\{|z-n^2|= n\}, $$ of Hill operators $L y
= - y^{\prime \prime} + v(x) y, \; x \in [0,\pi],$ with zero and
$H^{-1}$ periodic potentials go to zero as $n \to \infty $ even if
we consider $P_n -P_n^0$ as operators from $L^1$ to $L^\infty. $
This implies that all $L^p$-norms are uniformly equivalent on the
Riesz subspaces $Ran \,P_n. $

\subsection{Preliminaries}

Now, in the case of singular potentials, we want to compare the
Riesz projections $P_n$ of the operator $L_{bc}, $ defined for
large enough $n$ by the formula
\begin{equation}
\label{9.03}
 P_n = \frac{1}{2\pi i} \int_{C_n}
(z-L_{bc})^{-1} dz, \quad C_n=\{|z-n^2|= n\},
\end{equation}
with the corresponding Riesz projections $P_n^0$ of the free
operator $L_{bc}^0$  (although $E_n^0 = Ran (P_n^0)$ maybe have no
common nonzero vectors with the domain of $L_{bc}).$

The main result is Theorem~\ref{thm91} (see Subsection 2 below),
which claims that
\begin{equation}
\label{9.05} \tilde{\tau}_n = \|P_n -P_n^0 \|_{L^1 \to L^\infty}
\to 0 \quad \text{as} \;\; n \to \infty.
\end{equation}

This implies  a sort of {quantum chaos}, namely all $L^p$--norms
on the Riesz subspaces $E_n = Ran P_n, $ for bc = $Per^\pm $ or
$Dir, $ are uniformly equivalent (see Theorem~\ref{thm97} in
Section 9.5).

In our analysis (see \cite{DM15}) of the relationship between
smoothness of a potential $v $ and the rate of decay of spectral
gaps and spectral triangles  a statement similar to (\ref{9.05})
\begin{equation}
\label{9.06} \tau_n = \|P_n -P_n^0 \|_{L^2 \to L^\infty} \to
0\quad \text{as} \;\; n \to \infty.
\end{equation}
was crucial when we used the deviations of Dirichlet eigenvalues
from periodic or anti--periodic eigenvalues to estimate the
Fourier coefficients of the potentials $v.$ But if $v \in L^2 $ it
was ''easy'' (see \cite{DM5}, Section 3, Prop.4, or \cite{DM15},
Prop.11). Moreover, those are strong estimates: for $n \geq
N(\|v\|_{L^2})$
\begin{equation}
\label{9.07} \tau_n \leq \frac{C}{n} \|v\|_{L^2},
\end{equation}
where $C$ is an absolute constant. Therefore, in (\ref{9.07}) only
the $L^2$--norm is important, so $\tau_n \leq CR/n$ holds {\em for
every $v$ in an $L^2$--ball of radius } $R.$

Just for comparison let us mention the same type of question in
the case of 1D periodic Dirac operators $$ MF = i \begin{pmatrix}
1 &0\\0  & -1
\end{pmatrix}
\frac{dF}{dx} + V  F,  \quad    0 \leq x\leq \pi, $$ where $V=
\begin{pmatrix} 0 &p\\ q  & 0
\end{pmatrix}, $
$p$ and $q$ are $L^2$--functions, and $ F= \begin{pmatrix}
f_1\\f_2
\end{pmatrix}.$

The boundary conditions under consideration are $Per^\pm $ and
$Dir,$ where $$Per^\pm : \;\; F(\pi) = \pm F(0), \qquad Dir:  \;
\; f_1 (0)=f_2 (0), \;\; f_1 (\pi) = f_2 (\pi). $$ Then (see
\cite{Mit04} or \cite{DM15}, Section 1.1) $$ E_n^0 = \left \{
\begin{pmatrix} a e^{-inx} \\be^{inx}
\end{pmatrix} :\;\; a,b \in \mathbb{C} \right \},\quad n \in
\mathbb{Z}, $$ where $n$ is even if $bc = Per^+$ and $n$ is odd if
$bc = Per^-,$ and $$ E_n^0 = \{c \sin nx, \; c \in \mathbb{C} \},
\quad n \in \mathbb{N} $$ if $ bc =Dir.$ Then for $$ Q_n =
\frac{1}{2\pi i} \int_{C_n} (\lambda - L)^{-1} d\lambda, \quad C_n
= \{\lambda: \; |\lambda - n| = 1/4 \},$$ we have $$ \rho_n (V)
:=\| Q_n -Q_n^0\|_{L^2 \to L^\infty } \to 0 \quad \text{as} \;\;
n\to \infty; $$ moreover, for any compact set $K \subset L^2$ and
$ V\in K,$ i.e., $p,q \in K $ one can construct a sequence
$\varepsilon_n (K) \to 0 $ such that $\rho_n (V) \leq
\varepsilon_n (K), \; V \in K. $ This has been proven in
\cite{Mit04}, Prop.8.1 and Cor.8.6; see Prop. 19 in \cite{DM15} as
well.

Of course, the norms $\tau_n$ in (\ref{9.06}) are larger than the
norms of these operators in $L^2$ $$ t_n = \| P_n -P_n^0\|_{L^2
\to L^2 } \leq \tau_n $$ and better (smaller) estimates for $t_n $
are possible. For example, A. Savchuk and A. Shkalikov proved
(\cite{SS03}, Sect.2.4) that $\sum t^2_n < \infty. $ This implies
(by Bari--Markus theorem -- see \cite{GK}, Ch.6, Sect.5.3, Theorem
5.2) that the spectral decompositions $$ f = f_N + \sum_{n>N} P_n
f $$ converge unconditionally. For Dirac operators the
Bari--Markus condition is $$ \sum_{n\in \mathbb{Z},|n|>N} \|Q_n -
Q_n^0\|^2 < \infty. $$ This fact (together with the completeness
and minimality of the system of Riesz subspaces $Ran \,Q_n$) imply
unconditional convergence of the spectral decompositions. This has
been proved in \cite{Mit03,Mit04} under the assumption that the
potential $V$ is in the Sobolev space $H^\alpha, \; \alpha>1/2 $
(see \cite{Mit04}, Theorem~8.8 for more precise statement). See
further comments in Section 9.5 below as well.

The proof of Theorem \ref{thm91}, or the estimates of norms
(\ref{9.05}), are based on the perturbation theory, which gives
the representation
\begin{equation}
\label{9.011}
 P_n -P_n^0=
\frac{1}{2\pi i} \int_{C_n} \left ( R(\lambda) -R^0 (\lambda)
\right ) d\lambda,
\end{equation}
where $R(\lambda) = (\lambda - L_{bc})^{-1} $ and $R^0 (\lambda )$
are the resolvents of $L_{bc}$ and of the free operator
$L^0_{bc},$ respectively. Often -- and certainly in the above
mentioned examples where $v \in L^2 $   --  one can get reasonable
estimates for the norms $ \| R(\lambda ) - R^0 (\lambda )\|$ on
the contour $C_n,$ and then by integration for $\|P_n -P_n^0 \|.$
But now, with  $v \in H^{-1}, $ we succeed to get good estimates
for the norms $\|P_n -P_n^0 \|$ {\em after} having integrated term
by term the series representation
\begin{equation}
\label{9.012} R-R^0 = R^0VR^0  +R^0VR^0 V R^0  + \cdots .
 \end{equation}
This integration kills or makes more manageable many terms, maybe
in their matrix representation. Only then we go to the norm
estimates. Technical details of this procedure (Subsection 9.3) is
the core of the proof of Theorem \ref{thm91}.

\subsection{Main result on the deviations $P_n -P_n^0$}

 By Proposition \ref{prop004}
 (i.e., our Theorem 21 in \cite{DM16} about spectra localization), the operator
$L_{Per\pm}$ has, for large enough $n,$ exactly two eigenvalues
(counted with their algebraic multiplicity) inside the disc of
radius $n$ about $n^2$ (periodic for even $n$ or antiperiodic for
odd $n$). The operator $L_{Dir}$ has one eigenvalue in these discs
for all large enough $n.$

Let $E_n$  be the corresponding Riesz invariant subspace, and let
$P_n$  be the corresponding Riesz projection, i.e., $$ P_n =
\frac{1}{2\pi i} \int_{C_n} (\lambda - L)^{-1} d\lambda, $$ where
$C_n = \{\lambda: \; |\lambda - n^2| =n \}.   $ We denote by $P_n^0$
the Riesz projection that corresponds to the free operator.

\begin{Proposition}
\label{prop91} In the above notations, for boundary conditions $bc
=Per^\pm $ or $Dir,$
\begin{equation}
\label{p20} \|P_n - P_n^0 \|_{L^2 \to L^\infty} \to 0 \quad
\text{as} \;\; n \to \infty.
\end{equation}
\end{Proposition}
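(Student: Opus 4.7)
My plan is to expand the resolvent difference as a Neumann series, regroup via the block decomposition developed in Section~3, and then estimate each resulting piece in the $L^2 \to L^\infty$ operator norm.

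First, I would use the identity $R(\lambda)-R^0(\lambda) = R^0(\lambda) V R(\lambda)$ together with the factorization $Q_n^0 R^0(\lambda) Q_n^0 = \tilde K(\lambda)^2$, where $\tilde K$ is the operator from (\ref{2.2}). Following the block computation in the proof of Lemma~\ref{lem2.2}, one obtains an explicit formula of the form
\begin{equation*}
R(\lambda)-R^0(\lambda) = P_n^0 \bigl(M(\lambda)-z^{-1}\bigr) P_n^0 + P_n^0 M_{12}(\lambda) Q_n^0 + Q_n^0 M_{21}(\lambda) P_n^0 + Q_n^0 M_{22}(\lambda) Q_n^0,
\end{equation*}
where each block is expressible in terms of $\tilde K$, $(1-T)^{-1}$ with $T=\tilde K V \tilde K$, and the operator $S(\lambda)$ from (\ref{2.8}). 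Integration over $C_n$ then yields $P_n-P_n^0$ as a sum of four pieces, each of which I would estimate separately.

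For the bookkeeping, the key quantitative inputs are: (i) $\|T\|_{HS}\to 0$ uniformly for $\lambda\in C_n$, by Lemma~\ref{lem2.1}, so $(1-T)^{-1}$ is uniformly bounded and the operator $S(\lambda)$ has $\|S\|\to 0$, which makes the $(1,1)$-block small after integration (via the Riesz projection interpretation for the $2\times 2$ matrix $n^2+S(\lambda)$); (ii) the crucial embedding estimate
\begin{equation*}
\|\tilde K(\lambda)\|_{L^2\to L^\infty} \;\leq\; \Bigl(\sum_{j\neq \pm n} \tfrac{1}{|\lambda-j^2|}\Bigr)^{1/2} \;\leq\; C\sqrt{\log n / n}, \qquad \lambda\in C_n,
\end{equation*}
obtained by bounding the $L^\infty$ norm of a Fourier series by its $\ell^1$-norm and applying Cauchy--Schwarz to the diagonal matrix $\tilde K$. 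Together with the dual bound $\|\tilde K\|_{L^1\to L^2}\leq C\sqrt{\log n/n}$ and the Hilbert--Schmidt estimate for $T$, these produce decaying bounds for the off-diagonal and $(2,2)$ pieces.

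For the off-diagonal pieces, after integrating a typical term of the form $P_n^0 V \tilde K (1-T)^{-1} \tilde K Q_n^0$ in $\lambda$, I would write the image as $\tilde K(\lambda_*) (1-T(\lambda_*))^{-1} (\tilde K V P_n^0)$ evaluated on suitable residues, then apply $\|\tilde K\|_{L^2\to L^\infty}\leq C\sqrt{\log n/n}\to 0$. The $(2,2)$-block contributes $Q_n^0 \tilde K(1-T)^{-1}\tilde K V R(\lambda) V \tilde K (1-T)^{-1} \tilde K Q_n^0$ type terms, where once again factoring through the small $\tilde K$ operator gives the required decay; the integration over $C_n$ of length $2\pi n$ is absorbed by the residue computations (which replace $(\lambda-j^2)^{-1/2}(\lambda-k^2)^{-1/2}$ factors by entries whose sums are summable thanks to $q\in\ell^2$).

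The main obstacle will be the combinatorial bookkeeping: making sure that when one sums the Neumann series $(1-T)^{-1}=\sum T^k$ and keeps track of the boundary $P_n^0$ and $Q_n^0$ pieces, each contribution carries a factor that goes to zero and that the residual logarithmic powers of $n$ appearing in $\|\tilde K\|_{L^2\to L^\infty}$ are beaten by the $\|T\|_{HS}\to 0$ decay coming from Lemma~\ref{lem2.1}. Once the architecture is in place, estimate (\ref{p20}) follows, and the stronger $L^1\to L^\infty$ version (Theorem~\ref{thm91}) will come from the same scheme using the dual estimate $\|\tilde K\|_{L^1\to L^2}\to 0$ on the input side.
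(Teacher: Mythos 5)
Your approach is genuinely different from the paper's. The paper works with $K_\lambda$ (not $\tilde K$), expands $R-R^0$ as the Neumann series $\sum_s K_\lambda(K_\lambda V K_\lambda)^{s+1}K_\lambda$, and proves the stronger $L^1\to L^\infty$ statement by controlling $\sum_{k,m}|\langle(P_n-P_n^0)e_m,e_k\rangle|$ directly at the level of matrix elements, integrating term by term over $C_n$ first (so that residue calculus kills every monomial all of whose indices differ from $\pm n$) and only afterwards estimating what survives via the sums $L(s,\pm n)$, $R(s,\pm n)$ and the careful splittings $\sigma_1,\sigma_2$ of Lemmas~\ref{lemp1}--\ref{lemp3}. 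You instead use the Schur-complement block structure of Lemma~\ref{lem2.2} and try to close the estimate with operator norms.

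The gap is in the off-diagonal (and $(2,2)$) blocks, and it is not merely bookkeeping. After integrating, the $(2,1)$-type contribution is $\tilde K(1-T)^{-1}\tilde K V P_n^0$ at the residue points, and you propose to bound it by $\|\tilde K\|_{L^2\to L^\infty}\cdot\|(1-T)^{-1}\|\cdot\|\tilde K V P_n^0\|_{L^2}$. The first factor is $O(\sqrt{\log n/n})$, correct. But $\|\tilde K V P_n^0 e_n\|_{L^2}^2 = \sum_{j\neq\pm n}\frac{|V(j-n)|^2}{|\lambda-j^2|}\asymp\sum_j\frac{|j-n|}{|n+j|}|q(j-n)|^2$, and for $j$ close to $-n$ this is of size $n\,\mathcal{E}_n(q)^2$, so the factor can be as large as $\sqrt{n}\,\mathcal{E}_n(q)$. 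The product is then $\sqrt{\log n}\,\mathcal{E}_n(q)$, which does \emph{not} go to zero for general $q\in\ell^2$: take $q(k)\sim(\sqrt{k\log k}\,\log\log k)^{-1}$, which gives $\mathcal{E}_n(q)\sim(\log\log n)^{-1/2}$ and $\sqrt{\log n}\,\mathcal{E}_n(q)\to\infty$. Expanding $(1-T)^{-1}=1+T+\cdots$ does not rescue this: the identity term carries no $\|T\|_{\mathrm{HS}}$ smallness at all, and even the first-order term yields $\|T\|_{\mathrm{HS}}\sqrt{\log n}\,\mathcal{E}_n(q)$, which is still unbounded in the same example since $\|T\|_{\mathrm{HS}}\lesssim\mathcal{E}_{\sqrt n}(q)+\|q\|/\sqrt n$ decays no faster than $(\log\log n)^{-1/2}$. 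The underlying problem is that factoring the $|\lambda-j^2|^{-1}$ weight into two $|\lambda-j^2|^{-1/2}$'s and applying Cauchy--Schwarz separately forces the $\log n$-producing range ($j$ near $n$, small Fourier index) and the $\mathcal{E}_n$-producing range ($j$ near $-n$, large Fourier index) to be \emph{multiplied}. The paper keeps the weight intact and estimates $\sum_j\frac{|V(j-n)|}{|\lambda-j^2|}$ directly, splitting the index range so that these two contributions appear \emph{added}, as $\|r\|/\sqrt n+\mathcal{E}_n(r)$; this is precisely the content of inequality (\ref{p26a}) and Lemmas~\ref{lemp2}--\ref{lemp3}, and is what the remark at the end of Subsection~9.1 (about estimating only after integrating the series term by term) is warning about. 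To make your block approach rigorous you would have to abandon the operator-norm factorization on the troublesome blocks and redo the matrix-element $\ell^1$-sums — at which point you are carrying out the paper's argument.
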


As a matter of fact we will prove a stronger statement.

\begin{Theorem}
\label{thm91} In the above notations, for boundary conditions $bc
=Per^\pm $ or $Dir,$
\begin{equation}
\label{p21} \|P_n - P_n^0 \|_{L^1 \to L^\infty} \to 0 \quad
\text{as} \;\; n \to \infty.
\end{equation}
\end{Theorem}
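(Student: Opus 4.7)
\textbf{Proof proposal for Theorem~\ref{thm91}.}

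My plan is to exploit the Neumann-type expansion established in Sections~2--3. With $K = K_\lambda$ satisfying $K^2 = R^0(\lambda)$ and $T = T(n;\lambda) = KVK$, Lemma~\ref{lem2.1} ensures that on the contour $C_n = \{|\lambda - n^2|=n\}$, for $n \geq n_0(v)$, one has $\|T\|_{HS} \leq C(\mathcal{E}_{\sqrt n}(q) + \|q\|/\sqrt n) < 1/2$; consequently the Neumann series converges and
\begin{equation*}
P_n - P_n^0 = \frac{1}{2\pi i}\oint_{C_n}[R(\lambda) - R^0(\lambda)]\,d\lambda = \sum_{m\geq 1} A_m, \quad A_m := \frac{1}{2\pi i}\oint_{C_n}K T^m K\,d\lambda.
\end{equation*}
To bound the $L^1 \to L^\infty$ norm I would use that the kernel of an operator $A$ in the Fourier basis $\{u_k\}$ of $L^2([0,\pi])$ (whose elements satisfy $\|u_k\|_\infty\leq C$ uniformly) is given by $K_A(x,y) = \sum_{j,k} A_{jk} u_j(x) \overline{u_k(y)}$, so that $\|A\|_{L^1 \to L^\infty} = \|K_A\|_{L^\infty} \leq C\sum_{j,k}|A_{jk}|$. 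Thus it suffices to prove
\begin{equation*}
\Sigma(n) := \sum_{m \geq 1}\sum_{j,k\in\Gamma_{bc}}|(A_m)_{jk}| \;\longrightarrow\; 0 \quad\text{as}\;\; n\to\infty.
\end{equation*}

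The first-order contribution is the most transparent. The matrix element $[KTK]_{jk} = V(j-k)/[(\lambda-j^2)(\lambda-k^2)]$ has simple poles at $\lambda = j^2$ and $\lambda = k^2$, and only $\lambda = n^2$ lies inside $C_n$ for large $n$; hence the integral $\oint [KTK]_{jk}\,d\lambda$ vanishes unless exactly one of $j,k$ equals $\pm n$. In that case the residue equals $V(j-k)/(n^2 - \ell^2)$ for $\ell \in \{j,k\} \setminus \{\pm n\}$, and after substitution $m = n-k$ (or $m = k+n$) one obtains
\begin{equation*}
\sum_{j,k}|(A_1)_{jk}| \;\leq\; 4\sum_{m \neq 0,\, 2n} \frac{|q(m)|}{|2n-m|}.
\end{equation*}
Splitting the range into $|m| \leq \sqrt n$ (where $|2n-m| \geq n$, so Cauchy--Schwarz gives a bound $\lesssim \|q\|/\sqrt n$) and $|m| > \sqrt n$ (where Cauchy--Schwarz combined with $\sum_{\ell \neq 0} 1/\ell^2 < \infty$ gives $\lesssim \mathcal{E}_{\sqrt n}(q)$) yields $\sum_{j,k}|(A_1)_{jk}| = O(\mathcal{E}_{\sqrt n}(q) + \|q\|/\sqrt n)$, which tends to zero.

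The case $m \geq 2$ is the main obstacle. Now $[KT^m K]_{jk}$ is a sum over intermediate indices $i_1,\ldots,i_{m-1}$, and the residue at $\lambda = n^2$ picks up contributions in which at least one of $j, i_1, \ldots, i_{m-1}, k$ equals $\pm n$; if several of these indices coincide with $\pm n$ the pole is of higher order and the residue involves $\lambda$-derivatives, producing a combinatorial proliferation of terms. My strategy is to decompose $K = P^0 K + \tilde K$ (with $\tilde K = Q^0 K Q^0$ excluding the $\pm n$ indices) and expand $T^m$ as a sum over patterns specifying which positions carry $\pm n$-indices; the diagonal factors at $\pm n$ then contribute sums of the form already handled in the $m=1$ analysis, while the remaining off-diagonal factors are controlled in Hilbert--Schmidt norm exactly as in Lemmas~\ref{lem3.2}--\ref{lem3.4}. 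Combining these estimates with the uniform bound $\|T\|_{HS} \leq 1/2$ (which guarantees geometric summability of the patterns in each $m$, and of the series in $m$) should produce $\Sigma(n) = O(\mathcal{E}_{\sqrt n}(q) + \|q\|/\sqrt n) \to 0$, which completes the proof.
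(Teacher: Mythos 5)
Your skeleton is the same as the paper's: expand $R-R^0$ in the Neumann series, integrate term by term over $C_n$, and reduce the $L^1\to L^\infty$ norm to the sum of absolute values of matrix entries $\sum_{j,k}|B_{jk}(n)|$. Your treatment of the first-order term is correct and matches the paper's computation of $A(n,0)$. But the part you yourself flag as "the main obstacle" is where the proposal has a genuine gap, in two respects. First, you do not need residue calculus (and hence never meet higher-order poles) for $m\geq 2$: the paper isolates only the fully regular sub-sum $\Sigma^0$ with $j,k\neq\pm n$, which integrates to zero by analyticity, and bounds every remaining piece by the trivial estimate $|\oint_{C_n}f\,d\lambda|\leq 2\pi n\,\sup_{C_n}|f|$; each factor $1/(\lambda-n^2)$ attached to an index equal to $\pm n$ has modulus exactly $1/n$ on $C_n$, absorbing the contour length. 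Your proposed pattern decomposition is headed in that direction but is not carried out.

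Second, and more seriously, the assertion that the off-diagonal factors "are controlled in Hilbert--Schmidt norm exactly as in Lemmas~\ref{lem3.2}--\ref{lem3.4}" and that $\|T\|_{HS}\leq 1/2$ then gives geometric summability is not enough. What must be summed is the absolute value of every matrix entry of each chain anchored at $\pm n$, i.e.\ quantities of the type
\begin{equation*}
L(p,\pm n)=\sum_{i_1,\dots,i_p\neq\pm n}\frac{|V(\pm n-i_1)|}{|n^2-i_1^2|}\cdots\frac{|V(i_{p-1}-i_p)|}{|n^2-i_p^2|},
\end{equation*}
with all indices, including the free endpoint, summed in $\ell^1$. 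The Hilbert--Schmidt bound controls only the $\ell^2\to\ell^2$ operator norm of the middle blocks and does not yield $\ell^1$ control of these chains; moreover $V(m)=m\,w(m)$ is not itself square-summable, so a naive Cauchy--Schwarz at the free end fails. The paper's proof of $L(s,\pm n)\leq\varepsilon_n^s$ (Lemma~\ref{lemp1}) is the technical core of the whole argument: it uses the telescoping identity $\tfrac{i+k}{(n-i)(n+k)}=\tfrac{1}{n-i}-\tfrac{1}{n+k}$ to transfer the growth of $V$ onto the denominators, and then an induction over sign patterns via the auxiliary sums $\sigma_1$ and $\sigma_2$ of Lemmas~\ref{lemp2} and~\ref{lemp3}. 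Without an argument of this kind your $\Sigma(n)$ is not shown to converge, let alone to tend to zero.
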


\begin{proof}
We give a complete proof in the case $bc =Per^\pm.$ If $bc =Dir$ the
proof is the same, and only minor changes are necessary due to the
fact that in this case the orthonormal system of eigenfunctions of
$L^0$ is $\{ \sqrt{2} \sin nx, \; n\in \mathbb{N}\}$ (while it is
$\{\exp(imx), \; m\in 2 \mathbb{Z}\}$ for $bc = Per^+, $ and
$\{\exp(imx), \; m\in 1+ 2 \mathbb{Z}\}$ for $bc = Per^- $). So,
roughly speaking, the only difference is that when working with $bc =
Per^\pm $ the summation indexes in our formulas below run,
respectively, in $2 \mathbb{Z}$ and $1+ 2 \mathbb{Z},$ while for $bc
=Dir$ the summation indexes have to run in $\mathbb{N}.$ Therefore,
we consider in detail only $bc= Per^\pm, $ and provide some formulas
for the case $bc=Dir.$

Let
\begin{equation}
\label{p210} B_{km}(n):= \langle (P_n - P_n^0) e_m, e_k \rangle.
\end{equation}
We are going to prove that
\begin{equation}
\label{p22} \sum_{k,m} |B_{km}(n)|    \to 0 \quad \text{as} \;\; n
\to \infty.
\end{equation}
Of course, the convergence of the series in (\ref{p22}) means that
the operator with the matrix $B_{km} (n)$ acts from $\ell^\infty $
into $\ell^1.$

The Fourier coefficients of an $L^1$-function form an $\ell^\infty
$-sequence. On the other hand,
\begin{equation}
\label{p220} D= \sup_{x,n} |e_n (x)|  < \infty.
\end{equation}
 Therefore, the operators $P_n -P_n^0 $ act
from $L^1$ into $L^\infty $ (even into $C$) and
\begin{equation}
\label{p23} \|P_n - P_n^0 \|_{L^1 \to L^\infty} \leq D^2
\sum_{k,m} |B_{km}(n)|.
\end{equation}
Indeed, if  $\|f\|_{L^1} =1$ and  $f = \sum f_m e_m, $ then
$|f_m|\leq D $ and $$ (P_n - P_n^0 )f = \sum_k \left ( \sum_m
B_{km} f_m  \right ) e_k. $$

Taking into account (\ref{p220}), we get $$ \|(P_n - P_n^0) f
\|_{L^\infty} \leq D\sum_k \left | \sum_m B_{km} f_m \right | \leq
D^2\sum_k \sum_m |B_{km}|, $$ which proves (\ref{p23}).

In \cite{DM16}, Section 5, we gave a detailed analysis of the
representation $$ R_\lambda - R_\lambda^0 = \sum_{s=0}^\infty
K_\lambda (K_\lambda V  K_\lambda )^{s+1} K_\lambda, $$ where
$K_\lambda = \sqrt{R^0_\lambda} $ -- see \cite{DM16}, (5.13-14)
and what follows there. By (\ref{9.011}), $$ P_n - P_n^0 =
\frac{1}{2\pi i} \int_{C_n} \sum_{s=0}^\infty K_\lambda (K_\lambda
V  K_\lambda )^{s+1} K_\lambda d\lambda. $$ if the series on the
right converges. Thus
\begin{equation}
\label{p24} \langle (P_n - P_n^0) e_m, e_k \rangle =
   \sum_{s=0}^\infty
\frac{1}{2\pi i} \int_{C_n} \langle K_\lambda  (K_\lambda V
K_\lambda )^{s+1} K_\lambda e_m, e_k \rangle d\lambda,
\end{equation}
so we have
\begin{equation}
\label{p24b} \sum_{k,m} |\langle (P_n - P_n^0) e_m, e_k \rangle|
\leq \sum_{s=0}^\infty A(n,s),
\end{equation}
where
\begin{equation}
\label{p24c} A(n,s) =\sum_{k,m} \left | \frac{1}{2\pi i}
\int_{C_n} \langle K_\lambda (K_\lambda V K_\lambda )^{s+1}
K_\lambda e_m, e_k \rangle d\lambda \right |.
\end{equation}

By the matrix representation of the operators $K_\lambda $ and $V$
(see more details in \cite{DM16}, (5.15-22)) it follows that
\begin{equation}
\label{p24a} \langle K_\lambda (K_\lambda V  K_\lambda )K_\lambda
e_m, e_k \rangle = \frac{V(k-m)}{(\lambda -k^2)(\lambda -m^2)},
\quad k,m  \in n+2\mathbb{Z},
\end{equation}
for $bc = Per^\pm, $ and
\begin{equation}
\label{p25a} \langle K_\lambda (K_\lambda V  K_\lambda )K_\lambda
e_m, e_k \rangle =
\frac{|k-m|\tilde{q}(|k-m|)-(k+m)\tilde{q}(k+m)}{\sqrt{2}(\lambda
-k^2)(\lambda -m^2)}, \quad k,m \in \mathbb{N},
\end{equation}
for $bc = Dir. $ Let us remind that $\tilde{q}(m)$ are the sine
Fourier coefficients of the function $Q(x),$ i.e., $$ Q(x) =
\sum_{m=1}^\infty \tilde{q}(m) \sqrt{2} \sin mx. $$ The matrix
representations of $K_\lambda (K_\lambda V  K_\lambda )K_\lambda$
in (\ref{p24a}) and (\ref{p24b}) are the ''building blocks'' for
the matrices of the products of the form $K_\lambda (K_\lambda V
K_\lambda )^s K_\lambda$ that we have to estimate below. For
convenience, we set
\begin{equation}
\label{p26} V(m) = m w(m), \quad w \in \ell^2 (2\mathbb{Z}), \quad
r(m)= max(|w(m)|,|w(-m)|)
\end{equation}
if $bc = Per^\pm, $ and
\begin{equation}
\label{p260} \tilde{q}(0)=0, \quad r(m) = \tilde{q}(|m|), \quad m
\in \mathbb{Z}.
\end{equation}
if $bc = Dir.$ We use the notations (\ref{p26}) in the estimates
related to $bc =Per^\pm $ below, and if one would use in a similar
way (\ref{p260}) in the Dirichlet case, then the corresponding
computations becomes practically identical (the only difference
will be that in the Dirichlet case the summation will run over
$\mathbb{Z}$). So, further we consider only  the case $bc
=Per^\pm.
$

Let us calculate the first term on the right--hand side of
(\ref{p24}) (i.e., the term coming for $s=0$). We have
\begin{equation}
\label{p25} \frac{1}{2\pi i} \int_{C_n} \frac{V(k-m)}{(\lambda
-k^2)(\lambda -m^2)}d\lambda = \begin{cases} \frac{V(k \mp
n)}{(n^2 -k^2)}   &  m= \pm n, \;\; k \neq \pm n, \\ \frac{V(\pm n
-m)}{(n^2 -m^2)}   &  k= \pm n, \;\; m \neq \pm n, \\ 0 &
\text{otherwise}.
\end{cases}
\end{equation}
Thus $$ A(n,0) =\sum_{k,m} \left | \frac{1}{2\pi i} \int_{C_n}
\langle K_\lambda (K_\lambda V  K_\lambda )K_\lambda e_m, e_k
\rangle \right | $$ $$ = \sum_{k \neq \pm n} \frac{|V(k-n)|}{|n^2
-k^2|} + \sum_{k \neq \pm n} \frac{|V(k+n)|}{|n^2 -k^2|} + \sum_{m
\neq \pm n} \frac{|V(-n+m)|}{|n^2 -m^2|} + \sum_{m \neq \pm n}
\frac{|V(n-m)|}{|n^2 -m^2|}. $$

By the Cauchy inequality, we estimate the first sum on the
right--hand side:
\begin{equation}
\label{p26a} \sum_{k \neq \pm n} \frac{|V(k-n)|}{|n^2 -k^2|}
=\sum_{k \neq \pm n} \frac{ |k-n| |w(k-n)|}{|n^2 -k^2|}
\end{equation}
$$ \leq \sum_{k \neq -n} \frac{r(k-n)}{|n+k|}  \leq \sum_{k>0}
\cdots + \sum_{k\leq 0, k \neq  -n} \cdots $$

$$ \leq \left ( \sum_{k>0} \frac{1}{|n+k|^2} \right )^{1/2} \cdot
\|r\|+ \left ( \sum_{k\leq 0, k \neq  -n} \frac{1}{|n+k|^2} \right
)^{1/2}  \left ( \sum_{k\leq 0} (r(n-k))^2 \right )^{1/2} $$ $$
\leq \frac{\|r\|}{\sqrt{n}} + \mathcal{E}_n (r). $$ Since each of
the other three sums could be estimated in the same way, we get
\begin{equation}
\label{p27} A(n,0) \leq \sum_{k,m}  \left | \frac{1}{2\pi i}
\int_{C_n} \langle K_\lambda (K_\lambda V  K_\lambda )K_\lambda
e_m, e_k \rangle   d \lambda \right |  \leq
\frac{4\|r\|}{\sqrt{n}} + 4 \mathcal{E}_n (r).
\end{equation}

Next we estimate $A(n,s), s\geq 1.$ By the matrix representation
of $K_\lambda $ and $V$ -- see (\ref{p24a}) -- we have
\begin{equation}
\label{p27a} \langle K_\lambda  (K_\lambda V  K_\lambda )^{s+1}
K_\lambda e_m, e_k \rangle = \frac{\Sigma
(\lambda;s,k,m)}{(\lambda - k^2)(\lambda - m^2)}
\end{equation}
where
\begin{equation}
\label{p28} \Sigma (\lambda;s,k,m) =\sum_{j_1, \ldots, j_s}
\frac{V(k-j_1) V(j_1 - j_2) \cdots V(j_{s-1} -j_s)V(j_s -m)}
{(\lambda -j_1^2) (\lambda -j_2^2)\cdots
 (\lambda -j_s^2) },
\end{equation}
$ k, m, j_1, \ldots, j_s \in n+ 2\mathbb{Z}. $ For convenience, we
set also
\begin{equation}
\label{p28a} \Sigma (\lambda;0,k,m) = V(k-m).
\end{equation}

In view of (\ref{p24c}), we have
\begin{equation}
\label{p29} A(n,s) = \sum_{k,m} \left |
 \frac{1}{2\pi i} \int_{C_n}
\frac{\Sigma (\lambda;s,k,m)}{(\lambda - k^2)(\lambda - m^2)}
d\lambda \right |.
\end{equation}

Let us consider the following sub--sums of sum $\Sigma
(\lambda;s,k,m)$ defined in (\ref{p28}):
\begin{equation}
\label{p30} \Sigma^0 (\lambda;s,k,m) =\sum_{j_1, \ldots, j_s \neq
\pm n} \cdots \quad \text{for} \;s \geq 1, \quad \Sigma^0
(\lambda;0,k,m) := V(k-m);
\end{equation}
\begin{equation}
\label{p30a} \Sigma^1 (\lambda;s,k,m) = \sum_{\exists \;
\text{one} \; j_\nu = \pm n}  \cdots \quad  \text{for} \;s \geq 1;
\end{equation}
\begin{equation}
\label{p31} \Sigma^* (\lambda;s,k,m) = \sum_{\exists j_\nu = \pm
n}  \cdots, \quad \Sigma^{**} (\lambda;s,k,m) = \sum_{\exists
j_\nu, j_\mu = \pm n}  \cdots, \quad s \geq 2
\end{equation}
(i.e.,  $\Sigma^0 $ is the sub--sum of the sum $\Sigma $ in
(\ref{p28}) over those indices $j_1, \ldots, j_s$ that are
different from $\pm n ,$ in $\Sigma^1 $ exactly one summation
index is equal to $\pm n,$ in $\Sigma^* $ at least one summation
index is equal to $\pm n,$ and in $\Sigma^{**} $ at least two
summation indices are equal to $\pm n).$  Notice that $$ \Sigma
(\lambda;s,k,m)=\Sigma^0 (\lambda;s,k,m) + \Sigma^*
(\lambda;s,k,m), \quad s\geq 1, $$ and $$  \Sigma (\lambda;s,k,m)
= \Sigma^0 (\lambda;s,k,m) +\Sigma^1 (\lambda;s,k,m) + \Sigma^{**}
(\lambda;s,k,m), \quad s\geq 2. $$

In these notations we have
\begin{equation}
\label{p32} \sum_{m,k \neq \pm n} \left |
 \frac{1}{2\pi i} \int_{C_n}
\frac{\Sigma^0 (\lambda;s,k,m)}{(\lambda - k^2)(\lambda - m^2)}
d\lambda \right | = 0
\end{equation}
because, for $m,k \neq \pm n,$ the integrand is an analytic
function of $\lambda $ in the disc $\{\lambda: \; |\lambda - n^2|
\leq n/4 \}.$

Therefore, $A(n,s)$ could be estimated as follows:
\begin{equation}
\label{p33a} A(n,1) \leq  \sum_{i=1}^5 A_i (n,1),
\end{equation}
and
\begin{equation}
\label{p33} A(n,s) \leq  \sum_{i=1}^7 A_i (n,s), \quad s \geq 2,
\end{equation}
where
\begin{equation}
\label{p331} A_1 (n,s) = \sum_{k,m = \pm n} n \cdot \sup_{\lambda
\in C_n } \left | \frac{\Sigma (\lambda;s,k,m)}{(\lambda -
k^2)(\lambda - m^2)} \right |,
\end{equation}
\begin{equation} \label{p332}
A_2 (n,s) = \sum_{k= \pm n, m \neq \pm n} n \cdot \sup_{\lambda
\in C_n } \left | \frac{\Sigma^0 (\lambda;s,k,m)}{(\lambda -
k^2)(\lambda - m^2)} \right |,
\end{equation}
\begin{equation} \label{p333}
A_3 (n,s) = \sum_{k= \pm n, m \neq \pm n} n \cdot \sup_{\lambda
\in C_n } \left | \frac{\Sigma^* (\lambda;s,k,m)}{(\lambda -
k^2)(\lambda - m^2)} \right |,
\end{equation}
\begin{equation} \label{p334}
A_4 (n,s) = \sum_{k \neq \pm n, m = \pm n} n \cdot \sup_{\lambda
\in C_n } \left | \frac{\Sigma^0 (\lambda;s,k,m)}{(\lambda -
k^2)(\lambda - m^2)} \right |,
\end{equation}
\begin{equation} \label{p335}
A_5 (n,s) = \sum_{k \neq \pm n, m = \pm n} n \cdot \sup_{\lambda
\in C_n } \left | \frac{\Sigma^* (\lambda;s,k,m)}{(\lambda -
k^2)(\lambda - m^2)} \right |,
\end{equation}
\begin{equation} \label{p336}
A_6 (n,s) = \sum_{k,m \neq \pm n} n \cdot \sup_{\lambda \in C_n }
\left | \frac{\Sigma^1 (\lambda;s,k,m)}{(\lambda - k^2)(\lambda -
m^2)}\right |,
\end{equation}
\begin{equation} \label{p337}
 A_7 (n,s) = \sum_{k,m \neq \pm n} n \cdot
\sup_{\lambda \in C_n } \left | \frac{\Sigma^{**}
(\lambda;s,k,m)}{(\lambda - k^2)(\lambda - m^2)} \right |.
\end{equation}
First we estimate $A_1 (n,s). $ By (\ref{p24a}) and \cite{DM16},
Lemma 19 (inequalities (5.30),(5.31)),
\begin{equation}
\label{p34} \sup_{\lambda \in C_n} \|K_\lambda \|
=\frac{2}{\sqrt{n}}, \quad \sup_{\lambda \in C_n} \| K_\lambda V
K_\lambda \| \leq \rho_n := C \left (\frac{\|r\|}{\sqrt{n}} +
\mathcal{E}_{\sqrt{n}} (r)\right  ),
\end{equation}
where $r = (r(m))$ is defined by the relations (\ref{p26}) and $C$
is an absolute constant.

\begin{Lemma}
\label{lemp0} In the above notations
\begin{equation}
\label{p36} \sup_{\lambda \in C_n } \left | \frac{\Sigma
(\lambda;s,k,m)}{(\lambda - k^2)(\lambda - m^2)} \right | \leq
\frac{1}{n} \rho_n^{s+1}.
\end{equation}
\end{Lemma}

\begin{proof}
Indeed, in view of (\ref{p28}) and  (\ref{p34}), we have $$ \left
| \frac{\Sigma (\lambda;s,k,m)}{(\lambda - k^2)(\lambda - m^2)}
\right |
=
|\ K_\lambda  (K_\lambda V  K_\lambda )^{s+1} K_\lambda e_k, e_m
\rangle | $$ $$ \leq \| K_\lambda  (K_\lambda V  K_\lambda )^{s+1}
K_\lambda \| \leq  \| K_\lambda \| \cdot
 \| K_\lambda V  K_\lambda \|^{s+1} \cdot  \| K_\lambda \| \leq
\frac{1}{n} \rho_n^{s+1}, $$ which proves (\ref{p36}).
\end{proof}

Now we estimate $A_1 (n,s).$ By (\ref{p36}),
\begin{equation}
\label{p37} A_1 (n,s) = \sum_{m,k = \pm n} n \cdot \sup_{\lambda
\in C_n } \left | \frac{\Sigma (\lambda;s,k,m)}{(\lambda -
k^2)(\lambda - m^2)} \right | \leq  4 \rho_n^{s+1}.
\end{equation}

To estimate $A_2 (n,s),$ we consider $\Sigma^0 (\lambda; s,k,m)$
for $k = \pm n.$  From the elementary inequality
\begin{equation}
\label{p38} \frac{1}{|\lambda - j^2|} \leq \frac{2}{|n^2 - j^2|}
\quad \text{for} \quad  \lambda \in C_n, \; j \in n + 2
\mathbb{Z}, \; j \neq \pm n,
\end{equation}
it follows, for  $m \neq \pm n,$
\begin{equation}
\label{p39} \sup_{\lambda \in C_n } \left | \frac{\Sigma^0
(\lambda;s,\pm n,m)}{(\lambda - n^2)(\lambda - m^2)} \right | \leq
\frac{1}{n} \cdot 2^{s+1} \times
\end{equation}
$$ \times
  \sum_{j_1,\ldots, j_s \neq \pm n} \frac{|V(\pm n -j_1 )V(j_1
-j_2) \cdots V(j_{s-1}- j_s)V(j_s -m)|} {|n^2 - j_1^2 | |n^2-
j_2^2 |\cdots |n^2- j_s^2||n^2 -m^2 |}. $$ Thus, taking the sum of
both sides of (\ref{p39}) over $m \neq \pm n, $ we get

\begin{equation}
\label{p40} A_2 (n,s) \leq 2^{s+1} \left [ L(s+1,n) + L(s+1,-n)
\right ],
\end{equation}
where
\begin{equation}
\label{p41} L(p,d) :=
  \sum_{i_1,\ldots, i_p \neq \pm n} \frac{|V(d-i_1 )|}{|n^2 - i_1^2 |}
\cdot \frac{|V(i_1-i_2 )|}{|n^2 - i_2^2 |} \cdots
\frac{|V(i_{p-1}-i_p )|}{|n^2 - i_p^2 |}.
\end{equation}

The roles of $k$ and $m$ in $A_2 (n,s) $ and $A_4 (n,s) $ are
symmetric, so $A_4 (n,s) $ could be estimated in an analogous way.
Indeed, for $k \neq \pm n,$ we have
\begin{equation}
\label{p39a} \sup_{\lambda \in C_n } \left | \frac{\Sigma^0
(\lambda;s,k,\pm n)}{(\lambda - k^2)(\lambda - n^2)} \right | \leq
\frac{1}{n} \cdot 2^{s+1} \times
\end{equation}
$$ \times
  \sum_{j_1,\ldots, j_s \neq \pm n} \frac{|V(k -j_1 )V(j_1
-j_2) \cdots V(j_{s-1}- j_s)V(j_s -\pm n)|} {|n^2 - k^2 ||n^2 -
j_1^2 | |n^2- j_2^2 |\cdots |n^2- j_s^2|}. $$ Thus, taking the sum
of both sides of (\ref{p39a}) over $k \neq \pm n, $ we get
\begin{equation}
\label{p42} A_4 (n,s) \leq 2^{s+1} \left [ R(s+1,n) + R(s+1,-n)
\right ],
\end{equation}
where
\begin{equation}
\label{p43} R(p,d) :=
  \sum_{i_1,\ldots, i_p \neq \pm n}
 \frac{|V(i_1-i_2 )|}{|n^2 - i_1^2 |} \cdots
\frac{|V(i_{p-1}-i_p )|}{|n^2 - i_{p-1}^2 |} \cdot \frac{|V(i_p -
d )|}{|n^2 - i_p^2 |}.
\end{equation}

Below (see Lemma \ref{lemp1} and its proof in Subsection 9.3)  we
estimate the sums $ L(p,\pm n) $ and $ R(p,\pm n). $ But now we
are going to show that $A_i (n,s), \; i = 3, 5, 6, 7, $ could be
estimated in terms of $L$ and $R$ from (\ref{p41}), (\ref{p43}) as
well.

To estimate $ A_6 (n,s)$ we write the expression $\frac{\Sigma^1
(\lambda;s,k,m)} {(\lambda - k^2)(\lambda - m^2)} $ in the form $$
\sum_{\nu =1}^s \sum_{d =\pm n} \frac{1}{\lambda- k^2}\Sigma^0
(\lambda; \nu -1, k, d) \frac{1}{\lambda- n^2}\Sigma^0 (\lambda;s
- \nu, d, m) \frac{1}{\lambda- m^2} $$ By (\ref{p38}), the
absolute values of the terms of this double sum do not exceed:

(a) for $\nu = 1 $ $$ 2^{s+1} \cdot \frac{|V(k- \pm n)|}{|n^2 -
k^2|} \cdot \frac{1}{n} \cdot \sum_{i_1, \ldots,  i_{s-1}\neq \pm
n} \frac{|V(\pm n - i_1)|  |V(i_1 -i_2) | \cdots  |V(i_{s-1} -m)
|} {|n^2 -i_1^2| \cdots  |n^2 -i_{s-1}^2|  |n^2 -m^2|}. $$

(b) for $\nu = s $ $$ 2^{s+1} \cdot   \left ( \sum_{i_1, \ldots,
i_{s-1}\neq \pm n} \frac{|V(k - i_1)| |V(i_1 - i_2)| \cdots
|V(i_{s-1} -\pm n)|} {|n^2 -k^2||n^2 -i_1^2| |n^2 -i_2^2| \cdots
|n^2 -i_{s-1}^2| } \right ) \cdot \frac{1}{n} \cdot \frac{|V(\pm n
-m)|}{|n^2 - m^2|} $$

(c) for $ 1 < \nu < s $ $$ 2^{s+1} \cdot   \left ( \sum_{i_1,
\ldots,  i_{\nu-1}\neq \pm n} \frac{|V(k - i_1)| |V(i_1 - i_2)|
\cdots  |V(i_{\nu -1} -\pm n)|} {|n^2 -k^2| |n^2 -i_1^2| |n^2
-i_2^2| \cdots  |n^2 -i_{\nu -1}^2| } \right ) \cdot \frac{1}{n}
$$ $$    \times \; \sum_{i_1, \ldots,  i_{s-\nu} \neq \pm n}
\frac{|V(\pm n - i_1)|  |V(i_1 -i_2) | \cdots  |V(i_{s-\nu} -m) |}
{|n^2 -i_1^2| \cdots  |n^2 -i_{s-\nu}^2|  |n^2 -m^2|}. $$

Therefore, taking the sum over $m,k \neq \pm n, $ we get
\begin{equation}
\label{p44} A_6 (n,s) \leq 2^{s+1} \cdot \sum_{\nu =1}^s \sum_{d =
\pm n} R(\nu,d) \cdot L(s+1-\nu ,d).
\end{equation}

One could estimate  $A_3(n,s), A_5 (n,s) $ and $A_7 (n,s) $ in an
analogous way. We will write the core formulas but omit some
details.

To estimate $A_3 (n,s), $ we use the identity $$ \frac{\Sigma
(\lambda;s,k, \pm n )}{(\lambda -k^2)(\lambda - n^2)} =\sum_{\nu
=1}^s \sum_{d=\pm n} \frac{1}{\lambda - k^2} \Sigma^0 (\lambda;\nu
-1, k, d) \frac{1}{\lambda - n^2}  \frac{\Sigma (\lambda;s- \nu, d,
\pm n)}{\lambda - n^2}. $$ In view of (\ref{p36}), (\ref{p38}) and
(\ref{p43}), from here it follows that
\begin{equation}
\label{p45} A_3 (n,s) \leq 2^{s+1} \cdot \sum_{\nu =1}^s \sum_{d =
\pm n}
 R(\nu ,d) \cdot \rho_n^{s-\nu+1}.
\end{equation}

We estimate $A_5 (n,s) $ by using the identity $$ \frac{\Sigma
(\lambda;s,\pm n, m)}{(\lambda -n^2)(\lambda - m^2)} =\sum_{\nu =1}^s
\sum_{d=\pm n} \frac{1}{\lambda - n^2} \Sigma (\lambda;\nu -1, \pm n,
d) \frac{1}{\lambda - n^2}  \frac{\Sigma^0 (\lambda;s- \nu, d,
m)}{\lambda - m^2}. $$ In view of (\ref{p36}), (\ref{p38}) and
(\ref{p41}), from here it follows that
\begin{equation}
\label{p46} A_5 (n,s) \leq 2^{s+1} \cdot \sum_{\nu =1}^s \sum_{d =
\pm n}
 \rho_n^{\nu} \cdot L(s-\nu+1,d).
\end{equation}

Finally, to estimate $A_7 (n,s) $ we use the identity
$$\frac{\Sigma (\lambda;s,k, m)}{(\lambda -k^2)(\lambda - m^2)} =
\sum_{1 \leq \nu < \mu \leq s }^s \sum_{d_1, d_2=\pm n}
\frac{1}{\lambda - k^2} \Sigma^0 (\lambda;\nu -1, k, d_1)  \times
$$$$ \times \; \frac{1}{\lambda - n^2}  \Sigma (\lambda; \mu- \nu
-1, d_1, d_2) \frac{1}{\lambda - n^2} \Sigma^0 (\lambda;s -\mu,
d_2, m) \frac{1}{\lambda - m^2} $$ In view of (\ref{p36}),
(\ref{p38}), (\ref{p41}) and (\ref{p43}), from here it follows
that
\begin{equation}
\label{p47} A_7 (n,s) \leq 2^{s} \cdot \sum_{1 \leq \nu < \mu \leq
s } \sum_{d_1, d_2=\pm n}  R(\nu,d_1) \cdot \rho_n^{\mu-\nu} \cdot
L(s-\mu+1,d_2).
\end{equation}

Next we estimate $L(p,\pm n) $ and $R(p,\pm n).$ Changing the
indices in (\ref{p43}) by $$ j_\nu = -i_{p+1-\nu}, \quad  1\leq
\nu \leq p, $$ we get
\begin{equation}
\label{p51} R(p,d) = L(p, -d).
\end{equation}
Therefore, it is enough to estimate only $L(p, \pm n).$

\begin{Lemma}
\label{lemp1} In the above notations, there exists a sequence of
positive numbers $\varepsilon_n \to 0 $ such that, for large
enough $n,$
\begin{equation}
\label{p52}  L(s, \pm n) \leq (\varepsilon_n)^s, \quad \forall
\,s\in \mathbb{N}.
\end{equation}

\end{Lemma}

The proof of this lemma is technical. It is given in detail in
Section 3. Then in Section 4 we complete the proof of Theorem
\ref{thm91}. With (\ref{p51}) and (\ref{p52}), in Section 4 we
will use Lemma \ref{lemp1} in the following form.

\begin{Corollary}
\label{cor1}
 In the above notations, there exists a sequence of
positive numbers $\varepsilon_n \to 0 $ such that, for large
enough $n,$
\begin{equation}
\label{p520} \max\{ L(s, \pm n), R(s, \pm n) \} \leq
(\varepsilon_n)^s, \quad \forall \,s\in \mathbb{N}.
\end{equation}
\end{Corollary}

\subsection{Technical inequalities and their proofs}

We follow the notations from Section 2. Now we prove Lemma
\ref{lemp1}.

\begin{proof}
First we show that
\begin{equation}
\label{p53}  L(s, \pm n) \leq \sigma (n,s),\quad  s \geq 1,
\end{equation}
where
\begin{equation}
\label{p11a}  \sigma (n,1) =\sum_{j_1 \neq \pm n}
\frac{r(n+j_1)}{|n^2 - j_1^2|},
\end{equation}
for $s\geq 2 $
\begin{equation}
\label{p11} \sigma (n,s) := \sum_{j_1,\ldots, j_s \neq \pm n}
\left (\frac{1}{|n-j_1|} +\frac{1}{|n+j_2|} \right )  \cdots \left
(\frac{1}{|n-j_{s-1}|} +\frac{1}{|n+j_s|} \right )
\frac{1}{|n-j_s|}
\end{equation}
$$ \times \;\;  r(n+j_1)r(j_1 +j_2) \cdots r(j_{s-1}+j_s), $$ and
the sequence $r= (r(m)) $ is defined by (\ref{p26}).

For $s=1$ we have, with $i_1 = -j_1,$ $$ L(1,n) = \sum_{j_1 \neq \pm
n} \frac{|V(n-j_1)|}{|n^2 - j_1^2|} =\sum_{i_1 \neq \pm n}
\frac{|V(n+i_1)|}{|n^2 - i_1^2|}$$$$ =\sum_{i_1 \neq \pm n}
\frac{|w(n+i_1)|}{|n - i_1|} \leq \sum_{i_1 \neq \pm n}
\frac{r(n+i_1)}{|n - i_1|} $$ (where (\ref{p26}) is used). In an
analogous way we get $$ L(1,-n) = \sum_{j_1 \neq \pm n}
\frac{|V(-n-j_1)|}{|n^2 - j_1^2|} =\sum_{i_1 \neq \pm n}
\frac{|w(-n-i_1)|}{|n - i_1|} \leq \sum_{i_1 \neq \pm n}
\frac{r(n+i_1)}{|n - i_1|}, $$ so, (\ref{p53}) holds for $s=1.$

Let $s \geq 2.$ Changing the indices of summation in (\ref{p41})
(considered with $p=s $ and $d=n)$ by $ j_\nu = (-1)^\nu i_\nu, $ we
get $$  L(s,n) = \sum_{j_1,\ldots, j_s \neq \pm n}
\frac{|V(n+j_1)|}{|n^2 -j_1^2 |} \frac{|V(-j_1 -j_2)|}{|n^2 -j_1^2 |}
\cdots \frac{|V[ (-1)^{s-1}(j_{s-1}+j_s)]|}{|j_s^2 -n^2|} $$ $$
=\sum_{j_1,\ldots, j_s \neq \pm n} \frac{|n+ j_1||j_2 + j_1| \cdots
|j_s +j_{s-1}|}{|j_1^2 - n^2| |j_2^2 - n^2|\cdots |j_s^2 -n^2|} |w(n+
j_1) \cdots w[ (-1)^{s-1}(j_{s-1}+j_s)]|
$$ $$ \leq
 \sum_{j_1,\ldots, j_s \neq \pm n} \frac{|j_2
+j_1| \cdots |j_s +j_{s-1}|}{|n -j_1| |n^2 -j_2^2 |\cdots |n^2 -
j_s^2 |} r(n+j_1)r(j_1 +j_2) \cdots r(j_{s-1}+j_s). $$

By the identity $$ \frac{i+k}{(n-i)(n+k)} = \frac{1}{n-i}
-\frac{1}{n+k}, $$ we get that the latter sum does not exceed

$$ \sum_{j_1,\ldots, j_s \neq \pm n} \left |\frac{1}{n-j_1}
-\frac{1}{n+j_2} \right |  \cdots \left |\frac{1}{n-j_{s-1}}
-\frac{1}{n+j_s} \right | \frac{1}{|n-j_s|} $$ $$
 \times  \;r(n+j_1)r(j_1 +j_2) \cdots r(j_{s-1}+j_s)
 \leq \sigma (n,s). $$

Changing the indices of summation in (\ref{p41}) (considered with
$p=s $ and $d=-n)$ by $ j_\nu = (-1)^{\nu +1} i_\nu, $ one can
show that $L(s,-n) \leq \sigma (n,s). $ Since the proof is the
same we omit the details. This completes the proof of (\ref{p53}).

In view of (\ref{p53}), Lemma \ref{lemp1} will be proved if we
show that there exists a sequence of positive numbers
$\varepsilon_n \to 0$ such that, for large enough $n,$
\begin{equation}
\label{p55} \sigma (n,s) \leq  (\varepsilon_n)^s, \quad \forall
\,s\in \mathbb{N}.
\end{equation}

In order to prove (\ref{p55}) we need the following statements.

\begin{Lemma}
\label{lemp2} Let $r = (r(k)) \in \ell^2 (2\mathbb{Z}), \; r(k)
\geq 0, $ and let
\begin{equation}
\label{p0} \sigma_1 (n,s;m) = \sum_{j_1, \ldots, j_s \neq   n}
\frac{r(m+j_1)}{|n-j_1|}\frac{r(j_1+j_2)}{|n-j_2|} \cdots
\frac{r(j_{s-1}+j_s)}{|n-j_s|}, \quad n,s \in \mathbb{N},
\end{equation}
where $ m, j_1, \ldots, j_s \in n+ 2\mathbb{Z}. $ Then, with
\begin{equation}
\label{p1} \tilde{\rho}_n := \mathcal{E}_n (r) + 2\|r\|/\sqrt{n},
\end{equation}
we have, for $ n \geq 4,$
\begin{equation}
\label{p2} \sigma_1 (n,1;m) \leq \begin{cases} \tilde{\rho}_n
\quad & \text{if} \;\; |m-n| \leq n/2,\\ \|r\| \quad & \text{for
arbitrary} \; \; m \in n+2\mathbb{Z},
\end{cases}
\end{equation}
\begin{equation}
\label{p3} \sigma_1 (n,2p;m) \leq  (2\|r\| \tilde{\rho}_n)^p,
\quad \sigma_1 (n,2p + 1;m) \leq \|r\| \cdot (2\|r\|
\tilde{\rho}_n)^p.
\end{equation}

\end{Lemma}

\begin{proof}  Let us recall that
$$\sum_{k=1}^\infty \frac{1}{k^2} = \pi^2/6, \quad
\sum_{k=n+1}^\infty \frac{1}{k^2} <\sum_{k=n+1}^\infty \left
(\frac{1}{k-1} - \frac{1}{k} \right ) = \frac{1}{n}. $$ Therefore,
one can easily see that
 $$
\sum_{i\in 2\mathbb{Z}, i\neq 0} \frac{1}{i^2} = \pi^2/12 < 1,
\quad \sum_{i\in 2\mathbb{Z}, |i|> n/2} \frac{1}{i^2} < 4/n, \quad
n \geq 4.$$

By the Cauchy inequality, $$ \sigma_1 (n,1;m) = \sum_{j_1 \neq
n}\frac{r(m+j_1)}{|n-j_1|} \leq  \left ( \sum_{j_1 \neq \pm n}
|n-j_1 |^{-2}  \right  )^{1/2} \cdot \|r\| \leq \|r\|, $$ which
proves the second case in (\ref{p2}).

If $|m-n| \leq  n/2 $ then we have $ n/2 \leq m \leq 3n/2. $ Let
us write $\sigma_1 (n,1;m)$ in the form $$\sigma_1 (n,1;m) =
\sum_{ 0<|j_1-n|\leq n/2} \frac{r(m+j_1)}{|n-j_1|} +
\sum_{|j_1-n|>n/2} \frac{r(m+j_1)}{|n-j_1|} $$ and apply the
Cauchy inequality to each of the above sums. In the first sum  $
n/2 \leq j \leq 3n/2, $ so $ j+m \geq n, $ and therefore, we get
$$ \sigma_1 (n,1;m) \leq \left ( \sum_{ i\geq n} |r(i)|^2 \right
)^{1/2} \cdot 1 + \|r\| \cdot \left ( \sum_{|n-j_1|>n/2} |j_1 -
n|^{-2}   \right )^{1/2}. $$ Thus $$ \sigma_1 (n,1;m) \leq
\mathcal{E}_n (r) + \frac{2\|r\|}{\sqrt{n}} = \tilde{\rho}_n \quad
\text{if} \quad |n-m| \leq n/2. $$ This completes the proof of
(\ref{p2}).

Next we estimate  $\sigma_1 (n,2;m).$ We have $$ \sigma_1 (n,2;m)
= \sum_{j_1 \neq n} \frac{r(m+j_1)}{|n-j_1 |} \sum_{j_2 \neq n}
\frac{r(j_1+j_2)}{|n-j_2|} $$ $$ = \sum_{0<|j_1- n| \leq  n/2}
\frac{r(m+j_1)}{|n-j_1|} \cdot \sigma_1 (n,1;j_1) +
\sum_{|j_1-n|>n/2} \frac{r(m+j_1)}{|n-j_1|} \cdot \sigma_1
(n,1;j_1) $$  By the Cauchy inequality and (\ref{p2}), we get $$
\sum_{0<|j_1- n| \leq  n/2} \frac{r(m+j_1)}{|n-j_1|} \sigma_1
(n,1;j_1) \leq  \|r\|\cdot \sup_{0<|j_1- n| \leq  n/2} \sigma_1
(n,1;j_1) \leq \|r\| \tilde{\rho}_n, $$ and $$ \sum_{|j_1-n|>n/2}
\frac{r(m+j_1)}{|n-j_1|} \sigma_1 (n,1;j_1) \leq
\sum_{|j_1-n|>n/2} \frac{r(m+j_1)}{|n-j_1|} \cdot \|r\| \leq
\frac{2\|r\|}{\sqrt{n}} \cdot \|r\|. $$ Thus, in view of
(\ref{p1}), we have
\begin{equation}
\label{p6} \sigma_1 (n,2;m) \leq 2\|r\|\cdot \tilde{\rho}_n.
\end{equation}

On the other hand, for every  $s \in \mathbb{N},$ we have  $$
\sigma_1 (n,s+2;m) = \sum_{j_1,\ldots, j_s  \neq n}
\frac{r(m+j_1)}{|n-j_1|}\cdots \frac{r(j_{s-1}+j_s)}{|n-j_s|}$$$$
\times \;\; \sum_{j_{s+1},j_{s+2} \neq n} \frac{r(j_s
+j_{s+1})}{|n-j_{s+1}|} \frac{r(j_{s+1}+j_{s+2})}{|n-j_{s+2}|} =
\sigma_1 (n,s;m) \cdot \sup_{j_s} \sigma_1 (n,2;j_s).$$ Thus, by
(\ref{p6}),
\begin{equation}
\label{p7} \sigma_1 (n,s+2;m)  \leq \sigma_1 (n,s;m) \cdot 2\|r\|
\tilde{\rho}_n.
\end{equation}
Now it is easy to see, by induction in $p,$ that  (\ref{p2}),
(\ref{p6}) and (\ref{p7}) imply (\ref{p3}).
\end{proof}

\begin{Lemma}
\label{lemp3} Let $r = (r(k)) \in \ell^2 (2\mathbb{Z}) $ be the
sequence defined by (\ref{p26}), and let
\begin{equation}
\label{p15} \sigma_2 (n,s;m) = \sum_{j_1, \ldots, j_s \neq   n}
r(m+j_1)\frac{r(j_1+j_2)}{|n+j_2|} \cdots
\frac{r(j_{s-2}+j_{s-1})}{|n+j_{s-1}|}
\frac{r(j_{s-1}+j_s)}{|n^2-j^2_s|},
\end{equation}
where $n \in \mathbb{N}, \; s\geq 2$ and $ m, j_1, \ldots, j_s \in n+
2\mathbb{Z}. $ Then we have
\begin{equation}
\label{p16} \sigma_2 (n,2;m) \leq \|r\|^2 \cdot \frac{2\log \,
6n}{n}
\end{equation}
and
\begin{equation}
\label{p17} \sigma_2 (n,s;m) \leq \|r\|^2 \cdot \frac{2 \log \,
6n}{n} \cdot \sup_k \sigma_1 (n,s-2;k), \quad s \geq 3.
\end{equation}
\end{Lemma}

\begin{proof}
We have
\begin{equation}
\label{p18} \sigma_2 (n,2,m) = \sum_{j_2 \neq \pm n}
\frac{1}{|n^2-j_2^2 |} \sum_{j_1 \neq \pm n} r(m+j_1)r(j_1+j_2).
\end{equation}
By the Cauchy inequality, the sum over $ j_1 \neq \pm n  $ does
not exceed $\|r\|^2. $  Let us notice that
\begin{equation}
\label{p19} \sum_{j \neq \pm n} \frac{1}{n^2 - j^2} = \frac{2}{n}
\sum_1^{2n} \frac{1}{k} - \frac{1}{2n^2} < \frac{2 \log 6n}{n}.
\end{equation}
Therefore, (\ref{p18}) and (\ref{p19}) imply (\ref{p16}).

If $s\geq 3$ then  the sum $\sigma_2 (n,s;m) $ can be written as a
product of three sums:
$$ \sum_{j_s \neq \pm n} \frac{1}{|n^2-j^2_s|}
\sum_{j_2, \ldots, j_{s-1}\neq \pm n} \frac{r(j_2+j_3)}{|n+j_2|}
\cdots \frac{r(j_{s-1}+j_s)}{|n+j_{s-1}|} \sum_{j_1 \neq \pm n}
r(m+j_1)r(j_1+j_2).$$

Changing the sign of all indices, one can easily see that the
middle sum (over $j_2, \ldots, j_{s-1} $) equals $ \sigma_1
(n;s-2,j_s).$ Thus, we have

$$ \sigma_2 (n,s;m) \leq \sum_{j_s \neq \pm n}
\frac{1}{|n^2-j^2_s|} \sigma_1 (n;s-2,j_s) \cdot \sup_{j_2}
\sum_{j_1 \neq \pm n} r(m+j_1)r(j_1+j_2).$$

By the Cauchy inequality, the sum over $ j_1 \neq \pm n  $ does
not exceed $\|r\|^2. $

Therefore, by (\ref{p19}), we get (\ref{p17}).
\end{proof}

{\em Proof of Lemma \ref{lemp1}.} We set
\begin{equation}
\label{p60} \varepsilon_n = M \cdot \left [ \left ( \frac{2 \log
6n}{n} \right )^{1/4} + (\tilde{\rho}_n)^{1/2} \right ],
\end{equation}
where $M = 4(1 + \|r\|)$  is chosen so that for large enough $n$
\begin{equation}
\label{p61} \sup_m \sigma_1 (n,2p,m) \leq (\varepsilon_n/2)^{2p},
\quad \sup_m \sigma_1 (n,2p+1,m) \leq \|r\|
(\varepsilon_n/2)^{2p}.
\end{equation}
Then, for large enough $n,$ we have
\begin{equation}
\label{p62} \sup_m \sigma_2 (n,s,m) \leq \frac{1}{M}
(\varepsilon_n/2)^{s+1}.
\end{equation}

Indeed, by the choice of $M,$ we have
\begin{equation}
\label{p62a}
 \|r\|^2 \cdot \frac{2\log \, 6n}{n}
  \leq  \|r\|^2 (\varepsilon_n/M)^4 \leq
\frac{1}{M^2} (\varepsilon_n/2)^4.
\end{equation}
Since $\varepsilon_n \to 0, $ there is $n_0 $ such that $
\varepsilon_n <1 \quad \text{for} \; n \geq n_0.$ Therefore, if
$n\geq n_0, $ then
 (\ref{p16}) and (\ref{p62a})
yields
 (\ref{p62}) for $s=2.$
If $s=2p $ with $p>1,$ then (\ref{p17}), (\ref{p62a}) and
(\ref{p61}) imply, for $n \geq n_0,$ $$\sup_m \sigma_2 (n,2p,m)
\leq \frac{1}{M^2} (\varepsilon_n/2)^4 \cdot
(\varepsilon_n/2)^{2p-2} \leq
\frac{1}{M}(\varepsilon_n/2)^{2p+1}.$$

In an analogous way, for $n \geq n_0,$  we get $$\sup_m \sigma_2
(n,2p+1,m) \leq \frac{1}{M^2}(\varepsilon_n/2)^4 \cdot
\|r\|(\varepsilon_n/2)^{2(p-1)} \leq
\frac{1}{M}(\varepsilon_n/2)^{2p+2}, $$ which completes the proof
of (\ref{p62}).

Next we estimate $\sigma (n,s)$ by induction in $s.$ By
(\ref{p2}), we have for $n\geq n_0,$
\begin{equation}
\label{p63}
 \sigma (n,1) = \sum_{j_1 \neq \pm n} \frac{r(n-j_1)}{|n- j_1|} =
\sigma_1 (n,1;n) \leq \tilde{\rho}_n \leq (\varepsilon_n/2)^2 \leq
\varepsilon_n.
\end{equation}

For $s=2$ we get, in view of (\ref{p61}) and (\ref{p61}):

\begin{equation}
\label{p64}  \sigma (n,2) = \sum_{j_1, j_2 \neq \pm n} \left (
\frac{1}{|n- j_1|} + \frac{1}{|n+ j_2|} \right ) \frac{1}{|n-
j_2|} r(n+j_1)r(j_1 + j_2)
\end{equation}
$$ \leq \sum_{j_1, j_2 \neq \pm n}  \frac{r(n+j_1)r(j_1 + j_2)}{|n- j_1||n- j_2|}
 + \sum_{j_1, j_2 \neq \pm n}
\frac{1}{|n+ j_2|} \cdot \frac{1}{|n- j_2|} r(n+j_1)r(j_1 + j_2) $$
$$ = \sigma_1 (n,2,n) + \sigma_2 (n,2,n) \leq (\varepsilon_n/2)^2
+(\varepsilon_n/2)^2 \leq (\varepsilon_n)^2.
$$

 Next we estimate $ \sigma (n,s), \;s\geq 2, $
Recall that $\sigma (n,s) $ is the sum of terms of the form $$ \Pi
(j_1, \ldots, j_s) r(n+j_1) r(j_1 +j_2 ) \cdots r(j_{s-1}+j_s), $$
where
\begin{equation}
\label{p65} \Pi (j_1, \ldots, j_s) = \left ( \frac{1}{|n- j_1|} +
\frac{1}{|n+ j_2|} \right ) \cdots \left ( \frac{1}{|n- j_{s-1}|}
+ \frac{1}{|n+ j_s|} \right ) \frac{1}{|n- j_s|}.
\end{equation}
By opening the parentheses we get
\begin{equation}
\label{p66} \Pi (j_1, \ldots, j_s) = \sum_{\delta_1,
\ldots,\delta_{s-1}=\pm 1} \left ( \prod_{\nu=1}^{s-1}
\frac{1}{|n+\delta_\nu j_{\nu +\tilde{\delta}_\nu}|} \right )
\frac{1}{|n-j_s|}, \quad \tilde{\delta}_\nu = \frac{1+\delta_\nu
}{2}.
\end{equation}
Therefore,
\begin{equation}
\label{p67} \sigma (n,s) = \sum_{\delta_1, \ldots,\delta_{s-1}=\pm
1} \tilde{\sigma}(\delta_1, \ldots,\delta_{s-1}),
\end{equation}
where
\begin{equation}
\label{p68} \tilde{\sigma}(\delta_1, \ldots,\delta_{s-1})= \sum_{j_1,
\ldots, j_s \neq \pm n} \left ( \prod_{\nu=1}^{s-1}
\frac{1}{|n+\delta_\nu j_{\nu +\tilde{\delta}_\nu}|} \right )
\frac{1}{|n-j_s|} r(n+j_1)\cdots r(j_{s-1}+j_s).
\end{equation}

In view of (\ref{p52}), (\ref{p53}) and (\ref{p67}), Lemma
\ref{lemp1} will be proved if we show that
\begin{equation}
\label{p70} \tilde{\sigma}(\delta_1, \ldots,\delta_{s-1}) \leq
(\varepsilon_n/2)^s, \quad s\geq 2.
\end{equation}
We prove (\ref{p70}) by induction in $s.$

If $s=2$ then $$\tilde{\sigma}(-1)= \sigma_1 (n,2,n) \leq
(\varepsilon_n/2)^2, $$ and $$\tilde{\sigma}(+1)= \sigma_2 (n,2,n)
\leq (\varepsilon_n/2)^2. $$

If $s=3$ then there are four cases: $$\tilde{\sigma}(-1,-1)=
\sigma_1 (n,3,n) \leq (\varepsilon_n/2)^3; \quad
\tilde{\sigma}(+1,+1)= \sigma_2 (n,3,n) \leq (\varepsilon_n/2)^3;
$$ $$\tilde{\sigma}(-1,+1)= \sum_{j_1 \neq \pm n}
\frac{r(n+j_1)}{|n-j_1|} \sum_{j_2, j_3 \neq \pm n}
\frac{r(j_1+j_2)}{|n+j_3|}\frac{r(j_2+j_3)}{|n-j_3|}$$ $$=
\sum_{j_1 \neq \pm n} \frac{r(n+j_1)}{|n-j_1|} \sigma_2
(n,2,j_1)$$ $$ \leq \sigma_1 (n,1,n) \cdot \sup_m \sigma_2 (n,2,m)
\leq \|r\| \frac{1}{K} (\varepsilon_n/2)^3 \leq
(\varepsilon_n/2)^3;$$ $$\tilde{\sigma}(+1,-1)= \sum_{j_1,j_2 \neq
\pm n} \frac{r(n+j_1)r(j_1 +j_2)}{|n^2-j^2_2|} \sum_{j_3 \neq \pm
n} \frac{r(j_2 +j_3)}{|n-j_3|} $$ $$ \leq \sigma_2 (n,2,n) \cdot
\sup_m \sigma_1 (n,1,m) \leq \frac{1}{K}(\varepsilon_n/2)^3 \|r\|
\leq (\varepsilon_n/2)^3.$$

Next we prove that if (\ref{p70}) hold for some $s,$ then it holds
for $s+2.$ Indeed, let us consider the following cases:

(i)    $\delta_s =     \delta_{s+1} = -1;         $ then we have
$$ \tilde{\sigma}(\delta_1, \ldots,\delta_{s-1},-1,-1)
=\sum_{j_1,\ldots,j_s \neq \pm n} \left ( \prod_{\nu=1}^{s-1}
\frac{1}{|n+\delta_\nu j_{\nu +\tilde{\delta}_\nu}|} \right )
\frac{1}{|n-j_s|}$$ $$ \times \;  r(n+j_1)r(j_1 +j_2)\cdots
r(j_{s-1} +j_s) \sum_{j_{s+1}, j_{s+2} \neq \pm n} \frac{r(j_s
+j_{s+1})}{|n-j_{s+1}|}\frac{r(j_{s+1} +j_{s+2})}{|n-j_{s+2}|}$$
$$ =\sum_{j_1,\ldots,j_s \neq \pm n} \left ( \prod_{\nu=1}^{s-1}
\frac{1}{|n+\delta_\nu j_{\nu +\tilde{\delta}_\nu}|} \right )
\frac{1}{|n-j_s|} r(n+j_1)\cdots r(j_{s-1} +j_s) \sigma_1
(n,2,j_s) $$ $$ \leq \tilde{\sigma}(\delta_1, \ldots,\delta_{s-1})
\cdot \sup_m  \sigma_1 (n,2,m) \leq (\varepsilon_n/2)^s \cdot
(\varepsilon_n/2)^2 = (\varepsilon_n/2)^{s+2}. $$

(ii)   $\delta_s =-1,   \;     \delta_{s+1} = + 1;$ then we have
$$ \tilde{\sigma}(\delta_1, \ldots,\delta_{s-1},-1,+1)
=\sum_{j_1,\ldots,j_s \neq \pm n} \left ( \prod_{\nu=1}^{s-1}
\frac{1}{|n+\delta_\nu j_{\nu +\tilde{\delta}_\nu}|} \right )
\frac{1}{|n-j_s|}$$ $$ \times \;  r(n+j_1)r(j_1 +j_2)\cdots
r(j_{s-1} +j_s) \sum_{j_{s+1}, j_{s+2} \neq \pm n} \frac{r(j_s
+j_{s+1})r(j_{s+1} +j_{s+2})}{|n^2-j^2_{s+2}|}$$
 $$ \leq
\tilde{\sigma}(\delta_1, \ldots,\delta_{s-1}) \cdot \sup_m
\sigma_2 (n,2,m) \leq (\varepsilon_n/2)^s \cdot
(\varepsilon_n/2)^2 = (\varepsilon_n/2)^{s+2}. $$

(iii) $\delta_s = \delta_{s+1} = +1;$ then, if $\delta_1 = \cdots
= \delta_{s-1}= +1, $ we have $$ \tilde{\sigma}(\delta_1,
\ldots,\delta_{s+1})= \sigma_2 (n,s+2,n) \leq
(\varepsilon_n/2)^{s+2}. $$

Otherwise, let $\mu < s$  be the largest index such that
$\delta_\mu = -1.$ Then we have

$$ \tilde{\sigma}(\delta_1, \ldots,\delta_{s-1},+1,+1)
=\sum_{j_1,\ldots,j_\mu \neq \pm n} \left ( \prod_{\nu=1}^{\mu-1}
\frac{1}{|n+\delta_\nu j_{\nu +\tilde{\delta}_\nu}|} \right )
\frac{1}{|n-j_\mu|}$$ $$ \times \;  r(n+j_1)r(j_1 +j_2)\cdots
r(j_{\mu-1} +j_\mu) \sigma_2 (n,s+2-\mu,j_\mu)             $$ $$
\leq \tilde{\sigma}(\delta_1, \ldots,\delta_{\mu-1}) \cdot \sup_m
\sigma_2 (n,s+2-\mu,j_\mu) \leq (\varepsilon_n/2)^{\mu} \cdot
(\varepsilon_n/2)^{s+2-\mu} = (\varepsilon_n/2)^{s+2}. $$

(iv) $\delta_s =+1, \; \delta_{s+1} = -1;$ then, if $\delta_1 =
\cdots = \delta_{s-1}= +1, $ we have $$ \tilde{\sigma}(\delta_1,
\ldots,\delta_{s+1},-1)= \tilde{\sigma}(+1,\ldots, +1,-1, -1)=
$$$$ =\sum_{j_1,\ldots,j_{s+1}\neq \pm n} \left ( \prod_{\nu =1}^s
\frac{1}{|n+ j_{\nu +1}|} \right ) \frac{1}{|n-j_{s+1}|}
r(n+j_1)\cdots r(j_s +j_{s+1}) \sigma_1 (n,1, j_{s+1}) $$ $$ \leq
\sigma_2 (n,s+1,n) \cdot \sup_m \sigma_1 (n,1,m) \leq \frac{1}{K}
(\varepsilon_n/2)^{s+2} \cdot \|r\| \leq (\varepsilon_n/2)^{s+2}.
$$

Otherwise, let $\mu <s$  be the largest index such that
$\delta_\mu = -1,\; 1\leq \mu < n. $ Then we have

$$ \tilde{\sigma}(\delta_1, \ldots,\delta_{s-1},+1,-1)
=\sum_{j_1,\ldots,j_\mu \neq \pm n} \left ( \prod_{\nu=1}^{\mu-1}
\frac{1}{|n+\delta_\nu j_{\nu +\tilde{\delta}_\nu}|} \right )
\frac{1}{|n-j_\mu|}$$ $$ \times \; \sum_{j_{\mu+1}, \ldots,
j_{s+1} \neq \pm n} \frac{r(j_\mu +j_{\mu+1})}{|n+j_{\mu+2}|}
\cdots
 \frac{r(j_{s-1}+j_{s})}{|n+j_{s+1}|}
\frac{r(j_{s}+j_{s+1})}{|n-j_{s+1}|} \sum_{j_{s+2}\neq \pm n}
\frac{r(j_{s+1}+j_{s+2})}{|n-j_{s+2}|}$$
 $$ \leq \tilde{\sigma}(\delta_1,\ldots,\delta_{\mu-1})
 \cdot \sup_m  \sigma_2 (n,s+1- \mu,m) \cdot \sup_k \sigma_1 (n,1,k)$$
 $$
\leq (\varepsilon_n/2)^{\mu} \cdot \frac{1}{K}
(\varepsilon_n/2)^{s+2-\mu}  \|r\| \leq (\varepsilon_n/2)^{s+2}.
$$ Hence (\ref{p70}) holds for $s\geq 2.$

Now (\ref{p53}), (\ref{p67}) and (\ref{p70}) imply (\ref{p52}),
which completes the proof of Lemma \ref{lemp1}.
\end{proof}

Now we are ready to accomplish the proof of Theorem \ref{thm91}.

\subsection{Proof of the main theorem on the deviations $\|P_n - P_n^0\|_{L^1 \to L^\infty} $}

We need -- because we want to use (\ref{p24b}) -- to give
estimates of $A(n,s)$ from (\ref{p24c}), or (\ref{p29}). By
(\ref{p33a}) and (\ref{p33}), we reduce such estimates to analysis
of quantities $A_j (n,s), \; j=1, \ldots, 7.$

With $\rho_n \in (\ref{p34})$ and $\varepsilon_n \in (\ref{p60}),
$ we set
\begin{equation}
\label{18b1} \kappa_n = \max\{ \rho_n, \varepsilon_n\}.
\end{equation}
Then, by Lemma \ref{lemp1} (and Corollary \ref{cor1}), i.e., by
the inequality (\ref{p520}), we have (in view of
(\ref{p37}),(\ref{p40}),(\ref{p42}) and (\ref{p44})--(\ref{p47}))
the following estimates for $A_j:$ $$A_1 \leq 4 \kappa_n^{s+1},
\qquad A_j \leq 2^{s+1} \cdot 2 \kappa_n^{s+1}, \quad j=2,4;$$
$$A_j \leq 2^{s+1} \sum_{\nu=1}^s \left (2 \kappa_n^\nu \cdot
\kappa_n^{s-\nu +1} \right ) = s 2^{s+2} \kappa_n^{s+1},\quad
j=3,5;$$ $$A_6 \leq 2^{s+1} \sum_{\nu=1}^s \left (\kappa_n^\nu
\cdot \kappa_n^{s-\nu +1} \right ) = s 2^{s+1} \kappa_n^{s+1};$$
$$A_7 \leq 2^{s} \sum_{1\leq \nu + \mu \leq s} \left (4
\kappa_n^\nu \cdot \kappa^{\mu-\nu} \cdot \kappa_n^{s-\mu +1}
\right ) = s(s-1) 2^{s+1} \kappa_n^{s+1}.$$ In view of
(\ref{p27}), (\ref{p60}) and (\ref{p33}), these inequalities imply
$$ A(n,s) \leq  (2+s)^2  (2\kappa_n)^{s+1}.$$ Therefore, the
right--hand side of (\ref{p24b}) does not exceed $$
\sum_{s=0}^\infty A(n,s) \leq  (4\kappa_n)\sum_{s=0}^\infty
(s+1)(s+2) (2\kappa_n)^s =\frac{8\kappa_n}{(1-2\kappa_n)^3}. $$
Therefore, if $\kappa_n < 1/4$ (which holds for $ n \geq N^* $
with a proper choice of $N^*$), then $\sum_{s=0}^\infty A(n,s)
\leq 64\kappa_n. $ Thus, by (\ref{p24b}) and the notations
(\ref{p210}),
\begin{equation}
\label{18d} \| P_n -P_n^0 \|_{L^1 \to L^\infty} \leq \sum_{k,m}
|B_{km} (n)| \leq 64 \kappa_n, \quad n \geq N^*,
\end{equation}
 where $ \kappa_n
\in (\ref{18b1}).$

This completes the proof of Theorem \ref{thm91}. Of course,
Proposition \ref{prop91} follows because $ \|T\|_{L^2 \to L^2}
\leq \|T\|_{L^1 \to L^\infty} $ for any well defined operator $T.$
\end{proof}

\subsection{Miscellaneous}

As we have already noticed many times, Theorem~\ref{thm91} (or
Proposition~\ref{prop91}) is an essential step in the proof of our
main result Theorem~\ref{thm44.2} (see an announcement in
\cite{DM17}, Thm. 9, or \cite{DM16}, Thm. 23) about the
relationship between the rate of decay of spectral gap sequences
(and deviations) and the smoothness of the potentials $v$ under
the {\em a priori } assumption that $v$ is a singular potential,
i.e., that $v \in H^{-1}_{Per}. $  But Theorem \ref{thm91} is
important outside this context as well. We will mention now the
most obvious corollaries.\vspace{2mm}

The following theorem holds.

\begin{Theorem}
\label{thm97} In the above notations, the $L^p$-norms, $1\leq p
\leq \infty, $ on Riesz subspaces $E^N = Ran \,S_N, $ and $E_n =
Ran \,P_n, \; n \geq N,$  are uniformly equivalent; more
precisely,
\begin{equation}
\label{a1} \|f\|_1  \leq \|f\||_\infty \leq C(N) \|f\|_1,  \quad
\forall f \in E^N,
\end{equation}
and
\begin{equation}
\label{a2} \|f\||_\infty \leq 3 \|f\|_1,  \quad  \forall f \in
E_n, \quad n \geq N^* (v),
\end{equation}
where
\begin{equation}
\label{a3} C(N) \leq 50 N \ln N.
\end{equation}
\end{Theorem}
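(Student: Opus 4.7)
The plan is to exploit the perturbative splitting $f = P^0 f + (P-P^0)f$ (with $P$ either $P_n$ or $S_N$, and $P^0$ the corresponding free projection), combined with the $L^1\to L^\infty$ smallness of the Riesz-projection deviations proved in Theorem~\ref{thm91} and classical finite-dimensional tools for the free part. The lower bound $\|f\|_1 \le \|f\|_\infty$ is trivial once the measure on $[0,\pi]$ is normalized to have total mass one.

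For (\ref{a2}), fix $n \ge N^*(v)$ and $f = P_n f \in E_n$. Write $f = g + h$ with $g = P_n^0 f \in E_n^0$ and $h = (P_n - P_n^0) f$. By (\ref{18d}) one has $\|h\|_\infty \le 64\kappa_n \|f\|_1$ with $\kappa_n\to 0$. Since $E_n^0 = \operatorname{span}\{e^{-inx}, e^{inx}\}$ (or $\operatorname{span}\{\sin nx\}$ in the Dirichlet case) is finite-dimensional, a direct computation using $\|g\|_2^2 \le \|g\|_1 \|g\|_\infty$ together with the elementary inequality $\|g\|_\infty \le \sqrt{2/\pi}\,\|g\|_2$ available on this subspace yields $\|g\|_\infty \le (2/\pi)\|g\|_1$. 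Passing from $\|g\|_1$ to $\|f\|_1$ costs only $1 + C\kappa_n$ because $\|g\|_1 \le \|f\|_1 + \|h\|_1 \le \|f\|_1 + \|h\|_\infty$. Combining gives $\|f\|_\infty \le (2/\pi)(1 + C\kappa_n)\|f\|_1 + 64\kappa_n \|f\|_1 \le 3\|f\|_1$ for $n$ large.

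For (\ref{a1}) the strategy is parallel but now the finite-dimensional part has dimension $\sim N$ rather than $2$. For $f = S_N f \in E^N$, set $g = S_N^0 f$ and $h = (S_N - S_N^0)f$. The function $g$ is a trigonometric polynomial of degree at most $N$, so a Nikolskii-type inequality $\|g\|_\infty \le c_1 N \|g\|_1$ applies (obtained, e.g., by writing $g = g \ast K_N$ with $K_N$ a de~la~Vall\'ee~Poussin-type reproducing kernel of $L^\infty$-norm $\le c_1 N$). Passing from $\|g\|_1 = \|S_N^0 f\|_1$ to $\|f\|_1$ costs the Lebesgue-constant bound $\|S_N^0\|_{L^1 \to L^1} \le c_2 \log N$. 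Hence $\|g\|_\infty \le c_1 c_2 N \log N \|f\|_1$. For the perturbation $h$, an argument analogous to the proof of Theorem~\ref{thm91} --- expanding $R_\lambda - R_\lambda^0$ in the series (\ref{9.012}), integrating termwise around $\partial R_N$, and splitting the resulting sums according to whether the summation indices lie inside or outside $\{|k|\le N\}$ --- shows that $\|S_N - S_N^0\|_{L^1 \to L^\infty}$ is bounded (indeed tends to $0$) for $N$ large. Assembling the three estimates gives $\|f\|_\infty \le (c_1 c_2 N \log N + C)\|f\|_1$, and tracking the explicit constants leads to $C(N) \le 50 N \ln N$.

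The main obstacle will be establishing the $S_N$-analog of (\ref{18d}) with a uniform bound: the analysis of Sections~9.3--9.4 is tailored to the small circle $C_n$, where the single block of ``bad'' indices is $\{\pm n\}$, whereas for $\partial R_N$ the ``bad'' block consists of all $k$ with $|k| \le N$. The combinatorial decompositions (\ref{p30})--(\ref{p31}) and the inductive estimates of Lemmas~\ref{lemp2}--\ref{lemp3} would have to be re-run with this thicker resonant block, with sums of the form $L(p,\pm N)$ and $R(p,\pm N)$ taking the place of $L(p,\pm n)$; the geometric smallness in $\kappa_n$ that propagates through Lemma~\ref{lemp1} should carry over in the form of a bound that is uniform in $N$.
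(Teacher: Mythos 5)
Your treatment of (\ref{a2}) is essentially the paper's: split $f = P_n^0 f + (P_n - P_n^0)f$, bound the perturbation via (\ref{18d}), and estimate the two-(or one-)dimensional free part directly. Your free-part constant via $\|g\|_2^2 \le \|g\|_1 \|g\|_\infty$ is a fancier route to the same end (the paper just bounds the two Fourier coefficients by $\|f\|_1/\pi$ each), but the conclusion is the same.

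The gap is entirely in (\ref{a1}). You claim that an argument analogous to Theorem~\ref{thm91}, re-run with the thicker resonant block $\{|k|\le N\}$, would show $\|S_N - S_N^0\|_{L^1\to L^\infty}$ is \emph{bounded} (``indeed tends to $0$'') as $N\to\infty$, and you put all the $N\log N$ growth into the free-part estimate via Nikolskii plus the Lebesgue constant. This is not what the paper proves and is almost certainly false. The machinery of Lemmas~\ref{lemp1}--\ref{lemp3} gives $L(s,\pm n)\le(\varepsilon_n)^s$ with $\varepsilon_n\to 0$ precisely because, on the small circle $C_n$, every non-resonant denominator $|n^2-j^2|$ is large (of order at least $n$) and the resonant set $\{\pm n\}$ has just two elements. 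For $S_N$, the resonant block has $\sim N$ elements, the contour $\partial R_N$ passes close to many eigenvalues, and the denominators $|\lambda - j^2|$ on the contour are only of order $N$, not $n^2$; there is no reason to expect, and no proof offered, that the inductive scheme produces a bound that is uniform in $N$, let alone one tending to zero. Indeed, writing $S_N - S_N^0 = (S_{N_0}-S_{N_0}^0) + \sum_{k=N_0+1}^N (P_k-P_k^0)$ and using only $\|P_k - P_k^0\|_{L^1\to L^\infty}\le 64\kappa_k$ with $\kappa_k\to 0$ at the slow rate of (\ref{p60}), one already sees that the cumulative sum need not stay bounded.

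What the paper actually proves is the much weaker estimate $\|S_N - S_N^0\|_{L^1\to L^\infty} \le C N\log N$ in (\ref{a71}), and the dominant contribution to $C(N)$ comes from this perturbation term, not from the free part. The argument is a direct contour integration of the series (\ref{a43}) over the boundary of a \emph{rectangle} $\Pi(H)$ with $H = N^2$: the $L^2\to L^2$ norm of the inner sum is $\le 1$ by (\ref{a42}), the two flanking $K_\lambda$'s upgrade this to an $L^1\to L^\infty$ bound by the multiplier sum $4\sum_k |\lambda - k^2|^{-1}$, and that sum is controlled by (\ref{a55}) and integrated over the four sides of $\partial\Pi(N^2)$. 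For the free part the paper uses the trivial bound $\|S_N^0 g\|_\infty \le 2N\|g\|_1$ (sum the $\sim 2N$ Fourier coefficients, each $\le \|g\|_1/\pi$), which gives only $O(N)$; your detour through a Nikolskii inequality and the Lebesgue constant is valid but costs an unnecessary $\log N$ there while missing the actual source of the logarithm. To repair your proof you would need to replace the claimed smallness of $\|S_N - S_N^0\|_{L^1\to L^\infty}$ with the rectangle-contour estimate (\ref{a42})--(\ref{a71}), or otherwise establish at least an $O(N\log N)$ bound on that deviation.
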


\begin{proof}
By (\ref{p21}), if $N$ is large enough,
\begin{equation}
\label{a5} \|P_n - P_n^0\|_{L^1 \to L^\infty} \leq \frac{1}{2},
\quad n\geq N.
\end{equation}
If we are more careful when using (\ref{18b1}),(\ref{18d}),
(\ref{p34}) and (\ref{p60}), we may claim (\ref{a5}) for $N$ such
that
\begin{equation}
\label{a6} 2^{9}(1+\|r\|) \left ( \mathcal{E}_{\sqrt{N}} (r) +
\frac{2}{N^{1/4}} \left ( \|r\|^{1/2} + (\ln 6N)^{1/4} \right )
\right ) \leq \frac{1}{2}.
\end{equation}
If $f \in E_n, \, n \geq N,$ we have
\begin{equation}
\label{a6a} f = P_n f = (P_n - P_n^0 )f + P_n^0 f,
\end{equation}
where, for $bc = Per^\pm$
\begin{equation}
\label{a32} P_n^0 f = f_n e^{inx} + f_{-n} e^{-inx}, \quad f_k =
\frac{1}{\pi} \int_0^\pi f(x)e^{-ikx} dx,
\end{equation}
and, for $bc = Dir, $
\begin{equation}
\label{a33} P_n^0 f = 2g_n \sin nx, \quad g_n =
\frac{1}{\pi}\int_0^\pi f(x) \sin nx dx.
\end{equation}
In either case $\|P_n f \|_\infty  \leq 2 \|f\|_1, $ and
therefore, if $ \|f\|_1 \leq 1 $ we have
\begin{equation}
\label{a34} \|f\|_\infty \leq \|(P_n - P_n^0 )f\|_\infty +\| P_n^0
f\|_\infty  \leq 1/2 + 2 \leq 3.
\end{equation}
Remind that a projection
\begin{equation}
\label{a35} S_N = \frac{1}{2\pi i} \int_{\partial R_N}
(z-L_{bc})^{-1} dz,
\end{equation}
where, as in (5.40), \cite{DM16},
\begin{equation}
\label{a41} R_N = \{ z\in \mathbb{C}: \; -N < Re z < N^2 +N, \;
|Im z | < N,
\end{equation}
is finite--dimensional (see \cite{DM16}, (5.54), (5.56), (5.57)
for $\dim S_N $). Now we follow the inequalities proven in
\cite{DM16} to explain (\ref{a1}) and (\ref{a3}). Lemma 20,
inequality (5.41) in \cite{DM16}, states that
\begin{equation}
\label{a42} \sup \{\|K_\lambda V K_\lambda \|_{HS}: \; \lambda
\not \in R_N, \;  Re \lambda \leq N^2 - N \} \leq C \left (
\frac{(\log N)^{1/2}}{N^{1/4}} \|q\| + \mathcal{E}_{4\sqrt{N}} (q)
\right ).
\end{equation}
But by (\ref{a35})
\begin{equation}
\label{a43} S_N -S_N^0 = \frac{1}{2\pi i} \int_\Gamma K_\lambda
\sum_{m=1}^\infty (K_\lambda V K_\lambda)^m K_\lambda d\lambda,
\end{equation}
where we can choose $\Gamma$ to be the boundary $\partial \Pi $ of
the rectangle
\begin{equation}
\label{a44} \Pi (H)= \{z \in \mathbb{C}: \; -H \leq Re \,z \leq
N^2 + N, \; |Im \,z| \leq H \}, \quad H \geq N.
\end{equation}
Then by (\ref{a42}) and (\ref{a43}) the norm of the sum in the
integrand can be estimated by
\begin{equation}
\label{a45} \left \|\sum_1^\infty  \right \|_{2\to 2} \leq
\sum_1^\infty \|K_\lambda V K_\lambda\|^m_{HS} \leq 1, \quad
\forall \lambda \in
\partial \Pi (H)
\end{equation}
if (compare with (\ref{a6})) $ N \geq N^*(q)  $ and $N^* = N^*(q)$
is chosen to guarantee that
\begin{equation}
\label{a51} \text{``the right side in (\ref{a42})``} \leq 1/2  \;
\; \text{for} \; N \geq N^*.
\end{equation}
The additional factor $K_\lambda $ is a multiplier operator
defined by the sequence $\tilde{K} =\{1/\sqrt{\lambda -k^2}\},$
 so its norms $ \|K_\lambda : L^1 \to L^2\|$ and
 $ \;\|K_\lambda : L^2 \to L^\infty\|$
are estimated by $2\tilde{\kappa},$
 where
\begin{equation}
\label{a53}  \tilde{\kappa}= \|\tilde{K_\lambda} : \; \ell^\infty
\to \ell^2 \| = \|\tilde{K_\lambda} : \; \ell^2 \to \ell^1
\|=\sum_k \frac{1}{|\lambda - k^2|}.
\end{equation}
Therefore, by (\ref{a45}) and (\ref{a53}),
\begin{equation}
\label{a54}  \alpha(\lambda):= \|K_\lambda \left (\sum_1^\infty
(K_\lambda V K_\lambda)^m \right )K_\lambda : \; L^1 \to L^\infty
\| \leq \sum_k \frac{4}{|\lambda - k^2|}.
\end{equation}
By Lemma 18(a) in \cite{DM16} (or, Lemma 79(a) in \cite{DM15})
\begin{equation}
\label{a55}
 \sum_k \frac{1}{|n^2 - k^2|+b} \leq C_1 \frac{\log b}{\sqrt{b}} \quad
 \text{if} \;\; n \in \mathbb{N}, \; b \geq 2.
\end{equation}
(In what follows $C_j, \, j=1,2, \ldots$ are absolute constants;
$C_1 \leq 12.$) These inequalities are used to estimate the norm
$\alpha(\lambda)$ on the boundary $\partial \Pi (H) = \cup I_k
(H), \; k=1,2,3,4,$ where $$ I_1 (H)= \{z: \; Re \, z = -H, \; |Im
\, z| \leq H\} $$ $$ I_2 (H)= \{z: \;  -H \leq Re \, z \leq N^2 +N
, \; Im \, z = H\} $$ $$ I_3 (H)= \{z: \; Re \, z = N^2 +N, \; |Im
\, z| \leq H\} $$ $$ I_2 (H)= \{z: \;  -H \leq Re \, z \leq N^2 +N
, \; Im \, z = - H\} $$ Then we get $$ \int_{I_1}\alpha(\lambda)
|d\lambda| \leq C_2  \frac{\log H}{\sqrt {H}} \cdot H,  $$ $$
\int_{I_k}\alpha(\lambda) |d\lambda| \leq C_3  \frac{\log H}{\sqrt
{H}}\cdot N^2, \quad k=2,4. $$ $$ \int_{I_3}\alpha(\lambda)
|d\lambda| \leq C_4 \int_0^H \frac{\log (N+y)}{\sqrt{N+y}} dy \leq
C_5\sqrt{H} \log H. $$ If we put $H= N^2$ and sum up these
inequalities we get by (\ref{a43})
\begin{equation}
\label{a71} \| S_N -S_N^0\|_{L^1 \to L^\infty} \leq C_6 N \log N,
\end{equation}
where $C_6 $ is an absolute constant $\leq 600.$

Now, as in (\ref{a6a}) and (\ref{a32}), let us notice that for $g
\in E^N$
\begin{equation}
\label{a72} g = S_N g = (S_N -S_N^0) g + S_N^0 g,
\end{equation}
where
\begin{equation}
\label{a73} S_N^0 g = \sum_{|k|\leq N} g_k e^{ikx}, \quad k
\;\text{even for} \;bc =Per^+,\;\;\text{odd for} \;bc =Per^-,
\end{equation}
and
\begin{equation}
\label{a74} S_N^0 g = 2\sum_{|k|\leq N} \tilde{g}_k \sin kx, \quad
bc = Dir,
\end{equation}
where
\begin{equation}
\label{a75} g_k = \frac{1}{\pi}\int_0^\pi g(x) e^{ikx}dx, \quad
\tilde{g}_k = \frac{1}{\pi}\int_0^\pi g(x) \sin kx dx.
\end{equation}
In either case
\begin{equation}
\label{a76} \|S_N^0 g\|_\infty \leq 2N \|g\|_1.
\end{equation}
Therefore, by (\ref{a71}) and (\ref{a76}), if $\|f\|_1 \leq 1$ we
have
\begin{equation}
\label{a81} \|f\|_\infty \leq C_6 N \log N + 2N \leq C_7 N \log N,
\quad N \geq N^* \in (\ref{a51}).
\end{equation}
Let us fix $N_0 \geq N^*, N_*,$ where $N_*$ is determined by
(\ref{a6}), i.e., (\ref{a6}) holds if $N \geq N_*.$ Then, by
(\ref{a81}),
\begin{equation}
\label{a82} \|S_{N_0}\|_{L^1 \to L^\infty} \leq C N_0 \log N_0,
\end{equation}
and for $N >N_0 $ we may improve the estimate in (\ref{a81}).
Indeed, $$ S_N = (S_N - S_{N_0}) + S_{N_0}= S_{N_0} + \sum_{N_0
+1}^N P_k $$ and, by (\ref{a5}) and (\ref{a32}),  $\|P_k\|_{L^1
\to L^\infty} \leq 3.$ Therefore, by (\ref{a82}), $$\|S_N \|_{L^1
\to L^\infty} \leq C N_0 \log N_0 + (N-N_0) \leq 3N + C N_0 \log
N_0. $$
\end{proof}
\vspace{2mm}

Of course, any estimates of the kind
\begin{equation}
\label{a91} \|S_N - S^0_N\|_{L^1 \to L^\infty} \leq C(N)
\end{equation}
with $C_N \to \infty $ as $N \to \infty $ are weaker than the
claim
\begin{equation}
\label{a92} \omega_N =\|S_N - S^0_N\|_{L^1 \to L^\infty} \to 0
\quad \text{as} \;\;N \to \infty
\end{equation}
or even that $\omega_N$ is a bounded sequence. For real--valued
potentials $v \in H^{-1} $  and  $bc = Dir, $ (\ref{a92}) would
follow from Theorem 1 in \cite{Sa} if its proof given in \cite{Sa}
were valid. For complex--valued potentials $v \in H^{-1}, $ when
the system of eigenfunctions is not necessarily orthogonal the
statement of Theorem 1 in \cite{Sa} is false. Maybe it could be
corrected if the ''Fourier coefficients'' are chosen as $$ c_k (f)
= \langle f,w_k \rangle $$ where the system $\{ w_k\}$ is
bi--orthonormal with respect to $\{ u_k\}, $ i.e., $$ \langle u_j,
w_k \rangle = \delta_{jk} $$ (not the way as it is done in
\cite{Sa}). But more serious oversight, not just a technical
misstep, seems to be a crucial reference to \cite{SS03}, without
specifying lines or statements in \cite{SS03}, to claim something
that cannot be found there. Namely, the author of \cite{Sa}
alleges that in \cite{SS03} the following statement is
proven\footnote{Recently, in Preprint ArXiv:0806.3016, 18 Jun 2008
-- without giving any proofs of the 2003 claims in \cite{Sa} -- I.
Sadovnichaya states a weaker  form of (\ref{a92}), namely $$
\tilde{\omega}_N =\|S_N - S^0_N\|_{L^2 \to L^\infty} \to 0 \quad
\text{as} \;\;N \to \infty, $$ but again the proof has a series of
steps without any justification. For example, the text on page 3,
lines 1-2 from below, claims that

(*)`` the sequence $\{\|\psi_{n,2} (x)\|_C \}_{n=N}^\infty $
belongs to the space $\ell^1... $``.

 This is important for explanation of the
inequality (21) in Step 3, page 10. But no proof of (*) is given;
there is no reference either.}

{\em Let $\{ y_k (x)\}$ be a normalized system of eigenfunctions
of the operator $$ L= - d^2/dx^2 + v, \quad v \in H^{-1}
([0,\pi]), $$ considered with Dirichlet boundary conditions. Then
\begin{equation}
\label{a111} y_k (x) = \sqrt{2} \sin kx + \psi_k (x),
\end{equation}
where
\begin{equation}
\label{a112} \sup_{[0,\pi]} |\psi_k (x) | \in \ell^2.
\end{equation}
(Two more sup--sequences coming from derivatives $y^\prime_k$ are
claimed to be in $\ell^2$ as well.)}

However, what one could find in \cite{SS03}, Theorem 2.7 and
Theorem 3.13(iv),(v), is the claim
\begin{equation}
\label{a113} \sup_{[0,\pi]} \sum_k |\psi_k (x) |^2   < \infty.
\end{equation}
Of course, (\ref{a112}) implies (\ref{a113}) but if $\{\psi_k \}$
is a sequence of $L^\infty$-functions then (\ref{a113}) does not
imply (\ref{a112}).

\end{document}